\theoremstyle{plain}
\newtheorem{thm}{Theorem}[section]
\newtheorem{lem}[thm]{Lemma}
\newtheorem{cor}[thm]{Corollary}
\newtheorem{prop}[thm]{Proposition}
\theoremstyle{definition}
\newtheorem{df}[thm]{Definition}
\newtheorem{thm-df}[thm]{Theorem-Definition}
\newtheorem{cond}[thm]{Condition}
\theoremstyle{remark}
\newtheorem{rem}[thm]{Remark}
\newtheorem{eg}[thm]{Example}
\numberwithin{equation}{thm}
\crefname{thm}{Theorem}{Theorems}
\crefname{lem}{Lemma}{Lemmas}
\crefname{cor}{Corollary}{Corollaries}
\crefname{prop}{Proposition}{Propositions}
\crefname{df}{Definition}{Definitions}
\crefname{cond}{Condition}{Conditions}
\crefname{rem}{Remark}{Remarks}
\crefname{eg}{Example}{Examples}
\crefname{equation}{}{}
\newcommand{\bC}{\mathbb{C}}
\newcommand{\bZ}{\mathbb{Z}}
\newcommand{\bfF}{\mathbf{F}}
\newcommand{\bfG}{\mathbf{G}}
\newcommand{\bfH}{\mathbf{H}}
\newcommand{\bfJ}{\mathbf{J}}
\newcommand{\cA}{\mathcal{A}}
\newcommand{\cB}{\mathcal{B}}
\newcommand{\cC}{\mathcal{C}}
\newcommand{\cE}{\mathcal{E}}
\newcommand{\cF}{\mathcal{F}}
\newcommand{\cH}{\mathcal{H}}
\newcommand{\cI}{\mathcal{I}}
\newcommand{\cJ}{\mathcal{J}}
\newcommand{\cK}{\mathcal{K}}
\newcommand{\cL}{\mathcal{L}}
\newcommand{\cM}{\mathcal{M}}
\newcommand{\cN}{\mathcal{N}}
\newcommand{\cO}{\mathcal{O}}
\newcommand{\cS}{\mathcal{S}}
\newcommand{\cT}{\mathcal{T}}
\newcommand{\cU}{\mathcal{U}}
\newcommand{\cZ}{\mathcal{Z}}
\newcommand{\fh}{\mathfrak{h}}
\newcommand{\bx}{\mathbf{x}}
\newcommand{\by}{\mathbf{y}}
\newcommand{\sD}{\mathsf{D}}
\newcommand{\mrq}{\mathrm{q}}
\newcommand{\sm}{\mathsf{m}}
\providecommand{\tensor}[1]{\underset{#1}{\mathbin{\otimes}}}
\providecommand{\outensor}[1]{\underset{#1}{\mathbin{\boxtimes}}}
\providecommand{\cotensor}[1]{\underset{#1}{\mathbin{\square}}}
\providecommand{\dblcross}[1]{\underset{#1}{\mathbin{\bowtie}}}
\providecommand{\rbicross}[1]{\underset{#1}{\mathbin{\rtimes}}}
\providecommand{\redltimes}{\mathbin{\underset{r}{\ltimes}}}
\providecommand{\barltimes}{\mathbin{\bar{\ltimes}}}
\providecommand{\barotimes}{\mathbin{\bar{\otimes}}}
\newcommand{\bra}{\langle}
\newcommand{\ket}{\rangle}
\newcommand{\wh}{\widehat}
\newcommand{\wt}{\widetilde}
\newcommand{\rpb}{\hspace{0.1em}{}^*\!}
\DeclareMathOperator{\id}{id}
\DeclareMathOperator{\Ker}{Ker}
\DeclareMathOperator{\Hom}{Hom}
\DeclareMathOperator{\Aut}{Aut}
\DeclareMathOperator{\Rep}{Rep}
\DeclareMathOperator{\KK}{KK}
\DeclareMathOperator{\Calg}{C*alg}
\DeclareMathOperator{\Ad}{Ad} 
\DeclareMathOperator{\Ind}{Ind} 
\DeclareMathOperator{\Res}{Res} 
\DeclareMathOperator{\op}{op} 
\DeclareMathOperator{\cop}{cop} 
\DeclareMathOperator{\bi}{bi}
\DeclareMathOperator{\loc}{loc}
\begin{document}
	\title[partial Pontryagin duality]{Partial Pontryagin duality for actions of quantum groups on C*-algebras}
	\author[K. Kitamura]{Kan Kitamura}
	\date{}
	\address{Graduate School of Mathematical Sciences, the University of Tokyo, 3-8-1 Komaba Meguro-ku Tokyo 153-8914, Japan}
	\email{kankita@ms.u-tokyo.ac.jp}
	\date{}
	\subjclass[2020]{Primary 46L67, Secondary 16T15, 46L08, 19K35}
	\keywords{locally compact quantum group; quantum double; Takesaki--Takai duality; Baum--Connes conjecture}
	
	\begin{abstract}
		We compare actions on C*-algebras of two constructions of locally compact quantum groups, the bicrossed product and the double crossed product. 
		We give a duality between them as a generalization of Baaj--Skandalis duality. 
		In the case of quantum doubles, this duality also preserves monoidal structures given by twisted tensor products. 
		We also discuss its consequences for equivariant Kasparov theories in relation to the quantum analogue of the Baum--Connes conjecture. 
	\end{abstract}
	\maketitle
	\setcounter{tocdepth}{1}
	\tableofcontents
	
	\section{Introduction}\label{sec:intro}
	
	For a given locally compact abelian group $G$, we can associate another locally compact abelian group $\wh{G}$ called the Pontryagin dual of $G$. 
	In the theory of operator algebras, Pontryagin duality and its generalizations for noncommutative situations have been investigated. 
	As one of such developments, Takesaki~\cite{Takesaki:dualitycrossprod} used crossed products and gave a duality between actions on von Neumann algebras of a locally compact abelian group and its Pontryagin dual. 
	The C*-algebraic counterpart of this duality was due to Takai~\cite{Takai:duality} and called Takesaki--Takai duality. 
	Another development was to seek the noncommutative framework for the group itself, which led to the operator-algebraic formulations of quantum groups. 
	There are also generalizations of Takesaki and Takesaki--Takai dualities for actions of quantum groups, such as the duality for actions of Kac algebras by Enock and Schwartz~\cite{Enock:crossprod,Enock-Schwartz:crossprod} and that of Hopf C*-algebras coming from a certain class of multiplicative unitaries by Baaj and Skandalis~\cite{Baaj-Skandalis:mltu}. 
	
	In this article, we investigate a generalization of these dualities for actions on C*-algebras of quantum groups $G^{\op}$ and $\wh{G}$ with an additional component of a quantum group $H$ in action. 
	We use the formulation of a quantum group called a locally compact quantum group due to Kustermans--Vaes~\cite{Kustermans-Vaes:lcqg,Kustermans-Vaes:lcqgvN}. 
	From given locally compact quantum groups $G$ and $H$ and a datum called a matching, 
	their bicrossed product was constructed by \cite{Vaes-Vainerman:bicrossed} and their double crossed product was constructed by \cite{Baaj-Vaes:doublecrossed} as locally compact quantum groups. 
	The first result in this article is that actions of such a pair of quantum groups are in one-to-one correspondence up to equivariant Morita equivalences under reasonable conditions. 
	See \cref{thm:BSTTdualC*} for the precise statement. 
	We show this by examining unitaries associated with these constructions calculated in \cite{Vaes-Vainerman:bicrossed,Baaj-Vaes:doublecrossed}. 
	
	We study this duality in the case of (generalized) quantum doubles as a particularly interesting case. 
	Then we get a duality between actions of the direct product $G\times G^{\op}$ and the quantum double $\sD(G)$. 
	For a compact quantum group $G$, we write $\Rep(G)$ for the rigid C*-tensor category of finite dimensional unitary representations of $G$. 
	When the compact quantum group $G$ associates a finite dimensional Hopf $*$-algebra, 
	this duality morally corresponds to the Morita equivalence of fusion categories, the Deligne tensor product $\Rep(G)\boxtimes \Rep(G)^{\mathrm{rev}}$ and the Drinfeld center $\cZ(\Rep(G))$, 
	where $\Rep(G)^{\mathrm{rev}}$ is the C*-tensor category $\Rep(G)$ with the reversed monoidal structure. 
	We still have such a phenomenon in the locally compact case. 
	
	We also study the monoidal structures at the level of algebras with actions. 
	Unlike the group case, for C*-algebras $A$ and $B$ with actions of a general locally compact quantum group $G$, the tensor product $A\otimes B$ does not admit a ``diagonal" $G$-action canonically. 
	However, when we consider actions of the quantum double $\sD(G)$, there is a construction to give a C*-algebra $A\outensor{G}B$ with a canonical $G$-action as investigated by \cite{Nest-Voigt:eqpd,Meyer-Roy-Woronowicz:twiten,Meyer-Roy-Woronowicz:twiten2}. 
	We would like to call it a twisted tensor product. 
	The second result in this article is that our duality procedure is compatible with twisted tensor products in the case of (generalized) quantum doubles. 
	
	Although many of the results can also be given in the setting of von Neumann algebras, 
	we will consider them totally in the setting of C*-algebras because one of our motivations comes from equivariant $KK$-theories. 
	Associated with a regular locally compact quantum group $G$, we have a category $\KK^G$ called the equivariant Kasparov category, consisting of separable C*-algebras with $G$-actions as objects and homotopy classes of equivariant Kasparov bimodules as morphisms. 
	It has a canonical structure of a triangulated category. 
	Also, when $G$ is a group, tensor products give a monoidal structure on $\KK^G$. 
	These structures on $\KK^G$ are studied in the context of tensor triangular geometry, such as \cite{dellAmbrogio:ttKK,dellAmbrogio-Meyer:primecycKK}. 
	Similarly, for a regular locally compact quantum group $G$, twisted tensor products give a monoidal structure on $\KK^{\sD(G)}$. 
	The second result for quantum doubles says $\KK^{\sD(G)}$ is equivalent to $\KK^{G\times \wh{G}}$ as monoidal triangulated categories in the sense of \cite{Nakano-Vashaw-Yakimov:nctentrigeom}, 
	where the associators in the latter category are trivial. 
	See \cref{thm:moneqPDYD} and \cref{cor:moneqPDqd} for the precise statements. 
	
	The triangulated structure of an equivariant Kasparov category was the key to the reformulation of the Baum--Connes conjecture by \cite{Meyer-Nest:bctri}. 
	From this viewpoint, several authors have investigated the quantum analogue of the conjecture \cite{Meyer-Nest:dualcpt,Voigt:bcfo,Vergnioux-Voigt:bcfu,Martos:semidirectBC,Voigt:cpxss,deCommer-Martos-Nest:projective}. 
	In the quantum analogue of the Baum--Connes conjecture for a discrete quantum group $G$, 
	it is sometimes convenient to consider $\sD(G)$. 
	One of the reasons is that we can utilize the monoidal structure of $\KK^{\sD(G)}$. 
	Indeed in \cite{Voigt:bcfo}, with the aid of twisted tensor products, an analogue of the Baum--Connes conjecture for $\wh{SU_q(2)}$ was deduced 
	from the result that $C(SU_q(2)/T)$ is $\KK^{\sD(SU_q(2))}$-equivalent to $\bC^{\oplus2}$ for $0<q<1$, where $T\leq SU(2)$ is a maximal torus, 
	and a similar result for $-1<q<0$. 
	Also, an analogue of the Baum--Connes conjecture for $q$-deformations complex semisimple Lie groups was considered in \cite{Voigt:cpxss} 
	by making use of the duality between actions of $\sD(G)$ and a semi-direct product for a compact group $G$. 
	Thus by resuming this strategy, we can hope our duality result will give a tool for computations of equivariant $K$-theory. 
	
	As an example, we determine the equivariant $KK$-theoretic classes of some quantum homogeneous spaces with actions of quantum doubles. 
	More precisely, we consider the graded twisting $G^\tau$ of 
	a connected simply connected compact Lie group $G$ constructed by \cite{Neshveyev-Yamashita:twilie}. 
	We show that $C(G^\tau/T)$ and $\bC^{\oplus n}$ are $\KK^{\sD(G^{\tau})}$-equivalent for some $n\in \bZ_{>0}$ (\cref{thm:grtwLiehmg}). 
	We do this by comparing actions of $G^{\tau}\times (G^{\tau})^{\op}$ and actions of an extension of $G\times G^{\op}$ with the aid of our duality result and an analogue of the Baum--Connes conjecture for $\sD(G^\tau)$. 
	
	We explain the organization of this article. 
	In \cref{sec:preliminaries}, we recall and fix the terminologies and notation on the concept of locally compact quantum groups. 
	We recall in \cref{sec:matching} the concepts of matching on locally compact quantum groups and its associated bicrossed product and double crossed product. 
	\cref{sec:procedure} contains the first main result of the duality between actions of the bicrossed product and the double crossed product (\cref{thm:BSTTdualC*}). 
	\cref{sec:qdmon} focuses on the case of generalized quantum doubles. 
	We show the second result that the duality procedure is compatible with twisted tensor products (\cref{prop:monbimod}). 
	We will make the statements for these results without languages of $K$-theories so far. 
	Then in \cref{sec:KK}, we explain the consequences for equivariant Kasparov theories. 
	The duality and its compatibility with twisted tensor products are restated as a categorical equivalence (\cref{thm:BSTTdualcat}) and a monoidal equivalence (\cref{thm:moneqPDYD}) of equivariant Kasparov categories, respectively. 
	Also, we briefly discuss a quantum analogue of the Baum--Connes conjecture in \cref{ssec:gamma}. 
	Finally, \cref{sec:grtwhmg} is about the equivariant Kasparov theory of quantum homogeneous spaces of graded twisting with the action of the quantum double (\cref{thm:grtwLiehmg}). 
	
	\subsection*{Acknowledgments}
	The author would like to express deep gratitude to his advisor Yasuyuki Kawahigashi for his invaluable support. 
	Also, he would like to thank Yuki Arano and Yosuke Kubota for the discussions which led his attention to the partial Pontryagin duality of quantum groups, 
	and Christian Voigt for stimulating comments on the early version of the article. 
	Finally, the author would like to thank the anonymous referee for careful reading and comments, which improved the exposition.
	This work was supported by JSPS KAKENHI Grant Number JP21J21283, JST CREST program JPMJCR18T6, JSPS Overseas Challenge Program for Young Researchers, and the WINGS-FMSP program at the University of Tokyo. 
	
	\section{Preliminaries}\label{sec:preliminaries}
	
	We fix general notation throughout the article. 
	Let $A,B$ be C*-algebras. 
	For a bounded linear map $f\colon A\to \cM(B)$, 
	we abuse notation and simply write $f\colon \cM(A)\to\cM(B)$ for 
	the unique extension of $f$ to a linear map $\cM(A)\to\cM(B)$ 
	that is strictly continuous on the unit ball of $\cM(A)$ if it exists, 
	and similarly for the normal extension to the map between von Neumann algebras. 
	We use the symbol $\otimes$ for the spatial tensor products of C*-algebras and those of Hilbert spaces, and $\barotimes$ for von Neumann algebraic tensor products. 
	The symbol $\sigma$ is used for the flipping map $\sigma\colon A\otimes B\cong B\otimes A$. 
	We also use the so-called leg-numbering notation. 
	When $Y$ is a subset of a Banach space $X$, 
	we write $[Y]$ for the norm closure of the linear span of $Y$ in $X$. 
	Note that our notation is mostly compatible with that in \cite{Kitamura:indhomlcqg}, 
	except that we use the terminology ``action" instead of ``coaction" 
	and different legs in the definition of the twisted tensor products (\cref{def:twiten}). 
	
	\subsection{Locally compact quantum groups}\label{ssec:lcqg}
	
	We briefly recall and fix the notation about locally compact quantum groups and multiplicative unitaries. For details, we refer to \cite{Kustermans-Vaes:lcqg,Kustermans-Vaes:lcqgvN,Vaes:thesis} for locally compact quantum groups and \cite{Baaj-Skandalis:mltu,Woronowicz:mltu} for multiplicative unitaries. 
	
	\begin{df}
		Let $G=(L^\infty(G),\Delta_G)$ be a pair of a von Neumann algebra $L^\infty(G)$ 
		and a normal unital $*$-homomorphism 
		$\Delta_G\colon L^\infty(G)\to L^\infty(G)\barotimes L^\infty(G)$ satisfying \emph{coassociativity} in the sense of 
		$(\Delta_G\otimes\id)\Delta_G = (\id\otimes\Delta_G)\Delta_G$. 
		\begin{enumerate}
			\item
			A normal semi-finite weight $\varphi$ 
			on $L^\infty(G)$ 
			is called \emph{left} [resp.~\emph{right}] \emph{invariant} if 
			for any $x\in \cM^+_\varphi:=\{ x\in L^\infty(G)_+\,|\, \varphi(x)<\infty \}$ and any positive normal linear functional $\omega\colon  L^\infty(G)\to\bC$ 
			it holds that 
			\begin{align*}
				&
				\varphi(x)\omega(1) = \varphi((\omega\otimes \id)\Delta_G(x))
				\quad 
				\text{[resp.~$\varphi(x)\omega(1) = \varphi((\id \otimes \omega)\Delta_G(x))$\ ]. }
			\end{align*}
			\item
			The pair $G=(L^\infty(G),\Delta_G)$ is called a 
			\emph{locally compact quantum group} if 
			there exist a faithful left invariant weight and a faithful right invariant weight. 
			Then we call its faithful left [resp.~right] invariant weight a \emph{left} [resp.~\emph{right}] \emph{Haar weight}. 
		\end{enumerate}
	\end{df}
	
	Let $G=(L^{\infty}(G),\Delta_G)$ be a locally compact quantum group. 
	Take the left Haar weight $\varphi$ of $G$, which is unique up to a scalar multiplication. 
	Via the GNS construction of $\varphi$, denoted by 
	\begin{align*}
		\Lambda \colon \cN_\varphi := \{x\in L^{\infty}(G) \,|\, \varphi(x^*x)<\infty\}\to L^2(G), 
	\end{align*}
	we identify $L^\infty(G)$ as a von Neumann subalgebra of $\cB(L^2(G))$. 
	We have a well-defined unitary $W^G \in \cU(L^2(G)^{\otimes 2})$ characterized by 
	\begin{align*}
		W^{G*}(\Lambda(x)\otimes \Lambda(y)) 
		= (\Lambda\otimes \Lambda)\bigl(\Delta_G(y)(x\otimes 1)\bigr) 
	\end{align*}
	for all $x,y\in \cN_\varphi$, 
	which turns out to be a \emph{multiplicative unitary} in the sense of 
	\begin{align*}
		W^G_{23}W^G_{12}=W^G_{12}W^G_{13}W^G_{23}. 
	\end{align*}
	It holds $\Delta_G=\Ad W^G{}^*(1\otimes -)$ on $L^\infty(G)$. 
	
	Associated with $G$, there is another locally compact quantum group $\wh{G}$, which we call the (Pontryagin) dual of $G$, such that 
	the GNS construction of the left Haar weight of $\wh{G}$ can be canonically identified with $L^2(G)$ 
	and $W^{\wh{G}}=W^{G*}_{21}$ under this identification. 
	We write $\wh{W}^{G}:=W^{\wh{G}}$. 
	
	We write $J^G$ and $\wh{J}^{G}:=J^{\wh{G}}$ for the modular conjugations of the left Haar weight of $G$ and $\wh{G}$, respectively. 
	Sometimes we use the fact that $U^G:=J^G\wh{J}^G$ is a scalar multiplication of $\wh{J}^GJ^G$. 
	Then we have another multiplicative unitary 
	\begin{align*}
		&
		V^G := (\wh{J}^{G}\otimes \wh{J}^{G})
		\wh{W}^{G}(\wh{J}^{G}\otimes \wh{J}^{G}) 
		\in \cU(L^2(G)^{\otimes 2}) , 
	\end{align*}
	which satisfies $\Delta_G=\Ad V^G(-\otimes 1)$. 
	When we write $\Delta_G^{\cop}:=\sigma \Delta_G$, then
	$G^{\op}:=(L^{\infty}(G),\Delta_G^{\cop})$ is also a locally compact quantum group 
	and we can canonically identify $W^{G^{\op}}=V^{G*}_{21}$. 
	We have an anti-$*$-isomorphism of the von Neumann algebra called the unitary antipode, 
	$R^G:=\wh{J}^{G}(-)^*\wh{J}^{G}\colon L^\infty(G)\to L^\infty(G)$ 
	with $\Delta_G^{\cop} R^G = (R^G\otimes R^G)\Delta_G$. 
	We also write $\wh{\Delta}_{G}:=\Delta_{\wh{G}}$ and $\wh{V}^G:=V^{\wh{G}}$ for readability. 
	
	We shall write $\cK(G)=\cK(L^2(G))$ for short, and often identify $\cK(G)^*$ as $\cB(L^2(G))_*$. 
	We also write $C^r_0(G):=[(\id_{\cK(G)}\otimes\cK(G)^*) (W^G)] \subset \cB(L^2(G))$, which is a well-defined C*-algebra with $C^r_0(G)''=L^{\infty}(G)$. 
	Note that $C^r_0(G^{\op})=C^r_0(G)$ as C*-subalgebras of $\cB(L^2(G))$. 
	The restriction of $\Delta_G$ to $C^r_0(G)$ gives a well-defined non-degenerate $*$-homomorphism $\Delta_G\colon C^r_0(G)\to \cM(C^r_0(G)\otimes C^r_0(G))$, which satisfies 
	\begin{align*}
		[\Delta_G(C^r_0(G)) (1\otimes C^r_0(G))] 
		= C^r_0(G)\otimes C^r_0(G) 
		= [\Delta_G(C^r_0(G)) (C^r_0(G)\otimes 1)] . 
	\end{align*}
	It holds $W^G\in \cU\cM(C^r_0(G)\otimes C^r_0(\wh{G}))$. 
	A locally compact quantum group $G$ is said to be \emph{regular} if $[C^r_0(G)C^r_0(\wh{G})]=\cK(G)$, see also \cite{Baaj-Skandalis-Vaes:nonsemireg}. 
	When $G$ is regular, $\wh{G}$ and $G^{\op}$ are also regular and it holds 
	\begin{align*}
		&
		[C^r_0(G)_1 W^G C^r_0(\wh{G})_2] 
		= C^r_0(G)\otimes C^r_0(\wh{G}) 
		= [C^r_0(G)_1 W^{G*} C^r_0(\wh{G})_2] . 
	\end{align*}
	
	\subsection{Actions and homomorphisms}\label{ssec:homomorphism}
	We say a pair $(\cH,X)$ of a Hilbert space $\cH$ and a unitary $X\in L^\infty(G)\barotimes \cB(\cH)$ is a \emph{unitary representation} of $G$ if $(\Delta_G\otimes\id)(U)=U_{13}U_{23}$. 
	We introduce actions of locally compact quantum groups on operator algebras. 
	\begin{df}
		Let $G$ be a locally compact quantum group and $A$ be a C*-algebra. 
		Consider a non-degenerate injective $*$-homomorphism $\alpha\colon A\to  \cM(C^r_0(G)\otimes A)$. We say that $\alpha$ is a \emph{continuous left $G$-action} on $A$, or that $(A,\alpha)$ is a \emph{left $G$-C*-algebra}, 
		if 
		\begin{align*}
			(\id\otimes\alpha)\alpha=(\Delta_G\otimes\id)\alpha 
			\quad\text{and}\quad 
			\left[ (C^r_0(G)\otimes 1) \alpha(A) \right] = C^r_0(G)\otimes A . 
		\end{align*}
		Often we omit $\alpha$ and say $A$ is a left $G$-C*-algebra. 
		For a $*$-homomorphism $\alpha\colon A\to\cM(A\otimes C^r_0(G))$ with $(A,\sigma\alpha)$ being a left $G^{\op}$-C*-algebra, 
		we say that $\alpha$ is a \emph{continuous right $G$-action} on $A$, or that $(A,\alpha)$ is a \emph{right $G$-C*-algebra}. 
	\end{df}
	
	\begin{df}
		Let $G$ be a locally compact quantum group. 
		For left $G$-C*-algebras $(A,\alpha)$, $(B,\beta)$, 
		we say a $*$-homomorphism $f\colon A\to \cM(B)$ 
		(which need not be non-degenerate) is a \emph{left $G$-$*$-homomorphism} 
		if for all $a\in A$ and $x\in C^r_0(G)$, 
		\begin{align*}
			(x\otimes 1)(\beta f(a))=(\id\otimes f)((x\otimes 1)\alpha(a)) . 
		\end{align*}	
		Similarly we define a right $G$-$*$-homomorphism of right $G$-C*-algebras. 
	\end{df}
	
	For a left $G$-C*-algebra $(A,\alpha)$, we have its \emph{reduced crossed product} $G\redltimes A:= [C^r_0(\wh{G})_1 \alpha(A)]\subset \cM(\cK(G)\otimes A)$. 
	This is a left $\wh{G}^{\op}$-C*-algebra with $\Ad \wh{V}^G_{21}(-)_{23}$, called the dual action. 
	
	We also treat actions of quantum groups on Hilbert modules by following \cite[Section~2]{Baaj-Skandalis:eqkk}, \cite[Appendix]{Vaes:impr}. 
	We briefly note that for a left $G$-C*-algebra $(B,\beta)$, 
	giving a continuous left $G$-action on a right Hilbert $B$-module $\cE$ is equivalent to 
	giving a continuous left $G$-action $\kappa$ on $\cK_B(\cE\oplus B)$ such that $\kappa\restriction_B=\beta$ 
	via restriction to the corner. 
	For left $G$-C*-algebras $A$ and $B$, 
	a \emph{left $G$-$(A,B)$-correspondence} is a pair $(\cE,\phi)$ of a left $G$-right Hilbert $B$-module $\cE$ (i.e.~a right Hilbert $B$-module with a continuous left $G$-action) 
	and a left $G$-$*$-homomorphism $\phi\colon A\to \cM(\cK_B(\cE))=\cL_B(\cE)$. 
	When additionally $\phi(A)\subset \cK_B(\cE)$, the correspondence $(\cE,\phi)$ is called \emph{proper}. 
	Often we abuse notation to omit $\phi$. 
	For a left $G$-$(A,B)$-correspondence $\cE$ and a left $G$-$(B,D)$-correspondence $\cF$, the tensor product $\cE\tensor{B}\cF$ has the canonical structure of a left $G$-$(A,D)$-correspondence. 
	More precisely, when we write $\kappa_\cE$ and $\kappa_\cF$ for the continuous left $G$-actions on $\cE$ and $\cF$ respectively, the left $G$-action $\kappa$ on $\cE\tensor{B}\cF$ is determined by 
	\begin{align*}
		&
		(x\otimes1_{\cE\otimes\cF})\kappa(\xi \tensor{B} \eta) (y\otimes1_B)
		= 
		\bigl((x\otimes1_\cE)\kappa_\cE(\xi)\bigr)\tensor{C^r_0(G)\otimes B}\bigl(\kappa_\cF(\eta)(y\otimes1_B)\bigr) 
	\end{align*}
	for $x,y\in C^r_0(G)$, $\xi\in\cE$, and $\eta\in\cF$. 
	A left $G$-$(A,B)$-correspondence $\cE$ with a left $G$-$(B,A)$-correspondence $\cF$ such that $\cE\tensor{B}\cF\cong A$ and $\cF\tensor{A}\cE\cong B$ as $G$-C*-correspondences is called a \emph{left $G$-$(A,B)$-imprimitivity bimodule}. In this situation, we say $A$ is \emph{left $G$-Morita equivalent} to $B$. 
	
	Next we introduce homomorphisms of quantum groups by using bicharacters. See \cite{Meyer-Roy-Woronowicz:qghom} for details. 
	\begin{df}
		Let $G,H$ be locally compact quantum groups. 
		A unitary $X\in \cU(L^2(H)\otimes L^2(G))$ is called a \emph{bicharacter} from $H$ to $G$ if 
		\begin{align*}
			W^G_{23}X_{12} = X_{12}X_{13}W^{G}_{23}
			\quad\text{and}\quad
			X_{23}W^H_{12}=W^H_{12}X_{13}X_{23} .
		\end{align*}
		We define $\Hom_{\mathrm{qg}}(H,G)$ as the index set such that 
		$\{ W^{\phi} \}_{\phi\in \Hom_{\mathrm{qg}}(H,G)}$ is the set of bicharacters from $H$ to $G$, and 
		we call an element of $\Hom_{\mathrm{qg}}(H,G)$ a \emph{homomorphism} $\phi\colon H\to G$ of locally compact quantum groups. 
	\end{df}
	
	To understand the definition, it is good to keep in mind that a non-degenerate $*$-homomorphism $f\colon C^r_0(G)\to \cM(C^r_0(H))$ with $\Delta_H f=(f\otimes f)\Delta_G$ gives a bicharacter $(f\otimes\id)(W^G)\in\cU\cM(C^r_0(H)\otimes C^r_0(\wh{G}))$. 
	Let $\phi\colon H\to G$, $\psi\colon G\to F$ be homomorphisms of locally compact quantum groups. 
	Then $W^{\phi}\in\cU\cM(C^r_0(H)\otimes C^r_0(\wh{G}))$ and $(R^H\otimes R^{\wh{G}}) (W^{\phi}) = W^{\phi}$. 
	We put the unitary 
	\begin{align*}
		V^\phi := (\wh{J}^{G}\otimes \wh{J}^{H})W^{\phi*}_{21}(\wh{J}^{G}\otimes \wh{J}^{H}) \in\cU\cM(\wh{J}^{G}C^r_0(\wh{G})\wh{J}^{G}\otimes C^r_0(H)) . 
	\end{align*}
	There are well-defined homomorphisms 
	$\wh{\phi}\colon \wh{G}\to \wh{H}$ and $\phi^{\op}\colon H^{\op}\to G^{\op}$
	with $W^{\wh{\phi}} = \wh{W}^{\phi} := W^{\phi*}_{21}$ 
	and $W^{\phi^{\op}} := V^{\phi*}_{21}$. 
	We also put $\wh{V}^{\phi}:=V^{\wh{\phi}}$. 
	We define the composition $\psi\phi\colon H\to F$ by the corresponding well-defined bicharacter 
	$W^{\psi\phi}$ with the relation $W^{\psi\phi}_{13} = W^{\phi*}_{12}W^{\psi}_{23}W^{\phi}_{12}W^{\psi*}_{23}$. 
	Then we get the category of locally compact quantum groups and their homomorphisms, 
	where the identity $\id_G\colon G\to G$ is given by $W^{\id_G}:=W^G$. 
	We have the trivial homomorphism $1_{H\to G}\colon H\to G$ 
	determined by the bicharacter $1\in\cU\cM(C^r_0(H)\otimes C^r_0(\wh{G}))$. 
	
	For a left [resp.~right] $G$-C*-algebra $(A,\alpha)$, we can define the \emph{restriction} along $\phi$ as the continuous left [resp.~right] $H$-action 
	$\phi^*\alpha$ [resp.~$\alpha\rpb\phi$] on $A$ such that 
	\begin{align*}
		&
		(\id\otimes\alpha)(\phi^*\alpha)=\Ad W^{\phi*}_{12}(\alpha(-)_{23}) \quad
		\text{[resp. $(\alpha\otimes\id)(\alpha\rpb\phi)=\Ad V^{\phi}_{23}(\alpha(-)_{12})$\ ]}, 
	\end{align*}
	which uniquely exists. 
	Note that when $W^\phi=(f\otimes\id)(W^G)$ for some non-degenerate $*$-homomorphism $f\colon C^r_0(G)\to \cM(C^r_0(H))$ with $\Delta_H f=(f\otimes f)\Delta_G$, it holds $\phi^*\alpha=(f\otimes\id)\alpha$ [resp.~$\alpha\rpb\phi=(\id\otimes f)\alpha$]. 
	When we consider $A$ with this restriction we write $\phi^*A$ or $A\rpb \phi$, but sometimes we suppress $\phi$ and write $\Res^G_H A$ or more simply $A$ when no confusion is likely to occur. 
	By regarding $\Delta_G$ as a continuous left or right $G$-action on $C^r_0(G)$, 
	it holds that 
	\begin{align*}
		&
		\phi^*\Delta_G=\Ad W^{\phi*}(-)_2, 
		\quad
		\Delta_G\rpb\phi=\Ad V^{\phi}(-)_1, 
		\quad\text{and}\quad
		\sigma(\Delta_G\rpb \phi)R^G = (R^H\otimes R^G)(\phi^*\Delta_G). 
	\end{align*}
	
	\section{Bicrossed products and double crossed products}\label{sec:matching}
	
	In this section, we recall the concept of a matching and its associated two constructions of locally compact quantum groups by following \cite{Vaes-Vainerman:bicrossed,Baaj-Vaes:doublecrossed}. 
	
	\begin{df}\label{def:matching}
		Let $G$ and $H$ be locally compact quantum groups. 
		Then a $*$-automorphism $\sm\in\Aut(L^\infty(G)\barotimes L^\infty(H))$ is said to be a \emph{matching} on $G$ and $H$ if 
		\begin{align*}
			&
			(\Delta_G\otimes \id)\sm=\sm_{23}\sm_{13}(\Delta_G\otimes \id), 
			\quad\mathrm{and}\quad
			(\id\otimes \Delta_H)\sm=\sm_{13}\sm_{12}(\id\otimes \Delta_H). 
		\end{align*}
	\end{df}
	
	Note that the restriction of $\sm$ to $1\otimes L^\infty(H)$ gives a von Neumann algebraic left $G$-action on $L^\infty(H)$. Similarly, $\sm_{21}(1\otimes-)$ is a von Neumann algebraic left $H$-action on $L^\infty(G)$. 
	
	\begin{thm-df}[{\cite[Theorem~2.13]{Vaes-Vainerman:bicrossed}}, {\cite[Theorem~5.3]{Baaj-Vaes:doublecrossed}}]
		Let $\sm$ be a matching on locally compact quantum groups $G$ and $H$. 
		\begin{enumerate}
			\item
			We have a well-defined locally compact quantum group $(G\barltimes L^\infty(H), \Delta_{\rtimes})$, 
			where $G\barltimes L^\infty(H) = ( L^\infty(\wh{G})_1 \sm(L^\infty(H)_2) )''$ 
			and $\Delta_\rtimes \colon G\barltimes L^\infty(H)\to (G\barltimes L^\infty(H))^{\barotimes2}$ is characterized by 
			\begin{align*}
				&
				\Delta_{\rtimes}\sm(-)_2 
				= (\sm\otimes\sm)(\Delta_H(-)_{24})
				\text{\ \ on\ \ }L^\infty(H), 
				\\&
				(\id\otimes\Delta_{\rtimes})(W^G_{12}) 
				= W^{G}_{14} (\sm_{45}\sm_{15}W^G_{12}). 
			\end{align*}
			This is called the \emph{bicrossed product}, and 
			we will write $\wh{G}\rbicross{\sm}H$ for it. 
			\item
			We have a well-defined locally compact quantum group $\bigl( L^\infty(G)\barotimes L^\infty(H),  \sigma_{23}\sm_{23}(\Delta_G^{\cop}\otimes\Delta_H) \bigr)$.
			This is called the \emph{double crossed product}, and 
			we will write $G^{\op}\dblcross{\sm}H$ for it. 
		\end{enumerate}
	\end{thm-df}
	
	Like $\Delta_{\rtimes}$, throughout this article we shall 
	use the symbol $\rtimes$ instead of $\wh{G}\rbicross{\sm}H$ when it is used as indices, 
	such as $R^{\rtimes}$, $T^{\rtimes}$, $\wh{T}^{\rtimes}$ for a symbol $T\in \{W,V,J\}$, and $U^{\rtimes}$ to simplify notation. 
	We also put $U^l:=J^{\rtimes}(\wh{J}^G\otimes J^H)$, 
	$U^r:=\wh{J}^{\rtimes}(J^G\otimes \wh{J}^H)$, and 
	$Z^{\sm}:= U^{\rtimes}(U^{G*}\otimes U^{H*})
	$. 
	
	We extract from \cite{Vaes-Vainerman:bicrossed,Baaj-Vaes:doublecrossed} the relations of unitaries we will use later. 
	\begin{thm}[\cite{Vaes-Vainerman:bicrossed,Baaj-Vaes:doublecrossed}]\label{thm:relations}
		For a matching $\sm$ on locally compact quantum groups $G$ and $H$, we have the following. 
		\begin{enumerate}
			\item
			$W^{\rtimes}=(\sm_{12} W^H_{24}) (\sm_{34} \wh{W}^G_{13})$. 
			\item
			$U^l\in \cM(C^r_0(G)\otimes \cK(H))$ and 
			$(\Delta_G\otimes\id)U^l=U^l_{23} U^l_{13}$. 
			Also, $\Ad U^l(-)_2=\sm (-)_2$ on $L^\infty(H)$. 
			\item
			$U^r\in \cM(\cK(G)\otimes C^r_0(H))$ and 
			$(\id\otimes\Delta_H)U^r=U^r_{13}U^r_{12}$. 
			Also, $\Ad U^r(-)_1=\sm (-)_1$ on $L^\infty(G)$. 
			\item
			$\Ad Z^{\sm}=\sm$ on $L^\infty(G)\barotimes L^\infty(H)$. 
			\item
			$V^{G^{\op}\dblcross{\sm}H} = Z^{\sm*}_{12}\wh{W}^G_{13}Z^{\sm}_{12}V^H_{24} = \sm_{32}(\wh{W}^G_{13}) U^{l}_{32} V^H_{24}$. 
			Especially, $Z^{\sm*}_{12}\wh{W}^G_{13}Z^{\sm}_{12} = \sm_{32}(\wh{W}^G_{13}) U^{l}_{32}$. 
			\item
			$U^{\rtimes} = J^{\rtimes}\wh{J}^{\rtimes}$ and $J^G\otimes J^H$ commutes up to a scalar multiplication. 
			\item
			$\sm_{23}(U^r_{13}V^G_{12}) = V^G_{12}U^r_{13}$. 
		\end{enumerate}
	\end{thm}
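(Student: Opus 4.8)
The plan is to treat these seven identities as a reassembly of the explicit data produced in \cite{Vaes-Vainerman:bicrossed} and \cite{Baaj-Vaes:doublecrossed}, translated into the conventions fixed in \cref{ssec:lcqg}. The only genuine tools needed are the pentagon equation for multiplicative unitaries, the two matching relations of \cref{def:matching}, the characterizations $\Delta_G=\Ad W^{G*}(1\otimes-)=\Ad V^G(-\otimes1)$, and the standard interplay of the modular conjugations $J^G,\wh{J}^G$ with $W^G$, $V^G$ and $R^G$. Since the two sources use different conventions (as already flagged for the notions of action and of leg ordering), the first task is to fix, once and for all, the identifications $J^{\rtimes}=U^l(\wh{J}^G\otimes J^H)$ and $\wh{J}^{\rtimes}=U^r(J^G\otimes\wh{J}^H)$, which are just the defining relations of $U^l$ and $U^r$ read as anti-unitary involutions, and to record the explicit modular data of $\wh{G}\rbicross{\sm}H$ computed in \cite{Vaes-Vainerman:bicrossed}.

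I would begin with the modular backbone. Part (6) is partly notational, since $U^{\rtimes}=J^{\rtimes}\wh{J}^{\rtimes}$ is the $U$-operator of the bicrossed product by definition, while the scalar-commutation of $J^G\otimes J^H$ with $U^{\rtimes}$ is the image, under the explicit modular conjugations of \cite{Vaes-Vainerman:bicrossed}, of the corresponding scalar-commutations for $G$ and $H$ separately. Using the identifications above, the membership statements $U^l\in\cM(C^r_0(G)\otimes\cK(H))$, $U^r\in\cM(\cK(G)\otimes C^r_0(H))$, the representation-type equations, and the implementations $\Ad U^l(-)_2=\sm(-)_2$, $\Ad U^r(-)_1=\sm(-)_1$ of parts (2) and (3) are read off from the structural properties of $J^{\rtimes}$ and $\wh{J}^{\rtimes}$ together with the two halves of \cref{def:matching}. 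Part (4), $\Ad Z^{\sm}=\sm$, then follows by expanding $Z^{\sm}=U^{\rtimes}(U^{G*}\otimes U^{H*})$ through these identifications and combining the two one-leg implementations into the full automorphism of $L^\infty(G)\barotimes L^\infty(H)$.

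For part (1) I would verify directly that the proposed unitary $W^{\rtimes}=(\sm_{12}W^H_{24})(\sm_{34}\wh{W}^G_{13})$ both implements $\Delta_{\rtimes}$ and satisfies the pentagon equation, recovering the unitary computed in \cite{Vaes-Vainerman:bicrossed}: I would apply $\Ad W^{\rtimes*}(1\otimes-)$ to the two families of generators of $G\barltimes L^\infty(H)$, namely $\sm(L^\infty(H))_2$ and the legs of $W^G$ lying in $L^\infty(\wh{G})$, matching the outcomes against the two characterizing identities of $\Delta_{\rtimes}$, and then feed the pentagons for $W^H$ and $\wh{W}^G$ through the matching commutations of $\sm_{12}$ and $\sm_{34}$ with these legs. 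Part (5) is the corresponding computation on the double crossed product side: starting from the $V$-operator realizations of $\Delta_G^{\cop}$ and $\Delta_H$ provided by \cite{Baaj-Vaes:doublecrossed} (in which that of $\Delta_G^{\cop}$ is expressed through $\wh{W}^G$), I would substitute into $\sigma_{23}\sm_{23}(\Delta_G^{\cop}\otimes\Delta_H)$ and use part (4) to realize $\sm_{23}$ as conjugation by $Z^{\sm}_{12}$ on the relevant legs, obtaining the two stated forms of $V^{G^{\op}\dblcross{\sm}H}$; the highlighted special case then follows by comparing them and cancelling the common right factor $V^H_{24}$, the resulting identity $Z^{\sm*}_{12}\wh{W}^G_{13}Z^{\sm}_{12}=\sm_{32}(\wh{W}^G_{13})U^{l}_{32}$ being established with the cocycle relation for $U^l$ of part (2). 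Finally, part (7) is the operator-level form of the compatibility of the matching with $\Delta_G$: I would take the first matching relation of \cref{def:matching}, realize $\Delta_G$ as $\Ad V^G(-\otimes1)$ and $\sm$ as $\Ad Z^{\sm}$ via part (4), and use $\Ad U^r(-)_1=\sm(-)_1$ from part (3) to identify the action on the $G$-leg, which rearranges into the stated commutation $\sm_{23}(U^r_{13}V^G_{12})=V^G_{12}U^r_{13}$.

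The main obstacle I expect is not any single identity but the consistent bookkeeping of legs and, above all, the reconciliation of the modular-theoretic conventions between the two sources: the whole edifice rests on the precise form of $J^{\rtimes}$ and $\wh{J}^{\rtimes}$ from \cite{Vaes-Vainerman:bicrossed}, and any sign, flip, or scalar discrepancy there propagates through parts (2)--(4) and (6). I would therefore spend most of the care on fixing these modular conjugations unambiguously and on checking that the scalar ambiguities in $U^G=J^G\wh{J}^G$ versus $\wh{J}^GJ^G$ cancel in the composite expressions, after which the remaining manipulations are routine leg-numbering calculus.
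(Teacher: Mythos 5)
Your proposal has a genuine gap, and it is concentrated in parts (2) and (3). You propose to ``read off'' the corepresentation identity, the implementation property, and above all the memberships $U^l\in \cM(C^r_0(G)\otimes \cK(H))$ and $U^r\in \cM(\cK(G)\otimes C^r_0(H))$ from the matching axioms together with unspecified ``structural properties'' of $J^{\rtimes}$ and $\wh{J}^{\rtimes}$. These are not bookkeeping facts. The paper obtains them by recognizing $U^l=J^{\rtimes}(\wh{J}^G\otimes J^H)$ as the \emph{canonical unitary implementation}, in the sense of \cite{Vaes:unitaryimplement}, of the von Neumann algebraic left $G$-action $\sm(-)_2$ on $L^\infty(H)$; this recognition is available only because the left Haar weight of $\wh{G}\rbicross{\sm}H$ is the dual weight of the crossed product $G\barltimes L^\infty(H)$ (\cite[Proposition~2.8]{Vaes-Vainerman:bicrossed}), and then the three assertions of (2) are exactly \cite[Proposition~3.7, Proposition~3.12, Theorem~4.4]{Vaes:unitaryimplement}. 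In particular the continuity statement $U^l\in \cM(C^r_0(G)\otimes\cK(H))$ is an analytic theorem about canonical implementations; no amount of leg calculus with the relations of \cref{def:matching} produces it. For (3) the paper also performs a step for which you have no substitute: using the formula (1) for $W^{\rtimes}$, it identifies the dual of $\wh{G}\rbicross{\sm}H$ with $\wh{H}\rbicross{\sigma\sm\sigma}G$ via the flip, so that $U^r_{21}$ becomes the $U^l$ of the flipped matching and (3) reduces to (2). This is unavoidable: $\wh{J}^{\rtimes}$ is the modular conjugation of the \emph{dual} quantum group, and a symmetric treatment of (3) alongside (2) only makes sense after that identification.

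There is also a systematic flaw in your verification strategy for (1) and (5). The unitaries $W^{\rtimes}$ and $V^{G^{\op}\dblcross{\sm}H}$ are defined through the GNS constructions of the Haar weights of the respective quantum groups, and a unitary is not characterized by ``implements the comultiplication and satisfies the pentagon equation'': if $W_1$ and $W_2$ both satisfy $\Ad W_i^*(1\otimes -)=\Delta_{\rtimes}$, one only learns that $W_2W_1^*$ lies in the commutant of $1\otimes L^\infty(\wh{G}\rbicross{\sm}H)$, which leaves far too much freedom. Hence checking those two properties for the right-hand sides of (1) and (5) cannot identify them with $W^{\rtimes}$ and $V^{G^{\op}\dblcross{\sm}H}$; one has to work at the level of the (dual) weights, which is precisely the content of the cited results \cite[Theorem~2.13]{Vaes-Vainerman:bicrossed} and \cite[Theorem~5.3]{Baaj-Vaes:doublecrossed}. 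Your outlines for (4), (6), (7) do point at the right computations (these are \cite[Lemma~3.3, Theorem~5.3]{Baaj-Vaes:doublecrossed}), but they consume the modular identities of (2)--(3) as input, so the gap above propagates through the whole scheme.
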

	
	(1) is due to \cite[Definition~2.2, Theorem~2.13]{Vaes-Vainerman:bicrossed}. 
	(2) follows from \cite[Proposition~3.7, Proposition~3.12, Theorem~4.4]{Vaes:unitaryimplement} with the fact that the left Haar weight of $\wh{G}\rbicross{\sm}H$ is the dual weight of $G\barltimes L^\infty(H)$ by \cite[Proposition~2.8]{Vaes-Vainerman:bicrossed}. 
	Since $\wh{H}\rbicross{\sigma\sm\sigma}G$ is isomorphic to the dual of $\wh{G}\rbicross{\sm}H$ 
	via $\sigma\colon L^\infty\Bigl(\wh{\wh{G}\rbicross{\sm}H}\Bigr) = (\sm(L^\infty(G)_1)L^\infty(\wh{H})_2)'' \to H\barltimes L^\infty(G)$
	by the expression of $W^{\rtimes}$, 
	we see $U^r_{21}$ is the $U^l$ for $\sigma\sm\sigma$ and thus (3) holds. 
	For (4), see \cite[Lemma~3.3]{Baaj-Vaes:doublecrossed}. 
	(5), (6), and (7) are due to \cite[Theorem~5.3]{Baaj-Vaes:doublecrossed}. 
	
	We also note that $\sigma\sm\sigma$ is a matching on $H$ and $G$ 
	and we have $H^{\op}\dblcross{\sigma\sm\sigma}G\cong (G^{\op}\dblcross{\sm}H)^{\op}$ 
	via $\sigma\colon L^\infty(G)\barotimes L^\infty(H)\to L^\infty(H)\barotimes L^\infty(G)$. 
	
	\begin{rem}\label{rem:componentbidbl}
		For a matching $\sm$ on locally compact quantum groups $G$ and $H$, we have the following homomorphisms. 
		\begin{enumerate}
			\item
			We have a homomorphism $\pi_H\colon \wh{G}\rbicross{\sm} H\to H$ given by 
			$\sm(-)_2 \colon L^\infty(H)\to G\barltimes L^\infty(H)$. 
			More precisely, its corresponding bicharacter is $W^{\pi_H}:=\sm_{12}W^H_{23}$. 
			We have another homomorphism 
			$\wh{\pi}_G\colon \wh{G}\to \wh{G}\rbicross{\sm}H$ corresponding to the bicharacter 
			$W^{\wh{\pi}_G} := \sm_{23}\wh{W}^G_{12}$, 
			which is well-defined as the dual of $\pi_G\colon \wh{H}\rbicross{\sigma\sm\sigma}G\to G$. 
			Note that 
			\begin{align}\label{eq:rem:componentbidbl}
				&
				V^{\wh{\pi}_G} 
				=
				(\wh{J}^{\rtimes}J^{\rtimes})_{12} W^{\wh{\pi}_G}_{312} (\wh{J}^{\rtimes}J^{\rtimes})_{12}^* 
				= 
				\wh{V}^G_{13} . 
			\end{align}
			\item
			We have two homomorphisms 
			$\imath_{G} \colon G^{\op}\to G^{\op}\dblcross{\sm}H$ and 
			$\jmath_{H} \colon H\to G^{\op}\dblcross{\sm}H$ such that 
			\begin{align*}
				&
				\imath_{G}^*\Delta_{G^{\op}\dblcross{\sm}H}=\Delta_G^{\cop}\otimes\id \quad \text{and}\quad \jmath_{H}^*\Delta_{G^{\op}\dblcross{\sm}H}=(\sigma\sm\otimes\id)(\id\otimes\Delta_H) , 
			\end{align*}
			which are well-defined by \cite[Proposition~A.3]{Kitamura:indhomlcqg}, for example. 
		\end{enumerate}
	\end{rem}
	
	In order to treat continuous actions on C*-algebras, we need some continuity on the matching as follows. 
	
	\begin{df}
		We say a matching $\sm$ of locally compact quantum groups $G$ and $H$ is \emph{continuous} if the restriction of $\sm$ gives a $*$-automorphism on $C^r_0(G)\otimes C^r_0(H)$. 
	\end{df}
	
	For a continuous matching $\sm$ on regular locally compact quantum groups $G$ and $H$, 
	it follows from \cite[Section~9]{Baaj-Vaes:doublecrossed} 
	that $\wh{G}\rbicross{\sm}H$ and $G^{\op}\dblcross{\sm}H$ are also regular 
	and that 
	\begin{align*}
		&
		C^r_0(\wh{G}\rbicross{\sm}H) = [C^r_0(\wh{G})_1 \sm(C^r_0(H)_2)]  
		\quad \text{and}\quad
		C^r_0(G^{\op}\dblcross{\sm}H)=C^r_0(G)\otimes C^r_0(H) 
	\end{align*}
	as C*-subalgebras of $L^\infty(\wh{G}\rbicross{\sm}H)$ and $L^\infty(G^{\op}\dblcross{\sm}H)$, respectively. 
	
	\begin{eg}[Generalized quantum double]\label{eg:matchingYD}
		Let $\phi\colon H\to G$ be a homomorphism of locally compact quantum groups. 
		Then $\sm:=\Ad W^{\phi}$ is a continuous matching on $H^{\op}$ and $\wh{G}$ and we write $\sD(\phi):= H\dblcross{\Ad W^{\phi}} \wh{G} 
		= ( L^\infty(H)\barotimes L^\infty(\wh{G}),\, \sigma_{23} (\Ad W^{\phi}_{23}) (\Delta_H\otimes\wh{\Delta}_{G}) )$, 
		which is a locally compact quantum group called the \emph{generalized quantum double}. See \cite[Section~8]{Baaj-Vaes:doublecrossed} for more details. 
		When $\phi=\id_G$, this construction gives the quantum double $\sD(G):=\sD(\id_G)$ of $G$. 
		Also, note $\sD(1_{H\to G})=H\times \wh{G}$. 
	\end{eg}
	
	\begin{eg}[Semi-direct product]\label{eg:matchingsemidirect}
		For a discrete group $G$ and a locally compact quantum group $H$, consider a group homomorphism $\theta\colon G\to \Aut(L^\infty(H))$ with $\Delta_H \theta_t = (\theta_t\otimes\theta_t)\Delta_H$ for all $t\in G$. 
		Then a continuous matching $\sm$ on $G^{\op}$ and $H$ can be given by 
		$\sm(\delta_t\otimes x):=\delta_t\otimes \theta_t(x)$ for $t\in\Gamma$, $x\in L^\infty(H)$. 
		Here $\delta_t(s):=\delta_{t,s}$ for $s,t\in\Gamma$ by using the Kronecker delta. 
		When $H$ is a locally compact group, $G\dblcross{\sm}H$ is nothing but the semi-direct product $G\ltimes H$ for the corresponding action of $G$ on $H$ as homeomorphic group automorphisms. 
	\end{eg}
	
	When the result is a compact quantum group, the bicrossed product construction was studied in detail by \cite{Fima:compact}. 
	Also, when $H$ is a compact quantum group in the situation of \cref{eg:matchingsemidirect}, the bicrossed product $\wh{G}\rbicross{\sm}H$ coincides with the construction of \cite[Theorem~1.5]{Wang:tensorprod}. For $K$-theoretic properties of such compact quantum groups, see \cite{Martos:semidirectBC}. 
	
	\subsection{Decomposition of actions of double crossed product}
	
	It is convenient to decompose actions of double crossed products into smaller pieces with a relation that is analogous to the Yetter--Drinfeld condition. 
	
	\begin{df}\label{def:YDcond}
		Consider a continuous matching $\sm$ on regular locally compact quantum groups $G$ and $H$. 
		Let $A$ be a C*-algebra, $\alpha$ 
		be a continuous left $G^{\op}$-action on $A$, and $\lambda$ 
		be a continuous left $H$-action on $A$. 
		We say $(A,\alpha,\lambda)$ is a \emph{left $\sm$-YD C*-algebra} 
		if 
		\begin{align}\label{eq:prop:YDcond}
			&
			\sm_{13}(\id\otimes\sigma\lambda)\alpha=(\alpha\otimes\id)\sigma\lambda, 
		\end{align}
		or equivalently, if the following diagram commutes, 
		\begin{align*}
			&
			\begin{aligned}
				\xymatrix@R=1em{
					& \cM(C^r_0(G)\otimes A) \ar[r]^-{\id\otimes\sigma\lambda} &\cM(C^r_0(G)\otimes A\otimes C^r_0(H)) \ar[dd]^-{\sm_{13}}\,\\
					A \ar[dr]_-{\sigma\lambda}\ar[ur]^-{\alpha}&& \\
					& \cM(A\otimes C^r_0(H)) \ar[r]^-{\alpha\otimes\id} &\cM(C^r_0(G)\otimes A\otimes C^r_0(H)).
				}
			\end{aligned}
		\end{align*}
	\end{df}
	
	\begin{eg}\label{eg:YDcond}
		For a homomorphism $\phi\colon H \to G$ of regular locally compact quantum groups, we call a left $\Ad W^\phi$-YD C*-algebra a left $\phi$-YD C*-algebra, whose notion has appeared in \cite{Nest-Voigt:eqpd,Roy:double}. 
	\end{eg}
	
	The following proposition is a straightforward generalization of \cite[Proposition~3.2]{Nest-Voigt:eqpd} and \cite[Proposition~7.6]{Roy:double}. 
	
	\begin{prop}\label{prop:YDcond}
		Let $\sm$ be a continuous matching on regular locally compact quantum groups $G$ and $H$. 
		For each C*-algebra $A$, we put 
		$X_A$ as the set of pairs $(\alpha,\lambda)$ with $(A,\alpha,\lambda)$ being a left $\sm$-YD C*-algebra, and 
		$Y_A$ as the set of continuous left $G^{\op}\dblcross{\sm}H$-actions $\wt{\alpha}$ on $A$. 
		
		\begin{enumerate}
			\item
			Then we have a well-defined mutually inverse bijective maps 
			\begin{align*}
				&
				\Phi_A \colon X_A\ni (\alpha,\lambda) \mapsto (\id\otimes\lambda)\alpha \in Y_A, 
				\\&
				\Psi_A \colon Y_A\ni \wt{\alpha} \mapsto (\imath_{G}^*\wt{\alpha}, \jmath_{H}^*\wt{\alpha}) \in X_A . 
			\end{align*}
			\item
			For left $\sm$-YD C*-algebras $B$ and $D$, a $*$-homomorphism $f\colon B\to \cM(D)$ 
			is a left $G^{\op}$- and left $H$-$*$-homomorphism if and only if 
			it is a left $G^{\op}\dblcross{\sm}H$-$*$-homomorphism for the corresponding $G^{\op}\dblcross{\sm}H$-actions. 
		\end{enumerate}
	\end{prop}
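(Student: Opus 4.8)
The plan is to verify that $\Phi_A$ and $\Psi_A$ are well defined and mutually inverse by direct manipulation of coaction identities, and then to deduce (2) from the explicit shape $\wt\alpha=(\id\otimes\lambda)\alpha$. Writing $K:=G^{\op}\dblcross{\sm}H$, so that $C^r_0(K)=C^r_0(G)\otimes C^r_0(H)$ and $\Delta_K=\sigma_{23}\sm_{23}(\Delta_G^{\cop}\otimes\Delta_H)$, the first task is to check that $\Phi_A$ lands in $Y_A$. Setting $\wt\alpha:=(\id\otimes\lambda)\alpha$, which maps $A$ into $\cM(C^r_0(G)\otimes C^r_0(H)\otimes A)$, the density condition for $\wt\alpha$ follows at once from those for $\alpha$ and $\lambda$ together with $C^r_0(K)=C^r_0(G)\otimes C^r_0(H)$. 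The substance is the coaction identity $(\id\otimes\wt\alpha)\wt\alpha=(\Delta_K\otimes\id)\wt\alpha$. I would expand the right-hand side using the formula for $\Delta_K$, so that the flip $\sigma_{23}$ and the matching $\sm_{23}$ appear on the two inner legs, and then rewrite the left-hand side using the coaction identities $(\id\otimes\alpha)\alpha=(\Delta_G^{\cop}\otimes\id)\alpha$ and $(\id\otimes\lambda)\lambda=(\Delta_H\otimes\id)\lambda$. After these substitutions the two sides differ exactly by the reordering of an $H$-leg past a $G$-leg, and the YD condition \eqref{eq:prop:YDcond} in the form $\sm_{13}(\id\otimes\sigma\lambda)\alpha=(\alpha\otimes\id)\sigma\lambda$ is precisely what is needed to match them. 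This leg-bookkeeping — tracking how $\sigma_{23}\sm_{23}$ interacts with the position into which $\lambda$ inserts its $H$-leg, and invoking \eqref{eq:prop:YDcond} in the correct legs — is the main obstacle, and I expect it to be the only genuinely delicate computation.

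Next I would check that $\Psi_A$ lands in $X_A$. For a continuous left $K$-action $\wt\alpha$, the restrictions $\imath_G^*\wt\alpha$ and $\jmath_H^*\wt\alpha$ are automatically continuous left $G^{\op}$- and left $H$-actions by the general restriction construction of \cref{ssec:homomorphism}, so only the YD relation needs verification. Here I would feed the defining identities $\imath_G^*\Delta_K=\Delta_G^{\cop}\otimes\id$ and $\jmath_H^*\Delta_K=(\sigma\sm\otimes\id)(\id\otimes\Delta_H)$ from \cref{rem:componentbidbl}, together with the characterization of restriction from \cref{ssec:homomorphism}, into the coaction identity for $\wt\alpha$; the $\sigma\sm$ twist carried by $\jmath_H$ is exactly the $\sm$-conjugation appearing in \eqref{eq:prop:YDcond}, so the YD condition for the pair $(\imath_G^*\wt\alpha,\jmath_H^*\wt\alpha)$ follows from coassociativity of $\wt\alpha$.

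It then remains to see that $\Phi_A$ and $\Psi_A$ are inverse. For $\Psi_A\Phi_A=\id$ I would compute the two restrictions of $\wt\alpha=(\id\otimes\lambda)\alpha$: restricting along $\imath_G$ isolates the $G$-leg and trivialises the $H$-leg, returning $\alpha$, while restricting along $\jmath_H$ isolates the $H$-leg, and the $\sigma\sm$ twist built into $\jmath_H$ together with \eqref{eq:prop:YDcond} cancels the interaction with $\alpha$, returning $\lambda$. For $\Phi_A\Psi_A=\id$ I would reconstruct $\wt\alpha$ from its two restrictions, using that the images of $\imath_G$ and $\jmath_H$ generate $C^r_0(K)=C^r_0(G)\otimes C^r_0(H)$; combining the $G$- and $H$-restrictions in the order $(\id\otimes\jmath_H^*\wt\alpha)(\imath_G^*\wt\alpha)$ recovers $\wt\alpha$. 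Finally, for (2) I would note that, under $\Phi_A$, the equation defining a $K$-$*$-homomorphism for $x=x_G\otimes x_H\in C^r_0(G)\otimes C^r_0(H)$ factorises through the two defining equations for a $G^{\op}$- and an $H$-$*$-homomorphism; letting $x_H$ and then $x_G$ run over approximate units yields the two separate conditions, while multiplying the two conditions recovers the joint one, so the equivalence is immediate.
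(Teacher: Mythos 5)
Your treatment of part (1) is essentially the paper's argument, and it is sound: the well-definedness of $\Phi_A$ (Podle\'s density plus the comodule identity, which uses \cref{eq:prop:YDcond}), the verifications $\imath_G^*\wt{\alpha}=\alpha$ and $\jmath_H^*\wt{\alpha}=\lambda$, and the reduction of the well-definedness of $\Psi_A$ and of $\Phi_A\Psi_A=\id$ to the universal pair $(\alpha_0,\lambda_0)=(\imath_G^*\Delta_K,\jmath_H^*\Delta_K)$. The paper phrases this last reduction by passing to $(\wt{\alpha}(A),\Delta_K\otimes\id_A)$, you phrase it through the characterization of restrictions $(\id\otimes\wt{\alpha})(\phi^*\wt{\alpha})=((\phi^*\Delta_K)\otimes\id)\wt{\alpha}$; these are the same device. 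One correction of emphasis: the YD identity for $(\alpha_0,\lambda_0)$ does not follow from coassociativity of $\wt{\alpha}$. Written out with $\alpha_0=\Delta_G^{\cop}\otimes\id$ and $\lambda_0=(\sigma\sm\otimes\id)(\id\otimes\Delta_H)$, it is exactly the identity $(\Delta_G^{\cop}\otimes\id)\sm=\sm_{13}\sm_{23}(\Delta_G^{\cop}\otimes\id)$, i.e.\ the first matching axiom of \cref{def:matching} in $\cop$ form; coassociativity of $\wt{\alpha}$ enters only when transporting this universal relation to a general action.

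The genuine gap is in part (2), in the direction ``$G^{\op}\dblcross{\sm}H$-equivariant $\Rightarrow$ separately $G^{\op}$- and $H$-equivariant''. Your plan to let $x_H$, then $x_G$, run through approximate units cannot work as stated. The definition of equivariance allows $f\colon B\to\cM(D)$ to be degenerate, so $\id\otimes f$ is defined only on $C^r_0(G)\otimes C^r_0(H)\otimes B$ and admits no strictly continuous extension to the multiplier algebra. The net $(x_G\otimes x_H\otimes1)\wt{\beta}_B(b)$ converges along an approximate unit $(x_H)$ only strictly, and its limit $(\id\otimes\lambda_B)\bigl((x_G\otimes1)\alpha_B(b)\bigr)$ lies outside $C^r_0(G)\otimes C^r_0(H)\otimes B$ (already when $\lambda_B$ is trivial it has a $1$ in the $H$-leg), so ``setting $x_H=1$'' on the right-hand side is not a legitimate operation. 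Moreover, even if one slices the $H$-leg by functionals instead, what comes out is not the $G^{\op}$-equivariance of $f$ but the identity with both sides composed with maps of the form $(\omega\otimes\id)\lambda$; undoing these requires already knowing the $H$-equivariance of $f$, and symmetrically for the other leg, so the two conditions are circularly entangled. (Your converse direction, multiplying the two separate conditions to get the joint one, does work: there all relevant elements remain in the honest C*-algebras $(x_G\otimes1)\alpha_B(b)\in C^r_0(G)\otimes B$ and $(x_H\otimes1)\lambda_B(b')\in C^r_0(H)\otimes B$, and norm limits suffice.) The paper handles the problematic direction by invoking the general fact that equivariance of a $*$-homomorphism is preserved under restriction of actions along a homomorphism of quantum groups, applied to $\imath_{G}$ and $\jmath_{H}$; you need to cite or prove that fact rather than argue with approximate units.
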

	
	\begin{proof}
		First we consider $\Phi_A$. 
		Let $(A,\alpha,\lambda)$ be a left $\sm$-YD C*-algebra. 
		We put $\wt{\alpha}=(\id\otimes\lambda)\alpha$, which is injective on $A$, and its comodule property follows from 
		\begin{align*}
			&
			\begin{aligned}
				&
				\bigl( \id\otimes ( (\id\otimes\sigma\lambda)\alpha) \otimes \id \bigr) 
				(\id\otimes\sigma\lambda) \alpha 
				=
				\sm_{25} \bigl( \id^{\otimes2}\otimes ((\sigma\lambda\otimes\id)\sigma\lambda) \bigr) 
				(\id\otimes\alpha) \alpha
				\\&
				=
				\sm_{25} (\Delta_G^{\cop}\otimes\id\otimes\Delta_H^{\cop}) (\id\otimes\sigma\lambda)\alpha. 
			\end{aligned}
		\end{align*}
		Also, $\wt{\alpha}$ satisfies 
		$[C^r_0(G)_1 C^r_0(H)_2 \wt{\alpha}(A)] = C^r_0(G)\otimes C^r_0(H)\otimes A$. 
		Thus $\Phi_A$ is well-defined. 
		Moreover, 
		\begin{align*}
			&
			(\Delta_{G}^{\cop}\otimes\id^{\otimes2}) (\id\otimes\lambda) \alpha 
			=
			(\id^{\otimes 2}\otimes\lambda) (\id\otimes\alpha) \alpha 
		\end{align*}
		and
		\begin{align*}
			\sm_{41} (\Delta_{H}\otimes\id^{\otimes2}) (\lambda\otimes\id) \sigma\alpha 
			&=
			\sm_{41} (\id\otimes\lambda\otimes\id) (\lambda\otimes\id) \sigma\alpha 
			=
			(\id\otimes\lambda\otimes\id) (\id\otimes\sigma\alpha) \lambda 
		\end{align*}
		show $\alpha=\imath_{G}^*\wt{\alpha}$ and $\lambda=\jmath_{H}^*\wt{\alpha}$, respectively. 
		Thus $\Psi_A\Phi_A = \id$ if $\Psi_A$ is well-defined. 
		
		We put $\alpha_0 := \imath_{G}^*\Delta_{G^{\op}\dblcross{\sm}H}$ and 
		$\lambda_0 := \jmath_{H}^*\Delta_{G^{\op}\dblcross{\sm}H}$. 
		The well-definedness of $\Psi_A$ and $\Phi_A\Psi_A=\id$ follows from 
		$\sm_{14}(\id\otimes\sigma\lambda_0)\alpha_0 = (\alpha_0\otimes\id)\sigma\lambda_0$ and 
		$(\id\otimes\lambda_0)\alpha_0 = \Delta_{G^{\op}\dblcross{\sm}H}$, respectively, 
		because for any left $G^{\op}\dblcross{\sm}H$-C*-algebra $(A,\wt{\alpha})$, 
		we can consider $(\wt{\alpha}(A), \Delta_{G^{\op}\dblcross{\sm}H}\otimes\id_A)$ 
		and reduce the situations to these relations of $(\alpha_0,\lambda_0)$. 
		
		Now (1) is shown and it is easy to see (2) by the constructions of $\Phi_A$ and $\Psi_A$ by using the fact that the equivariance of a $*$-homomorphism is compatible with restrictions of actions. 
	\end{proof}
	
	The notion of a left $\sm$-YD structure on a Hilbert module can be defined by considering the corner of a $\sm$-YD structure on the linking algebra. 
	Also, a left $\sm$-YD C*-correspondence is defined to be a pair of a left $\sm$-YD Hilbert module and a $G^{\op}\dblcross{\sm}H$-$*$-homomorphism. 
	We can show the same bijections as (1) of \cref{prop:YDcond} for Hilbert modules by considering the corner and for C*-correspondences with the aid of (2).
	
	\subsection{Action of bicrossed product on its component}\label{ssec:actbi}
	
	We investigate the unitary representation of $\wh{G}\rbicross{\sm}H$ on $L^2(G)$ denoted by $X^{\sm}$ below. 
	This unitary also induces an action of $\wh{G}\rbicross{\sm}H$ on $\wh{G}$ 
	and will play an essential role in the duality. 
	
	\begin{lem}\label{lem:bicrossell2}
		For a matching $\sm$ of locally compact quantum groups $G$ and $H$, the following hold. 
		\begin{enumerate}
			\item
			There is a well-defined unitary representation of $\wh{G}\rbicross{\sm}H$ on $L^2(G)$ defined by 
			\begin{align*}
				&
				X^{\sm} := U^G_3 Z^{\sm}_{12}U^{r*}_{32}Z^{\sm*}_{12} \wh{W}^{G}_{13} U^{G*}_3 \in \cU\cM(C^r_0(\wh{G}\rbicross{\sm}H)\otimes \cK(G)). 
			\end{align*}
			\item
			The restriction of the unitary representation $X^{\sm}$ via the homomorphism $\wh{\pi}_G\colon \wh{G}\to \wh{G}\rbicross{\sm}H$ is $\wh{V}^G_{21}$. 
			More precisely, 
			$((\Delta_{\rtimes}\rpb\wh{\pi}_G)\otimes\id)(X^{\sm}) = X^{\sm}_{124}\wh{V}^G_{43}$. 
			\item
			Up to a scalar multiplication, $X^{\sm}$ is equal to 
			$Z^{\sm}_{12} \wh{V}^{G}_{31} Z^{\sm*}_{32} Z^{\sm*}_{12}$. 
		\end{enumerate}
	\end{lem}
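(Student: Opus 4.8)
The three assertions are operator identities among the unitaries $U^G$, $Z^{\sm}$, $U^r$, $\wh{W}^G$ and $\wh{V}^G$, so the whole proof is a matter of leg bookkeeping combined with the relations collected in \cref{thm:relations} and the standard identities relating $\wh{W}^G$, $\wh{V}^G=V^{\wh{G}}$ and the modular conjugations through $U^G=J^G\wh{J}^G$. The plan is to establish the reduced expression (3) first, since it makes the membership assertion in (1) and the computation in (2) transparent. Starting from the definition of $X^{\sm}$, one conjugates the whole bracketed product $Z^{\sm}_{12}U^{r*}_{32}Z^{\sm*}_{12}\wh{W}^G_{13}$ by $U^G_3$ on the representation leg; since conjugation by the modular conjugations turns $\wh{W}^G$ and $U^r$ into their $V$-type counterparts, and conjugation by $Z^{\sm}_{12}$ implements $\sm$ on the legs $(1,2)$ by \cref{thm:relations}(4), one then uses \cref{thm:relations}(7), $\sm_{23}(U^r_{13}V^G_{12})=V^G_{12}U^r_{13}$, together with the descriptions of $U^r$ and $Z^{\sm}$ in terms of the modular conjugations to move the $U^r$-factor through and reorganize the product into $Z^{\sm}_{12}\wh{V}^G_{31}Z^{\sm*}_{32}Z^{\sm*}_{12}$. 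The residual scalar, controlled by \cref{thm:relations}(6), is the source of the phrase ``up to a scalar''.

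Granting (3), unitarity of $X^{\sm}$ is immediate, and membership in $\cU\cM(C^r_0(\wh{G}\rbicross{\sm}H)\otimes\cK(G))$ can be read off the reduced form: the representation leg is leg $3$, while on the legs $(1,2)$ the factor $\wh{V}^G_{31}$ contributes $C^r_0(\wh{G})$ on leg $1$ and conjugation by $Z^{\sm}_{12}$ sends $C^r_0(H)$ on leg $2$ to $\sm(C^r_0(H))$, so that the $(1,2)$-part lands in $[C^r_0(\wh{G})_1\sm(C^r_0(H)_2)]=C^r_0(\wh{G}\rbicross{\sm}H)$; here the scalar ambiguity is irrelevant. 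The comodule identity $(\Delta_{\rtimes}\otimes\id)(X^{\sm})=X^{\sm}_{13}X^{\sm}_{23}$, by contrast, is quadratic in $X^{\sm}$ on the right and linear on the left, hence sensitive to normalization, and I would verify it directly from the exact defining formula rather than from (3). Computing $\Delta_{\rtimes}=\Ad W^{\rtimes*}(1\otimes-)$ with the explicit $W^{\rtimes}=(\sm_{12}W^H_{24})(\sm_{34}\wh{W}^G_{13})$ of \cref{thm:relations}(1), the identity reduces to the coproduct-type relations $(\Delta_G\otimes\id)U^l=U^l_{23}U^l_{13}$ and $(\id\otimes\Delta_H)U^r=U^r_{13}U^r_{12}$ of \cref{thm:relations}(2),(3), the multiplicativity of $\wh{V}^G$, and \cref{thm:relations}(7).

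For (2) I would use the explicit restriction $\Delta_{\rtimes}\rpb\wh{\pi}_G=\Ad V^{\wh{\pi}_G}((-)_{12})$ with $V^{\wh{\pi}_G}=\wh{V}^G_{13}$ from \eqref{eq:rem:componentbidbl}, turning the left-hand side into $\wh{V}^G_{13}X^{\sm}_{124}\wh{V}^{G*}_{13}$. Since both sides of the asserted identity are linear in $X^{\sm}$, the scalar ambiguity of (3) cancels and I may freely substitute the reduced form; the claim $\wh{V}^G_{13}X^{\sm}_{124}\wh{V}^{G*}_{13}=X^{\sm}_{124}\wh{V}^G_{43}$ then follows from the pentagon equation for $\wh{V}^G$ after using \cref{thm:relations}(4) to handle the interaction of the $\wh{G}$-leg with the $Z^{\sm}$-factors. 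Conceptually this merely expresses that the restriction of $X^{\sm}$ to the $\wh{G}$-part is the standard representation $\wh{V}^G_{21}$.

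The main obstacle is the comodule identity of (1). It is the one step in which the bicrossed-product coproduct must be propagated simultaneously across the $G$- and $H$-legs, forcing one to combine all three relations governing $\sm$ (namely $\Ad Z^{\sm}=\sm$ and the coproduct formulas for $U^l$ and $U^r$) with the multiplicativity of $\wh{V}^G$ and with \cref{thm:relations}(7), while keeping every leg and every scalar in agreement so that the identity holds exactly and not merely up to a phase.
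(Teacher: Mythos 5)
Your global plan---prove the reduced form (3) first and read (1) and (2) off it---is reasonable in outline, and your treatment of (2) via $V^{\wh{\pi}_G}=\wh{V}^G_{13}$ and the pentagon-type identity $\wh{V}^G_{13}\wh{W}^G_{14}\wh{V}^{G*}_{13}=\wh{W}^G_{14}\wh{W}^G_{34}$ is the same mechanism the paper uses. But the derivation of (3), on which you make everything rest, has a genuine gap: the relations you cite, namely \cref{thm:relations}(4), (6) and (7), cannot produce it. Relation (4) identifies $\Ad Z^{\sm}$ with $\sm$ \emph{only} on $L^\infty(G)\barotimes L^\infty(H)$, whereas the factor $\wh{W}^G_{13}$ in the definition of $X^{\sm}$ has its leg $1$ in $L^\infty(\wh{G})$, and after your conjugation by $U^G_3$ the resulting factor $\wh{V}^G_{31}$ has leg $1$ in $L^\infty(\wh{G})$ and leg $3$ in $J^GL^\infty(G)J^G$; none of these legs is in the domain where (4) applies, so ``conjugation by $Z^{\sm}_{12}$ implements $\sm$'' is unavailable exactly where it is needed. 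Relation (7) cannot substitute: rewritten on the legs of the lemma it reads $Z^{\sm}_{12}U^r_{32}V^G_{31}Z^{\sm*}_{12}=V^G_{31}U^r_{32}$, and $V^G_{31}$ carries $L^\infty(G)$ on leg $1$---the opposite leg configuration from $\wh{W}^G_{13}$ and $\wh{V}^G_{31}$, whose leg $1$ is of $\wh{G}$-type. The identity that actually moves $Z^{\sm}_{12}$ past $\wh{W}^G_{13}$ is the ``Especially'' part of \cref{thm:relations}(5), $Z^{\sm*}_{12}\wh{W}^G_{13}Z^{\sm}_{12}=\sm_{32}(\wh{W}^G_{13})U^l_{32}$, a nontrivial Baaj--Vaes relation which you never invoke. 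The paper's proof of (3) consists of inserting $Z^{\sm}_{12}Z^{\sm*}_{12}$, applying precisely this relation, rewriting $\sm_{32}(\wh{W}^{G*}_{13})=U^r_{32}\wh{W}^{G*}_{13}U^{r*}_{32}$ by \cref{thm:relations}(3), and then using the modular descriptions $U^l=J^{\rtimes}(\wh{J}^G\otimes J^H)$, $U^r=\wh{J}^{\rtimes}(J^G\otimes\wh{J}^H)$, $Z^{\sm}=U^{\rtimes}(U^{G*}\otimes U^{H*})$ together with (6) to reach $Z^{\sm}_{12}Z^{\sm}_{32}\wh{V}^{G*}_{31}Z^{\sm*}_{12}$ up to a phase. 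Without (5) your ``reorganization'' has no valid move at this step.

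The same obstruction undermines the two places where you use (3). For the membership claim in (1): $Z^{\sm}$ is not a multiplier of any of the relevant C*-algebras and $Z^{\sm*}_{32}$ ties the representation leg to leg $2$, so one cannot read ``the $(1,2)$-part lands in $[C^r_0(\wh{G})_1\sm(C^r_0(H)_2)]$'' off the reduced form. The correct route (implicit in the paper) is that the von Neumann-level containment $X^{\sm}\in L^\infty(\wh{G}\rbicross{\sm}H)\barotimes\cB(L^2(G))$ is immediate from the \emph{original} definition, because $Z^{\sm}_{12}U^{r*}_{32}Z^{\sm*}_{12}=\sm_{12}(U^{r*}_{32})$ does fall under (4) and has its $(1,2)$-legs in $\sm(1\otimes L^\infty(H))$ while $\wh{W}^G_{13}$ contributes $L^\infty(\wh{G})_1$; the C*-level statement then follows once the corepresentation identity is established. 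For (2), substituting the reduced form into $\wh{V}^G_{13}X^{\sm}_{124}\wh{V}^{G*}_{13}$ forces you to commute $\wh{V}^G_{13}$ past a bare $Z^{\sm}_{12}$, which nothing in (4) licenses; the paper instead computes from the defining formula, where the needed commutation is legitimate because $\sm_{12}(U^{r*}_{42})$ has its first leg in $L^\infty(G)$ and the first leg of $\wh{V}^G_{13}$ lies in the commutant $J^GL^\infty(G)J^G$. Your plan for the comodule identity in (1) (direct verification from the defining formula) is sound in outline and is what the paper does, though the tractable route is via the two characterizing formulas for $\Delta_{\rtimes}$ in the Theorem-Definition rather than brute-force conjugation by $W^{\rtimes*}$.
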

	
	\begin{proof}
		For (1), it suffices to show $X':=U^{G*}_3 X^{\sm}U^G_3$ is a unitary representation. 
		We can check this as 
		\begin{align*}
			&
			(\Delta_{\rtimes}\otimes\id)(X')
			=
			(\Delta_{\rtimes}\otimes\id)(Z^{\sm}_{12}U^{r*}_{32}Z^{\sm*}_{12} W^{G*}_{31})
			\\&
			=
			(\sm\otimes \sm\otimes\id)\bigl((\Delta_{H}\otimes\id)(U^{r*}_{21})\bigr)_{245}
			(\sm_{34}\sm_{54}W^{G*}_{51}) W^{G*}_{53}
			\\&
			=
			Z^{\sm}_{34}Z^{\sm}_{12} (U^{r*}_{52}U^{r*}_{54}) Z^{\sm*}_{12}Z^{\sm*}_{34}
			Z^{\sm}_{34}U^{r}_{54} (W^{G*}_{51}) U^{r*}_{54}Z^{\sm*}_{34} W^{G*}_{53}
			\\&
			=
			Z^{\sm}_{34}Z^{\sm}_{12} U^{r*}_{52}U^{r*}_{54} Z^{\sm*}_{12}
			U^{r}_{54} \wh{W}^{G}_{15} U^{r*}_{54}Z^{\sm*}_{34} \wh{W}^{G}_{35}
			\\&
			=
			Z^{\sm}_{12}U^{r*}_{52}Z^{\sm*}_{12} \wh{W}^{G}_{15} Z^{\sm}_{34}U^{r*}_{54}Z^{\sm*}_{34} \wh{W}^{G}_{35}
			\\&
			=
			X'_{125}X'_{345}. 
		\end{align*}
		
		(2) holds since by \cref{eq:rem:componentbidbl}, 
		\begin{align*}
			&
			((\Delta_{\rtimes}\rpb\wh{\pi}_G)\otimes\id)(X^{\sm}) 
			= \wh{V}^{G}_{13} X^{\sm}_{124} \wh{V}^{G*}_{13}
			= U^G_4 (\sm_{12}U^{r*}_{42}) \wh{V}^{G}_{13} \wh{W}^G_{14} \wh{V}^{G*}_{13} U^{G*}_4
			\\&
			= U^G_4 (\sm_{12}U^{r*}_{42}) \wh{W}^G_{14} U^{G*}_4 U^G_4 \wh{W}^G_{34} U^{G*}_4 
			= X^{\sm}_{124} \wh{V}^{G}_{43}. 
		\end{align*}
		
		(3) holds by using (5), (6) of \cref{thm:relations} to see for some $c_1, c_2\in\bC$ with $|c_1|=|c_2|=1$,  
		\begin{align*}
			&
			X^{\sm*} = 
			U^{G}_3 Z^{\sm}_{12} Z^{\sm*}_{12} \wh{W}^{G*}_{13} Z^{\sm}_{12} U^r_{32} Z^{\sm*}_{12} U^{G*}_3
			\\&
			=
			U^{G}_3 Z^{\sm}_{12} U^{l*}_{32} U^r_{32} W^{G}_{31} U^{r*}_{32} U^r_{32} Z^{\sm*}_{12} U^{G*}_3
			\\&
			=
			Z^{\sm}_{12} (J^{G}\wh{J}^{G})_3 \bigl((\wh{J}^G\otimes J^H) J^{\rtimes} \wh{J}^{\rtimes} (J^G\otimes \wh{J}^{H})\bigr)_{32} W^{G}_{31} U^{G*}_3 Z^{\sm*}_{12}
			\\&
			=
			c_1 Z^{\sm}_{12} U^{\rtimes}_{32} \bigl((J^G\otimes J^H) (J^G\otimes \wh{J}^{H})\bigr)_{32} W^{G}_{31} U^{G*}_3 Z^{\sm*}_{12}
			\\&
			=
			c_2 Z^{\sm}_{12} Z^{\sm}_{32} \wh{V}^{G*}_{31} Z^{\sm*}_{12} . 
		\qedhere\end{align*}
	\end{proof}
	
	\begin{prop}\label{prop:bicrosscr0}
		For a continuous matching $\sm$ of regular locally compact quantum groups $G$ and $H$, 
		there is a well-defined continuous right $\wh{G}\rbicross{\sm}H$-action $\delta_{\rtimes}$ 
		on $C^r_0(\wh{G})$ such that 
		for any $y\in C^r_0(\wh{G})$, 
		\begin{align}\label{eq:lem:bicrosscr01}
			&
			\Delta_{\rtimes}(y\otimes1) = \delta_{\rtimes}(y)_{134}
			\in \cM(C^r_0(\wh{G}) \otimes \bC\otimes C^r_0(\wh{G}\rbicross{\sm}H)), 
		\end{align}
		and 
		\begin{align}\label{eq:lem:bicrosscr02}
			\Ad X^{\sm*}(\wh{J}^Gy\wh{J}^G)_3 
			= (\wh{J}^{\rtimes}\otimes \wh{J}^G) \delta_{\rtimes}(y)_{312} (\wh{J}^{\rtimes}\otimes \wh{J}^G). 
		\end{align}
	\end{prop}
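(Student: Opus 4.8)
The plan is to produce $\delta_{\rtimes}$ by reading it off from \cref{eq:lem:bicrosscr01}, to check that it is a continuous right action by transporting the corresponding properties of $\Delta_{\rtimes}$, and finally to establish \cref{eq:lem:bicrosscr02} by manipulating the explicit formula for $X^{\sm}$ from \cref{lem:bicrossell2}.

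First I would use $C^r_0(\wh{G})=[(\cK(G)^*\otimes\id)(W^G)]$ and test \cref{eq:lem:bicrosscr01} on $y=(\omega\otimes\id)(W^G)$ with $\omega\in\cK(G)^*$. Then $y\otimes1=(\omega\otimes\id)(W^G_{12})$, where the $\wh{G}$-leg is regarded inside $\wh{G}\rbicross{\sm}H$, so applying $\id\otimes\Delta_{\rtimes}$ to the characterizing identity $(\id\otimes\Delta_{\rtimes})(W^G_{12})=W^G_{14}\sm_{45}\sm_{15}W^G_{12}$ and slicing the first leg against $\omega$ gives \[ \Delta_{\rtimes}(y\otimes1)=(\omega\otimes\id^{\otimes4})(W^G_{14}\sm_{45}\sm_{15}W^G_{12}). \] The right-hand side involves the legs $2,4,5$ but never the $H$-leg of the first tensor factor of $\wh{G}\rbicross{\sm}H$, which is exactly the support assertion of \cref{eq:lem:bicrosscr01}; it therefore determines a map $\delta_{\rtimes}\colon C^r_0(\wh{G})\to\cM(C^r_0(\wh{G})\otimes C^r_0(\wh{G}\rbicross{\sm}H))$. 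Continuity of $\sm$ together with regularity of $G$ and $H$ guarantees that the legs landing in the second factor actually lie in $C^r_0(\wh{G}\rbicross{\sm}H)=[C^r_0(\wh{G})_1\sm(C^r_0(H)_2)]$.

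Next I would verify that $\delta_{\rtimes}$ is a continuous right action. Injectivity is clear, since $\Delta_{\rtimes}$ is injective and $y\mapsto y\otimes1$ is. The right-action coassociativity $(\delta_{\rtimes}\otimes\id)\delta_{\rtimes}=(\id\otimes\Delta_{\rtimes})\delta_{\rtimes}$ follows from $(\Delta_{\rtimes}\otimes\id)\Delta_{\rtimes}=(\id\otimes\Delta_{\rtimes})\Delta_{\rtimes}$ evaluated at $y\otimes1$, together with the support statement just proved. For the density condition $[\delta_{\rtimes}(C^r_0(\wh{G}))(1\otimes C^r_0(\wh{G}\rbicross{\sm}H))]=C^r_0(\wh{G})\otimes C^r_0(\wh{G}\rbicross{\sm}H)$ I would start from the density property of $\Delta_{\rtimes}$ and peel off the $H$-contributions using $\Delta_{\rtimes}\sm(-)_2=(\sm\otimes\sm)(\Delta_H(-)_{24})$ on $L^\infty(H)$ and $C^r_0(\wh{G}\rbicross{\sm}H)=[C^r_0(\wh{G})_1\sm(C^r_0(H)_2)]$; alternatively, once \cref{eq:lem:bicrosscr02} is in hand this density is automatic, because $X^{\sm}$ is a unitary representation and regularity forces the density condition for the associated adjoint action.

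Finally, and this is the main obstacle, I would prove \cref{eq:lem:bicrosscr02}. Substituting the clean form $X^{\sm}=Z^{\sm}_{12}\wh{V}^G_{31}Z^{\sm*}_{32}Z^{\sm*}_{12}$ (valid up to a scalar) from \cref{lem:bicrossell2}(3) and cancelling the inner $Z^{\sm}_{12}$, which is disjoint from leg $3$, reduces the left-hand side to \[ \Ad X^{\sm*}\bigl((\wh{J}^Gy\wh{J}^G)_3\bigr)=Z^{\sm}_{12}Z^{\sm}_{32}\,\wh{V}^{G*}_{31}(\wh{J}^Gy\wh{J}^G)_3\wh{V}^G_{31}\,Z^{\sm*}_{32}Z^{\sm*}_{12}. \] The crux is to recognise the $\wh{V}^G_{31}$-conjugation of the commutant element $\wh{J}^Gy\wh{J}^G$ as a $(\wh{J}^G\otimes\wh{J}^G)$-twist of the coproduct $\Delta_{\wh{G}}=\Ad\wh{V}^G(-\otimes1)$, thereby producing the two $\wh{G}$-legs of $\delta_{\rtimes}(y)$ in legs $3$ and $1$; the outer $Z^{\sm}_{32}$-conjugation, which is $\Ad\sm$ by \cref{thm:relations}(4), then generates the $H$-contribution in leg $2$, matching the formula for $\Delta_{\rtimes}(y\otimes1)$ from the first paragraph. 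This $(\wh{J}^G\otimes\wh{J}^G)$-twist is precisely what conjugation by $\wh{J}^{\rtimes}\otimes\wh{J}^G$ on the right-hand side of \cref{eq:lem:bicrosscr02} undoes, using $U^{\rtimes}=J^{\rtimes}\wh{J}^{\rtimes}$, $U^G=J^G\wh{J}^G$, and the commutation of $J^G\otimes J^H$ with the relevant operators up to a scalar from \cref{thm:relations}(6); \cref{thm:relations}(7) lets me commute $U^r$ past $V^G$ where needed. The genuinely delicate part is this bookkeeping of the modular conjugations and the scalar ambiguities, the rest being formal leg-chasing.
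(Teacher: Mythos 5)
Your construction of $\delta_{\rtimes}$, the coassociativity argument, and the density step all track the paper's own proof: the paper likewise reads $\delta_{\rtimes}$ off from $\Delta_{\rtimes}(y\otimes 1)$ via the formula for $(\id\otimes\Delta_{\rtimes})(W^G_{12})$, and likewise obtains the density condition from regularity of $\wh{G}\rbicross{\sm}H$ by the automatic-continuity argument of \cite[Proposition~5.8]{Baaj-Skandalis-Vaes:nonsemireg}. One caution: what is needed is density for the coaction on the subalgebra $\wh{J}^GC^r_0(\wh{G})\wh{J}^G$ (equivalently, for $\delta_{\rtimes}$), which that argument yields from the comodule property you established; it is not a formal consequence of continuity of the adjoint action of $X^{\sm}$ on all of $\cK(G)$.

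The genuine gap is in your treatment of \cref{eq:lem:bicrosscr02}, which is the heart of the proposition. The reduction via \cref{lem:bicrossell2}(3) is correct, and so, essentially, is your identification of the inner conjugation: one can check
\begin{align*}
	\wh{V}^{G*}_{31}(\wh{J}^Gy\wh{J}^G)_3\wh{V}^G_{31}
	= \wh{J}^G_3 J^G_1\,\Delta_{\wh{G}}(y)_{31}\,\wh{J}^G_3 J^G_1
\end{align*}
(note that the conjugation on the new leg is by $J^G$, not $\wh{J}^G$). But the step that is supposed to finish the proof---that the outer $Z^{\sm}_{32}$-conjugation ``is $\Ad\sm$ by \cref{thm:relations}(4)'' and hence produces the $H$-contribution of $\Delta_{\rtimes}(y\otimes1)$---is a misapplication of that relation: (4) asserts $\Ad Z^{\sm}=\sm$ only on $L^\infty(G)\barotimes L^\infty(H)$, whereas the element displayed above has leg-$3$ content in $\wh{J}^GL^\infty(\wh{G})\wh{J}^G$ and leg-$1$ content in $L^\infty(\wh{G})$, neither of which lies in $L^\infty(G)$. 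So (4) gives no information about this conjugation, and the ``bookkeeping'' you defer is not routine; it is precisely the missing core of the argument. In the paper it is carried out by rewriting the conjugating unitary so that $W^{\rtimes*}$ appears (via \cref{thm:relations}(1) and insertions involving $U^r$ and $W^H$) and then invoking the unitary antipode identity $(R^{\rtimes}\otimes R^{\rtimes})\Delta_{\rtimes}=\Delta_{\rtimes}^{\cop}R^{\rtimes}$ together with $R^{\rtimes}=\wh{J}^{\rtimes}(-)^*\wh{J}^{\rtimes}$, which is what converts the $\Ad W^{\rtimes*}$ description of $\Delta_{\rtimes}$ into the $\wh{J}^{\rtimes}$-twisted form on the right-hand side of \cref{eq:lem:bicrosscr02}; alternatively one would need \cref{thm:relations}(5), which trades a $Z^{\sm}$-conjugation on one pair of legs for an application of $\sm$ on other legs at the cost of the correction unitary $U^l$. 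Your sketch engages with neither mechanism, so as written the proof of \cref{eq:lem:bicrosscr02} is missing.
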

	
	Note that it follows $(\wh{J}^GC^r_0(\wh{G})\wh{J}^G,\, \Ad X^{\sm*}(-)_3)$ is a well-defined left $\wh{G}\rbicross{\sm}H$-C*-algebra. 
	
	\begin{proof}
		We note that $\Delta_{\rtimes}(y\otimes 1)\in L^\infty(\wh{G})\otimes1\otimes L^\infty(\wh{G}\rbicross{\sm}H)$ for any $y\in L^\infty(\wh{G})$ by the expression of $W^{\rtimes}$ 
		and get a normal $*$-homomorphism 
		$\delta_{\rtimes}\colon L^\infty(\wh{G})\to L^\infty(\wh{G})\barotimes L^\infty(\wh{G}\rbicross{\sm}H)$ with \cref{eq:lem:bicrosscr01}. 
		Clearly $(\delta_{\rtimes}\otimes\id^{\otimes2})\delta_{\rtimes}=(\id\otimes\Delta_{\rtimes})\delta_{\rtimes}$. 
		We check 
		\begin{align*}
			&
			\bigl[ (\cK(G)\otimes\cK(H))^*_{34} 
			\bigl( \Delta_{\rtimes} (C^r_0(\wh{G})\otimes1) \bigr) \bigr]
			\\& 
			= 
			\bigl[ (\cK(G)\otimes\cK(G)\otimes\cK(H))^*_{145} 
			\bigl( (\id\otimes \Delta_{\rtimes}) 
			( W^G_{12} ) \bigr) \bigr]
			\\&
			=
			\bigl[ (\cK(G)\otimes\cK(G)\otimes\cK(H))^*_{145} 
			\bigl( W^G_{14} Z^{\sm}_{45}Z^{\sm}_{15} W^{G}_{12} Z^{\sm*}_{15}Z^{\sm*}_{45} \bigr) \bigr]
			\\&
			=
			\bigl[\cK(G)^*_{1}(W^{G}_{12})\bigr]
			=C^r_0(\wh{G})\otimes1 , 
		\end{align*}
		and we see 
		$[C^r_0(\wh{G}\rbicross{\sm}H)_{23}\delta_{\rtimes}(C^r_0(\wh{G}))]=C^r_0(\wh{G})\otimes C^r_0(\wh{G}\rbicross{\sm}H)$ 
		by an argument using the regularity of $\wh{G}\rbicross{\sm}H$ and the comodule property, 
		such as (the proof of) \cite[Proposition~5.8]{Baaj-Skandalis-Vaes:nonsemireg}. 
		
		We calculate by using (1) of \cref{lem:bicrossell2} that 
		\begin{align*}
			&
			\Ad X^{\sm*} (\wh{J}^G y \wh{J}^G)_3
			\\&
			=
			\Ad (U^G_3 \wh{W}^{G*}_{13} Z^{\sm}_{12} U^{r}_{32}) 
			(J^G y J^G)_3
			\\&
			=
			\Ad (U^G_3 \wh{W}^{G*}_{13} Z^{\sm}_{12} U^{r}_{32} W^{H*}_{24} U^{r*}_{34}W^H_{24}W^{H*}_{24}U^{r}_{34} Z^{\sm*}_{12}) 
			(J^G y J^G)_3
			\\&
			=
			\Ad (U^G_3 \wh{W}^{G*}_{13} Z^{\sm}_{12} U^{r*}_{34} W^{H*}_{24} Z^{\sm*}_{12} U^r_{34}) 
			(J^G y J^G)_3
			\\&
			=
			\Ad ((J^G\wh{J}^G)_3 U^{r*}_{34} W^{\rtimes*}) 
			(\wh{J}^{\rtimes} (y\otimes1) \wh{J}^{\rtimes})_{34}
			\\&
			=
			\Ad ((\wh{J}^G\otimes\wh{J}^H)\wh{J}^{\rtimes})_{34} 
			\bigl((R^{\rtimes}\otimes R^{\rtimes})\Delta_{\rtimes}(y^*\otimes1)\bigr)_{3412}
			\\&
			=
			(\wh{J}^{\rtimes}\otimes \wh{J}^G\otimes\wh{J}^H) 
			\Delta_{\rtimes}(y\otimes1)_{3412}
			(\wh{J}^{\rtimes}\otimes \wh{J}^G\otimes\wh{J}^H) . 
		\qedhere\end{align*}
	\end{proof}
	
	\section{Duality procedures}\label{sec:procedure}
	
	\subsection{From double crossed product to bicrossed product}\label{ssec:YDtobicross}
	
	First we recall the procedure \cite[Proposition~6.1]{Baaj-Vaes:doublecrossed}, 
	which gives a $\wh{G}\rbicross{\sm}H$-action on the crossed product $G^{\op}\barltimes M$ for a von Neumann algebra $M$ with a $G^{\op}\dblcross{\sm}H$-action. 
	We would like to give a proof of its C*-algebraic version. 
	
	\begin{prop}[{cf.~\cite[Proposition~6.1]{Baaj-Vaes:doublecrossed}}]\label{prop:YDtobicrossC*}
		Let $\sm$ be a continuous matching on regular locally compact quantum groups $G$ and $H$, 
		and $(A,\alpha,\lambda)$ be a left $\sm$-YD C*-algebra. 
		Then $\bfJ^\sm_G(A):= (G^{\op}\mathbin{\underset{\alpha,r}{\ltimes}} A, \wt{\delta})$ is a well-defined left $\wh{G}\rbicross{\sm}H$-C*-algebra, 
		where $\wt{\delta}\colon G^{\op}\mathbin{\underset{\alpha,r}{\ltimes}} A\to \cM\bigl(C^r_0(\wh{G}\rbicross{\sm}H)\otimes (G^{\op}\mathbin{\underset{\alpha,r}{\ltimes}} A)\bigr)$ satisfies for all $a\in A$ and $y\in C^r_0(\wh{G})$, 
		\begin{align}\label{eq:prop:YDtobicrossC*}
			&
			\wt{\delta}(\alpha(a)) = (\sm\otimes\alpha)(1\otimes\lambda(a))
			\quad\text{and}\quad
			\wt{\delta}(\wh{J}^G y\wh{J}^G \otimes 1) 
			= \Ad X^{\sm*}_{123} (\wh{J}^G y \wh{J}^G)_{3}. 
		\end{align}
	\end{prop}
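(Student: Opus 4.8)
The plan is to realize $\wt{\delta}$ as the restriction of $\Ad X^{\sm*}_{123}$ to the crossed product, where $X^{\sm}\in\cU\cM(C^r_0(\wh{G}\rbicross{\sm}H)\otimes\cK(G))$ is the unitary representation of \cref{lem:bicrossell2} placed in the $\wh{G}\rbicross{\sm}H$-legs $1,2$ and the $L^2(G)$-leg $3$ of the crossed product, with $A$ occupying leg $4$. First I would pin down the generators of $G^{\op}\mathbin{\underset{\alpha,r}{\ltimes}} A$. From $W^{G^{\op}}=V^{G*}_{21}=(\wh{J}^G\otimes\wh{J}^G)W^G(\wh{J}^G\otimes\wh{J}^G)$ one reads off $C^r_0(\wh{G^{\op}})=\wh{J}^GC^r_0(\wh{G})\wh{J}^G$, so that $G^{\op}\mathbin{\underset{\alpha,r}{\ltimes}} A=[(\wh{J}^GC^r_0(\wh{G})\wh{J}^G)_1\,\alpha(A)]$ and the two families $\{(\wh{J}^Gy\wh{J}^G)_1\}_y$ and $\{\alpha(a)\}_a$ generate it. It therefore suffices to check that $\Ad X^{\sm*}_{123}$ carries each generator into $\cM(C^r_0(\wh{G}\rbicross{\sm}H)\otimes(G^{\op}\mathbin{\underset{\alpha,r}{\ltimes}} A))$ via the two asserted formulas; since $\Ad X^{\sm*}_{123}$ is manifestly a $*$-homomorphism, the prescription \eqref{eq:prop:YDtobicrossC*} is then consistent and $\wt{\delta}:=\Ad X^{\sm*}_{123}$ is an injective $*$-homomorphism realizing it.

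The identity on the copy $(\wh{J}^Gy\wh{J}^G)_3$ is precisely \eqref{eq:lem:bicrosscr02} of \cref{prop:bicrosscr0} after relabelling legs, which also guarantees that these images land in $\cM(C^r_0(\wh{G}\rbicross{\sm}H)\otimes C^r_0(\wh{G^{\op}}))$. The crux is the identity on $\alpha(A)$, namely
\[
\Ad X^{\sm*}_{123}(\alpha(a)_{34})=(\sm\otimes\alpha)(1\otimes\lambda(a)).
\]
Here I would substitute the explicit form $X^{\sm}=U^G_3Z^{\sm}_{12}U^{r*}_{32}Z^{\sm*}_{12}\wh{W}^G_{13}U^{G*}_3$ of \cref{lem:bicrossell2} (or, where convenient, the form in part (3)) and commute $\alpha(a)_{34}$ through the factors: the factor $\wh{W}^G_{13}$ turns conjugation of the leg-$3$ element $\alpha(a)$ into an application of the comodule property of $\alpha$, while the matching factors $Z^{\sm}$ and $U^r$ are handled by \cref{thm:relations}(4),(7) and the relation $Z^{\sm*}_{12}\wh{W}^G_{13}Z^{\sm}_{12}=\sm_{32}(\wh{W}^G_{13})U^l_{32}$ of \cref{thm:relations}(5). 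The essential input is the Yetter--Drinfeld relation \eqref{eq:prop:YDcond}: it is exactly what is needed to trade the $G^{\op}$-variable produced by conjugating $\alpha(a)$ for the $H$-variable $\lambda(a)$ under $\sm$, yielding the right-hand side. The same computation shows membership in the correct multiplier algebra, since the resulting $H$-leg is absorbed into $\sm(C^r_0(H))\subset C^r_0(\wh{G}\rbicross{\sm}H)$ while the remaining $L^2(G)$- and $A$-legs reassemble $\alpha(A)\subset G^{\op}\mathbin{\underset{\alpha,r}{\ltimes}} A$.

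Once $\wt{\delta}=\Ad X^{\sm*}_{123}$ is established, the action axioms are cheap. Injectivity and the $*$-homomorphism property are free. Coassociativity $(\id\otimes\wt{\delta})\wt{\delta}=(\Delta_{\rtimes}\otimes\id)\wt{\delta}$ follows from the fact, recorded in \cref{lem:bicrossell2}, that $X^{\sm}$ is a unitary representation of $\wh{G}\rbicross{\sm}H$: conjugating by the representation identity for $X^{\sm}$ reproduces both sides. For the Podle\'s density condition $[(C^r_0(\wh{G}\rbicross{\sm}H)\otimes1)\wt{\delta}(G^{\op}\mathbin{\underset{\alpha,r}{\ltimes}} A)]=C^r_0(\wh{G}\rbicross{\sm}H)\otimes(G^{\op}\mathbin{\underset{\alpha,r}{\ltimes}} A)$ I would argue on the two generating families: for the $(\wh{J}^Gy\wh{J}^G)_1$-part this is the density already proved for $\delta_{\rtimes}$ in \cref{prop:bicrosscr0}, and for the $\alpha(A)$-part it follows from the defining density of $\alpha$ together with the continuity of $\sm$ and $\lambda$, the regularity of $G$, $H$ (hence of $\wh{G}\rbicross{\sm}H$) being what lets these pieces combine. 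Non-degeneracy of $\wt{\delta}$ then follows in the standard way.

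The main obstacle is the displayed identity on $\alpha(A)$: it is the only place where the interplay of $\alpha$ and $\lambda$ enters, and carrying out the commutations of $\alpha(a)$ through the four factors of $X^{\sm}$ while invoking \eqref{eq:prop:YDcond} at the right moment is the delicate bookkeeping step. Everything else is either formal or already contained in \cref{prop:bicrosscr0} and \cref{lem:bicrossell2}.
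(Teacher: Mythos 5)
Your reading of the generators is fine (including $C^r_0(\wh{G^{\op}})=\wh{J}^GC^r_0(\wh{G})\wh{J}^G$, and the reduction of the second formula in \cref{eq:prop:YDtobicrossC*} to \cref{prop:bicrosscr0}), but the core of your proposal --- the definition $\wt{\delta}:=\Ad X^{\sm*}_{123}$ together with the ``crux'' identity $\Ad X^{\sm*}_{123}(\alpha(a)_{34})=(\sm\otimes\alpha)(1\otimes\lambda(a))$ --- is false, and no bookkeeping with \cref{thm:relations} and \cref{eq:prop:YDcond} can repair it. Indeed, by (3) of \cref{lem:bicrossell2} we have, up to an irrelevant scalar, $X^{\sm*}=Z^{\sm}_{12}Z^{\sm}_{32}\wh{V}^{G*}_{31}Z^{\sm*}_{12}$. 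The factor $Z^{\sm*}_{12}$ commutes with $\alpha(a)_{34}$ (disjoint legs), and so does $\wh{V}^{G*}_{31}$: since $\wh{V}^G=V^{\wh{G}}\in\cU\cM(J^GC^r_0(G)J^G\otimes C^r_0(\wh{G}))$, the leg-$3$ component of $\wh{V}^{G}_{31}$ lies in $J^GL^\infty(G)J^G=L^\infty(G)'$, while $\alpha(a)$ has its first leg in $L^\infty(G)$. Hence
\begin{align*}
	\Ad X^{\sm*}_{123}(\alpha(a)_{34})
	=\Ad\bigl(Z^{\sm}_{12}Z^{\sm}_{32}\bigr)(\alpha(a)_{34})
	=\sm_{12}\sm_{32}(\alpha(a)_{34}),
\end{align*}
an expression that does not involve $\lambda$ at all, so it cannot equal $(\sm\otimes\alpha)(1\otimes\lambda(a))$, which does. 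The YD condition cannot bridge this: \cref{eq:prop:YDcond} relates $\sm_{13}(\id\otimes\sigma\lambda)\alpha$ to $(\alpha\otimes\id)\sigma\lambda$, i.e.\ both of its sides already contain $\lambda$; it never produces $\lambda(a)$ out of $\alpha(a)$. For a concrete failure take the trivial matching $\sm=\id$ (so $\wh{G}\rbicross{\sm}H=\wh{G}\times H$) and any YD algebra with nontrivial $\lambda$: your formula gives $\alpha(a)_{34}$, i.e.\ the trivial $H$-action on $\alpha(A)\cong A$, whereas the correct action must restrict to $\lambda$ there (this is the whole point of the duality).

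What you are missing is that $\wt{\delta}$ is \emph{not} a conjugation. The paper defines it as the composition
\begin{align*}
	\wt{\delta}=\Ad X^{\sm*}_{123}\circ\sm_{12}\,\sigma_{23}\circ(1\otimes\id\otimes\lambda)\colon
	G^{\op}\redltimes A\to \cM\bigl(C^r_0(\wh{G}\rbicross{\sm}H)\otimes\cK(G)\otimes A\bigr),
\end{align*}
i.e.\ one first applies $\lambda$ to the $A$-leg and the matching to the resulting $H$-leg, and only then conjugates by $X^{\sm*}_{123}$. On the copy $(\wh{J}^GC^r_0(\wh{G})\wh{J}^G)_1$ the preliminary maps act trivially, which is why your treatment of that generator happens to be correct; on $\alpha(A)$ the $\lambda$-factor is essential, and the computation you envisaged then does go through: by (3) of \cref{lem:bicrossell2} and the commutation above, $\Ad X^{\sm*}_{123}\sm_{12}\sigma_{23}\bigl((\id\otimes\lambda)\alpha(a)\bigr)_{234}=\sm_{12}\sm_{32}\sigma_{23}\bigl((\id\otimes\lambda)\alpha(a)\bigr)_{234}$, and it is exactly here that \cref{eq:prop:YDcond} converts $\sm_{32}\sigma_{23}\bigl((\id\otimes\lambda)\alpha(a)\bigr)_{234}$ into $\bigl((\id\otimes\alpha)\lambda(a)\bigr)_{234}$, yielding $(\sm\otimes\alpha)(1\otimes\lambda(a))$. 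A downstream casualty of your setup is the coassociativity argument: once $\wt{\delta}$ is not a conjugation, it does not follow ``for free'' from $X^{\sm}$ being a representation; the paper instead verifies $(\Delta_{\rtimes}\otimes\id^{\otimes2})\wt{\delta}=(\id^{\otimes2}\otimes\wt{\delta})\wt{\delta}$ separately on the two generating families, using \cref{prop:bicrosscr0} on $(\wh{J}^GC^r_0(\wh{G})\wh{J}^G)_1$ and the identification of $\wt{\delta}\restriction_{\alpha(A)}$ with $\pi_H^*\lambda$ on $\alpha(A)$.
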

	
	Note that $\wt{\delta}$ acts on $\alpha(A)\cong A$ by $\pi_H^*\lambda$. 
	
	\begin{proof}
		We define $\wt{\delta}$ as the following composition of injective non-degenerate $*$-homomorphisms 
		\begin{align}\label{eq:prf:prop:YDtobicrossC*}
			&
			\begin{aligned}
				&
				\wt{\delta}\colon 
				G^{\op}\redltimes A \subset \cM\bigl( \cK(G)\otimes A \bigr) 
				\xrightarrow{\sm_{12}\sigma_{23}(1\otimes\id\otimes\lambda)} 
				\cM\bigl( \sm(1\otimes C^r_0(H))\otimes \cK(G) \otimes A \bigr) 
				\\&
				\xrightarrow{\Ad X^{\sm*}_{123}} 
				\cM\bigl( C^r_0(\wh{G}\rbicross{\sm}H) \otimes \cK(G) \otimes A \bigr). 
			\end{aligned}
		\end{align}
		Clearly this map satisfies \cref{eq:prop:YDtobicrossC*} for any $y\in C^r_0(\wh{G})$. 
		Also, for any $a\in A$ by using (3) of \cref{lem:bicrossell2} we have 
		\begin{align*}
			&
			\Ad X^{\sm*}_{123} \sm_{12} \sigma_{23} \bigl((\id\otimes\lambda)\alpha(a)\bigr)_{234} 
			\\&
			=
			\Ad (Z^{\sm}_{12}Z^{\sm}_{32}\wh{V}^{G*}_{31}Z^{\sm*}_{12}Z^{\sm}_{12}) 
			\sigma_{23} \bigl((\id\otimes\lambda)\alpha(a)\bigr)_{234} 
			\\&
			=
			\sm_{12}\sm_{32} \sigma_{23} \bigl((\id\otimes\lambda)\alpha(a)\bigr)_{234} 
			\\&
			=
			\sm_{12} \bigl((\id\otimes\alpha)\lambda(a)\bigr)_{234}. 
		\end{align*}
		
		By \cref{prop:bicrosscr0}, we see 
		\begin{align*}
			&
			\bigl[ C^r_0(\wh{G}\rbicross{\sm}H)_{12} \wt{\delta} (G^{\op}\redltimes A) \bigr]
			\\&
			=
			\bigl[ (C^r_0(\wh{G}\rbicross{\sm}H)\otimes \wh{J}^GC^r_0(\wh{G})\wh{J}^G)_{123} (\sm\otimes\alpha)(1\otimes\lambda)(A) \bigr]
			\\&
			= 
			C^r_0(\wh{G}\rbicross{\sm}H)\otimes (G^{\op}\redltimes A). 
		\end{align*}
		
		It holds 
		$(\Delta_{\rtimes}\otimes\id^{\otimes2})\wt{\delta}
		=(\id^{\otimes2}\otimes\wt{\delta})\wt{\delta}$ 
		on $(\wh{J}^GC^r_0(\wh{G})\wh{J}^G)_1$ again by \cref{prop:bicrosscr0} 
		and on $\alpha(A)$ 
		by the remark before the proof. 
	\end{proof}
	
	As expected, this procedure recovers the dual action of $\wh{G}$ as follows. 
	
	\begin{cor}\label{cor:YDtobicross}
		Let $\sm$ be a continuous matching on regular locally compact quantum groups $G$ and $H$. 
		For a left $\sm$-YD C*-algebra $(A,\alpha,\lambda)$ and 
		the continuous left $\wh{G}\rbicross{\sm}H$-action $\wt{\delta}$ on $G^{\op}\redltimes A$ given by \cref{prop:YDtobicrossC*}, it holds 
		$\wh{\pi}_G^*\wt{\delta}=\Ad \wh{V}^{G*}_{21}(-)_{23}$. 
	\end{cor}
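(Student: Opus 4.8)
The plan is to identify $\wh{\pi}_G^*\wt{\delta}$ with the dual action by comparing the two maps on a generating set of $G^{\op}\redltimes A$, using \cref{lem:bicrossell2} as the key input. Write $B:=G^{\op}\redltimes A$ and let $\beta:=\Ad\wh{V}^{G*}_{21}(-)_{23}$ denote the right-hand side. Since $B=[(\wh{J}^GC^r_0(\wh{G})\wh{J}^G)_1\,\alpha(A)]$, it is generated as a C*-algebra by the elements $\wh{J}^Gy\wh{J}^G\otimes1$ with $y\in C^r_0(\wh{G})$ and $\alpha(a)$ with $a\in A$. As both $\wh{\pi}_G^*\wt{\delta}$ and $\beta$ are $*$-homomorphisms, by multiplicativity and continuity it suffices to check that they agree on these two families.

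First I treat the elements $\wh{J}^Gy\wh{J}^G\otimes1$. By \cref{prop:YDtobicrossC*} we have $\wt{\delta}(\wh{J}^Gy\wh{J}^G\otimes1)=\Ad X^{\sm*}_{123}(\wh{J}^Gy\wh{J}^G)_3$, so on the subalgebra $\wh{J}^GC^r_0(\wh{G})\wh{J}^G\otimes1$ the action $\wt{\delta}$ is implemented by the unitary representation $X^{\sm}$ of $\wh{G}\rbicross{\sm}H$ on $L^2(G)$. The restriction along $\wh{\pi}_G$ of an action of the form $\Ad U^*(1\otimes-)$, with $U$ a unitary representation, is implemented by the restricted representation $U^{\wh{\pi}_G}$; and \cref{lem:bicrossell2}(2) computes this restricted representation to be $\wh{V}^G_{21}$. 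Transporting the leg placement of \cref{lem:bicrossell2}(2) to the legs of the target $\cM(C^r_0(\wh{G})\otimes\cK(G)\otimes A)$, this gives $\wh{\pi}_G^*\wt{\delta}(\wh{J}^Gy\wh{J}^G\otimes1)=\Ad\wh{V}^{G*}_{21}(\wh{J}^Gy\wh{J}^G)_2=\beta(\wh{J}^Gy\wh{J}^G\otimes1)$, as wanted.

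Next I treat the elements $\alpha(a)$. By the remark following \cref{prop:YDtobicrossC*}, $\wt{\delta}$ restricts on $\alpha(A)$ to $\pi_H^*\lambda$, so $\wh{\pi}_G^*\wt{\delta}$ restricts there to $(\pi_H\wh{\pi}_G)^*\lambda$. The composite $\pi_H\wh{\pi}_G\colon\wh{G}\to H$ is the trivial homomorphism $1_{\wh{G}\to H}$ — the inclusion of the $\wh{G}$-factor followed by the projection onto the $H$-factor of the bicrossed product — which I verify at the level of bicharacters from the formulas $W^{\pi_H}=\sm_{12}W^H_{23}$ and $W^{\wh{\pi}_G}=\sm_{23}\wh{W}^G_{12}$ of \cref{rem:componentbidbl} together with the composition rule for homomorphisms. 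Hence $\wh{\pi}_G^*\wt{\delta}$ is trivial on $\alpha(A)$. On the other side, $\wh{V}^G=V^{\wh{G}}$ lies in $L^\infty(G)'\barotimes L^\infty(\wh{G})$, so the leg of $\wh{V}^{G*}_{21}$ sitting in position $2$ commutes with $\alpha(a)\in\cM(C^r_0(G)\otimes A)$, giving $\beta(\alpha(a))=\Ad\wh{V}^{G*}_{21}(\alpha(a))_{23}=1\otimes\alpha(a)$, again trivial. The two families therefore agree and $\wh{\pi}_G^*\wt{\delta}=\beta$.

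The main obstacle is the principle invoked in the second paragraph, namely that restriction of a spatially implemented action corresponds to restriction of the implementing representation, with all legs correctly placed. Concretely, for a unitary representation $U$ of $\wh{G}\rbicross{\sm}H$ and $\kappa=\Ad U^*(1\otimes-)$, I must check that $\Ad(U^{\wh{\pi}_G})^*(1\otimes-)$ satisfies the defining relation $(\id\otimes\kappa)(\wh{\pi}_G^*\kappa)=\Ad W^{\wh{\pi}_G*}_{12}(\kappa(-)_{23})$ of the restriction; this reduces to the intertwining relation between $U$, $W^{\wh{\pi}_G}$, and $U^{\wh{\pi}_G}$ encoded in \cref{lem:bicrossell2}(2), and then the uniqueness of restriction finishes the identification. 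The two triviality checks in the third paragraph are routine once the commutant relation for $\wh{V}^G$ and the composition $\pi_H\wh{\pi}_G$ are recorded.
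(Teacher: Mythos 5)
Your reduction to the two generating families of $G^{\op}\redltimes A$ is legitimate, and your third paragraph is essentially correct: the composition rule for bicharacters does give
\begin{align*}
W^{\pi_H\wh{\pi}_G}_{14}
= W^{\wh{\pi}_G*}_{123}\,W^{\pi_H}_{234}\,W^{\wh{\pi}_G}_{123}\,W^{\pi_H*}_{234}
= \sm_{23}\bigl( \wh{W}^{G*}_{12} W^H_{34} \wh{W}^G_{12} W^{H*}_{34} \bigr) = 1 ,
\end{align*}
so $\pi_H\wh{\pi}_G=1_{\wh{G}\to H}$, and $\Ad \wh{V}^{G*}_{21}$ indeed fixes $\alpha(A)_{23}$ because $\wh{V}^G\in L^\infty(G)'\barotimes L^\infty(\wh{G})$. (You do use silently that restriction of actions is functorial, $\wh{\pi}_G^*\pi_H^*=(\pi_H\wh{\pi}_G)^*$, and that it is compatible with equivariant embeddings into multiplier algebras; both are standard, though neither is proved in the paper.)

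The genuine gap is exactly at the step you flag as the main obstacle, and the reference you give does not close it. Write legs $1=\wh{G}$, $2=\wh{G}\rbicross{\sm}H$, $3=L^2(G)$, and let $\kappa=\Ad X^{\sm*}(-)_3$ on $C:=\wh{J}^GC^r_0(\wh{G})\wh{J}^G$. To verify the defining relation $(\id\otimes\kappa)(\wh{\pi}_G^*\kappa)=\Ad W^{\wh{\pi}_G*}_{12}(\kappa(-)_{23})$ for the candidate $\Ad\bigl(\wh{V}^G_{21}\bigr)^*(1\otimes-)$, what you need is the relation
\begin{align*}
X^{\sm}_{23}\, W^{\wh{\pi}_G}_{12}\, X^{\sm*}_{23} = W^{\wh{\pi}_G}_{12}\,\bigl(\wh{V}^G_{21}\bigr)_{13} ,
\end{align*}
that is, conjugation of the bicharacter $W^{\wh{\pi}_G}$ by the representation $X^{\sm}$; granted this, the check is immediate since $W^{\wh{\pi}_G}_{12}$ commutes with $c_3$. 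But what \cref{lem:bicrossell2}(2) states is the different relation
\begin{align*}
\Ad V^{\wh{\pi}_G}_{21}\bigl(X^{\sm}_{23}\bigr) = X^{\sm}_{23}\,\bigl(\wh{V}^G_{21}\bigr)_{13} ,
\end{align*}
with $V^{\wh{\pi}_G}$ the unitary computed in \cref{eq:rem:componentbidbl}: here it is $X^{\sm}$ that gets conjugated, by $V^{\wh{\pi}_G}$, not $W^{\wh{\pi}_G}$ by $X^{\sm}$. The two pictures are exchanged only through the modular conjugations ($V^{\phi}=(\wh{J}\otimes\wh{J})W^{\phi*}_{21}(\wh{J}\otimes\wh{J})$), and transporting the lemma's relation into the one you need requires controlling how $X^{\sm}$ interacts with $\wh{J}^{\rtimes}$ and $\wh{J}^G$ — a computation of the same order as those in the paper, which your proposal neither supplies nor replaces by a citation (e.g.\ to the universal-lift theory of restricted corepresentations). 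This is precisely why the paper never invokes the $W$-picture: its proof characterizes the restriction through $\Delta_{\rtimes}\rpb\wh{\pi}_G$ and verifies the single identity $((\Delta_{\rtimes}\rpb\wh{\pi}_G)\otimes\id^{\otimes2})\wt{\delta}=\Ad \wh{V}^{G*}_{43}(\wt{\delta}(-))_{1245}$, to which \cref{lem:bicrossell2}(2) applies verbatim, handling both generating families at once. If you recast your check on $C$ in that form — conjugate $\kappa(c)$ by $V^{\wh{\pi}_G}$, apply the lemma, and use that $V^{\wh{\pi}_G}$ commutes with $c$ on the representation leg — your argument closes, but it then coincides with the paper's computation restricted to one family of generators; as written, the argument does not go through.
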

	
	\begin{proof}
		Using \cref{eq:rem:componentbidbl}, (2) of \cref{lem:bicrossell2}, and \cref{prop:bicrosscr0}, we check that 
		\begin{align*}
			&
			((\Delta_{\rtimes}\rpb\wh{\pi}_G)\otimes\id^{\otimes2})\wt{\delta} 
			= \Ad (\wh{V}^{G}_{13} X^{\sm*}_{124}) \sm_{12}\sigma_{24} (1\otimes\id\otimes1\otimes\lambda) 
			\\&
			= \Ad (\wh{V}^{G*}_{43} X^{\sm*}_{124} \wh{V}^{G}_{13}) \sm_{12} \sigma_{24} (1\otimes\id\otimes1\otimes\lambda) 
			\\&
			= \Ad (\wh{V}^{G*}_{43} X^{\sm*}_{124}) \sm_{12} \sigma_{24} (1\otimes\id\otimes1\otimes\lambda) 
			= \Ad \wh{V}^{G*}_{43} (\wt{\delta}(-))_{1245} . 
		\qedhere\end{align*}
	\end{proof}

	\subsection{From bicrossed product to double crossed product}\label{ssec:bicrosstoYD}
	
	\begin{rem}
		To proceed, we need the following condition on a continuous matching $\sm$ on locally compact quantum groups $G$ and $H$, 
		\begin{align}\label{eq:strongcontimatching}
			&
			C^r_0(G)\otimes C^r_0(H)=[C^r_0(H)_2\sm(C^r_0(G)_1)] , 
		\end{align}
		which says the continuity of the left $H$-action $\sm_{21}(-)_2$ on $C^r_0(G)$. 
		In this article, we say such a continuous matching $\sm$ is \emph{continuous in the strict sense}. 
		Then the continuity of the left $G$-action $\sm(-)_2$ follows from (6) of \cref{thm:relations} as 
		\begin{align*}
			&
			C^r_0(G)\otimes C^r_0(H) 
			=
			\bigl[ \sm(R^G\otimes R^H) \bigl( C^r_0(H)_2 \sm(C^r_0(G)_1) \bigr) \bigr]
			\\&
			= 
			\bigl[ U^{\rtimes}(J^G\otimes J^H) C^r_0(H)_2 
			U^{\rtimes} (U^{G*}\otimes U^{H*}) C^r_0(G)_1 (U^{G}\otimes U^{H})U^{\rtimes*} 
			(J^G\otimes J^H) U^{\rtimes*} \bigr]
			\\&
			=
			\bigl[ U^{\rtimes} (J^G\otimes J^H) C^r_0(H)_2 (J^G\otimes J^H) 
			U^{\rtimes} (\wh{J}^{G}\otimes \wh{J}^{H}) C^r_0(G)_1 (\wh{J}^{G}\otimes \wh{J}^{H}) 
			U^{\rtimes*} U^{\rtimes*} \bigr]
			\\&
			=
			[ \sm(C^r_0(H)_2) C^r_0(G)_1 ]. 
		\end{align*}
	\end{rem}
	
	Next we give a structure of a left $\sm$-YD C*-algebra to the crossed product $\wh{G}\redltimes B$ for a left $\wh{G}\rbicross{\sm}H$-C*-algebra $B$. 
	Note that in particular this will give a left $H$-action on $\wh{G}\redltimes B$, but this $H$-action need not preserve $B\subset\cM(\wh{G}\redltimes B)$. 
	This situation makes the proof a bit more complicated. 
	
	\begin{prop}\label{prop:bicrosstoYDC*}
		Let $\sm$ be a matching on regular locally compact quantum groups $G$ and $H$ that is continuous in the strict sense. 
		Consider a left $\wh{G}\rbicross{\sm}H$-C*-algebra $(B,\wt{\beta})$ and put $\beta:=\wh{\pi}_{G}^*\wt{\beta}$. 
		Then $\wh{\bfJ}^{\sm}_G(B) := (\wh{G}\mathbin{\underset{\beta,r}{\ltimes}} B, \Ad V^G_{21}(-)_{23}, \gamma)$ is a well-defined left $\sm$-YD C*-algebra, where 
		$\gamma\colon \wh{G}\mathbin{\underset{\beta,r}{\ltimes}} B\to \cM\bigl(C^r_0(H)\otimes (\wh{G}\mathbin{\underset{\beta,r}{\ltimes}} B)\bigr)$ is a continuous left $H$-action such that for any $x\in C^r_0(G)$ and $b\in B$, 
		\begin{align}\label{eq:prop:bicrosstoYDC*}
			&
			\gamma(x\otimes 1) = \sigma\sm(x\otimes1) \quad\text{and}\quad 
			\gamma(\beta(b)) =  \wt{\beta}(b)_{213} . 
		\end{align} 
	\end{prop}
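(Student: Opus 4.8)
The plan is to produce $\gamma$ as a conjugation by a single unitary, so that it is automatically a $\ast$-homomorphism, and then to check the two prescriptions in \eqref{eq:prop:bicrosstoYDC*} on the two generating families $C^r_0(G)_1$ and $\beta(B)$ of $\wh{G}\redltimes B=[C^r_0(G)_1\beta(B)]$. The first formula $\gamma(x\otimes1)=\sigma\sm(x\otimes1)$ is just the canonical left $H$-action on $C^r_0(G)$ attached to the matching, which by (3) of \cref{thm:relations} is implemented by $U^r$; concretely $\sigma\sm(x\otimes1)=\Ad U^r_{21}\bigl((x)_2\bigr)$ in the legs $(H,G)$. I would therefore look for a unitary $\mathcal{U}$, built from $U^r$ and possibly the representation $X^\sm$ of \cref{lem:bicrossell2}, such that $\Ad\mathcal{U}$ also carries $\beta(b)=\wh{\pi}_G^{\ast}\wt{\beta}(b)$ to $\wt{\beta}(b)_{213}$; the passage between $\beta$ and $\wt\beta$ is governed by the restriction formula for $\wh\pi_G$, whose bicharacter $\sm_{23}\wh{W}^G_{12}$ is recorded in \cref{rem:componentbidbl}. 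Once $\mathcal U$ is fixed, both equations of \eqref{eq:prop:bicrosstoYDC*} hold by construction and $\gamma$ is genuinely multiplicative.

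The delicate point is that $\gamma$ must land in $\cM\bigl(C^r_0(H)\otimes(\wh{G}\redltimes B)\bigr)$ even though it does not preserve $B$. Indeed $\wt{\beta}(b)$ lies in $\cM(C^r_0(\wh{G}\rbicross{\sm}H)\otimes B)$ with $C^r_0(\wh{G}\rbicross{\sm}H)=[C^r_0(\wh{G})_1\sm(C^r_0(H)_2)]$ occupying the $L^2(G)\otimes L^2(H)$ legs, so after the $213$-permutation the factor $\sm(C^r_0(H))$ straddles the $H$- and $G$-legs. To see that $\wt\beta(b)_{213}$ nonetheless sits in the target, I would combine two observations: first, $\beta(B)$ already carries $C^r_0(\wh{G})$ in the $G$-leg, so together with the dual copy $C^r_0(G)_1$ regularity gives $[C^r_0(G)C^r_0(\wh G)]=\cK(G)$ there; second, the assumption of continuity in the strict sense, \eqref{eq:strongcontimatching}, disentangles $\sm(C^r_0(H))$ from the neighbouring legs. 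The same two inputs, with the Podle\'s conditions for $\wt\beta$ and for the matching action, also give the non-degeneracy relation $[(C^r_0(H)\otimes1)\gamma(\wh{G}\redltimes B)]=C^r_0(H)\otimes(\wh{G}\redltimes B)$.

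It then remains to verify the comodule law $(\id\otimes\gamma)\gamma=(\Delta_H\otimes\id)\gamma$ and the Yetter--Drinfeld relation \eqref{eq:prop:YDcond} for the pair $\bigl(\Ad V^G_{21}(-)_{23},\gamma\bigr)$, each checked separately on $C^r_0(G)_1$ and on $\beta(B)$. On the dual copy the $G^{\op}$-action is the coproduct of $G^{\op}$ (implemented by $V^G$ through $\Delta_G=\Ad V^G(-\otimes1)$) and $\gamma$ is the matching action, so both identities collapse to the two axioms of a matching in \cref{def:matching}. On $\beta(B)$ the comodule law descends from that of $\wt\beta$ once one substitutes the expression $W^{\rtimes}=(\sm_{12}W^H_{24})(\sm_{34}\wh{W}^G_{13})$ from (1) of \cref{thm:relations} for $\Delta_\rtimes$, this being exactly the portion of the coproduct seen through $\pi_H$. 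For the Yetter--Drinfeld relation on $\beta(B)$ I expect the decisive ingredient to be identity (7) of \cref{thm:relations}, $\sm_{23}(U^r_{13}V^G_{12})=V^G_{12}U^r_{13}$, as it is precisely the $\sm$-twisted commutation of the unitary $U^r$ governing $\gamma$ with the $V^G$ governing the dual $G^{\op}$-action that \eqref{eq:prop:YDcond} demands.

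The main obstacle is the well-definedness addressed in the first two steps: singling out an implementing unitary $\mathcal U$ that reproduces \emph{both} prescriptions at once, and proving that $\wt\beta(b)_{213}$ truly belongs to the crossed product. Because $\gamma$ fails to restrict to $B$, one cannot argue one leg at a time; the interplay of the dual copy $C^r_0(G)_1$ with the $C^r_0(\wh G)$-content of $\beta(B)$, reassembled via regularity and the strict-continuity hypothesis \eqref{eq:strongcontimatching}, is what makes this direction genuinely harder than \cref{prop:YDtobicrossC*}.
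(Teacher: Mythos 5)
Your plan founders at its first and most important step: $\gamma$ cannot be produced as conjugation by a single unitary, and every later step of your outline presupposes that it has been. The obstruction is already visible for the trivial matching $\sm=\id$, where $\wh{G}\rbicross{\sm}H=\wh{G}\times H$. Take any regular $G$, let $H=\bZ$, and let $B$ carry the trivial $\wh{G}$-action together with an arbitrary continuous $H$-action $\mu$, so that $\wt{\beta}(b)=\mu(b)_{23}$, $\beta(b)=1\otimes b$, and $\wh{G}\redltimes B=C^r_0(G)\otimes B$. The required prescription \eqref{eq:prop:bicrosstoYDC*} then reads $\gamma(1\otimes b)=\wt{\beta}(b)_{213}=\mu(b)_{13}$: on the copy $\beta(B)\cong B$ the map $\gamma$ is the given action $\mu$ itself. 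A unitary $\mathcal{U}$ ``built from $U^r$ and possibly $X^{\sm}$'' has trivial leg in $B$, hence commutes with $\beta(b)_{23}=b_3$, so $\Ad\mathcal{U}(b_3)=b_3\neq\mu(b)_{13}$ unless $\mu$ is trivial. Allowing an arbitrary unitary $\mathcal{U}\in\cM(\cK(H)\otimes\cK(G)\otimes B)$ does not help either: the identity $\Ad\mathcal{U}(b_3)=\mu(b)_{13}$ would make the two left $B$-actions $b\mapsto b_3$ and $b\mapsto\mu(b)_{13}$ on the Hilbert $B$-module $E:=L^2(H)\otimes L^2(G)\otimes B$ unitarily equivalent, and unitary equivalence preserves the relations $\pi(f)\,E\,g=0$ for $f,g\in B$; for $B=C(S^1)$ and $\mu$ a nontrivial rotation action of $\bZ$ these relations differ for the two actions ($fg=0$ versus $(f\circ\rho^{n})g=0$ for all $n$), so no such $\mathcal{U}$ exists. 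In short, $\gamma$ is genuinely non-spatial because it must reproduce the arbitrary coaction $\wt{\beta}$ on the $B$-part; this is exactly what makes this direction harder than \cref{prop:YDtobicrossC*}, where the unitary $X^{\sm}$ is only needed on the dual copy $C^r_0(\wh{G})$ while the $A$-part is handled by the homomorphism $\lambda$ itself.

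The paper's proof avoids this by never asking $\gamma$ to be a conjugation: it defines a $*$-homomorphism $\wt{\gamma}$ as the composition of $\id\otimes\wt{\beta}$ (the given coaction itself --- this is the non-unitary ingredient that produces the $B$-part) with conjugations by $\sm_{23}(\wh{W}^G_{12})$ and $\wh{V}^{\rtimes*}$ and the embedding $\sm_{15}(-)_{1234}$, then observes that the image lies in a subalgebra whose outer legs carry no content and slices those legs off with states; multiplicativity of $\gamma$ is inherited from $\wt{\beta}$, not from unitarity. Your remaining steps --- the Podle\'s condition from \eqref{eq:strongcontimatching} and regularity, the comodule law from the expression for $W^{\rtimes}$ in (1) of \cref{thm:relations}, and the YD relation from (7) of \cref{thm:relations} --- do point at the same ingredients the paper uses for the corresponding verifications, but none of them can be carried out until $\gamma$ exists as a $*$-homomorphism, so the gap above is fatal to the proposal as written.
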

	
	In the proof, we use the fact that $\wh{\Delta}_\rtimes$ on $\bigl( \sm(L^\infty(G)_1) L^\infty(\wh{H})_2 \bigr)''$ satisfies 
	\begin{align*}
		&
		\wh{\Delta}_{\rtimes}\sm(-)_1 = (\sm\otimes\sm)\Delta_G(-)_{13} 
		\text{\ \ on\ \ }L^\infty(G), 
		\\&
		(\id\otimes\wh{\Delta}_{\rtimes})(W^H_{13}) = W^H_{15} (\sm_{45}\sm_{41}W^H_{13}), 
	\end{align*}
	which can be checked by using the isomorphism of  
	$\wh{H}\rbicross{\sigma\sm\sigma}G$ and the dual of $\wh{G}\rbicross{\sm}H$ (see the remark after \cref{thm:relations}). 
	\begin{proof}
		Suppose $\gamma$ is a well-defined continuous left $H$-action. Then we can check $(\wh{G}\redltimes B, \Ad V^G_{21}(-)_{23}, \gamma)$ is a left $\sm$-YD C*-algebra. 
		Indeed, the commutativity of the diagram \cref{eq:prop:YDcond} on $C^r_0(G)_1$ is easy and for any $b\in B$ it holds 
		\begin{align*}
			&
			\Ad V^G_{24} (\gamma\beta(b))_{123}
			=
			\Ad V^G_{24} (\wt{\beta}(b))_{213}
			\\&
			=
			\sm_{41} (\wt{\beta}(b))_{213}
			=
			\sm_{41} (\gamma\otimes\id) (\Ad V^G_{13}) \beta(b)_{12}, 
		\end{align*}
		where we have used 
		$\Ad V^{G}_{13}(-)_{12}=\sm_{32}(-)_{12}$ on $C^r_0(\wh{G}\rbicross{\sm}H)$, 
		since they are identity maps on $C^r_0(\wh{G})_1$ and for $x\in C^r_0(H)$ it holds 
		$\Ad V^G_{13}\sm_{12}(x_2) = \sm_{32}\sm_{12}(x_2)$. 
		
		Therefore it suffices to show $\gamma$ is a well-defined continuous left $H$-action. 
		We consider the following composition 
		\begin{align*}
			\begin{aligned}
				&
				\wt{\gamma}\colon 
				\wh{G}\redltimes B 
				\xrightarrow{\id\otimes\wt{\beta}} 
				\Bigl[ C^r_0(G)_1 \bigl( ((\wh{\pi}_{G}^*\Delta_{\rtimes})\otimes\id) \wt{\beta}(B) \bigr) \Bigr]
				\\&
				\xrightarrow{\Ad \sm_{23}(\wh{W}^G_{12})}
				\Bigl[ \bigl(\sm_{23}(\Delta_G(C^r_0(G))_{21})\bigr) \wt{\beta}(B)_{234} \Bigr]
				\\&
				\xrightarrow{\sm_{15}(-)_{1234}}
				\Bigl[ \bigl((\sm\otimes\sm)(\Delta_G(C^r_0(G))_{13})\bigr)_{2315} \wt{\beta}(B)_{234} \Bigr]
				\\&
				\xrightarrow{\Ad \wh{V}^{\rtimes *}_{2315}}
				\bigl[ \sm_{23}(C^r_0(G)_{2}) \wt{\beta}(B)_{234} \bigr]
				\\&
				\subset 
				\cM(\cK(G)\otimes\cK(G)\otimes\cK(H)\otimes B\otimes\cK(H)) , 
			\end{aligned}
		\end{align*}
		and we check $(\omega\otimes\sigma\otimes\id\otimes \wt{\omega})\wt{\gamma}$ satisfies \cref{eq:prop:bicrosstoYDC*} for states $\omega$ on $\cK(G)$ and $\wt{\omega}$ on $\cK(H)$. 
		Thus $\gamma$ is an injective $*$-homomorphism $\wh{G}\redltimes B\to \cM(\cK(H)\otimes\cK(G)\otimes B)$. 
		
		Using the relation 
		\begin{align*}
			\begin{aligned}
				&
				V^{\rtimes} V^{\wh{\pi}_G*}_{123} 
				= 
				U^{\rtimes}_{12} W^{\rtimes}_{3412} U^{\rtimes*}_{12} \wh{V}^{G*}_{13} 
				\\&
				= 
				U^{\rtimes}_{12} (\sm_{34}W^H_{42}) (Z^{\sm}_{12}W^{G*}_{13}Z^{\sm*}_{12}) U^{\rtimes *}_{12} U^{G}_{1} W^{G}_{13} U^{G*}_{1}
				\\&
				=
				U^{\rtimes}_{12} (\sm_{34}W^H_{42}) U^{\rtimes*}_{12} U^{G}_1 U^{H*}_2 W^{G*}_{13} U^{H}_{2} W^{G}_{13} U^{G*}_{1}
				\\&
				=
				U^{\rtimes}_{12} (\sm_{34}W^H_{42}) U^{\rtimes *}_{12} 
			\end{aligned}
		\end{align*}
		and \cref{eq:strongcontimatching}, we see 
		\begin{align*}
			&
			\cK(G)\otimes\cK(H)\otimes 
			\bigl[ C^r_0(H)_2 \sm_{12}(C^r_0(G)_1)\wt{\beta}(B) \bigr] 
			\\&
			=
			\Bigl[ (\cK(G)\otimes\cK(H)\otimes C^r_0(G)\otimes C^r_0(H))_{1234} V^{\rtimes}_{1234}\wt{\beta}(B)_{125}V^{\rtimes *}_{1234} \Bigr] 
			\\&
			=
			\Bigl[ (\cK(G)\otimes\cK(H)\otimes C^r_0(G)\otimes C^r_0(H))_{1234} 
			U^{\rtimes}_{12} (\sm_{34}W^H_{42}) U^{\rtimes *}_{12} \bigl((\id\otimes\beta)\wt{\beta}(B)_{1235}\bigr) V^{\wh{\pi}_G}_{123} V^{\rtimes *}_{1234} \Bigr] 
			\\&
			=
			\Bigl[ (\wh{G}\redltimes B)_{35} (\cK(G)\otimes\cK(H)\otimes C^r_0(G)\otimes C^r_0(H))_{1234} 
			U^{\rtimes}_{12} (\sm_{34}W^{H*}_{42}) U^{\rtimes *}_{12} \Bigr] 
			\\&
			= 
			\cK(G)\otimes \cK(H)\otimes \bigl[ (\wh{G}\redltimes B)_{13} C^r_0(H)_2 \bigr], 
		\end{align*}
		and thus $[C^r_0(H)_1 \gamma(\wh{G}\redltimes B)]=C^r_0(H)\otimes (\wh{G}\redltimes B)$. 
		
		Now it remains to show $(\id\otimes\gamma)\gamma=(\Delta_H\otimes\id)\gamma$. 
		Clearly this holds on $C^r_0(G)_1$. 
		Also, for any $b\in B$, by applying $\wt{\gamma}$ to $\wt{\beta}(b)$ we get 
		\begin{align*}
			&
			\Ad \bigl(\wh{V}^{\rtimes*}_{3416}Z^{\sm}_{16}(\sm_{34}\wh{W}^{G}_{13})\bigr) (\id\otimes\wt{\beta})\wt{\beta}(b) 
			\\&
			= 
			\Ad \bigl( \wh{V}^{\rtimes*}_{3416} Z^{\sm}_{16} (\sm_{12}W^{H*}_{24}) W^{\rtimes}_{1234} \bigr) (\Delta_{\rtimes}\otimes\id)\wt{\beta}(b) 
			\\&
			=
			\Ad \bigl(\wh{V}^{\rtimes*}_{3416} Z^{\sm}_{16} (\sm_{12}W^{H*}_{24}) Z^{\sm*}_{16} W^{H*}_{26} \bigr)  (\wt{\beta}(b)_{345}) 
			\\&
			=
			\Ad \bigl(\wh{V}^{\rtimes*}_{3416} ((\id\otimes\wh{\Delta}_{\rtimes})(W^{H*}_{13}))_{23416} \bigr)  (\wt{\beta}(b)_{345}) 
			\\&
			= \Ad W^{H*}_{24} \wt{\beta}(b)_{345}. 
		\qedhere\end{align*}
	\end{proof}
	
	\subsection{Duality result}\label{ssec:duality}
	
	\begin{thm}\label{thm:BSTTdualC*}
		Let $\sm$ be a matching on regular locally compact quantum groups $G$ and $H$ that is continuous in the strict sense. 
		Consider a left $\sm$-YD C*-algebra $(A,\alpha,\lambda)$ and a left $\wh{G}\rbicross{\sm}H$-C*-algebra $(B,\wt{\beta})$, 
		and put $\beta:=\wh{\pi}_G^*\wt{\beta}$. 
		Then the following hold. 
		\begin{enumerate}
			\item
			We have a well-defined left $G^{\op}$- and left $H$-$*$-isomorphism
			\begin{align*}
				&
				\eta_A := (\id\otimes \alpha)^{-1} 
				\Ad ((J^G\otimes\wh{J}^G) W^{G*} (J^G\otimes\wh{J}^G))_{12} \colon  
				\wh{\bfJ}^{\sm}_G\bfJ^{\sm}_G(A) 
				\to \cK_A(L^2(G)\tensor{\bC} A), 
			\end{align*}
			where we have equipped $L^2(G)$ with the left $G^{\op}\dblcross{\sm}H$-action by $U^r_{32}V^G_{31}$. 
			\item
			We have a left $\wh{G}\rbicross{\sm}H$-$*$-isomorphism 
			\begin{align*}
				&
				\wh{\eta}_B := 
				(\id\otimes\beta)^{-1} \Ad W^G_{21} \colon 
				\bfJ^{\sm}_G\wh{\bfJ}^{\sm}_G(B)
				\to \cK_B(L^2(G)\tensor{\bC}B) , 
			\end{align*}
			where we have equipped $L^2(G)$ with the left $\wh{G}\rbicross{\sm}H$-action by $X^{\sm*}$. 
		\end{enumerate}
	\end{thm}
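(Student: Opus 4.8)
The plan is to treat both statements by a single mechanism: recognize each composite as an iterated reduced crossed product to which Baaj--Skandalis biduality \cite{Baaj-Skandalis:mltu} applies for the underlying $*$-isomorphism, and then verify that the remaining ``matching'' equivariance survives that isomorphism. By \cref{prop:YDtobicrossC*} and \cref{prop:bicrosstoYDC*}, the source of $\eta_A$ is $\wh{\bfJ}^\sm_G\bfJ^\sm_G(A)=\wh{G}\redltimes(G^{\op}\redltimes A)$, where, by \cref{cor:YDtobicross}, the outer $\wh{G}$-crossed product is formed along the dual action $\Ad\wh{V}^{G*}_{21}(-)_{23}$ of the inner $G^{\op}$-crossed product; likewise the source of $\wh{\eta}_B$ is $\bfJ^\sm_G\wh{\bfJ}^\sm_G(B)=G^{\op}\redltimes(\wh{G}\redltimes B)$, with the $G^{\op}$-crossed product formed along the dual action $\Ad V^G_{21}(-)_{23}$ of $\wh{G}\redltimes B$. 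Thus (1) is an instance of biduality for $G^{\op}$ and (2) an instance of biduality for $\wh{G}$, in each case collapsing the double stabilization by $\cK(G)$ onto the single one appearing in the target.

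For (1) I would first check that $u:=(J^G\otimes\wh{J}^G)W^{G*}(J^G\otimes\wh{J}^G)$ is a leg-swapped form of the multiplicative unitary governing biduality for $G^{\op}$, so that $\Ad u_{12}$ carries the three families of generators of $\wh{G}\redltimes(G^{\op}\redltimes A)$ — those from $C^r_0(G)$, from $\wh{J}^GC^r_0(\wh{G})\wh{J}^G$, and from $\alpha(A)$ — into $(\id\otimes\alpha)(\cK(G)\otimes A)$; applying $(\id\otimes\alpha)^{-1}$ then yields the $*$-isomorphism onto $\cK_A(L^2(G)\tensor{\bC}A)$ together with its $G^{\op}$-equivariance, the residual $G^{\op}$-action being the one implemented by $V^G$ on $L^2(G)$. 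The genuinely new point is the $H$-equivariance: I must trace the $H$-action $\gamma$ of \cref{prop:bicrosstoYDC*} (applied to $B=\bfJ^\sm_G(A)$) through $\eta_A$ and identify it with the action of $H$ on $L^2(G)$ by $U^r$ tensored with $\lambda$ on $A$. For this I would expand $\gamma$ via $\gamma(x\otimes1)=\sigma\sm(x\otimes1)$ and $\gamma(\beta(b))=\wt{\delta}(b)_{213}$, insert the description of $\wt{\delta}$ on $\bfJ^\sm_G(A)$ from \cref{prop:YDtobicrossC*}, and reduce to the commutation relation $\sm_{23}(U^r_{13}V^G_{12})=V^G_{12}U^r_{13}$ of \cref{thm:relations}(7) together with the left $\sm$-YD condition.

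For (2) the argument is parallel. Here $\Ad W^G_{21}$ plays the role of the biduality unitary for $\wh{G}$ (note $W^G_{21}=W^{\wh{G}*}$), collapsing the double stabilization and, after $(\id\otimes\beta)^{-1}$, giving the $*$-isomorphism onto $\cK_B(L^2(G)\tensor{\bC}B)$ with the $\wh{G}$-direction of the equivariance handled by biduality; by \cref{lem:bicrossell2}(2) this $\wh{G}$-direction is exactly the restriction of the target action $X^{\sm*}$ along $\wh{\pi}_G$, namely $\wh{V}^G_{21}$. It then remains to upgrade this to full $\wh{G}\rbicross{\sm}H$-equivariance, i.e.\ to match the bicrossed action $\wt{\delta}$ on the source (from $\bfJ^\sm_G$ applied to $\wh{\bfJ}^\sm_G(B)$) with the action of $\wh{G}\rbicross{\sm}H$ on $L^2(G)$ by $X^{\sm*}$ tensored with $\wt{\beta}$ on $B$. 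I would verify this on the two generating families of $\wh{G}\redltimes B$ separately, using \cref{prop:YDtobicrossC*} for $\wt{\delta}$, the expression $X^{\sm}=U^G_3 Z^\sm_{12}U^{r*}_{32}Z^{\sm*}_{12}\wh{W}^G_{13}U^{G*}_3$ of \cref{lem:bicrossell2}, and \cref{prop:bicrosscr0}.

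The main obstacle in both parts is the ``matching'' half of the equivariance rather than the underlying C*-isomorphism, which I would either invoke from \cite{Baaj-Skandalis:mltu} or re-establish by the standard multiplicative-unitary computation. In (1) the difficulty is concentrated in the $H$-action $\gamma$, which — as emphasized before \cref{prop:bicrosstoYDC*} — need not preserve $B\subset\cM(\wh{G}\redltimes B)$, so its interaction with $u$ entangles all the legs and must be controlled purely through the relations of \cref{thm:relations}; in (2) the analogous difficulty is the bicrossed direction transverse to $\wh{\pi}_G$, where $\sm$, $U^r$, and $Z^\sm$ enter nontrivially. I expect the leg-numbering bookkeeping in these two equivariance checks, anchored on \cref{thm:relations}(7), to be the technical heart of the proof.
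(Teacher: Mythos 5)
Your proposal takes essentially the same route as the paper: the paper likewise invokes Baaj--Skandalis duality together with \cref{cor:YDtobicross} to obtain the underlying $*$-isomorphisms and the left $G^{\op}$- (resp.\ left $\wh{G}$-) equivariance for free, and then verifies the remaining left $H$- (resp.\ full left $\wh{G}\rbicross{\sm}H$-) equivariance generator family by generator family, using \cref{prop:YDtobicrossC*,prop:bicrosstoYDC*,prop:bicrosscr0}, \cref{lem:bicrossell2}, and the unitary relations of \cref{thm:relations}. The deferred leg-numbering computations (including the essentially definitional check on the copy of $\wh{J}^G C^r_0(\wh{G})\wh{J}^G$ adjoined by $\bfJ^{\sm}_G$, which your list of generating families in part (2) omits) are exactly the body of the paper's argument.
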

	
	In the statement, we equip the right Hilbert $A$-module $L^2(G)\tensor{\bC}A$ with the continuous left actions given by the tensor product of $L^2(G)$ and $A$ regarded as an equivariant $(\bC,A)$-correspondence, 
	and similarly for $L^2(G)\tensor{\bC}B$. 
	Also, we recall that thanks to (7) of \cref{thm:relations}, the triple 
	$(L^2(G), V^G_{21}, U^r_{21})$ is a well-defined left $\sm$-YD Hilbert space (regarded as a right Hilbert $\bC$-module). 
	Due to \cite[Remark~12.5]{Vaes:impr}, these actions on $L^2(G)\tensor{\bC}A$ and $L^2(G)\tensor{\bC}B$ are continuous and give equivariant Morita equivalences. 
	
	\begin{proof}
		Thanks to Baaj--Skandalis duality \cite{Baaj-Skandalis:mltu}, we know $\eta_A$ is a well-defined left $G^{\op}$-$*$-isomorphism and $\wh{\eta}_B$ is a well-defined left $\wh{G}$-$*$-isomorphism with the restricted actions by \cref{cor:YDtobicross}. 
		
		We show (1). It suffices to show $\eta_A$ is a left $H$-$*$-homomorphism. 
		We write $\gamma$ for the left $H$-action on 
		\begin{align*}
			&
			\wh{\bfJ}^{\sm}_G\bfJ^{\sm}_G(A) 
			= \bigl[ C^r_0(G)_1 U^G_2\wh{\Delta}_{G}(C^r_0(\wh{G}))_{12}U^{G*}_2 \alpha(A)_{23} \bigr] . 
		\end{align*}
		Then we check for $x\in C^r_0(G)$, 
		\begin{align*}
			&
			(\id\otimes\eta_A)\gamma\eta_A^{-1}(x_1) 
			=
			(\id\otimes\eta_A)\gamma(x_1) 
			\\&
			=
			(\id\otimes\eta_A)(\Ad U^{r}_{21}) (x_2)
			=
			\Ad U^{r}_{21} (x_2), 
		\end{align*}
		for $y\in C^r_0(\wh{G})$ by using \cref{prop:bicrosscr0}, 
		\begin{align*}
			&
			(\id\otimes\eta_A)\gamma\eta_A^{-1}(y_1) 
			=
			(\id\otimes\eta_A)\gamma(U_2^G \wh{\Delta}_G(y)_{12} U_2^{G*})
			\\&
			=
			(\id\otimes\eta_A)(\Ad X^{\sm*}_{213})(U^GyU^{G*})_{3} 
			\\&
			=
			(\id\otimes\eta_A)
			\bigl((\wh{J}^{\rtimes}\otimes \wh{J}^G) \delta_{\rtimes}(J^G yJ^G)_{312} (\wh{J}^{\rtimes}\otimes \wh{J}^G)\bigr)_{213}
			\\&
			=
			(\id\otimes\eta_A) 
			\Ad\bigl( (\wh{J}^{\rtimes}\otimes \wh{J}^G)
			U^{r}_{12} W^{G}_{13} U^{r*}_{12} (J^G\otimes \wh{J}^H\otimes \wh{J}^G)\bigr)_{213} (y_2)
			\\&
			=
			(\id\otimes\eta_A) 
			\Ad\bigl( (J^G\otimes \wh{J}^H\otimes \wh{J}^G) W^{G}_{13} (J^G\otimes \wh{J}^H\otimes \wh{J}^G) U^r_{12} \bigr)_{213} 
			(y_2)
			\\&
			= \Ad U^r_{21} (y_2), 
		\end{align*}
		and for $a\in A$, 
		\begin{align*}
			&
			(\id\otimes\eta_A) \gamma\eta_A^{-1}(U^G_{1}\alpha(a)U^{G*}_1) 
			=
			(\id\otimes\eta_A) \gamma(\alpha(a)_{23}) 
			\\&
			=
			(\id^{\otimes2}\otimes\alpha)^{-1} 
			\Ad\bigl( U^G_2 V^{G}_{32} U^{G*}_2 U^{l}_{21} \bigr) 
			\bigl((\id\otimes\alpha)\lambda(a)\bigr)_{134}
			\\&
			=
			(\id^{\otimes2}\otimes\alpha)^{-1} 
			\Ad\bigl( U^{l}_{21} U^G_2 V^{G}_{32} U^{G*}_2 \bigr) 
			\bigl((\id\otimes\alpha)\lambda(a)\bigr)_{134}
			\\&
			=
			(\id^{\otimes2}\otimes\alpha)^{-1} 
			\Ad\bigl( J^{\rtimes}_{21} (J^G\otimes J^H)_{21} \bigr) 
			\bigl((\id^{\otimes2}\otimes\alpha)(\id\otimes\alpha)\lambda(a)\bigr)
			\\&
			=
			\Ad\bigl( J^{\rtimes}_{21} (J^G\otimes J^H)_{21} Z^{\sm}_{21} \bigr) 
			\bigl((\id\otimes\lambda)\alpha(a)\bigr)_{213}
			\\&
			=
			\Ad U^r_{21} 
			\bigl( (\id\otimes\lambda) (U^G_1 \alpha(a) U^{G*}_1) \bigr)_{213} , 
		\end{align*}
		where the last equality holds since by (6) of \cref{thm:relations}, 
		up to scalar multiplications we have 
		\begin{align*}
			&
			J^{\rtimes} (J^G\otimes J^H) Z^{\sm}
			=
			\wh{J}^{\rtimes} (\wh{J}^G\otimes \wh{J}^H)
			=
			U^r U^G_1 . 
		\end{align*}
		
		We show (2). It suffices to show $\wh{\eta}_B$ is a left $\wh{G}\rbicross{\sm}H$-$*$-homomorphism. 
		We write $\wt{\delta}$ for the left $\wh{G}\rbicross{\sm}H$-action on  
		\begin{align*}
			&
			\bfJ^{\sm}_G\wh{\bfJ}^{\sm}_G (B) 
			= 
			\bigl[ (\wh{J}^G C^r_0(\wh{G}) \wh{J}^G)_1 \Delta_G(C^r_0(G))_{21} \beta(B)_{23} \bigr]. 
		\end{align*}
		Then we check for $y\in C^r_0(\wh{G})$, 
		\begin{align*}
			&
			(\id^{\otimes2}\otimes\wh{\eta}_B) \wt{\delta} \wh{\eta}_B^{-1}(\wh{J}^Gy\wh{J}^G)_1
			=
			(\id^{\otimes2}\otimes\wh{\eta}_B) \wt{\delta} (\wh{J}^G_1 y_1 \wh{J}^G_1)
			\\&
			=
			(\id^{\otimes2}\otimes\wh{\eta}_B) 
			\Ad X^{\sm*}_{123} (\wh{J}^G y \wh{J}^G)_{3} 
			=
			\Ad X^{\sm*}_{123} (\wh{J}^G y \wh{J}^G)_{3} , 
		\end{align*}
		for $x\in C^r_0(G)$ by using (3) of \cref{lem:bicrossell2}, 
		\begin{align*}
			&
			(\id^{\otimes2}\otimes\wh{\eta}_B) \wt{\delta} \wh{\eta}_B^{-1}(x_1)
			=
			(\id^{\otimes2}\otimes\wh{\eta}_B) \wt{\delta} (\Delta_G(x)_{21})
			\\&
			=
			(\id^{\otimes2}\otimes\wh{\eta}_B) (\Ad W^{G*}_{43}) \sm_{12} \sm_{32} (x_3)
			\\&
			=
			(\id^{\otimes2}\otimes\wh{\eta}_B) \Ad\bigl( W^{G*}_{43} Z^{\sm}_{12} Z^{\sm}_{32} \wh{V}^{G*}_{31} Z^{\sm*}_{12} \bigr) (x_3)
			\\&
			= \Ad X^{\sm*}_{123}(x_3) , 
		\end{align*}
		and for $b\in B$, 
		\begin{align*}
			&
			(\id^{\otimes2}\otimes\wh{\eta}_B) \wt{\delta} \wh{\eta}_B^{-1}\beta(b)
			=
			(\id^{\otimes2}\otimes\wh{\eta}_B) \wt{\delta} (\beta(b)_{23})
			\\&
			=
			(\id^{\otimes2}\otimes\wh{\eta}_B) \sm_{12} \Ad V^G_{43} (\wt{\beta}(b)_{425})
			\\&
			=
			(\id^{\otimes3}\otimes\beta)^{-1} 
			\Ad (W^G_{43}Z^{\sm}_{12}V^G_{43}) (\wt{\beta}(b)_{425})
			\\&
			\overset{\blacklozenge}{=}
			(\id^{\otimes3}\otimes\beta)^{-1} 
			\Ad (Z^{\sm}_{12}V^{\wh{\pi}_G}_{324}) (\wt{\beta}(b)_{325})
			\\&
			=
			(\id^{\otimes3}\otimes\beta)^{-1} 
			\Ad (Z^{\sm}_{12}) \bigl(((\Delta_{\rtimes}\rpb\wh{\pi}_G)\otimes\id)\wt{\beta}(b)\bigr)_{3245}
			\\&
			=
			\Ad Z^{\sm}_{12} \wt{\beta}(b)_{324} 
			=
			\Ad (U^{\rtimes}_{12}U^{H*}_{2}) \wt{\beta}(b)_{324} 
			\\&
			=
			\Ad \bigl( U^{\rtimes}_{12}U^{H*}_{2} U^{\rtimes}_{32} W^{G*}_{13} U^{\rtimes*}_{32} \bigr) \wt{\beta}(b)_{324} 
			\\&
			\overset{\bigstar}{=}
			\Ad (Z^{\sm}_{12}Z^{\sm}_{32}\wh{V}^{G*}_{31}Z^{\sm*}_{12} Z^{\sm}_{12}\wh{W}^{G*}_{31}Z^{\sm*}_{12}) \wt{\beta}(b)_{124} 
			\\&
			=
			\Ad (Z^{\sm}_{12}Z^{\sm}_{32}\wh{V}^{G*}_{31}Z^{\sm*}_{12}) \bigl(((\wh{\pi}_G^*\Delta_{\rtimes})\otimes\id)\wt{\beta}(b)\bigr)_{3124} 
			\\&
			=
			\Ad X^{\sm*}_{123} ((\id\otimes\wt{\beta})\beta(b))_{3124} . 
		\end{align*}
		Here 
		for the last equality we have used (3) of \cref{lem:bicrossell2}, 
		and the equality $\overset{\blacklozenge}{=}$ holds by 
		\cref{eq:rem:componentbidbl} and 
		$W^GV^G=\Sigma\wh{V}^G U^G_2$ up to a scalar, 
		where $\Sigma\in\cU(L^2(G)^{\otimes2})$ is the flipping map (see \cite[(2.2)]{Baaj-Skandalis-Vaes:nonsemireg}, for example). 
		Also, the equality $\overset{\bigstar}{=}$ holds because up to scalars we have  
		\begin{align*}
			&
			U^{\rtimes}_{12}U^{H*}_{2} 
			U^{\rtimes}_{32} W^{G*}_{13} U^{\rtimes*}_{32}
			\\&
			=
			U^{\rtimes}_{12}U^{H*}_{2} 
			U^{\rtimes}_{32} U^{H*}_2 U^{G*}_3 V^{G*}_{31} U^{G}_3 U^{H}_2 U^{\rtimes*}_{32}
			\\&
			=
			U^{\rtimes}_{12}U^{H*}_{2} Z^{\sm}_{32} (U^{G*}_2 \wh{V}^{G*} \wh{W}^{G*} \Sigma)_{31} Z^{\sm*}_{32}
			\\&
			=
			Z^{\sm}_{12}Z^{\sm}_{32}\wh{V}^{G*}_{31}Z^{\sm*}_{12} Z^{\sm}_{12}\wh{W}^{G*}_{31}Z^{\sm*}_{12} \Sigma_{13} . 
		\qedhere\end{align*}
	\end{proof}
	
	\begin{rem}\label{rem:dualityopcop}
		Note that for a locally compact quantum group $F$ and a left $F$-C*-algebra $(A,\alpha)$, 
		the opposite C*-algebra $A^{\op}$ has the canonical continuous left $F^{\op}$-action $(R^F\otimes\id)\alpha$. 
		Thus \cref{thm:BSTTdualC*} gives the correspondence between continuous right actions of $G^{\op}\dblcross{\sm}H$ and $\wh{G}\rbicross{\sm}H$. 
		Via the isomorphisms of locally compact quantum groups, we have a similar duality between left $G^{\op}\dblcross{\sm}H$-YD C*-algebras and right $\wh{H}\rbicross{\sigma\sm\sigma}G$-C*-algebras. 
	\end{rem}
	
	\begin{rem}\label{rem:BSTTdualbimod}
		In the situation of \cref{thm:BSTTdualC*}, consider left $\sm$-YD C*-algebras $(A_i,\alpha_i,\lambda_i)$ for $i=1,2$. 
		\begin{enumerate}
			\item
			For a left $G^{\op}\dblcross{\sm}H$-$*$-homomorphism $f\colon A_1\to A_2$, 
			we have a well-defined $*$-homomorphism 
			\begin{align*}
				\bfJ^{\sm}_{G}(f):=(G^{\op}\redltimes f) 
				\colon G^{\op}\redltimes A_1 \to& G^{\op}\redltimes A_2, \\
				\alpha_1(a) \mapsto& \alpha_2(f(a)) \\
				\wh{J}^Gy\wh{J}^G\otimes 1 \mapsto& \wh{J}^Gy\wh{J}^G\otimes 1 
			\end{align*}
			where $a\in A_1$ and $y\in C^r_0(\wh{G})$. 
			It is left $\wh{G}\rbicross{\sm}H$-equivariant by definition of $\bfJ^{\sm}_G$. 
			\item
			For a left $G^{\op}\dblcross{\sm}H$-$(A_1,A_2)$-imprimitivity bimodule $\cE$, 
			we have a well-defined left $\wh{G}\rbicross{\sm}H$-$(\bfJ^{\sm}_G(A_1), 
			\bfJ^{\sm}_G(A_2))$-imprimitivity bimodule $\bfJ^{\sm}_G(\cE)$ as the corner of $\bfJ^{\sm}_G(\cK_{A_2}(\cE\oplus A_2))$. 
			By combining (1), for a proper left $G\dblcross{\sm}H$-$(A_1,A_2)$-correspondence $\cE$, 
			we get a proper left $\wh{G}\rbicross{\sm}H$-$(\bfJ^{\sm}_G(A_1), 
			\bfJ^{\sm}_G(A_2))$-correspondence, denoted by $\bfJ^{\sm}_G(\cE)$. 
			\item
			Similarly, $\wh{\bfJ}^{\sm}_G$ produces a 
			$G^{\op}\dblcross{\sm}H$-$*$-homomorphism, proper correspondence, imprimitivity bimodule from $\wh{G}\rbicross{\sm}H$-equivariant ones. 
		\end{enumerate}
	\end{rem}

	\section{Generalized quantum doubles and monoidal structures}\label{sec:qdmon}
	
	In this section, we consider the particular case of \cref{eg:matchingYD}. 
	Let $\phi\colon H\to G$ be a homomorphism of locally compact quantum groups. 
	If $G$ and $H$ are regular the matching $\Ad W^{\phi}$ is continuous in the strict sense. 
	We note from \cite[Proposition~8.1]{Baaj-Vaes:doublecrossed} that in this case one has the isomorphism 
	$\wh{H^{\op}}\rbicross{\Ad W^{\phi}}\wh{G} \cong \wh{H^{\op}}\times \wh{G}$ given by 
	\begin{align}\label{eq:isomgenqd}
		&
		\Ad W^{\phi*} \colon (\wh{J}^H C^r_0(\wh{H}) \wh{J}^H)_1 W^{\phi} C^r_0(\wh{G})_2 W^{\phi*} \to (\wh{J}^H C^r_0(\wh{H}) \wh{J}^H) \otimes C^r_0(\wh{G}) . 
	\end{align}
	
	\subsection{Duality procedures as twisted tensor products}\label{ssec:twitendual}
	We recall the notion of twisted tensor products. 
	We refer to 
	\cite{Nest-Voigt:eqpd,Meyer-Roy-Woronowicz:twiten,Meyer-Roy-Woronowicz:twiten2,Kitamura:indhomlcqg}, 
	but we use a different convention for the legs from these references 
	because it would be useful for iterations of twisted tensor products. 
	\begin{df}\label{def:twiten}
		Let $\phi\colon H\to G$, $\psi\colon G\to F$ be homomorphisms of locally compact quantum groups. 
		For a left $\phi$-YD C*-algebra $(A,\alpha,\lambda)$ and a left $\psi$-YD C*-algebra $(B,\beta,\mu)$, 
		we define the \emph{twisted tensor product} 
		\begin{align*}
			A\outensor{G}B := [\lambda(A)_{21} \beta(B)_{23}] \subset \cM(A\otimes \cK(G)\otimes B) , 
		\end{align*}
		which is a well-defined left $\psi\phi$-YD C*-algebra with the left $H$-action $\Ad W^{\phi*}_{13} (\alpha\otimes\id^{\otimes2})$ 
		and the left $\wh{F}$-action $\sigma_{12}\sigma_{23}\Ad W^{\psi}_{23}(\id^{\otimes2}\otimes \mu)$. 
	\end{df}
	
	We also define twisted tensor products of YD C*-correspondences by taking the corner of the twisted tensor products of the linking algebras. 
	Twisted tensor products of YD C*-correspondences are compatible with inner tensor products by \cite[Lemma~5.12]{Meyer-Roy-Woronowicz:twiten} and its proof. 
	
	We note that restrictions of actions and dual actions are given by twisted tensor products of YD C*-algebras. 
	For a homomorphism $\phi\colon H\to G$ of regular locally compact quantum groups, 
	we write $\bC_\phi$ for $\bC$ with the trivial left action of $\sD(\phi)$, and unless stated otherwise, we regard $C^r_0(G)$ as a left $1_{G\to \wh{G^{\op}}}$-YD C*-algebra by $(C^r_0(G), \Delta_G, \Delta_G^{\cop})$. 
	Then for a left $G$-C*-algebra $(A,\alpha)$ regarded as a left $1_{G\to\{1\}}$-YD C*-algebra (where $\{1\}$ is the trivial group), 
	we have a left $H$-$*$-isomorphism $\alpha\colon \phi^*A\cong \bC_\phi\outensor{G}A$.
	Also, by regarding $A$ as a left $1_{\{1\}\to\wh{G}}$-YD C*-algebra, we have a left $\wh{G}^{\op}$-$*$-isomorphism 
	$\Ad \wh{V}^G_{23}(-)_{21}\colon G\redltimes A\xrightarrow{\sim} A\outensor{\wh{G}}C^r_0(\wh{G})$. 
	Next we see the action of $\bfJ^{\Ad W^{\phi}}_{H^{\op}}$ is expressed as a twisted tensor product by regarding $\wh{H^{\op}}\times \wh{G} \cong \sD(1_{\wh{G}\to H^{\op}})$. 
	
	\begin{prop}\label{prop:dualtwiten}
		For a homomorphism $\phi\colon H\to G$ of regular locally compact quantum groups and 
		a left $\phi$-YD C*-algebra $(A,\alpha,\lambda)$, it holds 
		\begin{align*}
			\Ad \wh{V}^H_{23}(-)_{21} \colon 
			\bfJ^{\Ad W^{\phi}}_{H^{\op}}(A) \cong A\outensor{\wh{H}}C^r_0(\wh{H}), 
		\end{align*}
		as left $\wh{H^{\op}}\times \wh{G}$-C*-algebras. 
		Here in the right hand side, we regarded $A$ as a left $\wh{\phi}$-YD C*-algebra $(A,\lambda,\alpha)$, and $C^r_0(\wh{H})$ as a left $1_{\wh{H}\to H^{\op}}$-YD C*-algebra. 
	\end{prop}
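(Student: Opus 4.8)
The plan is to show that the stated map $\Phi:=\Ad\wh{V}^H_{23}(-)_{21}$ is a $\wh{H^{\op}}\times\wh{G}$-equivariant $*$-isomorphism by reducing everything except the $\wh{G}$-equivariance to facts already in hand. Write $\sm:=\Ad W^{\phi}$, so that $\bfJ^{\sm}_{H^{\op}}(A)=H\redltimes A=[C^r_0(\wh{H})_1\alpha(A)]$, and observe that by \cref{def:twiten} the target is $A\outensor{\wh{H}}C^r_0(\wh{H})=[\alpha(A)_{21}\,\Delta_{\wh{H}}(C^r_0(\wh{H}))_{23}]$, whose underlying C*-algebra depends only on the $H$-action $\alpha$. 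Thus, as a map of C*-algebras, $\Phi$ is exactly the isomorphism $H\redltimes A\cong A\outensor{\wh{H}}C^r_0(\wh{H})$ recorded before \cref{prop:dualtwiten} (the case $G=H$ of the Baaj--Skandalis description of the dual action as a twisted tensor product). Concretely, since $\wh{V}^H=V^{\wh{H}}$ commutes with $L^\infty(H)\otimes1$ and satisfies $\Delta_{\wh{H}}=\Ad\wh{V}^H(-\otimes1)$, the map $\Phi$ sends $\alpha(a)\mapsto\alpha(a)_{21}$ and $y\otimes1\mapsto\Delta_{\wh{H}}(y)_{23}$ for $a\in A$ and $y\in C^r_0(\wh{H})$, so it is a well-defined $*$-isomorphism carrying the two generating families onto one another. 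The same cited isomorphism shows that $\Phi$ is $\wh{H^{\op}}$-equivariant: on the left the $\wh{H^{\op}}$-action is the dual action of \cref{cor:YDtobicross}, and it matches the twisted-tensor $\wh{H^{\op}}$-action $\sigma_{12}\sigma_{23}(\id^{\otimes2}\otimes\Delta_{\wh{H}}^{\cop})$ on the right (modulo the canonical identification of $\wh{H}^{\op}$ with $\wh{H^{\op}}$). Since $\wh{H^{\op}}\rbicross{\sm}\wh{G}\cong\wh{H^{\op}}\times\wh{G}$ by \cref{eq:isomgenqd}, the full action is determined by its two commuting restrictions, so it remains to verify the $\wh{G}$-equivariance.

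On the family $\alpha(A)$ this is clean. By the remark following \cref{prop:YDtobicrossC*}, $\wt{\delta}$ acts on $\alpha(A)\cong A$ through $\pi_{\wh{G}}^*\lambda$; restricting along the section $\iota_{\wh{G}}\colon\wh{G}\to\wh{H^{\op}}\rbicross{\sm}\wh{G}$ of $\pi_{\wh{G}}$ (the second-factor inclusion under \cref{eq:isomgenqd}) gives simply $\lambda$, because $\pi_{\wh{G}}\iota_{\wh{G}}=\id_{\wh{G}}$. Transporting by $\Phi$ and using $\Phi(\alpha(a))=\alpha(a)_{21}$, the required intertwining becomes the identity
\begin{align*}
\Ad W^{\wh{\phi}*}_{13}(\lambda\otimes\id^{\otimes2})(\alpha(a)_{21})=(\id\otimes\sigma\alpha)\lambda(a),\qquad a\in A.
\end{align*}
As $W^{\wh{\phi}}=W^{\phi*}_{21}$ yields $\Ad W^{\wh{\phi}*}_{13}=\Ad W^{\phi}_{31}$, this is precisely the image of the $\sm$-YD condition \cref{eq:prop:YDcond} under the leg-flip $\sigma_{13}$, and therefore holds.

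The \emph{main obstacle} is the $\wh{G}$-equivariance on the dual generators $C^r_0(\wh{H})_1$. The key simplification is that $\wt{\delta}$ on this dual copy is given by $\Ad X^{\sm*}_{123}(-)_3$ and does not involve $A$ (see \cref{eq:prop:YDtobicrossC*}), so the claim reduces to the $A$-free identity
\begin{align*}
(\id\otimes\Phi)\,\iota_{\wh{G}}^*\wt{\delta}(y\otimes1)=\Ad W^{\wh{\phi}*}_{13}\bigl(\Delta_{\wh{H}}(y)_{34}\bigr),\qquad y\in C^r_0(\wh{H}).
\end{align*}
To compute the left-hand side I would feed the description of the bicrossed action $\delta_{\rtimes}$ on $C^r_0(\wh{H^{\op}})$ and its relation to $\Ad X^{\sm*}$ from \cref{prop:bicrosscr0}, together with the expressions for $X^{\sm}$ in \cref{lem:bicrossell2}, into the restriction $\iota_{\wh{G}}^*$, and then match the outcome against the right-hand side using the bicharacter relations for $W^{\wh{\phi}}$ and the identity $\Delta_{\wh{H}}=\Ad\wh{V}^H(-\otimes1)$. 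This is the one step that is not essentially formal: it requires keeping track of the modular conjugations $\wh{J}$, the unitary $Z^{\sm}$, and the identifications relating $\wh{H}^{\op}$, $\wh{H^{\op}}$, and $\wh{(\wh{H})^{\op}}$, and I expect it to absorb the bulk of the computation. Once both generating families are checked, $\Phi$ intertwines both restrictions and hence the full $\wh{H^{\op}}\times\wh{G}$-actions, proving the proposition.
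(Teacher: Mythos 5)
Your overall structure is the same as the paper's: reduce to checking equivariance separately on the two generating families $\alpha(A)$ and the dual copy $C^r_0(\wh{H})_1$, using \cref{eq:prop:YDtobicrossC*} and the identification \cref{eq:isomgenqd}. Several of your steps are correct and even usefully explicit: the identification of $\Phi$ with the recorded isomorphism $H\redltimes A\cong A\outensor{\wh{H}}C^r_0(\wh{H})$, the observation that the $\wh{H^{\op}}$-equivariance is exactly that recorded statement combined with \cref{cor:YDtobicross} (the compatibility of $\wh{\pi}_{H^{\op}}$ and $\pi_{\wh{G}}$ with the factor inclusion and projection under \cref{eq:isomgenqd} is a one-line comparison of bicharacters, e.g.\ $\Ad W^{\phi*}_{23}(W^{\wh{\pi}_{H^{\op}}})=\wh{W}^{H^{\op}}_{12}$), and the verification on $\alpha(A)$, where the required intertwining is indeed the leg-flip $\sigma_{13}$ of the YD condition \cref{eq:prop:YDcond}.

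The genuine gap is precisely the step you label the ``main obstacle'': the $\wh{G}$-equivariance on the dual copy is only planned, not proved, and the plan as stated cannot be completed with the ingredients you list. \cref{prop:bicrosscr0} and \cref{lem:bicrossell2} express the relevant action through $X^{\sm}$, which for a general matching is built from $Z^{\sm}$ and $U^{r}$, i.e.\ from the modular data $J^{\rtimes},\wh{J}^{\rtimes}$ of the bicrossed product; these admit no further simplification in general, and ``bicharacter relations for $W^{\wh{\phi}}$'' plus $\Delta_{\wh{H}}=\Ad\wh{V}^H(-\otimes1)$ do not suffice to evaluate them. What closes the argument in the quantum-double case is the specific identity $\wh{J}^{\rtimes}=W^{\phi}(J^H\otimes J^G)W^{\phi*}$ from \cite[Proposition~8.1]{Baaj-Vaes:doublecrossed}; feeding it into \cref{lem:bicrossell2}~(1) one computes
\begin{align*}
	X^{\Ad W^{\phi}} = W^{\phi}_{12}\,\wh{W}^{\phi}_{23}\, U^H_3 V^H_{13} U^{H*}_3\, W^{\phi*}_{12},
	\qquad\text{hence}\qquad
	\Ad\bigl(W^{\phi*}_{12}(X^{\Ad W^{\phi}})^*\bigr)(-)_3
	= \Ad\bigl(U^H_3V^{H*}_{13}U^{H*}_3\,\wh{W}^{\phi*}_{23}\bigr)(-)_3
\end{align*}
on $C^r_0(\wh{H})$. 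This factorization is the whole point: the leg-$13$ factor $U^H_3V^{H*}_{13}U^{H*}_3$ implements the dual ($\wh{H^{\op}}$-)action, while the leg-$23$ factor $\Ad\wh{W}^{\phi*}_{23}$ is exactly what transports under $\Phi$ to the target $\wh{G}$-action $\Ad W^{\wh{\phi}*}_{13}(\lambda\otimes\id^{\otimes2})$, i.e.\ to your right-hand side $\Ad W^{\wh{\phi}*}_{13}(\Delta_{\wh{H}}(y)_{34})$. Without this input (or an equivalent explicit determination of $Z^{\Ad W^{\phi}}$ and $U^r$), the $A$-free identity you correctly isolated remains unverified, so the decisive computation of the proof is missing.
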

	
	\begin{proof}
		It suffices to show $\Ad \wh{V}^H_{23}(-)_{21}$ preserves the $\wh{H^{\op}}\times \wh{G}$-actions via the isomorphism of \cref{eq:isomgenqd}. 
		We check this separately on $A$ and $C^r_0(\wh{H})$ with the aid of \cref{eq:prop:YDtobicrossC*}. 
		Since the left $\wh{H^{\op}}\rbicross{\sm}\wh{G}$-action on $\alpha(A)\subset \cM(\bfJ^{\Ad W^{\phi}}_{H^{\op}}(A))$ is the restriction of left $\wh{G}$-action $\lambda$ on $A\cong \alpha(A)$, 
		the corresponding $\wh{H^{\op}}\times\wh{G}$-action is trivial for $\wh{H^{\op}}$ and given by $\lambda$ for $\wh{G}$. 
		
		Also, by \cite[Proposition~8.1]{Baaj-Vaes:doublecrossed}, we have 
		$\wh{J}^{\sm}=W^{\phi}(J^H\otimes J^G)W^{\phi*}$ 
		and by \cref{lem:bicrossell2}~(1) it holds 
		\begin{align*}
			&
			X^{\Ad W^{\phi}} 
			\\&
			= 
			U^H_3 
			W^{\phi}_{12} 
			\bigl(W^{\phi}(J^{H}\otimes J^G)W^{\phi*}(J^{H}\otimes J^G)\bigr)^*_{32} 
			W^{\phi*}_{12} 
			V^H_{13} U^{H*}_3 
			\\&
			= 
			U^H_3 W^{\phi}_{12}
			\wh{V}^{\phi}_{32} W^{\phi*}_{32} 
			U^H_1 W^{\phi}_{32}W^H_{31} U^{H*}_1 
			W^{\phi*}_{12} U^{H*}_3 
			\\&
			= 
			W^{\phi}_{12} 
			\wh{W}^{\phi}_{23} U^H_3 V^H_{13} U^{H*}_3 
			W^{\phi*}_{12} . 
		\end{align*}
		Thus $\Ad (W^{\phi*}_{12}(X^{\Ad W^{\phi}})^*)(-)_3=\Ad (U^H_3V^{H*}_{13}U^{H*}_3\wh{W}^{\phi*}_{23})(-)_3$ on $C^r_0(\wh{H})$.
		The claim follows from these expressions. 
	\end{proof}
	
	Take a homomorphism $\phi\colon H\to G$ of regular locally compact quantum groups and a left $\phi$-YD C*-algebra $(A,\alpha,\lambda)$. 
	By regarding $A$ as a left $\wh{\phi}$-YD C*-algebra $(A,\lambda,\alpha)$, \cref{prop:dualtwiten} applied to the homomorphism $\wh{\phi}$ instead of $\phi$ gives the procedure 
	\begin{align*}
		\bfJ^{\phi}(A):=
		A\outensor{G}C^r_0(G)
		\cong 
		\bfJ^{\Ad \wh{W}^\phi}_{\wh{G}^{\op}}(A) ,
	\end{align*}
	yielding a left $H\times G^{\op}$-C*-algebra. 
	Here, note $G^{\op}\rbicross{\Ad\wh{W}^{\phi}} H \cong H\times G^{\op}$ as a consequence of \cref{eq:isomgenqd} for $\wh{\phi}$. 
	Thanks to \cref{thm:BSTTdualC*} for the matching $\Ad \wh{W}^{\phi}$ on $\wh{G}^{\op}$ and $H$, the procedure $\bfJ^{\phi}$ from left $\phi$-YD C*-algebras to left $H\times G^{\op}$-C*-algebras is bijective up to equivariant Morita equivalence. 
	Especially, when $\phi=\id_G$, this gives a duality between left $\sD(G)$-C*-algebras and $G\times G^{\op}$-C*-algebras.
	Due to conventional convenience, we mainly consider the this correspondence. 
	We also put $\wh{\bfJ}^{1_{H\to G}}(A) := A\outensor{\wh{G}^{\op}}C^r_0(\wh{G}^{\op})$ for a left $1_{H\to \wh{G}^{\op}}$-YD C*-algebra $A$. 
	Note the canonical equivariant $*$-isomorphism $\wh{\bfJ}^{1_{H\to G}}(A)\xrightarrow{\sim} \wh{\bfJ}^{\id}_{\wh{G}^{\op}}(A)$ for the trivial matching on $\wh{G}^{\op}$ and $H$. 
	
	In this occasion, we note the relation of $\bfJ^{\phi}$ with crossed products. 
	\begin{prop}\label{prop:crossprodqd}
		Let $\phi\colon H\to G$ be a homomorphism of regular locally compact quantum groups and $(A,\alpha,\lambda)$ be a left $\phi$-YD C*-algebra. 
		Then we have a canonical $*$-isomorphism $\sD(\phi)\redltimes A\cong C^r_0(\wh{H}) \outensor{H\times G^{\op}} \bfJ^{\phi}(A)$ constructed in the proof. 
		Here we regarded $A$ as a left $1_{H\times G^{\op}\to \{1\}}$-YD C*-algebra and $C^r_0(\wh{H})$ as a left $1_{\{1\}\to H\times G^{\op}}$-YD C*-algebra with the left $\wh{H}\times\wh{G}^{\op}$-action $\sigma_{23}(\wh{\Delta}_H\otimes\id)(\wh{\Delta}_H\rpb\wh{\phi})$. 
	\end{prop}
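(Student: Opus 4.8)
The plan is to realize the crossed product on the left as an iterated crossing and to match it term by term with the twisted tensor product on the right, in the explicit style of the proofs of \cref{thm:BSTTdualC*} and \cref{prop:dualtwiten}.

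First I would set up the dictionary between the two sides. By the identification recorded just before this proposition, $\bfJ^{\phi}(A)\cong\bfJ^{\Ad\wh{W}^{\phi}}_{\wh{G}^{\op}}(A)=\wh{G}\redltimes A$ is nothing but the reduced crossed product of $A$ by its $\wh{G}$-action $\lambda$, now carrying the left $H\times G^{\op}$-action whose $H$-part is transported from $\alpha$ and whose $G^{\op}$-part is the dual action of this crossing. On the other side, for a left $F$-C*-algebra $D$ the crossed product $F\redltimes D$ is a twisted tensor product with $C^r_0(\wh{F})$, as in the isomorphism $G\redltimes A\cong A\outensor{\wh{G}}C^r_0(\wh{G})$ recalled above; I would use this both to unwind the left-hand side and to recognize the right-hand side as a (twisted) crossing of $\bfJ^{\phi}(A)$.

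Second, I would exploit the factorization of a crossing by the double crossed product $\sD(\phi)=H\dblcross{\Ad W^{\phi}}\wh{G}$. Since $C^r_0(\sD(\phi))=C^r_0(H)\otimes C^r_0(\wh{G})$ and the action is $\wt\alpha=(\id\otimes\lambda)\alpha$, crossing the $\wh{G}$-leg first yields precisely $\wh{G}\redltimes A\cong\bfJ^{\phi}(A)$, and the residual $H$-crossing must be carried out on $\bfJ^{\phi}(A)$. The essential point is that, because $\sD(\phi)$ is a \emph{double crossed} product rather than a direct product, this residual crossing is \emph{not} the naive $\bfJ^{\phi}(A)\outensor{\wh{H}}C^r_0(\wh{H})$: the matching $\Ad W^{\phi}$ couples the $H$-direction to the now-dualized $G^{\op}$-direction. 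This coupling is exactly what is encoded by tensoring $C^r_0(\wh{H})$ over the \emph{whole} $H\times G^{\op}$ with the $\wh\phi$-twisted action $\sigma_{23}(\wh{\Delta}_H\otimes\id)(\wh{\Delta}_H\rpb\wh{\phi})$ prescribed in the statement, so that only $C^r_0(\wh{H})$ (and not the full $C^r_0(\wh{H}\times\wh{G}^{\op})$) is needed, the $\wh{G}^{\op}$-direction being supplied by the twist rather than by an independent crossing. I would therefore define the candidate map as the composite of the duality isomorphism $\bfJ^{\phi}(A)\cong\wh{G}\redltimes A$ of \cref{prop:dualtwiten} with the crossed-product-as-twisted-tensor-product identification for this residual crossing, realized concretely as $\Ad$ of an explicit product of the unitaries $W^{G}$, $V^{G}$, $\wh{V}^{H}$, $X^{\Ad W^{\phi}}$ and the matching unitaries of \cref{thm:relations}.

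Finally, I would check that this candidate is a well-defined $*$-isomorphism by testing it on the three generating families of $\sD(\phi)\redltimes A$, namely the copy $\wt\alpha(A)$, the $C^r_0(\wh{G})$-leg, and the $C^r_0(\wh{H})$-leg of $C^r_0(\wh{\sD(\phi)})$, using the relations collected in \cref{thm:relations}, the explicit formula for $X^{\Ad W^{\phi}}$ computed in the proof of \cref{prop:dualtwiten}, and \cref{lem:bicrossell2}, exactly as in the generator-by-generator verifications in the proof of \cref{thm:BSTTdualC*}. The hardest part will be the second step: pinning down $C^r_0(\wh{\sD(\phi)})$ inside $\cM(\cK(H)\otimes\cK(G))$ and verifying that the $\wh\phi$-twist on the coefficient $C^r_0(\wh{H})$ together with the $H\times G^{\op}$-action on $\bfJ^{\phi}(A)$ reproduce exactly the full $\sD(\phi)$-crossed-product relations, with all leg numberings and conjugating unitaries matching up.
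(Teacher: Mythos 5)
Your plan follows essentially the same route as the paper's proof: the paper likewise starts from the Baaj--Vaes description $C^r_0(\wh{\sD(\phi)})=\bigl[C^r_0(\wh{H})_1\bigl(\Ad(\cdots)C^r_0(G)_2\bigr)\bigr]$ (their Propositions~8.1 and 9.1), uses the identification $\bfJ^{\phi}(A)\cong\bfJ^{\Ad\wh{W}^{\phi}}_{\wh{G}^{\op}}(A)$, and realizes the isomorphism as a chain of explicit conjugations (by $U^G_2\wh{V}^{\phi}_{12}U^{G*}_2$, then $V^{G}_{32}U^G_2\wh{V}^{\phi}_{12}$ with a leg insertion, and finally $\Ad\wh{V}^H_{21}$) that reorganizes the generators into a twisted $C^r_0(\wh{H})$-leg times $\wt{\delta}(\bfJ^{\phi}(A))$ --- exactly your ``cross the $\wh{G}$-leg first, then the matching-coupled residual $H$-crossing'' scheme, including your observation that the coupling forces the coefficient to be twisted over all of $H\times G^{\op}$ rather than giving a naive $\outensor{\wh{H}}$-factor. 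The only difference is presentational: the paper constructs the isomorphism as a composite of unitary conjugations tracking the generating subspaces, rather than fixing a candidate map and verifying it on the three generating families, but the computational content is the same.
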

	
	Actually, we do not use the regularity in the proof. 
	
	\begin{proof}
		By \cite[Proposition~8.1, Proposition~9.1]{Baaj-Vaes:doublecrossed}, we have the description 
		\begin{align*}
			&
			C^r_0(\wh{\sD(\phi)})
			=
			\bigl[ C^r_0(\wh{H})_1 \bigl( \Ad ( (J^{H}\otimes\wh{J}^{G})W^{\phi*} (J^{H}\otimes\wh{J}^{G}) W^{\phi*} ) C^r_0(G)_2 \bigr) \bigr] . 
		\end{align*}
		We write $\wt{\delta}$ for the left $H\times G^{\op}$-action of $\bfJ^{\phi}(A)$ and have the composition of $*$-isomorphisms as follows, 
		\begin{align*}
			&
			\sD(\phi)\redltimes A 
			= 
			[ C^r_0(\wh{\sD(\phi)})_{12} (\id\otimes\lambda)\alpha(A) ] 
			\\&
			\xrightarrow{\Ad (U^G_2 \wh{V}^{\phi}_{12} U^{G*}_2) } 
			\bigl[ U^G_2 \wh{V}^{\phi}_{12} C^r_0(\wh{H})_1 \wh{V}^{\phi*}_{12} U^{G*}_2 
			W^{\phi*}_{12} C^r_0(G)_2 W^{\phi}_{12} 
			(\id\otimes\lambda)\alpha(A) \bigr] 
			\\&
			\xrightarrow{ \Ad (V^{G}_{32} U^G_2 \wh{V}^{\phi}_{12})(-)_{134} } 
			\bigl[ \bigl( \Ad (V^{G}_{32} U^{G\otimes2}_{32} \wh{V}^{\phi}_{12} \wh{V}^{\phi}_{13}) C^r_0(\wh{H})_1 \bigr) 
			\bigl( \Ad (V^{G}_{32}W^{\phi*}_{13}) C^r_0(G)_3 \bigr) 
			(\id\otimes\lambda)\alpha(A)_{134} \bigr] 
			\\&
			\cong 
			\bigl[ \bigl( \Ad (U^{G\otimes2}_{32} \wh{V}^{G*}_{23} \wh{V}^{G}_{23} \wh{V}^{\phi}_{12}) C^r_0(\wh{H})_1 \bigr) 
			\wt{\delta}\bfJ^{\phi}(A) \bigr] 
			\\&
			\xrightarrow{\Ad \wh{V}^H_{21}(-)_{2345}} 
			C^r_0(\wh{H}) \outensor{H \times G^{\op}} \bfJ^{\phi}(A) . 
			\qedhere\end{align*}
	\end{proof}
	
	\subsection{Monoidal operations}\label{ssec:moneqC*}
	\begin{rem}
		For a compact quantum group $G$, we write $\cO(G)\subset C^r(G)$ for the dense $*$-subalgebra spanned by the matrix coefficients of finite dimensional unitary representations of $G$. 
		We refer to \cite{Neshveyev-Tuset:book} for representation theory of compact quantum groups. 
		For a right $G$-C*-algebra $(A,\alpha)$ and a left $G$-C*-algebra $(B,\beta)$, we define the spectral subalgebras $\cA=\alpha^{-1}(A\odot \cO(G))$, $\cB:=\beta^{-1}(\cO(G)\odot B)$, and 
		the cotensor product by 
		\begin{align*}
			&
			A\cotensor{G}B:=[ x\in \cA\odot \cB \,|\, (\alpha\otimes\id)(x)=(\id\otimes\beta)(x) ] 
			\subset A\otimes B . 
		\end{align*}
		See \cite{deRijdt-vanderVennet:moneq,Voigt:bcfo}. 
		Let $\phi\colon H\to G$, $\psi\colon G\to F$ be homomorphisms of compact quantum groups. 
		For a left $\phi$-YD C*-algebra $(A,\alpha,\lambda)$ and a left $\psi$-C*-algebra $(B,\beta,\mu)$, 
		we can show there is a left $H\times \wh{F}^{\op}$-$*$-isomorphism 
		\begin{align*}
			\bfJ^{\phi}(A)\cotensor{G}\bfJ^{\psi}(B)\cong \bfJ^{\psi\phi}(A\outensor{G}B) 
		\end{align*}
		with the aid of \cite[Lemma~4.5]{Kitamura:indhomlcqg}, for example. 
	\end{rem}
	
	Unfortunately, sometimes cotensor products do not preserve Morita equivalences. 
	Alternatively, we can treat its stabilized version by using twisted tensor products. 
	Let $(A,\alpha,\lambda)$ be a left $\phi$-YD C*-algebra for a homomorphism $\phi\colon H\to G$ of regular locally compact quantum groups. 
	Unless stated otherwise, in this section we always equip $L^2(G)$ with the left $H$-action $W^{\phi*}(-)_2$ and the left $\wh{G}$-action $\wh{V}^{G*}_{21}(-)_2$. 
	These actions induce a left $H$-action and a left $\wh{G}$-action on the right Hilbert $A$-module $L^2(G)\tensor{\bC}A$ that are continuous. 
	Also, they give a left $1_{H\to G}$-YD structure on $\cK_A(L^2(G)\tensor{\bC} A)$, 
	which can be checked to be left $\sD(1_{H\to G})$-$*$-isomorphic to 
	$\wh{\bfJ}^{1_{H\to G}}\bfJ^{\phi}(A)$ by 
	\begin{align*}
		\Ad \wh{W}^G_{12} (\id\otimes\lambda)\colon 
		\cK_A(L^2(G)\tensor{\bC}A) &\xrightarrow{\sim} 
		[ U^G_1 C^r_0(\wh{G})_1 U^{G*}_1 \Delta_G(C^r_0(G))_{21} \lambda(A)_{23} ] 
		\cong \wh{\bfJ}^{1_{H\to G}}\bfJ^{\phi}(A) .  
	\end{align*}
	Note that typically these actions on $L^2(G)\tensor{\bC}A$ itself do not satisfy left $1_{H\to G}$- nor $\phi$-YD conditions, but we have the following. 
	
	\begin{lem}\label{lem:monbimod}
		Let $\phi\colon H\to G$, $\psi\colon G\to F$ be homomorphisms of regular locally compact quantum groups with $\psi\phi=1_{H\to F}$, 
		$(A,\alpha,\lambda)$ be a left $\phi$-YD C*-algebra, and 
		$(B,\beta,\mu)$ be a left $\psi$-YD C*-algebra. 
		Then 
		\begin{align*}
			\cI_{A,B}:=
			\bigl(\, &
			(L^2(G)\tensor{\bC} A)\outensor{G}B,\ 
			W^{\phi*}_{12} \bigl((\id\otimes\alpha\otimes\id^{\otimes2})(-)\bigr)_{21345} W^{\phi}_{14},\\& 
			W^{\psi}_{41}(\id^{\otimes3}\otimes\mu)(-)_{23415}W^{\psi*}_{41}
			\, \bigr) 
		\end{align*}
		is a well-defined left $1_{H\to F}$-YD $\bigl( \cK_A(L^2(G)\tensor{\bC} A)\outensor{G}B ,\, A\outensor{G}B \bigr)$-imprimitivity bimodule. 
	\end{lem}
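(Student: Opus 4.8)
The plan is to assemble $\cI_{A,B}$ from the twisted tensor product of \cref{def:twiten}, exploiting that only the left $\wh{G}$-action on $L^2(G)\tensor{\bC}A$ and the left $G$-action $\beta$ on $B$ enter the construction of the underlying Hilbert module, whereas the two displayed actions are extra structure placed on top. Since $\psi\phi=1_{H\to F}$, the associated matching $\Ad W^{1_{H\to F}}=\id$ is trivial, so being a left $1_{H\to F}$-YD imprimitivity bimodule means precisely that $\cI_{A,B}$ is an imprimitivity bimodule carrying commuting left $H$- and left $\wh{F}$-actions compatible with the bimodule structure. Accordingly I would split the argument into three parts: well-definedness of the module together with its imprimitivity structure over $A\outensor{G}B$; well-definedness and continuity of the two displayed actions; and their commutation.

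For the first part, I would view $L^2(G)\tensor{\bC}A$ as a $\wh{G}$-equivariant imprimitivity bimodule between $\cK_A(L^2(G)\tensor{\bC}A)$ and $A$: the left $\wh{G}$-action $\wh{V}^{G*}_{21}(-)_2$ on $L^2(G)$ together with $\lambda$ on $A$ extends to the linking algebra and restricts to the $\wh{G}$-part of the $1_{H\to G}$-YD structure on $\cK_A(L^2(G)\tensor{\bC}A)$ on one corner and to $\lambda$ on the other. Forming $(-)\outensor{G}B$ of this linking algebra, which by \cref{def:twiten} refers only to the left $\wh{G}$-action of the left factor and to $\beta$, is functorial for $\wh{G}$-equivariant data, hence carries corner projections to corner projections and identifies the off-diagonal corner with $\cI_{A,B}$. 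This simultaneously yields $\cK_{A\outensor{G}B}(\cI_{A,B})\cong\cK_A(L^2(G)\tensor{\bC}A)\outensor{G}B$ and fullness over $A\outensor{G}B$, so $\cI_{A,B}$ is the desired imprimitivity bimodule; the compatibility of $\outensor{G}$ with inner tensor products from \cite[Lemma~5.12]{Meyer-Roy-Woronowicz:twiten} underlies this step. The point is that this part never uses a YD condition for $L^2(G)\tensor{\bC}A$, which indeed fails, but only its $\wh{G}$-equivariance.

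For the second part, I would read the two formulas leg-by-leg as the correspondence analogues of the twisted tensor product actions of \cref{def:twiten}: the left $H$-action conjugates by $W^{\phi}$ the action $\alpha$ on the $A$-leg together with the $H$-action $W^{\phi*}(-)_2$ on the $L^2(G)$-leg, and symmetrically the left $\wh{F}$-action conjugates by $W^{\psi}$ the action $\mu$ on the $B$-leg. That each sends $\cI_{A,B}$ into the appropriate multiplier module, is a module map, satisfies the coaction identity, and is continuous then follows from the corepresentation properties of $W^{\phi},W^{\psi}$, the comodule identities for $\alpha,\mu$, and regularity, just as for the plain twisted tensor product.

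The third part is the heart of the matter and where I expect the main obstacle. The left $H$-action touches only the $L^2(G)$-, $A$-, and $\cK(G)$-legs, through $W^{\phi}$ and $\alpha$, while the left $\wh{F}$-action touches only the $\cK(G)$- and $B$-legs, through $W^{\psi}$ and $\mu$; since $\alpha$ and $\mu$ act on the disjoint legs $A$ and $B$, the sole potential obstruction to commutation lives on the shared $\cK(G)$-leg, where $W^{\phi}$ inserts its $C^r_0(\wh{G})$-leg and $W^{\psi}$ its $C^r_0(G)$-leg. This is exactly where the hypothesis $\psi\phi=1_{H\to F}$ is indispensable: the composition relation $W^{\psi\phi}_{13}=W^{\phi*}_{12}W^{\psi}_{23}W^{\phi}_{12}W^{\psi*}_{23}$ degenerates, for $W^{\psi\phi}=1$, to $W^{\phi}_{12}W^{\psi}_{23}=W^{\psi}_{23}W^{\phi}_{12}$, which forces the two conjugations to commute on the $\cK(G)$-leg and hence the two actions to commute. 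Because $L^2(G)\tensor{\bC}A$ is not a $\phi$-YD correspondence, this commutation cannot be imported from \cref{def:twiten} and must be verified directly from this degenerate bicharacter relation, which is why I regard it as the main difficulty.
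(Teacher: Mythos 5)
Your proposal is correct and follows essentially the same route as the paper's (very terse) proof: the underlying module, its imprimitivity structure, and the left $\wh{F}$-action are obtained as the twisted tensor product of the left $\wh{G}$-C*-correspondence $L^2(G)\tensor{\bC}A$ with the left $\psi$-YD C*-algebra $B$ (no YD condition needed on the left factor), the left $H$-action is checked directly via bicharacter relations (the paper uses $W^{\phi*}_{12}\wh{V}^{G*}_{23}=\wh{V}^{G*}_{23}W^{\phi*}_{12}W^{\phi*}_{13}$), and the commutation of the two actions is exactly your degenerate composition identity $W^{\psi}_{23}W^{\phi}_{12}=W^{\phi}_{12}W^{\psi\phi}_{13}W^{\psi}_{23}=W^{\phi}_{12}W^{\psi}_{23}$ coming from $\psi\phi=1_{H\to F}$. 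Your identification of where the hypothesis $\psi\phi=1_{H\to F}$ enters, and of the commutation on the shared $\cK(G)$-leg as the crux, matches the paper precisely.
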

	
	For a C*-algebra $C$ and a Hilbert $C$-module $\cE$, we put $\cM(\cE):=\cL_C(C,\cE)$ in the proof. 
	
	\begin{proof}
		More precisely, as a left $\wh{F}$-C*-correspondence, $\cI_{A,B}$ is defined by the twisted tensor product of a left $\wh{G}$-C*-correspondence and a left $\psi$-YD C*-algebra 
		\begin{align*}
			&
			(L^2(G)\tensor{\bC} A)\outensor{G}B 
			= 
			[ \wh{V}^{G*}_{13}L^2(G)_1 \lambda(A)_{32} \beta(B)_{34} ] 
		\end{align*}
		inside $\cM(L^2(G)\otimes A\otimes \cK(G)\otimes B)$. 
		Using $W^{\phi*}_{12} \wh{V}^{G*}_{23} 
		= \wh{V}^{G*}_{23} W^{\phi*}_{12} W^{\phi*}_{13}$, 
		it is not hard to see the left $H$-action is well-defined and continuous. 
		It commutes with the left $\wh{F}$-action by 
		$W^{\psi}_{23}W^{\phi}_{12} = W^{\phi}_{12}W^{\psi\phi}_{13}W^{\psi}_{23} = W^{\phi}_{12}W^{\psi}_{23}$. 
	\end{proof}
	
	Note that the procedure $\bfJ^{\phi}$ sends a left $\sD(\phi)$-imprimitivity bimodule to a $H\times G^{\op}$-equivariant one by \cref{rem:BSTTdualbimod} for $\sm=\Ad \wh{W}^{\phi}$. 
	By letting $(F,\psi,B)$ in \cref{lem:monbimod} be $(\wh{F}^{\op}, 1_{G\to \wh{F}^{\op}}, \bfJ^{\psi}(B))$ and identifying $\wh{\bfJ}^{1_{H\to G}}\bfJ^{\phi}(A)\cong \cK_A(L^2(G)\tensor{\bC} A)$, we get the following. 
	
	\begin{prop}\label{prop:monbimod}
		Let $\phi\colon H\to G$, $\psi\colon G\to F$ be homomorphisms of regular locally compact quantum groups, 
		$(A,\alpha,\lambda)$ be a left $\phi$-YD C*-algebra, and $(B,\beta,\mu)$ be a left $\psi$-YD C*-algebra. 
		Then 
		\begin{align*}
			&
			\cJ_{A,B} := 
			\cI_{A,\bfJ^{\psi}(B)} 
			\tensor{\Ad W^{\psi}_{24} } 
			\bfJ^{\psi\phi}(A\outensor{G}B) 
		\end{align*}
		is a well-defined left $1_{H\to \wh{F}^{\op}}$-YD 
		$\bigl( \wh{\bfJ}^{1_{H\to G}}\bfJ^{\phi}(A)\outensor{G}\bfJ^{\psi}(B) ,\, \bfJ^{\psi\phi}(A\outensor{G}B) \bigr)$-imprimitivity bimodule. 
	\qed\end{prop}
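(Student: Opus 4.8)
The plan is to build $\cJ_{A,B}$ by specializing \cref{lem:monbimod} and then gluing the resulting imprimitivity bimodule to $\bfJ^{\psi\phi}(A\outensor{G}B)$ along an associativity isomorphism for twisted tensor products. First I would invoke \cref{lem:monbimod} with $(\wh{F}^{\op},\,1_{G\to\wh{F}^{\op}},\,\bfJ^\psi(B))$ in the roles of $(F,\psi,B)$, keeping $\phi\colon H\to G$ and $(A,\alpha,\lambda)$. Here $\bfJ^\psi(B)$ is a left $G\times F^{\op}$-C*-algebra, read as a left $1_{G\to\wh{F}^{\op}}$-YD C*-algebra, and the hypothesis ``$\psi\phi=1$'' becomes $1_{G\to\wh{F}^{\op}}\circ\phi=1_{H\to\wh{F}^{\op}}$, which holds automatically. \cref{lem:monbimod} then produces a left $1_{H\to\wh{F}^{\op}}$-YD imprimitivity bimodule $\cI_{A,\bfJ^\psi(B)}$ with left coefficient algebra $\cK_A(L^2(G)\tensor{\bC}A)\outensor{G}\bfJ^\psi(B)$ and right coefficient algebra $A\outensor{G}\bfJ^\psi(B)$.

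To match the statement, I would rewrite the left coefficient algebra. The equivariant $*$-isomorphism $\Ad\wh{W}^G_{12}(\id\otimes\lambda)\colon\cK_A(L^2(G)\tensor{\bC}A)\xrightarrow{\sim}\wh{\bfJ}^{1_{H\to G}}\bfJ^\phi(A)$ recorded just before \cref{lem:monbimod} is $\sD(1_{H\to G})$-equivariant, so it preserves both the left $H$- and the left $\wh{G}$-actions. Since forming $\outensor{G}\bfJ^\psi(B)$ sends equivariant $*$-isomorphisms to equivariant $*$-isomorphisms, it yields a left $1_{H\to\wh{F}^{\op}}$-YD $*$-isomorphism $\cK_A(L^2(G)\tensor{\bC}A)\outensor{G}\bfJ^\psi(B)\cong\wh{\bfJ}^{1_{H\to G}}\bfJ^\phi(A)\outensor{G}\bfJ^\psi(B)$, which is the left coefficient algebra appearing in the statement.

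The substance of the argument is the third step: to exhibit $\Ad W^\psi_{24}$ as a left $1_{H\to\wh{F}^{\op}}$-YD $*$-isomorphism $A\outensor{G}\bfJ^\psi(B)=A\outensor{G}(B\outensor{F}C^r_0(F))\xrightarrow{\sim}(A\outensor{G}B)\outensor{F}C^r_0(F)=\bfJ^{\psi\phi}(A\outensor{G}B)$, that is, an associativity isomorphism for the iterated twisted tensor product, twisted by $W^\psi$. Realizing both algebras inside $\cM(A\otimes\cK(G)\otimes B\otimes\cK(F)\otimes C^r_0(F))$ with legs $1$--$5$, the unitary $W^\psi_{24}\in\cU\cM(C^r_0(G)_2\otimes C^r_0(\wh{F})_4)$ acts on the two twisting legs, and I would check that $\Ad W^\psi_{24}$ carries the closed linear span defining the left bracketing onto that defining the right one, and that it intertwines the two left $H$-actions (built from $W^{\phi*}$ and from $W^{(\psi\phi)*}$ through \cref{def:twiten}) as well as the two left $F^{\op}$-actions coming from $\Delta_F^{\cop}$ on the last leg. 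This verification rests on the bicharacter relations of $W^\psi$, the multiplicativity of $W^G,W^F,W^\psi$, and the composition identity $W^{\psi\phi}_{13}=W^{\phi*}_{12}W^\psi_{23}W^\phi_{12}W^{\psi*}_{23}$; it is precisely the point where the leg convention of \cref{def:twiten} is arranged so that a single $W^\psi$ effects reassociation, and it is the only genuinely technical step.

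Finally, $\cJ_{A,B}$ is the inner tensor product $\cI_{A,\bfJ^\psi(B)}\tensor{\Ad W^\psi_{24}}\bfJ^{\psi\phi}(A\outensor{G}B)$, in which the right coefficient algebra $A\outensor{G}\bfJ^\psi(B)$ of $\cI_{A,\bfJ^\psi(B)}$ is identified with $\bfJ^{\psi\phi}(A\outensor{G}B)$ via the isomorphism of the third step, the latter being viewed as the standard self-correspondence. The inner tensor product of an imprimitivity bimodule with such a correspondence along an equivariant $*$-isomorphism is again an imprimitivity bimodule, and equivariance of inner tensor products (as used in \cref{rem:BSTTdualbimod}) transports the left $1_{H\to\wh{F}^{\op}}$-YD structure to $\cJ_{A,B}$. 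Together with the left-algebra identification of the second step, this shows $\cJ_{A,B}$ is a left $1_{H\to\wh{F}^{\op}}$-YD $(\wh{\bfJ}^{1_{H\to G}}\bfJ^\phi(A)\outensor{G}\bfJ^\psi(B),\,\bfJ^{\psi\phi}(A\outensor{G}B))$-imprimitivity bimodule, as claimed.
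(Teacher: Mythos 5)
Your proposal is correct and follows essentially the same route as the paper: the paper likewise obtains $\cJ_{A,B}$ by applying \cref{lem:monbimod} with $(F,\psi,B)$ specialized to $(\wh{F}^{\op},\,1_{G\to\wh{F}^{\op}},\,\bfJ^{\psi}(B))$, identifying $\cK_A(L^2(G)\tensor{\bC}A)\cong\wh{\bfJ}^{1_{H\to G}}\bfJ^{\phi}(A)$, and then tensoring along the equivariant associativity $*$-isomorphism $\Ad W^{\psi}_{24}\colon A\outensor{G}\bfJ^{\psi}(B)\xrightarrow{\sim}\bfJ^{\psi\phi}(A\outensor{G}B)$. The only cosmetic difference is that the paper records this associativity isomorphism (and its coherence via $W^\psi_{23}W^\phi_{12}=W^\phi_{12}W^{\psi\phi}_{13}W^\psi_{23}$) as a stated general fact rather than a step to be verified, which is exactly the verification you flag as the technical core.
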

	
	Here, we used the left $H\times F^{\op}$-$*$-isomorphism 
	\[ \Ad W^{\psi}_{24}\colon A\outensor{G}\bfJ^{\psi}(B) = 
	A\outensor{G}(B\outensor{F}C^r_0(F))\xrightarrow{\sim}(A\outensor{G}B)\outensor{F}C^r_0(F) = \bfJ^{\psi\phi}(A\outensor{G}B) \]
	for the tensor product of equivariant C*-correspondences. 
	More generally, consider homomorphisms of locally compact quantum groups $\phi\colon H\to G$, $\psi\colon G\to F$, $\theta\colon F\to E$, a left $\phi$-YD C*-algebra $A$, a left $\psi$-YD C*-algebra $B$, and a left $\theta$-YD C*-algebra $D$. 
	Then we have $\Ad W^\psi_{24}\colon A\outensor{G}(B\outensor{F}D)\xrightarrow{\sim} (A\outensor{G}B)\outensor{F}D$ as left $\theta\psi\phi$-YD C*-algebras inside $\cM(A\otimes\cK(G)\otimes B\otimes \cK(F)\otimes D)$. 
	The coherence for such $*$-isomorphisms can be assured thanks to $W^\psi_{23}W^\phi_{12}=W^\phi_{12}W^{\psi\phi}_{13}W^\psi_{23}$. 
	When $\psi=1_{G\to F}$, this $*$-isomorphism is trivial, and we can safely write the iterated twisted tensor product as $A\outensor{G} B \outensor{F}D$. 
	The next lemma says the equivariant imprimitivity bimodule $\cJ_{A,B}$ is compatible with this associativity. 
	Similarly to \cref{prop:monbimod}, when a $*$-homomorphism of the form $\Ad W$ with some leg numbering for some unitary $W$ appears in the tensor product of C*-correspondences, we understand it is with respect to the legs coming from iterated twisted tensor products. 
	
	\begin{lem}\label{lem:monbimodass}
		Let $\phi\colon H\to G$, $\psi\colon G\to F$, $\theta\colon F\to E$ be homomorphisms of regular locally compact quantum groups, and $(A,\alpha,\lambda)$ be a left $\phi$-YD C*-algebra, $(B,\beta,\mu)$ be a left $\psi$-YD C*-algebra, $(D,\delta,\nu)$ be a left $\theta$-YD C*-algebra. 
		Then 
		\begin{align*}
			\bigl( \wh{\bfJ}^{1_{H\to F}}(\cJ_{A,B})\outensor{F}\bfJ^\theta(D) \bigr) \tensor{ \wh{\bfJ}^{1_{H\to F}}\bfJ^{\psi\phi}(A\outensor{G} B) \outensor{F} \bfJ^{\theta}(D)} \cJ_{A\outensor{G}B,D}
		\end{align*}
		and 
		\begin{align*}
			\bigl( \wh{\bfJ}^{1_{H\to G}}\bfJ^\phi(A)\outensor{G} \cJ_{B,D} \bigr) \tensor{ \wh{\bfJ}^{1_{H\to G}}\bfJ^{\phi}(A) \outensor{G} \bfJ^{\theta\psi}(B\outensor{F} D)} \cJ_{A,B\outensor{F}D} \tensor{\Ad W^{\psi}_{24}} \bfJ^{\theta\psi\phi}((A\outensor{G}B)\outensor{F}D)
		\end{align*}
		are equivariantly unitarily isomorphic left $1_{H\to \wh{E}^{\op}}$-YD $\bigl( \wh{\bfJ}^{1_{H\to G}}\bfJ^\phi(A) \outensor{G} \wh{\bfJ}^{1_{G\to F}} \bfJ^\psi(B) \outensor{F} \bfJ^{\theta}(D), \, \bfJ^{\theta\psi\phi}((A\outensor{G}B)\outensor{F}D) \bigr)$-imprimitivity bimodules. 
	\end{lem}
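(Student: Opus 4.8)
The plan is to read this statement as the associativity (pentagon) coherence condition for the monoidal structure morphisms $\cJ_{A,B}$ produced in \cref{prop:monbimod}, and to prove it by unwinding every layer into explicit submodules of one large multiplier algebra and then exhibiting a single unitary, assembled from the bicharacters $W^\phi$, $W^\psi$, $W^{\psi\phi}$, that intertwines both imprimitivity bimodule structures. The conceptual content is that the associators $\Ad W^\psi_{24}$ of iterated twisted tensor products are already coherent by virtue of the composition relation $W^\psi_{23}W^\phi_{12}=W^\phi_{12}W^{\psi\phi}_{13}W^\psi_{23}$ recorded before the lemma, and the task is to transport this coherence through the duality procedures $\bfJ^\phi$ and $\wh{\bfJ}^{1_{H\to G}}$. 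I would therefore first fix one ambient algebra, of the form $\cM(L^2(G)\otimes A\otimes \cK(G)\otimes B\otimes \cK(F)\otimes D\otimes\dotsb)$ together with the stabilizing $L^2$-legs introduced by $\cI$ and by $\wh{\bfJ}$, and pin down the precise slot occupied by each tensor factor, since the whole argument is governed by this bookkeeping.

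Next I would expand both composites inside this common algebra. Using \cref{lem:monbimod}, the definition of $\cI$ in \cref{def:twiten}, and the extension of $\bfJ$ and $\wh{\bfJ}$ to bimodules from \cref{rem:BSTTdualbimod}, the first composite unwinds into an iterated twisted tensor product of the stabilized bimodule $\cI_{A,\bfJ^\psi(B)}$, the dual procedure $\wh{\bfJ}^{1_{H\to F}}$, and $\bfJ^\theta(D)$, tied together by the associator maps $\Ad W^\psi_{24}$; the second composite unwinds similarly, with the inner grouping shifted and an extra $\Ad W^\psi_{24}$ applied. In both cases the actions on the resulting module are carried by $W^\phi$, $W^\psi$, and the stabilizing $\wh{V}^G$-factors, and the left coefficient algebra is $\wh{\bfJ}^{1_{H\to G}}\bfJ^\phi(A)\outensor{G}\wh{\bfJ}^{1_{G\to F}}\bfJ^\psi(B)\outensor{F}\bfJ^\theta(D)$ while the right coefficient algebra is $\bfJ^{\theta\psi\phi}((A\outensor{G}B)\outensor{F}D)$, matching the claim.

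I would then identify the two underlying Hilbert modules and construct the intertwining unitary. The point is that, after cancelling the matching $\Ad W$-rearrangements, the difference between the two groupings is exactly the pentagon for the twisted-tensor associators, which collapses to $W^\psi_{23}W^\phi_{12}=W^\phi_{12}W^{\psi\phi}_{13}W^\psi_{23}$ (and its $\wh{\phantom{x}}$-dual on the stabilizing legs). Accordingly the intertwiner is a product of $\Ad W^\psi_{24}$-type rearrangements on the twisted-tensor legs together with the identity on the $L^2$-factors; I would verify that it is simultaneously a bimodule isomorphism and equivariant for the residual left $H$-action and the left $\wh{E}^{\op}$-action defining the $1_{H\to\wh{E}^{\op}}$-YD structure, the equivariance again reducing to the same composition relation applied on the legs carrying the $H$- and $\wh{E}$-variables.

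The main obstacle I expect is organizational rather than conceptual: keeping the leg numbering consistent across the several nested layers of twisted tensor products, dual procedures, and stabilizations, while simultaneously checking that the one unitary respects the bimodule action and both equivariance conditions. Once the ambient algebra and all leg placements are fixed, every nontrivial step collapses to an instance of $W^\psi_{23}W^\phi_{12}=W^\phi_{12}W^{\psi\phi}_{13}W^\psi_{23}$ together with the compatibilities of \cref{lem:monbimod} and \cref{prop:monbimod}, so the real risk lies in the bookkeeping and in confirming nondegeneracy and continuity of the intermediate actions, not in locating the correct relation.
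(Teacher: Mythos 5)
Your proposal is correct and follows essentially the same route as the paper: the paper's proof likewise expands both composites into explicit submodules ($\cF_1$, $\cF_2$) of one large multiplier algebra $\cM(L^2(G)\otimes L^2(F)\otimes A\otimes\cK(G)\otimes B\otimes\cK(F)\otimes D\otimes\cK(E)\otimes C^r_0(E))$, tracks the residual $H$- and $E^{\op}$-actions leg by leg, and reduces the comparison to bicharacter identities, assembling the final isomorphism through $\Ad W$-type untwistings exactly as you describe. The only point worth flagging is that besides the composition relation $W^\psi_{23}W^\phi_{12}=W^\phi_{12}W^{\psi\phi}_{13}W^\psi_{23}$, the paper's comparison of $\cF_1$ and $\cF_2$ also needs the mixed relation $\wh{V}^{G*}_{14}W^{\psi*}_{42}\wh{V}^{F*}_{26}W^{\psi}_{46} = W^{\psi*}_{42}\wh{V}^{F*}_{26}W^{\psi}_{46}\wh{V}^{G*}_{14}\wh{V}^{G*}_{12}$ on the stabilizing $L^2$-legs, which is covered by your parenthetical remark about the dual relations on those legs.
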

	
	\begin{proof}
		We write $\wh{\delta}$ for the left $F$-action on $\bfJ^\theta(D)$. 
		We put $\cE_1$ for the former imprimitivity bimodule in the statement and 
		\begin{align*}
			\cE_2:=
			\bigl( \wh{\bfJ}^{1_{H\to G}}\bfJ^\phi(A)\outensor{G} \cJ_{B,D} \bigr) \tensor{\wh{\bfJ}\bfJ(A)\boxtimes \bfJ(B\boxtimes D)} \cJ_{A,B\boxtimes D} . 
		\end{align*}
		
		Since $\bfJ^{\psi}(B)$ is a left $1_{H\to \wh{F}^{\op}}$-YD C*-algebra, 
		we see $\wh{\bfJ}^{1_{H\to F}}(C\outensor{G} \bfJ^{\psi}(B)) = C\outensor{G}\wh{\bfJ}^{1_{G\to F}}\bfJ^{\psi}(B)$ for any left $\phi$-YD C*-algebra $C$ 
		and $\wh{\bfJ}^{1_{H\to F}}(\cI_{A,\bfJ(B)}) \cong \cI_{A, \wh{\bfJ}\bfJ(B)}$ 
		as equivariant $\bigl( \cK_A(L^2(G)\tensor{\bC}A) \outensor{G}\wh{\bfJ}^{1_{G\to F}}\bfJ^{\psi}(B), A\outensor{G}\wh{\bfJ}^{1_{G\to F}}\bfJ^{\psi}(B) \bigr)$-imprimitivity bimodules. 
		Then we can check 
		\begin{align*}
			&
			\cE_1\tensor{ \Ad W^{\theta*}_{46} } (A\outensor{G}B)\outensor{F} \bfJ^\theta(D) 
			\\&
			\cong
			\bigl( \wh{\bfJ}^{1_{H\to F}}(\cI_{A,\bfJ(B)}) \outensor{F} \bfJ^\theta(D) \bigr) 
			\tensor{\Phi}
			\cI_{A\boxtimes B,\bfJ(D)}
			\\&
			\cong
			\Bigl[
			\wh{V}^{G*}_{14} L^2(G)_1 W^{\psi*}_{42} \wh{V}^{F*}_{26} L^2(F)_2 W^{\psi}_{46} \lambda(A)_{43} \bigl((\id\otimes\mu)\beta(B)\bigr)_{465} W^{\psi*}_{46} \wh{\delta}(\bfJ^\theta(D))_{6789}
			\Bigr]  \\&=:\cF_1
		\end{align*}
		inside $\cM(L^2(G)\otimes L^2(F)\otimes A\otimes \cK(G)\otimes B\otimes \cK(F)\otimes D\otimes \cK(E)\otimes C^r_0(E))$, where 
		\begin{align*}
			&
			\Psi\colon 
			A\outensor{G}\wh{\bfJ}^{1_{G\to F}}\bfJ^{\psi}(B)  
			= 
			\wh{\bfJ}^{1_{H\to F}} ( A\outensor{G}\bfJ^{\psi}(B) ) 
			\\&
			\xrightarrow{\Ad W^{\psi}_{24}} 
			\wh{\bfJ}^{1_{H\to F}} ( \bfJ^{\psi\phi}(A\outensor{G}B) ) 
			\cong 
			\cK_{A\boxtimes B}\bigl( L^2(F)\tensor{\bC} (A\outensor{G}B) \bigr) 
		\end{align*}
		is the left $H\times F^{\op}$-$*$-isomorphism and 
		\begin{align*}
			&
			\Phi:=\Psi\otimes\id^{\otimes4}\colon A\outensor{G}\wh{\bfJ}^{1_{G\to F}}\bfJ^{\psi}(B) 
			\outensor{F} \bfJ^{\theta}(D) 
			\xrightarrow{\sim} 
			\cK_{A\boxtimes B} \bigl( L^2(F)\tensor{\bC} (A\outensor{G}B) \bigr) \outensor{F} \bfJ^{\theta}(D) . 
		\end{align*}
		
		By unwinding definitions, the left $E^{\op}$-action on $\cF_1$ is $\Ad V^E_{01}(-)_{234567890}$ (where we promise the leg $0$ is placed on the right of $9$), and 
		the left $H$-action on $\cF_1$ coincide with 
		\begin{align*}
			W^{\phi*}_{12} \bigl( \sigma_{12}\sigma_{23} (\id^{\otimes2}\otimes\alpha\otimes\id^{\otimes6})(-) \bigr) W^{\phi}_{15} W^{\psi\phi}_{17} . 
		\end{align*}
		
		Next, we observe 
		\begin{align*}
			&
			\cI_{A, \bfJ(B\boxtimes D)} 
			\tensor{\Ad W^{\theta*}_{46}} 
			A\outensor{G} (B\outensor{F}\bfJ^{\theta}(D))
			\\&
			\cong 
			\bigl( \cK_{A} (L^2(G) \tensor{\bC} A) \outensor{G} \bfJ^{\theta\psi}(B\outensor{F} D) \bigr) \tensor{\Ad W^{\theta*}_{57}} \cI_{A, B\boxtimes \bfJ(D)} ,
		\end{align*}
		and 
		\begin{align*}
			&
			\cE_2\tensor{ \Ad (W^{\theta*}_{46}W^{\theta\psi*}_{26}) } A\outensor{G}(B\outensor{F}\bfJ^\theta(D)) 
			\\&
			\cong
			\bigl( \cK_A(L^2(G)\tensor{\bC}A) \outensor{G} \cI_{B, \bfJ(D)} \bigr) 
			\tensor{\cK( (L^2(G)\otimes A) \boxtimes (B\boxtimes \bfJ(D) ) )}
			\cI_{A, B\boxtimes \bfJ(D)} 
			\\&
			\cong
			\Bigl[
			W^{\psi*}_{42} \wh{V}^{F*}_{26} L^2(F)_2 W^{\psi}_{46} \wh{V}^{G*}_{14} L^2(G)_1 \lambda(A)_{43} W^{\psi*}_{46} \bigl((\id\otimes\beta)\mu(B)\bigr)_{645} \wh{\delta}(\bfJ^\theta(D))_{6789} W^{\psi}_{46}
			\Bigr] \\&=:\cF_2
		\end{align*}
		inside 
		$\cM(L^2(G)\otimes L^2(F)\otimes A\otimes \cK(G)\otimes B\otimes \cK(F)\otimes D\otimes \cK(E)\otimes C^r_0(E))$. 
		The left $E^{\op}$-action on $\cF_2$ is $\Ad V^E_{01}(-)_{234567890}$ and 
		the left $H$-action on $\cF_2$ is 
		\begin{align*}
			W^{\phi*}_{12}\bigl( \sigma_{12}\sigma_{23} (\id^{\otimes2}\otimes\alpha\otimes\id^{\otimes6})(-) \bigr) W^{\phi}_{15} . 
		\end{align*}
		
		From these expressions we can check 
		$\cF_1\tensor{\Ad W^{\psi*}_{24}} (A\outensor{G} B)\outensor{F} \bfJ^{\theta}(D) \cong \cF_2$ 
		with the aid of 
		$\wh{V}^{G*}_{14}W^{\psi*}_{42}\wh{V}^{F*}_{26}W^{\psi}_{46} = W^{\psi*}_{42}\wh{V}^{F*}_{26}W^{\psi}_{46}\wh{V}^{G*}_{14}\wh{V}^{G*}_{12}$. 
		Hence by letting $C:=\bfJ^{\theta\psi\phi}((A\outensor{G} B)\outensor{F} D)$, we conclude 
		\begin{align*}
			&
			\cE_1
			\cong 
			\cF_1 \tensor{\Ad W^{\theta}_{46}} 
			C
			\cong 
			\cF_1 \tensor{\Ad (W^{\psi}_{24}W^{\theta\psi}_{26}W^{\theta}_{46}W^{\psi*}_{24}) } 
			C
			\\&
			\cong 
			\cF_2 \tensor{\Ad (W^{\psi}_{24}W^{\theta\psi}_{26}W^{\theta}_{46}) } 
			C
			\cong 
			\cE_2 \tensor{\Ad W^{\psi}_{24} } 
			C . 
		\qedhere\end{align*}
	\end{proof}

	\section{Consequences for equivariant Kasparov theory}\label{sec:KK}
	
	Let $G$ be a regular locally compact quantum group with separable $L^2(G)$. 
	We write $\Calg^G$ for the category of separable left $G$-C*-algebras and left $G$-$*$-homomorphisms $A\to B$ for any separable left $G$-C*-algebras $A,B$. 
	We define the $G$-equivariant Kasparov category $\KK^G$ as 
	the category such that its objects are separable left $G$-C*-algebras, 
	and the set of morphisms from $A$ to $B$ is the $G$-equivariant KK-group $\KK^G(A,B)$ for objects $A,B\in\KK^G$. 
	Here we refer to \cite{Baaj-Skandalis:eqkk} for the definition of $\KK^G(A,B):=\KK^{C^r_0(G)}(A,B)$. 
	The category $\KK^G$ is additive and has arbitrary countable direct sums. 
	
	We briefly recall the Cuntz picture of $\KK^G$. See the argument of \cite[Theorem~6.5]{Meyer:genhom} and \cite[Theorem~4.3]{Nest-Voigt:eqpd} for details. 
	For a separable left $G$-C*-algebra $(A,\alpha)$, we take $A\ast_G A$ as the quotient of the free product $A\ast A$ by the kernel of the well-defined $*$-homomorphism 
	\begin{align*}
		&
		\wt{\alpha}:=((\id\otimes\iota_1)\alpha)\ast((\id\otimes\iota_2)\alpha)\colon A\ast A\to \cM(C^r_0(G)\otimes (A\ast A)) , 
	\end{align*}
	where $\iota_1, \iota_2\colon A\to A\ast A$ are the embedding of $A$ into the left and right component of $A\ast A$, respectively. 
	Then $\wt{\alpha}$ induces a well-defined continuous left $G$-action on $A\ast_G A$. 
	We define $\mrq A:=\Ker( \id_A\ast_G \id_A \colon A\ast_G A\to A )$, which is a left $G$-C*-subalgebra of $A\ast_G A$, and by putting 
	$A_{\cK}:=\cK_A(L^2(G)^{\oplus\infty}\tensor{\bC}A)$, 
	\begin{align*}
		&
		\wt{\mrq}A:= \cK_{\mrq A_{\cK}}\bigl( L^2(G)^{\oplus\infty}\tensor{\bC}\mrq A_{\cK} \bigr) . 
	\end{align*}
	Then the canonical map 
	$\Hom_{\Calg^{G}}(\wt{\mrq}A, \wt{\mrq}B){\slash\!\!\simeq} \to \KK^G(\wt{\mrq}A, \wt{\mrq}B)$ 
	is bijective, 
	where $\simeq$ is the $G$-equivalent homotopy equivalence of left $G$-$*$-homomorphisms. 
	Here we note the composition of left $G$-$*$-homomorphisms 
	$\id_A\ast_G 0\colon \mrq A\subset A\ast_G A\to A$ is a $\KK^G$-equivalence. 
	Thus as a composition of $G$-imprimitivity bimodules and $\id\ast_G0$, 
	we get a proper left $G$-$(\wt{\mrq}A, A)$-correspondence $\wt{\pi}_A$ 
	giving a $\KK^G$-equivalence. 
	Using this we can deduce the universality of the canonical functor $\Calg^G\to \KK^G$ among split exact stable homotopy functors from $\Calg^G$ to additive categories. 
	For the precise meaning of this statement, see \cite[Theorem~4.4]{Nest-Voigt:eqpd}. 
	
	Also, by using the Cuntz picture, we can equip $\KK^G$ with the canonical structure of a triangulated category given by mapping cone construction. 
	See \cite[Proposition~4.5]{Nest-Voigt:eqpd}, \cite[Appendix]{Meyer-Nest:bctri} for details, and \cite{Neeman:book} for the definition and basic properties of triangulated categories. 
	
	\subsection{Categorical equivalences}\label{ssec:catequiv}
	
	We can restate \cref{thm:BSTTdualC*} as follows. 
	\begin{thm}\label{thm:BSTTdualcat}
		Let $G$ and $H$ be regular locally compact quantum groups with separable $L^2(G)$ and $L^2(H)$, and 
		$\sm$ be a matching on $G$ and $H$ that is continuous in the strict sense. 
		Then there are mutually quasi-inverse equivalences of triangulated categories 
		$\bfJ^{\sm}_{G} \colon \KK^{G^{\op}\dblcross{\sm}H}\to \KK^{\wh{G}\rbicross{\sm} H}$
		and 
		$\wh{\bfJ}^{\sm}_{G} \colon \KK^{\wh{G}\rbicross{\sm} H} \to \KK^{G^{\op}\dblcross{\sm}H}$. 
		Moreover, $\bfJ^{\sm}_{G}$, $\wh{\bfJ}^{\sm}_{G}$ and the functors of Baaj--Skandalis duality $\KK^{G^{\op}}\simeq \KK^{\wh{G}}$ commute with the restrictions 
		$\imath_{G}^* \colon \KK^{G^{\op}\dblcross{\sm}H}\to \KK^{G^{\op}}$ and 
		$\wh{\pi}_G^* \colon \KK^{\wh{G}\rbicross{\sm}H}\to \KK^{\wh{G}}$ 
		up to natural isomorphisms. 
	\end{thm}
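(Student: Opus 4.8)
The plan is to promote the C*-level constructions $\bfJ^{\sm}_G$ and $\wh{\bfJ}^{\sm}_G$ to functors of triangulated categories by the universal property of equivariant $\KK$, and then to extract the quasi-inverse relations and the compatibilities directly from \cref{thm:BSTTdualC*} and \cref{cor:YDtobicross}. First I would regard $A\mapsto \bfJ^{\sm}_G(A)=G^{\op}\redltimes A$, together with its effect on equivariant $*$-homomorphisms, proper correspondences, and imprimitivity bimodules recorded in \cref{rem:BSTTdualbimod}, as a functor $\Calg^{G^{\op}\dblcross{\sm}H}\to \KK^{\wh{G}\rbicross{\sm}H}$. To invoke the universal property in the form of \cite[Theorem~4.4]{Nest-Voigt:eqpd}, I would verify that this composite is homotopy invariant, stable, and split-exact: homotopy invariance and split-exactness hold because reduced crossed products by regular quantum groups preserve homotopies and split extensions, while stability follows from the compatibility with stabilization encoded in \cref{rem:BSTTdualbimod}. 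This produces $\bfJ^{\sm}_G\colon \KK^{G^{\op}\dblcross{\sm}H}\to \KK^{\wh{G}\rbicross{\sm}H}$, and symmetrically $\wh{\bfJ}^{\sm}_G$.

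Next I would check that these functors are exact in the triangulated sense. The functor $\bfJ^{\sm}_G$ commutes with suspension since $G^{\op}\redltimes(C_0(\bR)\otimes A)\cong C_0(\bR)\otimes\bfJ^{\sm}_G(A)$ equivariantly, and it carries a mapping cone extension to a mapping cone extension because the mapping cone is built from pullbacks and tensor products that the reduced crossed product preserves. In view of the description of the triangulated structure through mapping cones (\cite[Proposition~4.5]{Nest-Voigt:eqpd}), this makes $\bfJ^{\sm}_G$ and $\wh{\bfJ}^{\sm}_G$ triangulated functors.

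Finally, to see they are mutually quasi-inverse I would feed the equivariant Morita equivalences of \cref{thm:BSTTdualC*} into $\KK$. Since an equivariant imprimitivity bimodule is a $\KK$-equivalence, the isomorphisms $\eta_A$ and $\wh{\eta}_B$ give $\wh{\bfJ}^{\sm}_G\bfJ^{\sm}_G(A)\cong \cK_A(L^2(G)\tensor{\bC}A)$ and $\bfJ^{\sm}_G\wh{\bfJ}^{\sm}_G(B)\cong \cK_B(L^2(G)\tensor{\bC}B)$; invoking the equivariance of the $\sm$-YD Hilbert space $(L^2(G), V^G_{21}, U^r_{21})$ together with stability then identifies these stabilizations with $A$ and $B$ in $\KK$, so that $\wh{\bfJ}^{\sm}_G\bfJ^{\sm}_G\cong \id$ and $\bfJ^{\sm}_G\wh{\bfJ}^{\sm}_G\cong \id$. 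The compatibility with restriction and Baaj--Skandalis duality is then read off from \cref{cor:YDtobicross}: there $\wh{\pi}_G^*\bfJ^{\sm}_G(A)=G^{\op}\redltimes A$ carries precisely the dual action $\Ad\wh{V}^{G*}_{21}(-)_{23}$, which is Baaj--Skandalis duality applied to $\imath_G^*A$, and dually for $\wh{\bfJ}^{\sm}_G$ via \cref{prop:bicrosstoYDC*}. I expect the main obstacle to lie in the naturality bookkeeping of this last stage: one must ensure that the $\eta_A$, $\wh{\eta}_B$ and the stabilization equivalences assemble into natural isomorphisms of triangulated functors, tracking the enriched $\wh{G}\rbicross{\sm}H$- and $G^{\op}\dblcross{\sm}H$-equivariance uniformly in the variable algebra; once this uniformity is in place, every remaining step is a formal consequence of the cited C*-level results.
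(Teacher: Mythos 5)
Your proposal follows essentially the same route as the paper: define $\bfJ^{\sm}_G$ and $\wh{\bfJ}^{\sm}_G$ on $\Calg$ via \cref{rem:BSTTdualbimod}, descend to triangulated functors on $\KK$ by the universal property of \cite[Theorem~4.4]{Nest-Voigt:eqpd}, obtain the quasi-inverse relations from the naturality in $A$ of the isomorphism $\wh{\bfJ}^{\sm}_G\bfJ^{\sm}_G(A)\cong\cK_A(L^2(G)\tensor{\bC}A)$ of \cref{thm:BSTTdualC*} together with the fact that this stabilization is an endofunctor naturally isomorphic to the identity, and read off the compatibility with restrictions from \cref{cor:YDtobicross}. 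Your explicit verification of homotopy invariance, stability, split-exactness, and exactness, and your flagging of the naturality bookkeeping, simply spell out what the paper's terse appeal to "universality" and "naturality" leaves implicit.
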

	
	\begin{proof}
		First, $\bfJ^{\sm}_G$ gives a well-defined functor of 
		$\Calg^{G^{\op}\dblcross{\sm}H}\to \Calg^{\wh{G}\rbicross{\sm}H}$ 
		as in (1) of \cref{rem:BSTTdualbimod} for morphisms. 
		We can check that this induces a well-defined triangulated functor $\KK^{G^{\op}\dblcross{\sm}H}\to \KK^{\wh{G}\rbicross{\sm}H}$ by using the universality. 
		Similarly, we get a well-defined triangulated functor $\wh{\bfJ}^{\sm}_G\colon \KK^{\wh{G}\rbicross{\sm}H}\to \KK^{G^{\op}\dblcross{\sm}H}$. 
		To show $\wh{\bfJ}^{\sm}_G\bfJ^{\sm}_G\cong \id_{\KK^{G^{\op}\dblcross{\sm}H}}$, 
		we see the equivariant $*$-isomorphism  
		$\wh{\bfJ}^{\sm}_{G}\bfJ^{\sm}_{G}(A)\cong \cK_A(L^2(G)\tensor{\bC}A)$
		in \cref{thm:BSTTdualC*} 
		is natural with $A\in \Calg^{G^{\op}\dblcross{\sm}H}$ 
		and the right hand side induces an endofunctor on $\KK^{G^{\op}\dblcross{\sm}H}$ that is naturally isomorphic to $\id$. 
		Similarly $\bfJ^{\sm}_G\wh{\bfJ}^{\sm}_G\cong \id_{\KK^{\wh{G}\rbicross{\sm}H}}$. 
	\end{proof}
	
	Using twisted tensor products, 
	we can equip $\KK^{\sD(G)}$ with a monoidal structure by following \cite{Nest-Voigt:eqpd}. 
	See \cite{Etingof-Gelaki-Nikshych-Ostrik:book} for the definition and basic properties of monoidal categories. 
	
	Consider homomorphisms $\phi\colon H\to G$, $\psi\colon G\to F$, $\theta\colon F\to E$, $\eta\colon K\to H$ of regular locally compact quantum groups with separable $L^2$-spaces. 
	We recall $\bC_\phi$ denotes $\bC$ with the trivial left $\sD(\phi)$-action. 
	For $B\in\Calg^{\sD(\psi)}$, we have a well-defined (covariant) functor 
	$-\outensor{G}B\colon \Calg^{\sD(\phi)}\to\Calg^{\sD(\psi\phi)}$ 
	and this induces a triangulated functor 
	$-\outensor{G}B\colon \KK^{\sD(\phi)}\to\KK^{\sD(\psi\phi)}$ by the universality and techniques from \cite[Section~5]{Meyer-Roy-Woronowicz:twiten}. 
	More precisely, for $\bx\in \KK^{\sD(\phi)}(A_1,A_2)$ 
	we take $f \in \Hom_{\Calg^{G}}(\wt{\mrq}A_1, \wt{\mrq}A_2)$ 
	representing $[\wt{\pi}_{A_2}]^{-1}\circ \bx\circ [\wt{\pi}_{A_1}] \in \KK^{\sD(\phi)}(\wt{\mrq}A_1, \wt{\mrq}A_2)$ and let 
	\begin{align*}
		\bx\outensor{G}B 
		:= 
		[\wt{\pi}_{A_2}\outensor{G}B] \circ [f\otimes\id^{\otimes2}] \circ [\wt{\pi}_{A_1}\outensor{G}B]^{-1}, 
	\end{align*}
	which is independent on the choice of $f$. 
	When $\bx$ is given by a proper left $G$-$(A_1,A_2)$-correspondence $\cE$, then $[\cE]\outensor{G}B$ is given by the left $G$-correspondence $\cE\outensor{G}B$. 
	We can similarly define a triangulated functor $A\outensor{G}-\colon \KK^{\sD(\psi)}\to \KK^{\sD(\psi\phi)}$ for $A\in \Calg^{\sD(\phi)}$. 
	By definition $\bC_{\id_G}\outensor{G}-$ and $-\outensor{G}\bC_{\id_H}$ are both naturally isomorphic to $\id$ on $\KK^{\sD(\phi)}$. 
	
	For any $f\in\Hom_{\Calg^{\sD(\phi)}}(\wt{\mrq}A_1, \wt{\mrq}A_2)$, $g\in\Hom_{\Calg^{\sD(\psi)}}(\wt{\mrq}B_1, \wt{\mrq}B_2)$, 
	$\bx := [\wt{\pi}_{A_2}] \circ [f] \circ [\wt{\pi}_{A_1}]^{-1} \in \KK^{\sD(\phi)}(A_1,A_2)$ and 
	$\by := [\wt{\pi}_{B_2}] \circ [g] \circ [\wt{\pi}_{B_1}]^{-1} \in  \KK^{\sD(\psi)}(B_1,B_2)$, 
	we have the following commutative diagram in $\KK^{\sD(\psi\phi)}$
	\begin{align}\label{diag:cubicbifunc}
		&
		\begin{aligned}
			\xymatrix{
				A_1\outensor{G}B_1
				\ar[rrr]^-{\bx\boxtimes B_1}
				\ar[ddd]^-{A_1\boxtimes \by}
				&&&
				A_2\outensor{G}B_1
				\ar[ddd]^-{A_2\boxtimes \by}
				\\&
				\wt{\mrq}A_1\outensor{G}\wt{\mrq}B_1
				\ar[ul]_-{[\wt{\pi}_{A_1}\boxtimes \wt{\pi}_{B_1}]}
				\ar[r]^-{[f\boxtimes \id]}
				\ar[d]^-{[\id\boxtimes g]}
				&
				\wt{\mrq}A_2\outensor{G}\wt{\mrq}B_1
				\ar[ur]^-{[\wt{\pi}_{A_2}\boxtimes \wt{\pi}_{B_1}]}
				\ar[d]^-{[\id\boxtimes g]}
				&\\&
				\wt{\mrq}A_1\outensor{G}\wt{\mrq}B_2
				\ar[dl]^-{[\wt{\pi}_{A_1}\boxtimes \wt{\pi}_{B_2}]}
				\ar[r]_-{[f\boxtimes \id]}
				&
				\wt{\mrq}A_2\outensor{G}\wt{\mrq}B_2
				\ar[dr]_-{[\wt{\pi}_{A_2}\boxtimes \wt{\pi}_{B_2}]}
				&\\
				A_1\outensor{G}B_2
				\ar[rrr]_-{\bx\boxtimes B_2}
				&&&
				A_2\outensor{G}B_2
			}
		\end{aligned}
	\end{align}
	with $\KK^{\sD(\psi\phi)}$-equivalences $[\wt{\pi}_{A_i}\outensor{G}\wt{\pi}_{B_j}]$ for $i,j=1,2$. 
	Therefore we have a well-defined bifunctor 
	$-\outensor{G}-\colon \KK^{\sD(\phi)}\times \KK^{\sD(\psi)} \to \KK^{\sD(\psi\phi)}$, 
	which sends $(\bx,\by)$ to 
	\begin{align*}
		&
		\bx\outensor{G}\by
		:=
		(\bx\outensor{G}B_2)\circ(A_1\outensor{G}\by) 
		= 
		(A_2\outensor{G}\by)\circ(\bx\outensor{G}B_1) . 
	\end{align*}
	We have a natural isomorphism 
	$\Ad W^{\psi}\colon -\outensor{G}(-\outensor{H}-)\xrightarrow{\sim}(-\outensor{G}-)\outensor{H}-$ 
	of the functors of 
	\begin{align*}
		&
		\Calg^{\sD(\phi)}\times \Calg^{\sD(\psi)}\times \Calg^{\sD(\theta)}
		\to 
		\Calg^{\sD(\theta\psi\phi)} , 
	\end{align*}
	which induces a natural isomorphism in the level of equivariant Kasparov categories 
	by an argument like \cref{diag:cubicbifunc} using $\wt{\pi}$. 
	Also, we have the ``pentagonal" equality of two natural isomorphisms 
	\begin{align*}
		&
		\Ad W^{\psi}_{46}\Ad W^{\phi}_{24} 
		= 
		\Ad W^{\phi}_{24} \Ad W^{\psi\phi}_{26} \Ad W^{\psi}_{46}
	\end{align*}
	of the functors 
	\begin{align*}
		&
		-\outensor{H}(-\outensor{G}(-\outensor{F}-)), 
		((-\outensor{H}-)\outensor{G}-)\outensor{F}- 
		\colon 
		\KK^{\sD(\eta)}\times\KK^{\sD(\phi)}\times\KK^{\sD(\psi)}\times\KK^{\sD(\theta)}
		\to \KK^{\sD(\theta\psi\phi\eta)} . 
	\end{align*}
	Especially, we have the following corollary. 
	As in \cite{Nakano-Vashaw-Yakimov:nctentrigeom}, 
	we say a category $\cC$ with a structure of a monoidal category and a triangulated category is a \emph{monoidal triangulated category} 
	if the bifunctor of the monoidal operation $-\otimes -\colon \cC\times\cC\to \cC$ is componentwise triangulated. 
	
	\begin{cor}[{\cite[Theorem~4.10]{Nest-Voigt:eqpd}}]\label{cor:tensortriangulatedKK}
		Let $G$ be a regular locally compact quantum group with separable $L^2(G)$. 
		Then $\KK^{\sD(G)}$ is a monoidal triangulated category with $-\outensor{G}-$ and the unit object $\bC_{\id_G}$. 
	\qed\end{cor}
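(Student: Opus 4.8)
The plan is to assemble the corollary from the ingredients established in the paragraphs immediately preceding it, all specialized to $\phi=\psi=\theta=\eta=\id_G$. With this choice one has $\sD(\id_G)=\sD(G)$ by \cref{eg:matchingYD}, so the bifunctor $-\outensor{G}-\colon\KK^{\sD(\phi)}\times\KK^{\sD(\psi)}\to\KK^{\sD(\psi\phi)}$ restricts to an endobifunctor on $\KK^{\sD(G)}$. First I would record the unit object $\bC_{\id_G}$ together with the left and right unit constraints, namely the natural isomorphisms $\bC_{\id_G}\outensor{G}-\cong\id$ and $-\outensor{G}\bC_{\id_G}\cong\id$ already noted above, and the associativity constraint $\Ad W^{G}\colon -\outensor{G}(-\outensor{G}-)\xrightarrow{\sim}(-\outensor{G}-)\outensor{G}-$, which is the case $\psi=\id_G$ of the natural isomorphism $\Ad W^{\psi}$ and which has already been lifted from $\Calg^{\sD(G)}$ to $\KK^{\sD(G)}$ by the $\wt{\pi}$-cube argument of \cref{diag:cubicbifunc}.

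Next I would verify the coherence axioms. The pentagon axiom is precisely the displayed ``pentagonal'' identity $\Ad W^{\psi}_{46}\Ad W^{\phi}_{24}=\Ad W^{\phi}_{24}\Ad W^{\psi\phi}_{26}\Ad W^{\psi}_{46}$ specialized to $\phi=\psi=\id_G$ and transported to the Kasparov level by the same cube argument, so no new computation is needed there. For the triangle axiom I would check the compatibility of the associator $\Ad W^{G}$ with the two unit constraints directly at the level of $\Calg^{\sD(G)}$, using the explicit leg-numbered formulas of \cref{def:twiten} for twisted tensor products with the unit $\bC_{\id_G}$, and then descend the resulting identity of natural isomorphisms to $\KK^{\sD(G)}$ through the $\wt{\pi}$-machinery once more.

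Finally I would invoke the fact, established in the discussion above, that the functors $A\outensor{G}-$ and $-\outensor{G}B$ are triangulated for fixed $A,B$, so that the monoidal operation is componentwise triangulated; together with the monoidal structure just assembled, this yields that $\KK^{\sD(G)}$ is a monoidal triangulated category in the sense of \cite{Nakano-Vashaw-Yakimov:nctentrigeom}. The main obstacle here is not any single hard computation but the bookkeeping: confirming the triangle axiom at the C*-level and making sure that every constraint, once descended to $\KK^{\sD(G)}$ via $\wt{\pi}$, is genuinely natural and satisfies the axioms there rather than merely up to the ambiguities of the Cuntz picture. Since the pentagon and the unit laws are already secured at the Kasparov level, this residual verification should be routine.
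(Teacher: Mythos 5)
Your proposal is correct and follows essentially the same route as the paper: the corollary is stated with an immediate \qed precisely because it is assembled, exactly as you do, from the preceding paragraphs — the bifunctor on Kasparov categories built via the Cuntz picture and \cref{diag:cubicbifunc}, the unit isomorphisms for $\bC_{\id_G}$, the associator $\Ad W^{\psi}$ lifted to $\KK$, the pentagonal identity, and componentwise triangulatedness — all specialized to $\phi=\psi=\theta=\eta=\id_G$. Your explicit attention to the triangle axiom (compatibility of the associator with the unit constraints), which the paper leaves implicit as a routine check at the level of $\Calg^{\sD(G)}$, is a small but harmless addition rather than a different approach.
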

	
	In the case of generalized quantum doubles, the results of \cref{ssec:moneqC*} can be restated as follows. 
	\begin{thm}\label{thm:moneqPDYD}
		Let $\phi\colon H\to G$, $\psi\colon G\to F$, $\theta\colon F\to E$ be homomorphisms of regular locally compact quantum groups with separable $L^2$ spaces. 
		Then we have the commutative diagram in $\KK^{G\times E^{\op}}$ 
		that are natural with $A\in\KK^{\sD(\phi)}$, $B\in\KK^{\sD(\psi)}$, and $D\in\KK^{\sD(\theta)}$ as follows, 
		\begin{align*}
			&
			\begin{aligned}
				&
				\xymatrix@C=-7.5em{
					&\bfJ^\phi(A) \outensor{\wh{G}^{\op}} C^r_0(\wh{G}) \outensor{G} 
					\bfJ^\psi(B) \outensor{\wh{F}^{\op}} C^r_0(\wh{F}) \outensor{F} \bfJ^\theta(D) 
					\ar[dl]_-{\bfJ^{\phi}(A) \outensor{} C^r_0(\wh{G}) \outensor{} \cJ_{B,D} \qquad} 
					\ar[dr]^-{\qquad \cJ_{A,B} \outensor{} C^r_0(\wh{F}) \outensor{} \bfJ^\theta(D) }
					&\\
					\bfJ^\phi(A) \outensor{\wh{G}^{\op}} C^r_0(\wh{G}) \outensor{G} 
					\bfJ^{\theta\psi}(B\outensor{F}D) 
					\ar[d]_-{\cJ_{A,B\outensor{}D}} &&
					\bfJ^{\psi\phi}(A\outensor{G}B) 
					\outensor{\wh{F}^{\op}} C^r_0(\wh{F}) \outensor{F}\bfJ^\theta(D) 
					\ar[d]^-{\cJ_{A\outensor{}B,D}} \\
					\bfJ^{\theta\psi\phi}\bigl( A\outensor{G}(B\outensor{F}D) \bigr)
					\ar[rr]^-{\bfJ^{\theta\psi\phi}(\Ad W^{\psi}_{24})}&& 
					\bfJ^{\theta\psi\phi}\bigl( (A\outensor{G}B)\outensor{F}D \bigr) , 
				}
			\end{aligned}
		\end{align*}
		where $\cJ$ is the equivariant imprimitivity bimodule from \cref{prop:monbimod}. 
	\end{thm}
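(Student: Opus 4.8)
The plan is to reduce the theorem to the coherence at the level of Hilbert modules already proved in \cref{lem:monbimodass}, by passing to the Kasparov category. Each arrow of the diagram is a morphism of $\KK^{G\times E^{\op}}$: the four arrows labelled by a $\cJ$ are the classes of the equivariant imprimitivity bimodules produced in \cref{prop:monbimod} (invertible, being Morita equivalences), and the bottom arrow is the class of the equivariant $*$-isomorphism $\bfJ^{\theta\psi\phi}(\Ad W^\psi_{24})$. I would first recall the standard fact that the composition in $\KK$ of two morphisms represented by equivariant C*-correspondences is represented by their interior tensor product \cite{Baaj-Skandalis:eqkk}, so that each of the two composites around the diagram is itself the class of an equivariant imprimitivity bimodule obtained by an iterated interior tensor product.

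Next I would identify these two composites with the two sides of \cref{lem:monbimodass}. This uses the canonical equivariant $*$-isomorphism $\bfJ^\phi(A)\outensor{\wh{G}^{\op}}C^r_0(\wh{G})\cong\wh{\bfJ}^{1_{H\to G}}\bfJ^\phi(A)$ recorded after the definition of $\wh{\bfJ}^{1_{H\to G}}$ in \cref{ssec:twitendual} (and likewise for the $\wh{F}$-factor), together with the associativity isomorphism $\Ad W^\psi_{24}$ for iterated twisted tensor products. After these identifications, the path top$\to$right$\to$bottom-right becomes exactly the first bimodule $\bigl(\wh{\bfJ}^{1_{H\to F}}(\cJ_{A,B})\outensor{F}\bfJ^\theta(D)\bigr)\tensor{}\cJ_{A\outensor{G}B,D}$ of \cref{lem:monbimodass}, while the path top$\to$left$\to$bottom-left$\to$bottom-right becomes the second one, where the last leg of the composite, tensoring by the $*$-isomorphism $\Ad W^\psi_{24}$, realizes the class of the bottom arrow $\bfJ^{\theta\psi\phi}(\Ad W^\psi_{24})$. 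The equivariant unitary isomorphism asserted in \cref{lem:monbimodass} then gives equality of the two $\KK^{G\times E^{\op}}$-classes, that is, commutativity of the diagram for each fixed triple $(A,B,D)$.

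Finally, for the naturality in $A,B,D$ I would upgrade from $\Calg$ to $\KK$ by the same device used to make $-\outensor{G}-$ a bifunctor before \cref{cor:tensortriangulatedKK}: represent each $\KK$-morphism by a left-equivariant $*$-homomorphism between the Cuntz algebras $\wt{\mrq}(-)$, insert the $\KK$-equivalences $\wt{\pi}$, and observe that every vertex is the value of a functor and every arrow a natural transformation built from the already functorial operations $\bfJ^\bullet$, $\wh{\bfJ}^\bullet$ and the twisted tensor products, the bimodules $\cJ$ being natural in their arguments by their construction in \cref{lem:monbimod} and \cref{prop:monbimod}. A cube-type diagram as in \cref{diag:cubicbifunc} then promotes the pointwise commutativity to a commutative diagram of natural transformations.

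I expect the main obstacle to be organizational rather than analytic: one must track the chain of identifications---the $\wh{\bfJ}^{1}\bfJ\cong(-)\outensor{}C^r_0$ isomorphisms, the identification $\wh{\bfJ}^{1_{H\to G}}\bfJ^\phi(A)\cong\cK_A(L^2(G)\tensor{\bC}A)$ used in \cref{prop:monbimod}, and the associators $\Ad W^\psi$---with enough care that the two $\KK$-composites line up legs-for-legs with the two sides of \cref{lem:monbimodass}, since a mismatch in leg placement would break the comparison. The analytic content is already contained in that lemma, and the naturality is a routine, if lengthy, application of the Cuntz-picture machinery.
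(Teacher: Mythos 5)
Your proposal follows essentially the same route as the paper: the commutativity of the pentagon for fixed $(A,B,D)$ is exactly \cref{lem:monbimodass} after the canonical identifications $\bfJ^\phi(A)\outensor{\wh{G}^{\op}}C^r_0(\wh{G})\cong\wh{\bfJ}^{1_{H\to G}}\bfJ^\phi(A)$ (and likewise over $F$), and the naturality in $A,B,D$ is upgraded to $\KK$ via the Cuntz-picture and cube-diagram machinery of \cref{diag:cubicbifunc}. The only point the paper makes explicit that you leave implicit under ``natural by construction'' is naturality of $\cJ_{A,B}$ against the correspondences $\wt{\pi}$ (not just equivariant $*$-homomorphisms): the paper verifies that for proper equivariant correspondences $\cE,\cF$ the two composites agree in $\KK^{G\times F^{\op}}$ by applying $\cJ$ to the linking algebras $\cK_{A_2}(\cE\oplus A_2)$ and $\cK_{B_2}(\cF\oplus B_2)$.
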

	
	\begin{proof}
		By \cref{lem:monbimodass}, it is enough to show that $\cJ_{A,B}$ is natural with $A\in\KK^{\sD(\phi)}$ and $B\in\KK^{\sD(\psi)}$. 
		We take $A_1,A_2,B_1,B_2,f,g,\bx,\by$ as in \cref{diag:cubicbifunc}. 
		For a proper left $\sD(\phi)$-$(A_1,A_2)$-correspondence $\cE$ and a proper left $\sD(\psi)$-$(B_1,B_2)$-correspondence $\cF$, 
		it is not hard to see 
		\begin{align*}
			&
			\wh{\bfJ}^{1_{H\to G}}\bfJ^{\phi}(\cE)\outensor{G}\bfJ^{\psi}(\cF) 
			\tensor{ \wh{\bfJ}\bfJ(A_2)\boxtimes \bfJ(B_2) }
			\cJ_{A_2,B_2}
			\quad\text{and}
			\\&
			\cJ_{A_1,B_1} 
			\tensor{ \bfJ(A_1\boxtimes B_1) } 
			\bfJ^{\psi\phi}(\cE\outensor{G}\cF) , 
		\end{align*}
		give the same element in $\KK^{G\times F^{\op}}( \bfJ^\phi(A_1)\outensor{G}C^r_0(G)\outensor{\wh{G}^{\op}} \bfJ^{\psi}(B_1), \bfJ^{\psi\phi}(A_2\outensor{G}B_2) )$, 
		by considering $\cJ$ for the linking algebras $\cK_{A_2}(\cE\oplus A_2)$ and $\cK_{B_2}(\cF\oplus B_2)$. 
		Now the claim follows from an argument like \cref{diag:cubicbifunc} using $\wt{\pi}$. 
	\end{proof}
	
	Note that the argument of \cref{cor:tensortriangulatedKK} also shows that twisted tensor products give an associative bifunctor on $\KK^{\sD(\phi)}$ for a homomorphism $\phi\colon G\to G$ with $\phi\phi=\phi$. 
	Now the naturality of the diagram in \cref{thm:moneqPDYD} shows the following. 
	\begin{cor}\label{cor:moneqPDqd}
		Let $\phi\colon G\to G$ be a homomorphism on a regular locally compact quantum group with separable $L^2(G)$ that is idempotent in the sense of $\phi\phi=\phi$. 
		Then $\KK^{\sD(\phi)}$ is a monoidal triangulated category with $-\outensor{G}-$ 
		and the unit object $\wh{\bfJ}^{\Ad \wh{W}^{\phi}}_{\wh{G}^{\op}}\bfJ^{\id_G}(\bC_{\id_G})$ 
		which is equivalent to $\KK^{\sD(G)}$. 
		Especially, we have an equivalence of monoidal triangulated categories $\KK^{\sD(G)}\simeq \KK^{\sD(1_{G\to G})}$. 
	\qed\end{cor}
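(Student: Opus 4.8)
The plan is to realize the asserted equivalence concretely as a composite of the duality equivalences of \cref{thm:BSTTdualcat} and then to promote it to a monoidal equivalence using the imprimitivity bimodules $\cJ$ of \cref{prop:monbimod}. First I would set $F:=\wh{\bfJ}^{\Ad \wh{W}^{\phi}}_{\wh{G}^{\op}}\circ\bfJ^{\id_G}$. Applying \cref{thm:BSTTdualcat} to the two generalized quantum double matchings $\Ad\wh{W}^{\id_G}$ and $\Ad\wh{W}^{\phi}$ on $\wh{G}^{\op}$, both $\bfJ^{\id_G}$ and $\bfJ^{\phi}=\bfJ^{\Ad\wh{W}^{\phi}}_{\wh{G}^{\op}}$ are equivalences of triangulated categories landing in the common category $\KK^{G\times G^{\op}}$, and $\wh{\bfJ}^{\Ad\wh{W}^{\phi}}_{\wh{G}^{\op}}$ is a quasi-inverse of $\bfJ^{\phi}$; hence $F\colon\KK^{\sD(G)}\xrightarrow{\sim}\KK^{\sD(\phi)}$ is an equivalence of triangulated categories carrying the monoidal unit $\bC_{\id_G}$ of $\KK^{\sD(G)}$ (\cref{cor:tensortriangulatedKK}) to $F(\bC_{\id_G})=\wh{\bfJ}^{\Ad\wh{W}^{\phi}}_{\wh{G}^{\op}}\bfJ^{\id_G}(\bC_{\id_G})$.

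Next I would install the tensor bifunctor on the target. Because $\phi\phi=\phi$, the twisted tensor product of two left $\phi$-YD C*-algebras is again a left $\phi\phi=\phi$-YD C*-algebra, so $-\outensor{G}-$ restricts to an associative, componentwise-triangulated bifunctor on $\KK^{\sD(\phi)}$ by the argument of \cref{cor:tensortriangulatedKK}, the pentagon being the coherence $\Ad W^{\psi}_{46}\Ad W^{\phi}_{24}=\Ad W^{\phi}_{24}\Ad W^{\psi\phi}_{26}\Ad W^{\psi}_{46}$ recorded before \cref{cor:tensortriangulatedKK}. It then remains to see that $F$ intertwines $-\outensor{G}-$ on $\KK^{\sD(G)}$ with $-\outensor{G}-$ on $\KK^{\sD(\phi)}$, equivalently that the monoidal structure transported from $\KK^{\sD(G)}$ along the equivalence $F$ agrees with $-\outensor{G}-$. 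The required natural isomorphisms $F(A)\outensor{G}F(B)\simeq F(A\outensor{G}B)$ are furnished by the bimodules $\cJ_{A,B}$ of \cref{prop:monbimod}, after invoking the identification $\wh{\bfJ}^{1}\bfJ^{\phi}(A)\cong\cK_A(L^2(G)\tensor{\bC}A)$ established before \cref{lem:monbimod}; their naturality in $A$ and $B$ in the sense of \cref{diag:cubicbifunc} and their associativity coherence are exactly \cref{thm:moneqPDYD} and \cref{lem:monbimodass}.

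Granting these, $F$ becomes a strong monoidal equivalence of triangulated categories, whence $\KK^{\sD(\phi)}$ equipped with $-\outensor{G}-$ and the unit object $F(\bC_{\id_G})$ is a monoidal triangulated category and is monoidally equivalent to $\KK^{\sD(G)}$; the left and right unit constraints together with the triangle identity are then obtained by transport along $F$, since a strong monoidal functor preserves the unit. For the final assertion I would apply what has just been shown to the two idempotent endomorphisms $\id_G$ and $1_{G\to G}$ of $G$: as $\sD(\id_G)=\sD(G)$, this yields the monoidal triangulated equivalence $\KK^{\sD(G)}\simeq\KK^{\sD(1_{G\to G})}$.

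The hard part will be the identification in the second paragraph: turning the bimodules $\cJ_{A,B}$, which a priori compare the stabilized objects $\wh{\bfJ}^{1}\bfJ^{\phi}(A)\outensor{G}\bfJ^{\psi}(B)$ with $\bfJ^{\psi\phi}(A\outensor{G}B)$, into genuine structure maps $F(A)\outensor{G}F(B)\simeq F(A\outensor{G}B)$ for the composite $F$, matching all legs and the stabilizations $\wh{\bfJ}^{1}\bfJ^{\phi}(-)$ and verifying the unit constraints. The genuinely nontrivial coherence, namely the pentagon, is however already discharged by \cref{lem:monbimodass}, so this last step is a matter of careful assembly rather than new computation.
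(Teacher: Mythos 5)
Your proposal is correct and takes essentially the same route as the paper, whose own justification is exactly the two observations you make: the argument of \cref{cor:tensortriangulatedKK} together with $\phi\phi=\phi$ gives the associative triangulated bifunctor $-\outensor{G}-$ on $\KK^{\sD(\phi)}$, and the naturality and coherence of the bimodules $\cJ_{A,B}$ from \cref{thm:moneqPDYD} and \cref{lem:monbimodass}, combined with the duality equivalences of \cref{thm:BSTTdualcat}, yield the monoidal equivalence with unit $\wh{\bfJ}^{\Ad \wh{W}^{\phi}}_{\wh{G}^{\op}}\bfJ^{\id_G}(\bC_{\id_G})$. The final ``assembly'' step you flag, turning the equivalences $\wh{\bfJ}^{1}\bfJ^{\phi}(A)\outensor{G}\bfJ^{\psi}(B)\cong\bfJ^{\psi\phi}(A\outensor{G}B)$ into structure maps for the composite functor $F$, is precisely what the paper leaves implicit, and your treatment of it is sound.
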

	
	\begin{rem}
		Note that $\sD(\phi)$ is a 2-cocycle twist of $H\times\wh{G}$ by $\Omega=W^{\phi}_{14} \in \cM((C^r_0(H)\otimes C^r_0(\wh{G}))^{\otimes2})$ in the sense of \cite{deCommer:Galois}, 
		and they are comonoidally equivalent in the sense of \cite[Proposition~4.3]{deCommer:comonoidal}. 
		Then it is shown in \cite{Crespo:moneqKK} that such a pair of regular locally compact quantum groups give equivalent equivariant Kasparov categories. 
		\cref{cor:moneqPDqd} can be regarded as its refinement in the case of quantum doubles. 
	\end{rem}
	
	We also note the consequence of \cref{prop:crossprodqd}. 
	
	\begin{cor}\label{cor:crossprodqd}
		Let $\phi\colon H\to G$ be a homomorphism of regular locally compact quantum groups with separable $L^2(G)$ and $L^2(H)$. 
		Then the triangulated functor $\sD(\phi)\redltimes -\colon \KK^{\sD(\phi)}\to\KK$ is naturally isomorphic to 
		the composition of $C^r_0(\wh{H})\outensor{H\times G^{\op}}- \colon \KK^{H\times G^{\op}}\to \KK$ 
		after $\bfJ^{\phi}\colon \KK^{\sD(\phi)}\xrightarrow{\sim} \KK^{H\times G^{\op}}$. 
	\qed\end{cor}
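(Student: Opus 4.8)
The plan is to upgrade the object-level isomorphism of \cref{prop:crossprodqd} to a natural isomorphism of triangulated functors, following the same pattern used in the proofs of \cref{thm:BSTTdualcat} and \cref{thm:moneqPDYD}. First I would record that all three functors in sight are triangulated: the reduced crossed product $\sD(\phi)\redltimes-\colon\KK^{\sD(\phi)}\to\KK$ is triangulated by the standard descent construction; the procedure $\bfJ^{\phi}$ is a triangulated equivalence $\KK^{\sD(\phi)}\xrightarrow{\sim}\KK^{H\times G^{\op}}$ by \cref{thm:BSTTdualcat}, applied to the matching $\Ad\wh{W}^{\phi}$ on $\wh{G}^{\op}$ and $H$ under the identification $\bfJ^{\phi}(A)\cong\bfJ^{\Ad\wh{W}^{\phi}}_{\wh{G}^{\op}}(A)$ from \cref{ssec:twitendual}; and $C^r_0(\wh{H})\outensor{H\times G^{\op}}-\colon\KK^{H\times G^{\op}}\to\KK$ is triangulated as a twisted tensor product functor by the machinery of \cref{ssec:moneqC*} together with \cite[Section~5]{Meyer-Roy-Woronowicz:twiten}. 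So the statement is exactly that these two composite triangulated functors agree up to natural isomorphism.

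Next I would exhibit the candidate natural transformation. By \cref{prop:crossprodqd}, for every left $\phi$-YD C*-algebra $A$ we already have a canonical $*$-isomorphism $\sD(\phi)\redltimes A\cong C^r_0(\wh{H})\outensor{H\times G^{\op}}\bfJ^{\phi}(A)$, written explicitly there as a composition of adjoint maps $\Ad$ by unitaries assembled from $U^G$, $\wh{V}^{\phi}$, $V^G$, $W^{\phi}$, and $\wh{V}^H$. The crucial point is that each conjugating unitary is a fixed operator independent of $A$, while the three constituent constructions ($\redltimes$, $\bfJ^{\phi}$, and $C^r_0(\wh{H})\outensor{H\times G^{\op}}-$) are all functorial on $\Calg^{\sD(\phi)}$. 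Hence for a $\sD(\phi)$-equivariant $*$-homomorphism $f\colon A_1\to A_2$ the naturality square commutes on the nose: both vertical legs apply $f$ through the respective functors, and the horizontal legs conjugate by the same $A$-independent unitaries, so the square reduces to the functoriality of each piece. This gives naturality of the isomorphism on $\Calg^{\sD(\phi)}$.

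Finally I would lift naturality from $\Calg^{\sD(\phi)}$ to $\KK^{\sD(\phi)}$ via the Cuntz picture, exactly as in the proof of \cref{thm:moneqPDYD}. Any morphism $\bx\in\KK^{\sD(\phi)}(A_1,A_2)$ factors as $[\wt{\pi}_{A_2}]\circ[f]\circ[\wt{\pi}_{A_1}]^{-1}$ for a representative $f\in\Hom_{\Calg^{\sD(\phi)}}(\wt{\mrq}A_1,\wt{\mrq}A_2)$, so it suffices to check the naturality square against such $f$ and against the $\KK^{\sD(\phi)}$-equivalences $\wt{\pi}$; both cases reduce to the $*$-homomorphism naturality just established, together with the compatibility of all three functors with the correspondences $\wt{\pi}$. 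I expect the main obstacle to be essentially bookkeeping rather than conceptual: one must verify carefully that the conjugating unitaries supplied by the construction in \cref{prop:crossprodqd} are genuinely $A$-independent so that the squares commute strictly, and that passing through $\bfJ^{\phi}$ is compatible with the identification $\bfJ^{\phi}(A)\cong\bfJ^{\Ad\wh{W}^{\phi}}_{\wh{G}^{\op}}(A)$ used to invoke \cref{thm:BSTTdualcat}. Once these compatibilities are in place, the universal property of $\Calg^{\sD(\phi)}\to\KK^{\sD(\phi)}$ promotes the $*$-isomorphisms to the required natural isomorphism of triangulated functors.
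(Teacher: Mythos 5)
Your proposal is correct and takes essentially the same route as the paper: \cref{cor:crossprodqd} is stated there without proof precisely because it is the observation that the canonical isomorphism of \cref{prop:crossprodqd}, being a composition of conjugations by fixed, $A$-independent unitaries and leg permutations, is natural in $A$ on $\Calg^{\sD(\phi)}$, and then descends to the Kasparov categories by the Cuntz-picture/universality argument the paper already uses for \cref{thm:BSTTdualcat} and \cref{thm:moneqPDYD}. Your write-up just makes these two routine verifications explicit.
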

	
	\subsection{Quantum analogue of the property \texorpdfstring{$\gamma=1$}{γ=1}}\label{ssec:gamma}
	
	We indicate how we can apply this categorical equivalence to the quantum analogue of the Baum--Connes conjecture from the viewpoint of Meyer--Nest \cite{Meyer-Nest:bctri}. 
	For simplicity, we will restrict ourselves to the properties which can be hoped to be true at most for ``torsion-free" discrete quantum groups. 
	
	Before doing this, we recall the notion of induction procedures quantum group actions on C*-algebras to use here and in the next section. We refer to \cite{Vaes:impr,Nest-Voigt:eqpd,Kitamura:indhomlcqg} for details. 
	Consider a homomorphism $\phi\colon H\to G$ of regular locally compact quantum groups. 
	We say $\phi$ is \emph{proper} if 
	there is a unital normal $*$-homomorphism $\wh{\phi}^r\colon L^\infty(\wh{H})\to L^\infty(\wh{G})$ with $(\wh{\phi}^r\otimes\wh{\phi}^r) \wh{\Delta}_H = \wh{\Delta}_G\wh{\phi}^r$ such that $(\id\otimes \wh{\phi}^r)(W^H)=W^\phi$. 
	Moreover, when $\wh{\phi}^r$ is injective, we say $\phi$ gives a \emph{closed quantum subgroup}. 
	When $\phi$ is proper and $A$ is a left $H$-C*-algebra, we have an induced left $G$-C*-algebra $\Ind_{\phi} A$. 
	When $\phi$ gives a closed quantum subgroup, 
	$\Ind_\phi$ induces a triangulated functor $\Ind^G_H\colon \KK^H\to \KK^G$, and this is naturally isomorphic to 
	$\wh{G}^{\op}\redltimes (\wh{\phi}^{\op*}(H\redltimes -)) \colon \KK^H\to\KK^G$. 
	For a general proper homomorphism $\phi$, the procedure $\Ind_\phi$ does not behave well with equivariant Morita equivalences, and does not induce a triangulated functor $\KK^H\to \KK^G$. 
	When $\phi$ gives a closed quantum subgroup, 
	the left $G$-C*-algebra $C^r_0(G/H):=\Ind_{\phi} \bC$ is considered to realize the quantum homogeneous space $G/H$. 
	It also has a continuous left $\wh{G}$-action $\Ad W^G_{21}(-)_2$ by the proof of \cite[Theorem~8.2]{Vaes:impr}, and 
	we regard $C^r_0(G/H)$ as a left $G$-YD C*-algebra. 
	
	For a locally compact quantum group $G$, we put $G^{\bi}:=G\times G^{\op}$. 
	When we have a homomorphism $\phi \colon H\to G$ of locally compact quantum groups, 
	we have another homomorphism $\phi^{\bi}\colon H^{\bi}\to G^{\bi}$ with $W^{\phi^{\bi}} = W^{\phi}_{13}V^{\phi*}_{42}$. 
	\begin{prop}\label{prop:hmgvscotensor}
		Let $\phi\colon H\to G$ be a homomorphism of regular locally compact groups giving a closed quantum subgroup. 
		Then we have a $G^{\bi}$-$*$-isomorphism 
		$\bfJ^{\id_G} (C^r_0(G/H)) \cong \Ind^G_H(\phi^*C^r_0(G))$. 
	\end{prop}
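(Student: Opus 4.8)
The plan is to realise both sides as iterated reduced crossed products and then to match them by an explicit unitary. For the left-hand side, the identification $\bfJ^{\id_G}(A)\cong \bfJ^{\Ad\wh{W}^G}_{\wh{G}^{\op}}(A)$ from \cref{ssec:twitendual} together with \cref{prop:YDtobicrossC*} gives $\bfJ^{\id_G}(C^r_0(G/H))\cong \wh{G}\redltimes C^r_0(G/H)=[C^r_0(G)_1\,\lambda(C^r_0(G/H))]$, where the crossed product is formed along the canonical left $\wh{G}$-action $\lambda=\Ad W^G_{21}(-)_2$ on $C^r_0(G/H)=\Ind_\phi\bC$ recorded just before the statement. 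For the right-hand side, the closed-quantum-subgroup hypothesis lets me use the natural isomorphism $\Ind^G_H\cong \wh{G}^{\op}\redltimes(\wh{\phi}^{\op*}(H\redltimes-))$; applied to $\phi^*C^r_0(G)$, whose $H$-action is $\phi^*\Delta_G=\Ad W^{\phi*}(-)_2$, its innermost layer is $H\redltimes\phi^*C^r_0(G)=[C^r_0(\wh{H})_1 W^{\phi*}_{12}C^r_0(G)_2 W^{\phi}_{12}]$. In both presentations the underlying C*-algebra has the form $[C^r_0(G)(\cdots)]$, since $C^r_0(\widehat{\wh{G}})=C^r_0(\widehat{\wh{G}^{\op}})=C^r_0(G)$.

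I would first compute the inner crossed product $H\redltimes\phi^*C^r_0(G)$ using the factorisation $W^{\phi}=(\id\otimes\wh{\phi}^r)(W^H)$, regularity, and the multiplicative-unitary relations collected in \cref{thm:relations}; the normal $*$-homomorphism $\wh{\phi}^r$ is the tool that relates the $\wh{H}$- and $\wh{G}$-legs. Restricting the resulting dual action along $\wh{\phi}^{\op}$ and forming $\wh{G}^{\op}\redltimes$ should then produce, after conjugation by a unitary assembled from $W^{\phi}$, $W^G$ and the modular conjugations, exactly the generators $C^r_0(G)_1\,\lambda(C^r_0(G/H))$ of $\wh{G}\redltimes C^r_0(G/H)$. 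The point making this feasible is that $C^r_0(G)$ carries the commuting bi-regular actions of $G^{\bi}$: its left-regular action (restricted to $H$) is the one being crossed, while its right-regular action is untouched by $\phi^*$, by $H\redltimes$, and by $\wh{G}^{\op}\redltimes$, and it is this surviving factor that I would match with the residual $G$-action on $C^r_0(G/H)$.

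The step I expect to be the main obstacle is verifying that the resulting $*$-isomorphism is genuinely $G^{\bi}$-equivariant, i.e.\ that it intertwines the $G$- and the $G^{\op}$-actions simultaneously rather than merely identifying the underlying C*-algebras. This entails deciding, on each side, which of the two commuting actions (the dual action of the outer crossed product versus the residual induced/right-regular action) realises the $G$-factor and which realises the $G^{\op}$-factor, and then reconciling the two assignments. The reconciliation is exactly where the op/cop bookkeeping concentrates: the outer crossed products are taken along $\wh{G}$ on the left and along $\wh{G}^{\op}$ on the right, so their dual actions carry opposite comultiplications, and matching them is governed by $W^{\phi^{\bi}}=W^{\phi}_{13}V^{\phi*}_{42}$, by $W^{\phi^{\op}}=V^{\phi*}_{21}$, and by the antipode compatibility $\sigma(\Delta_G\rpb\phi)R^G=(R^H\otimes R^G)(\phi^*\Delta_G)$ from \cref{ssec:homomorphism}. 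Once equivariance for both legs is checked on the generators $C^r_0(\wh{H})_1$ and $C^r_0(G)_2$, non-degeneracy and the comodule identities follow formally.

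Conceptually, the identity is a projection formula $\Ind_\phi(\phi^*A)\cong C^r_0(G/H)\outensor{G}A$ (here with $A=C^r_0(G)$ carrying its bi-regular $G^{\bi}$-structure), which matches $\bfJ^{\id_G}(C^r_0(G/H))=C^r_0(G/H)\outensor{G}C^r_0(G)$ directly; proving it in the twisted-tensor-product language of \cref{sec:qdmon} would be more conceptual, but it would first require endowing the induction functor with Yetter--Drinfeld functoriality, so I would fall back on the crossed-product computation above.
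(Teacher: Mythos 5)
Your reduction of the left-hand side is correct: $\bfJ^{\id_G}(C^r_0(G/H))\cong\wh{G}\redltimes C^r_0(G/H)$ together with its $G^{\bi}$-structure is exactly what \cref{prop:dualtwiten} provides. The gap is in your treatment of the right-hand side. The identification $\Ind^G_H\cong\wh{G}^{\op}\redltimes(\wh{\phi}^{\op*}(H\redltimes-))$ on which your computation rests is stated in \cref{ssec:gamma} only as a natural isomorphism of triangulated functors $\KK^H\to\KK^G$; it is not a $*$-isomorphism of C*-algebras, because the iterated crossed product is a stabilization of the induced algebra. This is already visible for an ordinary group $G$ with closed subgroup $H$: the inner layer $H\redltimes\phi^*C_0(G)$ is the crossed product of a free and proper action, hence isomorphic to $C_0(H\backslash G)\otimes\cK(L^2(H))$, and the full composite is $\Ind^G_H(\phi^*C_0(G))\otimes\cK(L^2(H))$ (for $H=G$ it is $C_0(G)\otimes\cK(L^2(G))$, by Takesaki--Takai duality). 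Meanwhile, for a group the left $\wh{G}$-action $\Ad W^G_{21}(-)_2$ on $C_0(G/H)$ is trivial, since the relevant leg of $W^G$ lies in the commutative algebra $L^\infty(G)$; so your left-hand side is $[C_0(G)_1\,\lambda(C_0(G/H))]=C_0(G)\otimes C_0(G/H)$, a commutative C*-algebra. No unitary conjugation can match a commutative algebra with a stable one, so the step where you expect the iterated crossed product to land exactly on the generators $C^r_0(G)_1\,\lambda(C^r_0(G/H))$ fails. What your route can deliver is a $G^{\bi}$-equivariant Morita equivalence (equivalently, an isomorphism after tensoring with $\cK(L^2(H))$), which is strictly weaker than the $*$-isomorphism the proposition asserts; in particular, the op/cop equivariance bookkeeping you single out as the main obstacle is not where the argument actually breaks.

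The conceptual route you set aside in your last paragraph is precisely the paper's proof, and it costs nothing: by definition $\bfJ^{\id_G}(C^r_0(G/H))=(\Ind^G_H\bC)\outensor{G}C^r_0(G)$, and the projection formula $(\Ind^G_H\bC)\outensor{G}C^r_0(G)\cong\Ind^G_H(\bC_\phi\outensor{G}C^r_0(G))$ is available as a citable $G^{\bi}$-equivariant $*$-isomorphism, namely \cite[Proposition~6.3]{Kitamura:indhomlcqg}, which combined with the identification $\bC_\phi\outensor{G}A\cong\phi^*A$ from \cref{ssec:twitendual} finishes the proof in two lines. In other words, the ``Yetter--Drinfeld functoriality of induction'' you feared having to develop is exactly the content of the cited result, so there was no need to fall back on a crossed-product computation --- and, as explained above, that fallback cannot give the statement at the claimed level of precision.
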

	
	\begin{proof}
		We regard $C^r_0(G)$ as a left $G^{\bi}$-C*-algebra. 
		By \cite[Proposition~6.3]{Kitamura:indhomlcqg} we get left $G^{\bi}$-$*$-isomorphisms 
		\begin{align*}
			&
			(\Ind^G_H \bC)\outensor{G}C^r_0(G) \cong \Ind^G_H(\bC_\phi\outensor{G}C^r_0(G)) \cong \Ind^G_H(\phi^*C^r_0(G)) . 
		\qedhere\end{align*}
	\end{proof}
	
	For a triangulated category $\cT$ closed under direct sums of countably many objects, 
	a full subcategory $\cS$ of $\cT$ is called \emph{localizing} 
	if $\cS$ is closed under taking distinguished triangles, countable direct sums, and isomorphic objects in $\cT$. 
	For a class of objects $\cC$ in $\KK^G$, we write $\bra\, \cC \,\ket_{\loc}^{G} \subset \KK^G$ for the smallest localizing subcategory of $\KK^G$ containing $\cC$. 
	
	\begin{rem}
		For a homomorphism $\phi\colon H\to G$ of regular locally compact quantum group and a left $H$-C*-algebra $A$, it holds by \cite[Example~5.9]{Kitamura:indhomlcqg}, 
		\begin{align*}
			&
			\Ind^{\sD(\phi)}_H A 
			= 
			(\, C^r_0(\wh{G})\otimes A,\, \Ad W^{\phi*}_{12}\sigma_{12}(\id\otimes\alpha),\, \wh{\Delta}_G\otimes \id \,) . 
		\end{align*}
		We have a left $H\times G^{\op}$-$*$-isomorphism 
		\begin{align*}
			&
			\cK_A(L^2(G)\tensor{\bC}A)
			\xrightarrow{\Ad \wh{V}^{G}_{12}(-)_{13}}
			[ \wh{\Delta}_G(C^r_0(\wh{G}))_{12} A_{3} C^r_0(G)_{1} ] 
			\cong 
			\bfJ^{\phi}(\Ind^{\sD(\phi)}_H A) , 
		\end{align*}
		where in the left hand side we consider the left $H\times G^{\op}$-Hilbert space $(L^2(G), W^{\phi*}_{13}V^{G}_{32}(-)_3)$ 
		and we equip $A$ with the trivial left $G^{\op}$-action. 
		Thus $\bfJ^{\phi}(\Ind^{\sD(\phi)}_H A)$ is left $H\times G^{\op}$-Morita equivalent to $A$. 
	\end{rem}
	
	\begin{prop}\label{prop:gamma1}
		For a compact quantum group $G$ with separable $L^2(G)$, the following are equivalent. 
		\begin{enumerate}
			\item
			$C^r(G)\in \bra\, \KK \,\ket_{\loc}^{G^{\bi}}$. 
			\item[(1')]
			$C^r(G)\in \bra\, \Res^G_{G^{\bi}}\KK^G \,\ket_{\loc}^{G^{\bi}}$, 
			where we consider the projection onto the first component $G^{\bi}\to G$. 
			\item
			$\bC\in \bra\, \Ind^{\sD(G)}_{G}\KK^G \,\ket_{\loc}^{\sD(G)}$. 
			\item
			$\KK^{G^{\bi}} = \bra\, \KK \,\ket_{\loc}^{G^{\bi}}$. 
			\item[(3')]
			$\KK^{G^{\bi}} = \bra\, \Res^G_{G^{\bi}}\KK^G \,\ket_{\loc}^{G^{\bi}}$. 
			\item
			$\KK^{\sD(G)}= \bra\, \Ind^{\sD(G)}_{G}\KK^G \,\ket_{\loc}^{\sD(G)}$. 
		\end{enumerate}
		Either one of them implies $\KK^G=\bra\, \KK \,\ket_{\loc}^G$. 
	\end{prop}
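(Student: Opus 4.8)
The plan is to pass everything through the monoidal triangulated equivalence $\bfJ:=\bfJ^{\id_G}\colon\KK^{\sD(G)}\xrightarrow{\sim}\KK^{G^{\bi}}$ of \cref{thm:BSTTdualcat} and \cref{cor:moneqPDqd}, under which $\bC_{\id_G}$ is the tensor unit. I would use two dictionary entries: from the definition $\bfJ^{\id_G}(-)=(-)\outensor{G}C^r_0(G)$ in \cref{ssec:twitendual} one gets $\bfJ(\bC_{\id_G})\cong C^r(G)$ with its two–sided translation action, and from the Remark preceding the statement $\bfJ(\Ind^{\sD(G)}_G A)$ is $\KK^{G^{\bi}}$–equivalent to $\Res^G_{G^{\bi}}A$ for every $A\in\KK^G$. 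Since a triangulated equivalence preserves localizing subcategories, membership of a fixed object, and equality with the whole category, these give at once $(1')\Leftrightarrow(2)$ and $(3')\Leftrightarrow(4)$ and reduce the statement to the $\sD(G)$–side. There I abbreviate $\cL_2:=\bra\Ind^{\sD(G)}_G\KK^G\ket^{\sD(G)}_{\loc}$ and $\cL_1:=\bra\Ind^{\sD(G)}_G\KK_{\mathrm{triv}}\ket^{\sD(G)}_{\loc}$, with $\KK_{\mathrm{triv}}\subseteq\KK^G$ the trivially–acted algebras; note $\cL_1\subseteq\cL_2$, and that via $\wh{\bfJ}$ the condition $(1)$ reads $\bC_{\id_G}\in\cL_1$, $(3)$ reads $\cL_1=\KK^{\sD(G)}$, while $(2)$ reads $\bC_{\id_G}\in\cL_2$ and $(4)$ reads $\cL_2=\KK^{\sD(G)}$.

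The heart is the relative homological algebra for the compact quantum subgroup $G\leq\sD(G)$. First I would record that $\cL_2$ is a $\outensor{G}$–ideal: using the projection isomorphism $C^r_0(\sD(G)/G)\outensor{G}X\cong\Ind^{\sD(G)}_G\Res^{\sD(G)}_G X$ (the induction–homogeneous–space identity, cf.\ \cref{prop:hmgvscotensor}) together with the Frobenius splitting exhibiting each $A$ as a retract of $\Res^{\sD(G)}_G\Ind^{\sD(G)}_G A$ — where compactness of $G$ enters — one checks $\Ind^{\sD(G)}_G A\outensor{G}X$ is a retract of an induced algebra, hence lies in $\cL_2$. Granting this, $(2)\Leftrightarrow(4)$ is the tensor–unit argument: if $\bC_{\id_G}\in\cL_2$ then $X\cong X\outensor{G}\bC_{\id_G}\in\cL_2$ for all $X$. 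For $(1)\Leftrightarrow(2)$ I would use the Dirac morphism: the counit $C^r_0(\sD(G)/G)=\Ind^{\sD(G)}_G\Res^{\sD(G)}_G\bC_{\id_G}\to\bC_{\id_G}$ has cone in the subcategory complementary to $\cL_2$, and its source lies in $\cL_1$. Completing it to a triangle and using $\cL_1\subseteq\cL_2$, one sees $\bC_{\id_G}\in\cL_2$ forces the cone to vanish, so $\bC_{\id_G}\cong C^r_0(\sD(G)/G)\in\cL_1$; the converse is just $\cL_1\subseteq\cL_2$.

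It remains to link the whole–category conditions and to deduce the final clause. The inclusions $\KK\subseteq\Res^G_{G^{\bi}}\KK^G$ and $\cL_1\subseteq\cL_2$ give the trivial implications $(1)\Rightarrow(1')$, $(3)\Rightarrow(3')$, $(3)\Rightarrow(1)$, $(4)\Rightarrow(2)$, $(3')\Rightarrow(1')$. To get $(4)\Rightarrow(3)$ I would first prove the final assertion from $(3')$: restricting the identity $\KK^{G^{\bi}}=\bra\Res^G_{G^{\bi}}\KK^G\ket^{G^{\bi}}_{\loc}$ along the second inclusion $G^{\op}\hookrightarrow G^{\bi}$, $h\mapsto(1,h)$ — a triangulated functor carrying $\Res^G_{G^{\bi}}\KK^G$ into trivially–acted algebras and split by the inflation along $G^{\bi}\to G^{\op}$ — yields $\KK^{G^{\op}}=\bra\KK\ket^{G^{\op}}_{\loc}$, equivalently $\KK^G=\bra\KK\ket^G_{\loc}$. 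Feeding this back: under $(4)$ the complementary subcategory vanishes, so every $X$ satisfies $X\cong\Ind^{\sD(G)}_G\Res^{\sD(G)}_G X$; since $\Res^{\sD(G)}_G X\in\KK^G=\bra\KK\ket^G_{\loc}$ and $\Ind^{\sD(G)}_G$ is triangulated and sum–preserving, $X\in\cL_1$, which is $(3)$. Assembling $(1)\Leftrightarrow(2)\Leftrightarrow(4)\Leftrightarrow(3')\Leftrightarrow(1')$ with $(3)\Leftrightarrow(4)$ closes all equivalences, and the displayed derivation from $(3')$ proves the last clause.

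The step I expect to be the main obstacle is the relative–homological–algebra input of the second paragraph: verifying, in the locally compact quantum setting where $\KK^G$ carries no tensor product, that $\cL_2$ really is a $\outensor{G}$–ideal and that the counit $C^r_0(\sD(G)/G)\to\bC_{\id_G}$ is the Dirac approximation of the unit with cone complementary to $\cL_2$. Both hinge on the projection and Frobenius formulas for twisted tensor products and on the compactness of $G$ (the Haar conditional expectation providing the splittings); it is exactly here that the naive inclusion $\Res^G_{G^{\bi}}\KK^G\subseteq\bra\KK\ket^{G^{\bi}}_{\loc}$, which fails in general, must be replaced by the device of the Dirac morphism evaluated at the unit.
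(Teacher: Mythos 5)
Your skeleton largely coincides with the paper's proof: the duality dictionary $\bfJ^{\id_G}(\bC)\cong C^r(G)$, $\bfJ^{\id_G}(\Ind^{\sD(G)}_G A)\sim \Res^G_{G^{\bi}}A$ giving (1')$\Leftrightarrow$(2) and (3')$\Leftrightarrow$(4), the derivation of the final clause from (3') by restricting along $G^{\op}\to G^{\bi}$ and taking opposite algebras, and the trivial implications are all exactly as in the paper. But two of your stated steps are false. First, the counit $\Ind^{\sD(G)}_G\Res^{\sD(G)}_G\bC\to\bC$ is \emph{not} a Dirac morphism: for its cone to lie in the complement of $\cL_2$ (which is the kernel of $\Res^{\sD(G)}_G$, by the result of Voigt used in \cref{lem:gamma1}), the restriction of the counit would have to be a $\KK^G$-equivalence. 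It is not: $\Res^{\sD(G)}_G\Ind^{\sD(G)}_G\bC$ is $C^r_0(\wh{G})$ with the adjoint-type $G$-action, which is $\bigoplus_{\pi\in\Irr(G)}\cK(V_\pi)\cong\bigoplus_{\pi}\bC$ in $\KK^G$, and already on $K_0^G$ the restricted counit is a map $\bigoplus_\pi R(G)\to R(G)$ that cannot be invertible. The Dirac approximation of the unit relative to $\cL_2$ requires the full Meyer--Nest phantom tower, not a single induction step. Second, in your (4)$\Rightarrow$(3) the assertion ``the complementary subcategory vanishes, so every $X$ satisfies $X\cong\Ind^{\sD(G)}_G\Res^{\sD(G)}_GX$'' is equally false ($X=\bC$ is a counterexample: (4) puts $\bC$ into $\cL_2$ but does not invert the counit); vanishing of the complement gives $X\in\cL_2$, nothing more.

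Neither error is fatal, because both steps are redundant in your own scheme. For (4)$\Rightarrow$(3): from your (correct) derivation of $\KK^G=\bra\,\KK\,\ket^G_{\loc}$ out of (3'), triangulatedness and sum-preservation of $\Ind^{\sD(G)}_G$ give $\Ind^{\sD(G)}_G\KK^G\subseteq\bra\,\Ind^{\sD(G)}_G\KK\,\ket^{\sD(G)}_{\loc}=\cL_1$, hence $\cL_2\subseteq\cL_1$ and (3) follows, with no mention of the counit. And once (2)$\Rightarrow$(4) is in place, the chain (1)$\Rightarrow$(1')$\Leftrightarrow$(2)$\Rightarrow$(4)$\Rightarrow$(3)$\Rightarrow$(1) closes all equivalences, so the Dirac argument for (1)$\Leftrightarrow$(2) can simply be deleted. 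What remains load-bearing is your ideal/tensor-unit step (2)$\Rightarrow$(4), and this is precisely the image under the duality of the paper's key step, which is (1')$\Rightarrow$(3') performed on the $G^{\bi}$ side in one line: $A\cong A\outensor{\wh{G}^{\op}}C^r_0(\wh{G})\outensor{G}C^r(G)$ (Takesaki--Takai), and the functor $A\outensor{\wh{G}^{\op}}C^r_0(\wh{G})\outensor{G}-$ carries $\Res^G_{G^{\bi}}\KK^G$ into itself simply because in a twisted tensor product the $G^{\op}$-action sits on the last leg. The paper thereby avoids both of your extra inputs: the projection formula $C^r_0(\sD(G)/G)\outensor{G}X\cong\Ind^{\sD(G)}_G\Res^{\sD(G)}_GX$ for the inclusion $G\leq\sD(G)$ (note \cref{prop:hmgvscotensor} does not cover this---it concerns a closed quantum subgroup $H\leq G$---so you would need to prove it separately) and the Frobenius retract of $A$ inside $\Res^{\sD(G)}_G\Ind^{\sD(G)}_GA$. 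If you keep the dual-side formulation, supply proofs of those two facts; otherwise replace (2)$\Rightarrow$(4) by the paper's one-line $G^{\bi}$-side argument.
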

	
	\begin{proof}
		We see $\bfJ^{\id_G}(\bC)=\bfJ^{\id_G}(C^r_0(G/G))\cong C^r(G)$ by \cref{prop:hmgvscotensor}, and 
		the equivalences (3')$\Leftrightarrow$(4) and (1')$\Leftrightarrow$(2) by the previous remark. 
		
		If (3') holds then we see $\KK^{G^{\op}}=\bra\KK\ket_{\loc}^{G^{\op}}$ by the restrictions via $G^{\op}\to G^{\bi}\to G^{\op}$. 
		Thus we have $\KK^{G}=\bra\KK\ket_{\loc}^{G}$ by taking opposite algebras (see \cref{rem:componentbidbl}) and (3) follows. 
		
		Clearly (3)$\Rightarrow$(1)$\Rightarrow$(1') 
		and it is enough to deduce (3') from (1'). 
		But for any $A\in \Calg^{G^{\bi}}$, it holds in $\KK^{G^{\bi}}$ that 
		\begin{align*}
			&
			A 
			\cong A \outensor{\wh{G}^{\op}} C^r_0(\wh{G}) \outensor{G} C^r(G) 
			\\&
			\subset \bra\, A \outensor{\wh{G}^{\op}} C^r_0(\wh{G}) \outensor{G} \KK^G \,\ket_{\loc}^{G^{\bi}}
			\subset 
			\bra\, \KK^G \,\ket_{\loc}^{G^{\bi}}. 
		\qedhere\end{align*}
	\end{proof}
	
	In this proposition when the discrete quantum group $\wh{G}$ is \emph{torsion-free} in the sense that $\Rep(G)$ has only trivial ergodic Q-systems (see \cite[Definition~3.7]{Arano-deCommer:torsion}), the last condition $\KK^G=\bra\, \KK \,\ket_{\loc}^G$ 
	is considered as a version of the property that a $\gamma$-element for $\wh{G}$ exists and equals to $1$, 
	via the reformulations of the Baum--Connes conjecture by \cite{Meyer-Nest:bctri} using the decomposition of the triangulated category $\KK^{G}\simeq \KK^{\wh{G}}$. 
	Also, (4) was considered in \cite{Voigt:cpxss} as a version of the Baum--Connes property for $\sD(G)$. 
	The author does not know the property $\KK^G=\bra \KK\ket_{\loc}^{G}$ is preserved under direct products, 
	although there are such results for other variants of the Baum--Connes properties, see \cite{Martos:semidirectBC} for example. 
	But by considering (1) of the previous proposition, we can easily see the following permanence for quantum doubles. 
	\begin{cor}\label{cor:prodgamma1}
		For compact quantum groups $G_1$, $G_2$ with separable $L^2$ spaces, 
		(4) of \cref{prop:gamma1} for $G=G_1\times G_2$ holds if and only if (4) for $G=G_1$ and $G_2$ hold. 
	\qed\end{cor}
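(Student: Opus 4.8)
The plan is to reduce everything to conditions (1) and (4) of \cref{prop:gamma1}, which are equivalent there, and to feed in two elementary structural facts: the canonical isomorphism of locally compact quantum groups $(G_1\times G_2)^{\bi}\cong G_1^{\bi}\times G_2^{\bi}$, obtained by reordering the four factors and using $(G_1\times G_2)^{\op}=G_1^{\op}\times G_2^{\op}$, and the identification $C^r(G_1\times G_2)\cong C^r(G_1)\otimes C^r(G_2)$ of the reduced algebras. The latter is compatible with the bi-actions, so that under the former isomorphism $C^r(G_1\times G_2)$ is the external tensor product of $C^r(G_1)\in\KK^{G_1^{\bi}}$ and $C^r(G_2)\in\KK^{G_2^{\bi}}$. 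Throughout I would use freely that restriction along a homomorphism and external tensoring with a fixed separable algebra are triangulated functors preserving countable direct sums and sending trivially equivariant algebras to trivially equivariant ones, and that for any such functor $F$ one has $F(\bra\, \mathcal{C} \,\ket_{\loc})\subseteq\bra\, F(\mathcal{C}) \,\ket_{\loc}$, since $F^{-1}(\bra\, F(\mathcal{C}) \,\ket_{\loc})$ is a localizing subcategory containing $\mathcal{C}$.

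For the ``only if'' direction I would argue directly with condition (4). Let $\pi_1\colon G_1^{\bi}\times G_2^{\bi}\to G_1^{\bi}$ be the projection and let $\iota_1:=(\id_{G_1^{\bi}},\,1_{G_1^{\bi}\to G_2^{\bi}})\colon G_1^{\bi}\to G_1^{\bi}\times G_2^{\bi}$ be the inclusion built from the trivial homomorphism, so that $\pi_1\iota_1=\id_{G_1^{\bi}}$ and hence $\iota_1^*\pi_1^*=\id$ on $\KK^{G_1^{\bi}}$. Given an arbitrary $A\in\KK^{G_1^{\bi}}$, the inflation $\pi_1^*A$ lies in $\KK^{G_1^{\bi}\times G_2^{\bi}}\cong\KK^{(G_1\times G_2)^{\bi}}=\bra\, \KK \,\ket_{\loc}$ by (4) for $G_1\times G_2$; applying the restriction $\iota_1^*$, which maps $\bra\, \KK \,\ket_{\loc}^{G_1^{\bi}\times G_2^{\bi}}$ into $\bra\, \KK \,\ket_{\loc}^{G_1^{\bi}}$, and using $\iota_1^*\pi_1^*=\id$, we obtain $A\cong\iota_1^*\pi_1^*A\in\bra\, \KK \,\ket_{\loc}^{G_1^{\bi}}$. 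Since $A$ was arbitrary this is exactly (4) for $G_1$, and the argument for $G_2$ is symmetric.

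For the ``if'' direction I would pass to condition (1), equivalent to (4) by \cref{prop:gamma1}, and show $C^r(G_1)\otimes C^r(G_2)\in\bra\, \KK \,\ket_{\loc}^{G_1^{\bi}\times G_2^{\bi}}$ in two steps. First, tensoring on the right by the fixed separable algebra $C^r(G_2)$ defines a functor $-\otimes C^r(G_2)\colon\KK^{G_1^{\bi}}\to\KK^{G_1^{\bi}\times G_2^{\bi}}$; since $C^r(G_1)\in\bra\, \KK \,\ket_{\loc}^{G_1^{\bi}}$ by (1) for $G_1$, its image satisfies $C^r(G_1)\otimes C^r(G_2)\in\bra\, \{N\otimes C^r(G_2):N\in\KK\} \,\ket_{\loc}$. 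Second, for each trivially equivariant $N$ the functor $N\otimes-\colon\KK^{G_2^{\bi}}\to\KK^{G_1^{\bi}\times G_2^{\bi}}$ together with (1) for $G_2$ gives $N\otimes C^r(G_2)\in\bra\, \{N\otimes M:M\in\KK\} \,\ket_{\loc}\subseteq\bra\, \KK \,\ket_{\loc}^{G_1^{\bi}\times G_2^{\bi}}$, because $N\otimes M$ carries the trivial action. Hence every generator $N\otimes C^r(G_2)$ already lies in $\bra\, \KK \,\ket_{\loc}^{G_1^{\bi}\times G_2^{\bi}}$, so the localizing subcategory they generate does too, and therefore so does $C^r(G_1)\otimes C^r(G_2)\cong C^r(G_1\times G_2)$. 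Translating back through $(G_1\times G_2)^{\bi}\cong G_1^{\bi}\times G_2^{\bi}$ yields (1), hence (4), for $G_1\times G_2$.

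The remaining verifications are the two structural isomorphisms recorded in the first paragraph and the compatibility of the external tensor product with the triangulated and direct-sum structures, all of which are standard. The only load-bearing observations are the splitting $\pi_1\iota_1=\id$ that powers the ``only if'' direction and the two-step generation argument that powers the ``if'' direction; I do not anticipate a genuine obstacle beyond bookkeeping the four quantum-group legs, which is why \cref{prop:gamma1}(1) makes the statement ``easy'' as claimed.
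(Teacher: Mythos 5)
Your proof is correct and is essentially the argument the paper intends: the corollary is stated without proof, with the pointer ``by considering (1) of the previous proposition,'' and your ``if'' direction is exactly that tensor-generation argument (note that the exterior-product functors you need are instances of the paper's twisted tensor product functors $-\outensor{G}-$ with all twisting data trivial, so they are available as triangulated, direct-sum-preserving functors).
Your ``only if'' direction via the splitting $\pi_1\iota_1=\id$ and restriction of localizing subcategories is the same inflation--restriction trick the paper itself uses inside the proof of \cref{prop:gamma1} (the restrictions via $G^{\op}\to G^{\bi}\to G^{\op}$), and it correctly avoids the trap of trying to split $C^r(G_2)$ off $C^r(G_1)\otimes C^r(G_2)$ in $\KK$, which would require coamenability.
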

	
	We use the following lemma using the techniques of \cite{Meyer-Nest:homological,Meyer:homological} later. 
	\begin{lem}\label{lem:gamma1}
		Let $G$ be a compact quantum group with sparable $L^2(G)$ such that 
		$\KK^{G^{\bi}} = \bra \KK \ket_{\loc}^{G^{\bi}}$, 
		$A_i\in\KK^{G^{\bi}}$, and $B_i\in \KK^{\sD(G)}$ for $i=1,2$. 
		Then $\bx\in\KK^{G^{\bi}}(A_1,A_2)$ is a $\KK^{G^{\bi}}$-equivalence if $G^{\op}\redltimes\bx$ is a $\KK^{G}$-equivalence. 
		Also, $\by\in\KK^{\sD(G)}(B_1,B_2)$ is a $\KK^{\sD(G)}$-equivalence if $\Res^{\sD(G)}_{G}\by$ is a $\KK^G$-equivalence. 
	\end{lem}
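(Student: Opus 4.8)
The plan is to deduce both assertions from one principle of Meyer--Nest and Meyer homological algebra: a coproduct-preserving triangulated functor whose ``contractible'' subcategory is zero is conservative, and to read the hypothesis precisely as the vanishing of that contractible subcategory. I organize everything through the right-orthogonality description of the localizing subcategory generated by the induced objects.

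I would treat the second assertion first, as it is the cleaner instance. The functor $\Res^{\sD(G)}_{G}\colon\KK^{\sD(G)}\to\KK^G$ is restriction along the closed quantum subgroup $G\le\sD(G)$; it is triangulated and preserves countable direct sums. By Frobenius reciprocity for this closed quantum subgroup---the induction--restriction adjunction $\Ind^{\sD(G)}_{G}\dashv\Res^{\sD(G)}_{G}$ with induction as the \emph{left} adjoint, realized through the identification of $\Ind^{\sD(G)}_G$ with an iterated crossed product recalled in \cref{ssec:gamma}---one has a natural isomorphism $\KK^{\sD(G)}(\Ind^{\sD(G)}_{G}E,\,C)\cong\KK^{G}(E,\,\Res^{\sD(G)}_{G}C)$ for $E\in\KK^G$ and $C\in\KK^{\sD(G)}$. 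Hence $C$ lies in the right-orthogonal $(\Ind^{\sD(G)}_{G}\KK^G)^{\perp}$ if and only if $\Res^{\sD(G)}_{G}C\cong0$; and since a right-orthogonal is a localizing subcategory, this orthogonal equals $\bra\,\Ind^{\sD(G)}_{G}\KK^G\,\ket^{\sD(G)}_{\loc}{}^{\perp}$. The hypothesis is equivalent to condition~(4) of \cref{prop:gamma1}, i.e.\ $\KK^{\sD(G)}=\bra\,\Ind^{\sD(G)}_{G}\KK^G\,\ket^{\sD(G)}_{\loc}$, so this orthogonal is the right-orthogonal of the whole category, which contains only the zero object. Thus $\Res^{\sD(G)}_{G}$ has zero kernel. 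Completing $\by$ to a distinguished triangle with mapping cone $C$, triangulatedness gives $\Res^{\sD(G)}_{G}C\cong\mathrm{Cone}(\Res^{\sD(G)}_{G}\by)\cong0$ whenever $\Res^{\sD(G)}_{G}\by$ is a $\KK^G$-equivalence, whence $C\cong0$ and $\by$ is a $\KK^{\sD(G)}$-equivalence.

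For the first assertion I would realize $G^{\op}\redltimes-$ as a restriction after an equivalence. By Baaj--Skandalis duality the crossed-product functor is an equivalence $G^{\op}\redltimes-\colon\KK^{G^{\bi}}\xrightarrow{\ \sim\ }\KK^{G\times\wh{G}}$ onto the equivariant category for $G$ together with the dual discrete quantum group $\wh{G}$ of the $G^{\op}$-factor, and the stated functor $G^{\op}\redltimes-\colon\KK^{G^{\bi}}\to\KK^{G}$ is this equivalence followed by the restriction $\Res^{G\times\wh{G}}_{G}$ forgetting $\wh{G}$. The image of a trivially-equivariant algebra under the equivalence is induced from $G$, so the image of $\bra\,\KK\,\ket^{G^{\bi}}_{\loc}$ is contained in $\bra\,\Ind^{G\times\wh{G}}_{G}\KK^{G}\,\ket_{\loc}$; transporting the hypothesis $\KK^{G^{\bi}}=\bra\,\KK\,\ket^{G^{\bi}}_{\loc}$ therefore forces $\bra\,\Ind^{G\times\wh{G}}_{G}\KK^{G}\,\ket_{\loc}=\KK^{G\times\wh{G}}$. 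The same orthogonality argument as above, now for the adjunction $\Ind^{G\times\wh{G}}_{G}\dashv\Res^{G\times\wh{G}}_{G}$, gives $\ker(\Res^{G\times\wh{G}}_{G})=0$, hence $G^{\op}\redltimes-$ (a conservative functor composed with an equivalence) has zero kernel, and the mapping-cone argument finishes exactly as before.

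The main obstacle is the orthogonality identity $\ker(\text{restriction})=\bra\,\text{induced objects}\,\ket^{\perp}_{\loc}$, which rests on having the induction--restriction adjunction in equivariant $\KK$ with induction as the left adjoint; I must confirm this Frobenius reciprocity in the required direction for the closed quantum subgroups $G\le\sD(G)$ and $G\le G\times\wh{G}$, where the relevant quotients are discrete, placing us in the ``compact subgroup'' situation of Meyer--Nest. A secondary point to check carefully is the bookkeeping in the first assertion: that $G^{\op}\redltimes-$ is genuinely the composite of a Baaj--Skandalis equivalence with $\Res^{G\times\wh{G}}_{G}$, and that the image of the trivially-equivariant generators lands inside $\Ind^{G\times\wh{G}}_{G}\KK^{G}$, so that the hypothesis indeed forces the induced objects to generate $\KK^{G\times\wh{G}}$.
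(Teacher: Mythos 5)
Your proof is correct, and on the second claim it is essentially the paper's own argument: the paper obtains the vanishing of the kernel of $\Res^{\sD(G)}_{G}$ by citing \cite[Proposition~5.1]{Voigt:cpxss} (precisely the Frobenius reciprocity $\Ind^{\sD(G)}_{G}\dashv\Res^{\sD(G)}_{G}$ you invoke) together with (3)$\Leftrightarrow$(4) of \cref{prop:gamma1}, and then runs the same mapping-cone argument. Where you genuinely diverge is the first claim. The paper reduces it to the second claim through its own duality machinery: the equivalence $\bfJ^{\id_G}\colon\KK^{\sD(G)}\xrightarrow{\sim}\KK^{G^{\bi}}$ and the natural isomorphism $G^{\op}\redltimes\bfJ^{\id_G}(-)\cong -\outensor{G}C^r_0(G)\outensor{\wh{G}^{\op}}C^r_0(\wh{G})\cong-\outensor{G}\cK(L^2(G))\cong\Res^{\sD(G)}_{G}$, so that no further adjunction is required. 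You instead factor $G^{\op}\redltimes-$ through an ``equivariant Baaj--Skandalis'' equivalence $\KK^{G^{\bi}}\simeq\KK^{G\times\wh{G}}$ and rerun the orthogonality argument for $G\le G\times\wh{G}$. This is sound: the equivalence you need is exactly \cref{thm:BSTTdualcat} for the trivial matching on $G$ and $G$ (plain Baaj--Skandalis duality does not by itself carry the extra commuting $G$-action), and your identification of the images of trivially-equivariant algebras with $\Ind^{G\times\wh{G}}_{G}$-induced ones is right. The cost is a second adjunction, $\Ind^{G\times\wh{G}}_{G}\dashv\Res^{G\times\wh{G}}_{G}$, which is \emph{not} literally the cited result (Voigt's proposition concerns $G\le\sD(G)$); it does hold --- for instance because $G\times\wh{G}=\sD(1_{G\to G})$ is itself a generalized quantum double with discrete quotient, so the same proof applies --- but you would have to supply it, whereas the paper's route avoids the issue entirely. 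Your care about the direction of the adjunction is warranted and correct: induction is the \emph{left} adjoint here because the quotients are discrete, in contrast to the cocompact-quotient adjunction $\Res\dashv\Ind$ from \cite[Proposition~6.6]{Kitamura:indhomlcqg} used elsewhere in the paper.

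One small repair: the identity $(\Ind^{\sD(G)}_{G}\KK^G)^{\perp}=\bra\,\Ind^{\sD(G)}_{G}\KK^G\,\ket_{\loc}^{\sD(G)}{}^{\perp}$ should be justified by the fact that \emph{left} orthogonals ${}^{\perp}C$ are localizing (morphisms out of a direct sum form a product, and $\Hom(-,C)$ is cohomological), not that right orthogonals are localizing --- right orthogonals need not be closed under direct sums. The identity you use is standard and true; only the stated reason is off.
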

	
	\begin{proof}
		We can show for $A\in \KK^{\sD(G)}$, if $\Res^{\sD(G)}_{G} A\cong 0$ in $\KK^{G}$ then $A\cong 0$ in $\KK^{\sD(G)}$, 
		because of \cite[Proposition~5.1]{Voigt:cpxss} and (3)$\Leftrightarrow$(4) of \cref{prop:gamma1}. 
		Thus the second claim follows from the fact that for any exact triangle $A\xrightarrow{f} B\to C\to A[1]$ in a triangulated category, 
		$C\cong 0$ if and only if $f$ is isomorphic. 
		The first claim holds because of the natural isomorphism of the functors of $\KK^{\sD(G)}\to \KK^G$, 
		\begin{align*}
			&
			G^{\op}\redltimes\bfJ^{\id_G}(-) 
			\cong -\outensor{G} C^r_0(G) \outensor{\wh{G}^{\op}} C^r_0(\wh{G}) 
			\cong -\outensor{G}\cK(L^2(G)) 
			\cong \Res^{\sD(G)}_{G} . 
		\qedhere\end{align*} 
	\end{proof}
	
	\begin{rem}\label{rem:BCSU-q(2)}
		Let $0<q< 1$. 
		We identify 
		$T:=\left\{ \left( \begin{array}{cc}
			z&0\\0&\overline{z}
		\end{array} \right)\in SU(2) \,\bigg|\, z\in \bC \right\}$ 
		as a closed subgroup of $SU_{\pm q}(2)$ canonically. 
		It is shown in \cite[Theorem~4.7]{Voigt:bcfo} that $C(SU_q(2)/T)$ and $\bC^{\oplus 2}$ are $\KK^{\sD(SU_q(2))}$-equivalent. 
		Also, it is shown in \cite[Theorem~4.6, Proposition~4.8]{Voigt:bcfo} that there is an element $\bx\in \KK^{\sD(SU_{-q}(2))}(C(SU_{-q}(2)/T),\bC^{\oplus2})$ giving a projection onto a direct summand in $\KK^{\sD(SU_{-q}(2))}$ that is a $\KK^{SU_{-q}(2)}$-equivalence. 
		In this occasion, we briefly remark that these facts are enough to show $\KK^{\sD(SU_{-q}(2))}$-equivalence of $C(SU_{-q}(2)/T)$ and $\bC^{\oplus 2}$. 
		
		We put $G_q:=SU_{-q}(2)$. 
		Note that it follows from the fact above that $\bC^{\oplus 2}$ is a direct summand of $C(G_q^{\op}/T^{\op})$ in $\KK^{\sD(G_q^{\op})}$ and they are $\KK^{G_q^{\op}}$-equivalent 
		by taking opposite algebras. 
		We can show $\KK^{G_q^{\bi}}=\bra \KK \ket_{\loc}^{G_q^{\bi}}$ in a similar way as \cite[Theorem~6.1]{Voigt:bcfo} by using $\KK^{T^{\bi}}=\bra \KK\ket_{\loc}^{T^{\bi}}$. 
		Now \cref{lem:gamma1} shows the desired claim. 
	\end{rem}
	
	\begin{rem}
		Let $G_0$ and $G_1$ be compact quantum groups with $\Gamma_i:=\wh{G}_i$ being torsion-free for $i=0,1$. 
		Note that torsion-freeness is preserved under direct products, free products by \cite[Theorem~3.16, Theorem~3.17]{Arano-deCommer:torsion}, and taking opposite quantum groups clearly. 
		\begin{enumerate}
			\item
			If $G_0$ and $G_1$ are monoidally equivalent, so are $G_0^{\bi}$ and $G_1^{\bi}$. 
			Thus if $G_0$ satisfies (3) of \cref{prop:gamma1}, so does $G_1$ by \cite[Theorem~8.6]{Voigt:bcfo}. 
			\item
			If $\Gamma_0$ is a divisible quantum subgroup of $\Gamma_1$, so is $\Gamma_0^{\bi}$ in $\Gamma_1^{\bi}$ (see \cite[Definition~4.1, Lemma~4.2]{Vergnioux-Voigt:bcfu}). 
			Thus if $G_1$ satisfies (3) of \cref{prop:gamma1}, so does $G_0$ by \cite[Lemma~6.7]{Vergnioux-Voigt:bcfu}. 
			\item
			The properties of \cref{prop:gamma1} are also preserved under free products. 
			To see this, assume $G_0$ and $G_1$ satisfy (4) of \cref{prop:gamma1}. 
			We put $\Gamma:=\wh{G}_0\ast \wh{G}_1$ and 
			let $\iota_j\colon \wh{G}_j\to \Gamma$ denote the canonical quantum subgroup for $j=0,1$. 
			By \cite[Theorem~5.5]{Vergnioux-Voigt:bcfu} and remarks after \cite[Definition~5.1]{Vergnioux-Voigt:bcfu}, there is a left $\sD(\Gamma)$-C*-algebra $\mathcal{P}$ that is contained in 
			\[ \bra A\otimes C \,|\, A\in \{c_0(\Gamma/\wh{G}_0), c_0(\Gamma/\wh{G}_1), c_0(\Gamma)\}, C\in \KK \ket_{\loc}^{\sD(\Gamma)}, \] 
			and contains $\bC$ as a direct summand in the triangulated category $\KK^{\sD(\Gamma)}$. 
			Thus (2) of \cref{prop:gamma1} for $\wh{\Gamma}$ holds because 
			$c_0(\Gamma)= \Ind^{\sD(\Gamma)}_{\wh{\Gamma}}\bC$ and 
			for $i=0,1$, 
			\begin{align*}
				&
				c_0(\Gamma/\Gamma_i) = \Ind^{\sD(\Gamma)}_{\sD(\iota_i)} \Res^{\sD(\Gamma_i)}_{\sD(\iota_i)} \bC_{\id_{\Gamma_i}} 
				\in 
				\bra \Ind^{\sD(\Gamma)}_{\sD(\iota_i)} \Res^{\sD(\Gamma_i)}_{\sD(\iota_i)} \Ind^{\sD(\Gamma_i)}_{G_i}\KK^{G_i} \ket_{\loc}^{\sD(\Gamma)}
				\\&
				= 
				\bra \Ind^{\sD(\Gamma)}_{\sD(\iota_i)} \Ind^{\sD(\iota_i)}_{\wh{\Gamma}} \Res^{G_i}_{\wh{\Gamma}}\KK^{G_i} \ket_{\loc}^{\sD(\Gamma)} 
				\subset 
				\bra \Ind^{\sD(\Gamma)}_{\wh{\Gamma}} \KK^{\wh{\Gamma}} \ket_{\loc}^{\sD(\Gamma)} , 
			\end{align*}
			with the aid of \cite[Theorem~5.3, Remark~5.7]{Kitamura:indhomlcqg}, where we considered restrictions via $\wh{\iota}_i\colon \wh{\Gamma}\to G_i$ and the homomorphism $\sD(\iota_i)\to \sD(\Gamma_i)$ induced by $\wh{\iota}_i$. 
		\end{enumerate}
		
		Therefore, by following the strategy of \cite{Voigt:bcfo,Vergnioux-Voigt:bcfu}, we obtain from the case of $SU_q(2)$ (cf.~\cref{rem:BCSU-q(2)}) that free orthogonal quantum groups and free unitary quantum groups satisfy the conditions of \cref{prop:gamma1}. 
	\end{rem}
	
	\section{Equivariant \texorpdfstring{$K$}{K}-theory of quantum homogeneous spaces}\label{sec:grtwhmg}
	
	We determine equivariant $KK$-theoretic classes of certain quantum homogeneous spaces of graded twisting in the sense of \cite{Neshveyev-Yamashita:twilie,Bichon-Neshveyev-Yamashita:grtwi,Bichon-Neshveyev-Yamashita:grmod}. 
	We mainly deal with the following situation. 
	\begin{cond}\label{cond:actionfinab}
		Let $F,G,H$ be compact quantum groups with separable $L^2$-spaces and $\Gamma$ be a finite abelian group. Consider the following sequences 
		\begin{align*}
			\xymatrix@R=0.4em{
				1\ar[r] & 
				\wh{\Gamma} \ar@<0.3ex>[r]^-{\imath} & 
				H \ar[r]^-{\phi}\ar@<0.3ex>[l]^-{\pi} & 
				G \ar[r] & 
				1, 
				\\
				1\ar[r] & 
				\wh{\Gamma} \ar@<0.3ex>[r]^-{\jmath} & 
				H \ar[r]^-{\varphi}\ar@<0.3ex>[l]^-{\pi} & 
				F \ar[r] & 
				1, 
			}
		\end{align*}
		which are ``split exact" (with common $\pi$) in the sense of satisfying the three conditions as follows. 
		\begin{itemize}
			\item
			$\imath, \wh{\phi}, \jmath, \wh{\varphi}$ give closed quantum subgroups. 
			Via $\phi$ and $\varphi$, we shall identify $C^r(G)$ and $C^r(F)$ as unital C*-subalgebras of $C^r(H)$ compatible with the comultiplications. 
			\item
			Under these identifications, 
			$(\imath^*C^r_0(H))^{\wh{\Gamma}}=C^r_0(G)$, 
			and $(\jmath^*C^r_0(H))^{\wh{\Gamma}}=C^r_0(F)$. 
			\item
			$\pi\imath=\id_{\wh{\Gamma}}=\pi\jmath$. 
		\end{itemize}
	\end{cond}
	
	\begin{rem}
		For $t\in\Gamma$, we write $u^t\in C(\wh{\Gamma})$ for the unitary $\ell^2(\Gamma)\ni \xi \mapsto \xi(t^{-1}(-))\in \ell^2(\Gamma)$. 
		Under the condition above, we have a $*$-homomorphism $\pi^r\colon C(\wh{\Gamma})\to C^r(H)$ such that $W^\pi=(\pi^r\otimes\id)(W^{\wh{\Gamma}})$ by coamenability and \cite{Meyer-Roy-Woronowicz:qghom}, 
		and by abusing notation to identify $u^t$ with $\pi^r(u^t)$, 
		we have $W^\pi=\sum\limits_{t\in\Gamma}u^{t*}\otimes\delta_t$. 
		It is not hard to see $\Ad W^{\pi}$ is a matching on $G^{\op}$ and $\Gamma$ (see \cite[Remark~7.2]{Kitamura:indhomlcqg}). 
		Especially for each $t\in \Gamma$, we see $\alpha_t:=\Ad u^t$ gives an $*$-automorphism of $C^r(G)$ preserving $\Delta_G$, and by comparing the Haar states of $G$ and $H$ we see $C^r(H)=\Gamma\redltimes C^r(G)$. 
		We let $p:=(R^{\wh{\Gamma}}\otimes\epsilon_H)(\jmath^*\Delta_H) \colon \cO(G)\subset \cO(H)\to C(\wh{\Gamma})$ be a unital $*$-homomorphism preserving comultiplications, where $\epsilon_H\colon \cO(H)\to \bC$ is the counit. 
		Then the pair $(p,\alpha)$ is an \emph{invariant cocentral action} of $\Gamma$ on $\cO(G)$ in the sense of the beginning of \cite[Section~2.2]{Bichon-Neshveyev-Yamashita:grtwi} preserving $*$-structures and 
		we can see that $F=G^{t,(p,\alpha)}$ is the \emph{graded twisting} in the sense of the beginning of \cite[Section~3.1]{Bichon-Neshveyev-Yamashita:grtwi}. 
		Conversely, a graded twisting of $G$ by an invariant cocentral action of $\Gamma$ fits the condition above thanks to \cite[Proposition~2.7, Proposition~3.1]{Bichon-Neshveyev-Yamashita:grtwi}. 
	\end{rem}
	
	\begin{rem}\label{rem:bfF}
		Under \cref{cond:actionfinab}, we see $\wh{\Gamma}\rbicross{\Ad W^{\pi}_{21}} G^{\op}\cong H^{\op}\cong \wh{\Gamma}\rbicross{\Ad W^{\pi}_{21}} F^{\op}$. 
		We shall write $\wt{G} := G\dblcross{\Ad W^\pi}\Gamma$ and $\wt{F}:=F\dblcross{\Ad W^{\pi}}\Gamma$. 
		Then we have equivalences of triangulated categories $\KK^{\wt{G}}\simeq \KK^H\simeq \KK^{\wt{F}}$ by using \cref{thm:BSTTdualcat} and \cref{rem:dualityopcop} twice. 
		But we would like to use the concrete construction of an equivalence 
		$\KK^{\wt{G}}\simeq\KK^{\wt{F}}$ described in 
		\cite[Theorem~7.4]{Kitamura:indhomlcqg}, a $KK$-theoretic version of \cite[Theorem~3.6]{Bichon-Neshveyev-Yamashita:grmod}. 
		We have a categorical equivalence 
		$\Calg^{\wt{G}} \simeq \Calg^{\wt{F}}$ that sends 
		a separable left $\wt{G}$-C*-algebra $A$ with its left $G$-action $\alpha$ and left $\Gamma$-action $\lambda$ to 
		\begin{align*}
			&
			\bfF(A):=
			\bigl[ u^{r*}_1 \lambda(a) \ \big|\ 
			r\in\Gamma, a\in A, 
			(\jmath^*\alpha)(a)\in u^{r}\otimes A \bigr] 
			\subset \Gamma\redltimes A \cong A\outensor{\Gamma} C(\wh{\Gamma}), 
		\end{align*}
		and the restriction of its left $\wt{F}$-action via the canonical homomorphism $\sD(\pi)\to\wt{F}$ is the left $\sD(\pi)$-action on $A\outensor{\wh{\Gamma}} C(\wh{\Gamma})$, 
		where we consider $C(\wh{\Gamma})$ as a left $\wh{\Gamma}$-YD C*-algebra with the trivial $\Gamma$-action. 
		This functor induces an equivalence 
		$\bfF\colon \KK^{\wt{G}}\xrightarrow{\sim} \KK^{\wt{F}}$ 
		of triangulated categories. 
	\end{rem}
	
	We want to see what data of $\KK^{\sD(F)}$ can be conveyed from the information about $G$. 
	Our strategy is to compare $\KK^{\wt{G}^{\bi}}$ and $\KK^{\wt{F}^{\bi}}$ instead of $\KK^{\sD(\wt{G})}$ and $\KK^{\sD(\wt{F})}$ directly. 
	We can do this since the sequences of \cref{cond:actionfinab} induces the sequences 
	\begin{align*}
		\xymatrix@R=0.4em{
			1\ar[r] & 
			(\wh{\Gamma})^{\bi} \ar@<0.3ex>[r]^-{\imath^{\bi}} & 
			H^{\bi} \ar[r]^-{\phi^{\bi}}\ar@<0.3ex>[l]^-{\pi^{\bi}} & 
			G^{\bi} \ar[r] & 
			1, 
			\\
			1\ar[r] & 
			(\wh{\Gamma})^{\bi} \ar@<0.3ex>[r]^-{\jmath^{\bi}} & 
			H^{\bi} \ar[r]^-{\varphi^{\bi}}\ar@<0.3ex>[l]^-{\pi^{\bi}} & 
			F^{\bi} \ar[r] & 
			1, 
		}
	\end{align*}
	which again satisfy \cref{cond:actionfinab} via the identification $\wh{\Gamma^{\bi}}\cong (\wh{\Gamma})^{\bi}$. 
	We note that $W^{\pi^{\op}}=W^{\pi} \in \cU\cM(C^r(H)\otimes C(\Gamma))$ because $\Gamma$ is an abelian group, and 
	$\wt{G}^{\op}\cong G^{\op}\dblcross{\Ad W^{\pi}}\Gamma^{\op}$ given by 
	\begin{align}\label{eq:wtGopvsrDpi}
		&
		\begin{aligned}
			&
			C^r(\wt{G}^{\op})
			= C^r(G)\otimes C(\Gamma)
			\ni x\otimes \delta_t 
			\\&
			\mapsto 
			\Ad W^{\pi*} (x\otimes \delta_{t^{-1}})
			\in C^r(G)\otimes C(\Gamma)
			= C^r(G^{\op}\dblcross{\Ad W^{\pi}}\Gamma^{\op}) . 
		\end{aligned}
	\end{align}
	By \cref{rem:bfF}, we have the categorical equivalence 
	$\Calg^{\wt{G}^{\bi}} \simeq \Calg^{\wt{F}^{\bi}}$ that sends 
	a separable left $(G\dblcross{\Ad W^{\pi}}\Gamma) \times (G^{\op}\dblcross{\Ad W^{\pi}}\Gamma^{\op})$-C*-algebra $A$ 
	with its left $G$-action $\alpha$, left $\Gamma$-action $\lambda$, left $G^{\op}$-action $\beta$, and left $\Gamma^{\op}$-action $\mu$ to 
	\begin{align}\label{eq:bfFbi}
		\bfF^{\bi}(A)
		:=&
		\left[ u^{r*}_1 u^{s*}_2 (\id\otimes\mu)\lambda(a) \ \left|\ 
		\begin{array}{c}
			r,s\in\Gamma,\ a\in A, 
			\\
			(\jmath^*\alpha)(a)\in u^{r}\otimes A,\ 
			(\jmath^{\op *}\beta)(a)\in u^{s}\otimes A
		\end{array}\right.
		\right] 
		\subset \Gamma^{\bi}\redltimes A , 
	\end{align}
	and the restriction of its left $\wt{F}^{\bi}$-action via the homomorphism $\sD(\pi^{\bi})\cong \sD(\pi)^{\bi}\to \wt{F}^{\bi}$ 
	is the left $\sD(\pi^{\bi})$-action of $\Gamma^{\bi}\redltimes A\cong A\outensor{\Gamma^{\bi}}C(\wh{\Gamma}^{\bi})$. 
	This functor induces an equivalence $\bfF^{\bi}\colon \KK^{\wt{G}^{\bi}}\xrightarrow{\sim} \KK^{\wt{F}^{\bi}}$ 
	of triangulated categories. 
	
	We define the compact quantum group 
	$G^{\bi}_{\Gamma}$ by 
	$C^r(G^{\bi}_{\Gamma}) := C^r(G)\otimes C(\Gamma)\otimes C^r(G^{\op})$ 
	and for $x\in C^r(G), y\in C^r(G^{\op}), s\in\Gamma$, 
	\begin{align*}
		&
		\Delta_{G^{\bi}_{\Gamma}} (x\otimes \delta_s \otimes y) 
		:= \sum\limits_{t\in\Gamma} 
		(u^{t}\otimes u^{s^{-1}t})_{43}^*
		\bigl( \Delta_G(x)\otimes \delta_t\otimes\delta_{st^{-1}}\otimes \Delta_G(y)
		\bigr)_{142563}
		(u^{t}\otimes u^{s^{-1}t})_{43} . 
	\end{align*}
	Then $C^r(G)$ is a left $G^{\bi}_{\Gamma}$-C*-algebra by sending $x\in C^r(G)$ to 
	\begin{align*}
		&
		\sum\limits_{t\in\Gamma} (\delta_t)_2 u^{t\otimes 2*}_{34} \bigl((\id\otimes\Delta_G)\Delta_G(x)\bigr)_{143} u^{t\otimes2}_{34} \in C^r(G^{\bi}_{\Gamma})\otimes C^r(G). 
	\end{align*}
	We can regard $G^{\bi}_{\Gamma}$ as a closed quantum subgroup of $\wt{G}^{\bi}$ via 
	\begin{align*}
		&
		C^r(\wt{G})\otimes C^r(\wt{G}^{\op})\ni 
		x\otimes \delta_s\otimes y\otimes \delta_t 
		\mapsto 
		\delta_{s,t^{-1}} x\otimes \delta_s\otimes y 
		\in C^r(G^{\bi}_{\Gamma}). 
	\end{align*}
	We have canonical closed quantum subgroups $G^{\bi}\subset G^{\bi}_{\Gamma}\subset \wt{G}^{\bi}$, and 
	projections $G^{\bi}_{\Gamma}\to \wt{G}$ and $G^{\bi}\to G$ that ignore the component $G^{\op}$. 
	We will use induction and restriction along these homomorphisms concerning $G$, and similarly for $F$. 
	Then we are going to show the following. 
	
	\begin{thm}\label{thm:grtwtransfer}
		Under \cref{cond:actionfinab}, take $A\in\Calg^{\wt{G}^{\bi}}$, $C\in \Calg$ such that $\Res^{\wt{G}^{\bi}}_{G^{\bi}_{\Gamma}}A \cong C\otimes C^r(G)$ in $\KK^{G^{\bi}_{\Gamma}}$. 
		If $\KK^{F^{\bi}}=\bra\, \KK \,\ket_{\loc}^{F^{\bi}}$, then 
		$\Res^{\wt{F}^{\bi}}_{F^{\bi}}\bfF^{\bi}(A)\cong C\otimes C^r(F)$ 
		in $\KK^{F^{\bi}}$. 
	\end{thm}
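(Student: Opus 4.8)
The plan is to deduce the $\KK^{F^{\bi}}$-equivalence from a statement in $\KK^{F}$, into which the hypothesis on $A$ can be fed through the equivalence $\bfF^{\bi}$ of \cref{rem:bfF}. First I would invoke \cref{lem:gamma1} for the compact quantum group $F$: since $\KK^{F^{\bi}}=\bra\, \KK\, \ket_{\loc}^{F^{\bi}}$ is assumed, a morphism in $\KK^{F^{\bi}}$ is an equivalence as soon as its image under $F^{\op}\redltimes-$ is a $\KK^{F}$-equivalence. Thus it suffices to produce a natural comparison morphism $\bx\in\KK^{F^{\bi}}(\Res^{\wt{F}^{\bi}}_{F^{\bi}}\bfF^{\bi}(A),\, C\otimes C^r(F))$ and to check that $F^{\op}\redltimes\bx$ is invertible in $\KK^{F}$.

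The mechanism for transporting the $G$-datum to the $F$-side is that $\bfF^{\bi}$ is the composite equivalence $\KK^{\wt{G}^{\bi}}\simeq\KK^{H^{\bi}}\simeq\KK^{\wt{F}^{\bi}}$ through the common middle $H^{\bi}$, and that crossing by $\Gamma$ turns both $G$- and $F$-regular algebras into the same $C^r(H)=\Gamma\redltimes C^r(G)$ from \cref{cond:actionfinab}. Concretely, I would establish a natural isomorphism of functors $\KK^{\wt{G}^{\bi}}\to\KK^{F}$ of the shape $F^{\op}\redltimes\Res^{\wt{F}^{\bi}}_{F^{\bi}}\bfF^{\bi}(-)\cong\Theta\circ\Res^{\wt{G}^{\bi}}_{G^{\bi}_\Gamma}(-)$, where $\Theta$ is an explicit functor defined on $\KK^{G^{\bi}_\Gamma}$. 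The point of restricting only to $G^{\bi}_\Gamma$ — rather than to $G^{\bi}$, or keeping all of $\wt{G}^{\bi}$ — is that the single copy of $\Gamma$ retained in $G^{\bi}_\Gamma$ is exactly the grading datum that $\bfF^{\bi}$ uses to rebuild the $F$-twisting, so that $\Theta$ is well defined on this restriction while remaining blind to the rest of $A$. The identity $G^{\op}\redltimes\bfJ^{\id_G}(-)\cong\Res^{\sD(G)}_G$ from the proof of \cref{lem:gamma1} and \cref{cor:crossprodqd}, together with \cref{eq:bfFbi} and \cref{eq:wtGopvsrDpi}, are the tools for unwinding these crossed products.

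With this functor identity in hand I would feed in the hypothesis $\Res^{\wt{G}^{\bi}}_{G^{\bi}_\Gamma}A\cong C\otimes C^r(G)$: applying $\Theta$ gives $F^{\op}\redltimes\Res^{\wt{F}^{\bi}}_{F^{\bi}}\bfF^{\bi}(A)\cong C\otimes\Theta(C^r(G))$, and a direct computation would identify $\Theta(C^r(G))$ with $F^{\op}\redltimes C^r(F)$, so that the two sides become $\KK^{F}$-equivalent compatibly with the candidate $\bx$ (here $F^{\op}\redltimes(C\otimes C^r(F))\cong C\otimes(F^{\op}\redltimes C^r(F))$ since $C$ carries the trivial action). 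I expect the main obstacle to be twofold: first, constructing the comparison morphism $\bx$ at the level of $F^{\bi}$ — rather than merely matching objects after crossing — which requires promoting the $G^{\bi}_\Gamma$-equivalence to an $F^{\bi}$-morphism through the natural functors; and second, the verification of the functor identity above, that is, the careful bookkeeping of the left and right $G$-, $F$-, and $\Gamma$-actions, their modular objects, and the leg numberings coming from \cref{eq:bfFbi}, \cref{eq:wtGopvsrDpi} and the embedding $G^{\bi}_\Gamma\subset\wt{G}^{\bi}$.
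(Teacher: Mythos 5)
Your outline follows the same route as the paper's proof: reduce via \cref{lem:gamma1} to checking invertibility after $F^{\op}\redltimes-$, establish a natural isomorphism $F^{\op}\redltimes\Res^{\wt{F}^{\bi}}_{F^{\bi}}\bfF^{\bi}(-)\cong \bfH\,\Res^{\wt{G}^{\bi}}_{G^{\bi}_{\Gamma}}(-)$ (your $\Theta$ is exactly the functor $\bfH$ of \cref{lem:grtwitransfer1}), and then exhibit a comparison morphism in $\KK^{F^{\bi}}$ whose crossed product realizes this identification. However, the step you defer as ``the main obstacle'' --- producing a class $\by\in\KK^{F^{\bi}}\bigl(\Res^{\wt{F}^{\bi}}_{F^{\bi}}\bfF^{\bi}(A),\, C\otimes C^r(F)\bigr)$ out of the given $\bx\in\KK^{G^{\bi}_{\Gamma}}\bigl(\Res^{\wt{G}^{\bi}}_{G^{\bi}_{\Gamma}}A,\, C\otimes C^r(G)\bigr)$ --- is the actual content of the theorem, and ``promoting through the natural functors'' cannot be carried out with the functors you name: restrictions, $\bfF^{\bi}$, and crossed products all take algebras over the \emph{larger} quantum groups as input, whereas $\bx$ lives over the smaller group $G^{\bi}_{\Gamma}$, so none of them can even be applied to $\bx$. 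Merely knowing that the two objects become isomorphic after crossing by $F^{\op}$ (which is what your functor identity gives) does not feed into \cref{lem:gamma1}, which needs a specific morphism whose crossed product is invertible.

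The missing idea is induction along $G^{\bi}_{\Gamma}\subset\wt{G}^{\bi}$ together with the adjunction $\Res^{\wt{G}^{\bi}}_{G^{\bi}_{\Gamma}}\dashv\Ind^{\wt{G}^{\bi}}_{G^{\bi}_{\Gamma}}$, with unit $\eta$ and counit $\kappa$. The paper defines
\begin{align*}
	\by:=(\id_C\otimes\rho)\circ\bfF^{\bi}\Ind^{\wt{G}^{\bi}}_{G^{\bi}_{\Gamma}}(\bx)\circ\bfF^{\bi}(\eta_A),
\end{align*}
which requires two further nontrivial inputs: the identification $\Ind^{\wt{G}^{\bi}}_{G^{\bi}_{\Gamma}}C^r(G)\cong C^r(\wt{G})$, and the explicit left $F^{\bi}$-equivariant $*$-homomorphism $\rho\colon\Res^{\wt{F}^{\bi}}_{F^{\bi}}\bfF^{\bi}(C^r(\wt{G}))\to C^r(F)$ of \cref{lem:grtwitransfer2}, obtained by exhibiting an $F^{\bi}$-invariant central projection $p$ in $\bfF^{\bi}(C^r(\wt{G}))$ and identifying the corner $p\,\bfF^{\bi}(C^r(\wt{G}))\,p$ with $C^r(F)$ via $C^r(H)=\Gamma\redltimes C^r(G)\to\Gamma^{\bi}\redltimes C^r(\wt{G})$; this is a concrete computation with the grading, not a formal step. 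Moreover, invertibility of $F^{\op}\redltimes\by$ is established by showing it agrees, up to canonical identifications, with the invertible class $\bfH(\bx)\circ\theta_A$, and this uses the compatibility diagram of \cref{lem:grtwitransfer2}, the naturality of $\theta$ and of $\kappa$, and the triangle identity $\kappa_{\Res A}\circ\Res(\eta_A)=\id$ --- none of which appear in your outline. (A secondary point: the isomorphism $\theta$ itself is constructed at the level of $\Calg$ and descends to $\KK$ only after composing with the stabilization $\wh{\wt{G}^{\bi}}{}^{\op}\redltimes\wt{G}^{\bi}\redltimes-$, since the $\Calg$-level functors involving induction preserve full corner embeddings only on objects with free $G^{\op}$-action; this is more than leg-numbering bookkeeping.) Without the adjunction and the homomorphism $\rho$, your argument produces no candidate morphism at all, so the reduction via \cref{lem:gamma1} has nothing to apply to.
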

	
	We have a natural transformations 
	$\eta\colon \id_{\KK^{\wt{G}^{\bi}}}
	\to 
	\Ind^{\wt{G}^{\bi}}_{G^{\bi}_{\Gamma}} \Res^{\wt{G}^{\bi}}_{G^{\bi}_{\Gamma}}$ and 
	$\kappa\colon \Res^{\wt{G}^{\bi}}_{G^{\bi}_{\Gamma}} \Ind^{\wt{G}^{\bi}}_{G^{\bi}_{\Gamma}} 
	\to \id_{\KK^{G^{\bi}_{\Gamma}}}$
	corresponding to the adjunction of the functors 
	$\Res^{\wt{G}^{\bi}}_{G^{\bi}_{\Gamma}} \dashv \Ind^{\wt{G}^{\bi}}_{G^{\bi}_{\Gamma}}$ 
	by \cite[Proposition~6.6]{Kitamura:indhomlcqg}. 
	
	\begin{lem}\label{lem:grtwitransfer1}
		Under \cref{cond:actionfinab}, we have the following commutative diagram of functors up to some natural isomorphisms, 
		\begin{align*}
			&
			\xymatrix{
				\Calg^{\wt{G}^{\bi}} 
				\ar[d]_-{\Res^{\wt{G}^{\bi}}_{G^{\bi}_{\Gamma}}} \ar[r]^-{\bfF^{\bi}} &
				\Calg^{\wt{F}^{\bi}} 
				\ar[d]_-{\Res^{\wt{F}^{\bi}}_{F^{\bi}_{\Gamma}}} 
				\ar[dr]^-{\Res^{\wt{F}^{\bi}}_{F^{\bi}}}&
				\\
				\Calg^{G^{\bi}_{\Gamma}} 
				\ar[d]_-{\Ind^{\wt{G}}_{G^{\bi}_{\Gamma}}} & 
				\Calg^{F^{\bi}_{\Gamma}} 
				\ar[d]_-{\Ind^{\wt{F}}_{F^{\bi}_{\Gamma}}} &
				\Calg^{F^{\bi}} \ar[d]^-{\Ind^F_{F^{\bi}}}
				\\
				\Calg^{\wt{G}} \ar[r]^-{\bfF} & 
				\Calg^{\wt{F}} \ar[r]^-{\Res^{\wt{F}}_F}&
				\Calg^{F}. 
			}
		\end{align*}
		Moreover, when we put 
		\begin{align*}
			&
			\bfG:= F^{\op}\redltimes \Res^{\wt{F}^{\bi}}_{F^{\bi}} \bfF^{\bi} 
			\colon \KK^{\wt{G}^{\bi}}\to \KK^{F}, 
			\\&
			\bfH:=\Res^{\wt{F}}_{F} \bfF (\wh{\wt{G}}^{\op}\redltimes (G^{\bi}_{\Gamma})\redltimes-)
			\colon \KK^{G^{\bi}_{\Gamma}}\to \KK^F, 
		\end{align*}
		we have a natural isomorphism $\theta\colon  \bfG 
		\xrightarrow{\sim} 
		\bfH \Res^{\wt{G}^{\bi}}_{G^{\bi}_{\Gamma}}$ defined in the proof. 
	\end{lem}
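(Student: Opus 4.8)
The plan is to recognise every functor in the diagram as an explicit crossed-product construction and then to build $\theta$ from the two commuting regions of the diagram followed by a single application of Baaj--Skandalis duality. The starting point is the crossed-product description of induction along a closed quantum subgroup recalled in \cref{ssec:gamma}, namely $\Ind^{\wt G}_{G^{\bi}_\Gamma}\cong \wh{\wt G}^{\op}\redltimes\bigl(G^{\bi}_\Gamma\redltimes-\bigr)$ (with the evident dual restriction suppressed), which identifies $\bfH$ with $\Res^{\wt F}_F\,\bfF\,\Ind^{\wt G}_{G^{\bi}_\Gamma}$. With this reading the desired $\theta\colon\bfG\xrightarrow{\sim}\bfH\,\Res^{\wt G^{\bi}}_{G^{\bi}_\Gamma}$ is assembled by moving $\Res^{\wt G^{\bi}}_{G^{\bi}_\Gamma}$ through the diagram.

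First I would verify the left region, i.e.\ the natural isomorphism
\[
\bfF\,\Ind^{\wt G}_{G^{\bi}_\Gamma}\Res^{\wt G^{\bi}}_{G^{\bi}_\Gamma}\cong \Ind^{\wt F}_{F^{\bi}_\Gamma}\Res^{\wt F^{\bi}}_{F^{\bi}_\Gamma}\,\bfF^{\bi}.
\]
By \cref{rem:bfF} and \cref{eq:bfFbi}, $\bfF$ and $\bfF^{\bi}$ are spectral-subalgebra constructions inside the crossed products by $\Gamma$ and $\Gamma^{\bi}$, cut out by the grading unitaries $u^t$, while the closed quantum subgroups $G^{\bi}_\Gamma\subset\wt G^{\bi}$, $F^{\bi}_\Gamma\subset\wt F^{\bi}$ and the projections onto $\wt G$, $\wt F$ are all compatible with the common splitting $\pi$ of \cref{cond:actionfinab}. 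I would factor this region through a graded-twisting functor $\Calg^{G^{\bi}_\Gamma}\to\Calg^{F^{\bi}_\Gamma}$ induced by the same invariant cocentral action, and check commutativity with $\Res$ on top and with $\Ind$ on the bottom by comparing the two resulting C*-algebras on generators; the content is only that induction and restriction along the $\pi$-compatible homomorphisms permute and relabel the same $\Gamma$-spectral data that $\bfF$ and $\bfF^{\bi}$ select.

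Next I would establish the right region, the Mackey-type base-change isomorphism
\[
\Res^{\wt F}_F\,\Ind^{\wt F}_{F^{\bi}_\Gamma}\Res^{\wt F^{\bi}}_{F^{\bi}_\Gamma}\cong \Ind^F_{F^{\bi}}\Res^{\wt F^{\bi}}_{F^{\bi}},
\]
expressing the commutation of induction and restriction across the square formed by the inclusions $F^{\bi}_\Gamma,F^{\bi}\hookrightarrow\wt F^{\bi}$ and the projections $F^{\bi}_\Gamma\to\wt F\to F$. Here I would lean on the compatibility of induction with restriction from \cite{Kitamura:indhomlcqg} (the same circle of ideas yielding the adjunction $\Res\dashv\Ind$ via \cite[Proposition~6.6]{Kitamura:indhomlcqg}): both sides are the $F$-restriction of the crossed-product model of the relevant induction, and they coincide once the residual and dual actions are matched.

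Combining the two regions gives $\bfH\,\Res^{\wt G^{\bi}}_{G^{\bi}_\Gamma}\cong \Ind^F_{F^{\bi}}\Res^{\wt F^{\bi}}_{F^{\bi}}\,\bfF^{\bi}$, so it remains to compare $\Ind^F_{F^{\bi}}$ with $F^{\op}\redltimes(-)$ on $F^{\bi}$-algebras. Writing $F^{\bi}\redltimes B\cong F\redltimes\bigl(F^{\op}\redltimes B\bigr)$ and inserting the crossed-product model of $\Ind^F_{F^{\bi}}$, Baaj--Skandalis/Takesaki--Takai duality \cite{Baaj-Skandalis:mltu} collapses the factor $\wh F^{\op}\redltimes F\redltimes(-)$ to a stabilisation by $\cK(L^2(F))$, leaving $F^{\op}\redltimes B$ equipped with its residual left $F$-action; this is a $\KK^F$-equivalence natural in $B$, and taking $B=\Res^{\wt F^{\bi}}_{F^{\bi}}\bfF^{\bi}(A)$ produces $\theta$ naturally in $A\in\KK^{\wt G^{\bi}}$. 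I expect the main obstacle to be the right region together with this final collapse: the delicate points are keeping the leg-numbering and the several commuting and dual actions straight, verifying that the residual $F$-action created by $\Ind^F_{F^{\bi}}$ is exactly the one retained by $\bfG$, and ensuring that all the identifications are natural rather than merely objectwise so that they descend along the Cuntz picture used in \cref{sec:KK}.
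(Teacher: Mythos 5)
Your handling of the C*-level diagram is essentially the paper's own: the left region is verified on generators using the $\Gamma^{\bi}$-spectral decomposition (the paper does exactly this, producing the isomorphism via $\id\otimes\mu$ and the fact that $C(\wh{\Gamma})\cap C^r(G)=\bC 1$ inside $C^r(H)$), and the right region follows from $C^r(\wt{F})\cong\Ind^F_{F^{\bi}}C^r(F^{\bi}_{\Gamma})$ together with the induction--restriction compatibility of \cite{Kitamura:indhomlcqg}. The gap lies entirely in your passage to $\KK$. The homomorphisms $G^{\bi}_{\Gamma}\to\wt{G}$ and $F^{\bi}\to F$ are the \emph{projections} forgetting the $G^{\op}$-, resp.\ $F^{\op}$-component; they are proper but do not give closed quantum subgroups. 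Consequently the two facts you invoke from \cref{ssec:gamma} --- the crossed-product model $\Ind^G_H\cong\wh{G}^{\op}\redltimes(\wh{\phi}^{\op*}(H\redltimes-))$ and the descent of $\Ind$ to a triangulated functor on Kasparov categories --- are exactly the facts that fail for these inductions: the paper states explicitly that for a general proper homomorphism, $\Ind_\phi$ neither respects equivariant Morita equivalence nor induces a functor $\KK^H\to\KK^G$. So your identification of $\bfH$ with $\Res^{\wt{F}}_{F}\bfF\,\Ind^{\wt{G}}_{G^{\bi}_{\Gamma}}$ does not even typecheck as an isomorphism of $\KK$-functors, and your final ``Baaj--Skandalis collapse'' $\Ind^F_{F^{\bi}}B\simeq F^{\op}\redltimes B$, natural in $B$, is false: for $B=\bC$ with trivial action one has $\Ind^F_{F^{\bi}}\bC\cong\bC$ (the cotensor description gives the invariants of right translation in $C^r(F)$, i.e.\ the scalars), whereas $F^{\op}\redltimes\bC\cong C^r_0(\wh{F^{\op}})$, and these are not even $\KK$-equivalent unless $F$ is trivial. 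The same object $A=\bC$ shows that $\bfG$ and $\bfH\Res^{\wt{G}^{\bi}}_{G^{\bi}_{\Gamma}}$ do \emph{not} agree with the $\Ind$-composites appearing in the C*-level diagram, so no argument identifying them objectwise can succeed.

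The missing idea is the paper's stabilization step, which is the real content of the second half of its proof. One keeps the C*-level natural isomorphism $\theta'\colon\Ind^{F}_{F^{\bi}}\Res^{\wt{F}^{\bi}}_{F^{\bi}}\bfF^{\bi}\xrightarrow{\sim}\Res^{\wt{F}}_{F}\bfF\,\Ind^{\wt{G}}_{G^{\bi}_{\Gamma}}\Res^{\wt{G}^{\bi}}_{G^{\bi}_{\Gamma}}$, observes via \cite[Remark~6.5]{Kitamura:indhomlcqg} and $\Res^{\wt{G}}_{G}\Ind^{\wt{G}}_{G^{\bi}_{\Gamma}}\cong\Ind^{G}_{G^{\bi}}\Res^{G^{\bi}_{\Gamma}}_{G^{\bi}}$ that both composites preserve full corner embeddings \emph{on the subcategory of algebras whose left $G^{\op}$-action is free}, and then precomposes with $\wh{\wt{G}^{\bi}}^{\op}\redltimes\wt{G}^{\bi}\redltimes-$, whose image consists of such algebras and which is naturally $\KK^{\wt{G}^{\bi}}$-equivalent to the identity. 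Only after this precomposition do the two sides of $\theta'$ descend to triangulated functors $\bfG',\bfH'$ on $\KK^{\wt{G}^{\bi}}$, and the remaining work is to check $\bfG'\cong\bfG$ and $\bfH'\cong\bfH\Res^{\wt{G}^{\bi}}_{G^{\bi}_{\Gamma}}$; that is, the Green--Julg/Baaj--Skandalis comparison of induction with crossed products that you want to use is valid precisely on free objects, and the stabilization is what puts one there. Without it, your $\theta$ is defined at best on algebras with free $G^{\op}$-actions, not on all of $\KK^{\wt{G}^{\bi}}$.
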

	
	\begin{proof}
		First we consider the natural isomorphisms of the diagram. 
		Note that we have a left $F$-$*$-isomorphism 
		$C^r(\wt{F}) 
		\cong 
		\Ind^F_{F^{\bi}} C^r(F^{\bi}_{\Gamma})$ and 
		it follows 
		$\Res^{\wt{F}}_F \Ind^{\wt{F}}_{F^{\bi}_{\Gamma}} 
		\cong \Ind^F_{F^{\bi}} \Res^{F^{\bi}_{\Gamma}}_{F^{\bi}}$ 
		by \cite[Theorem~5.3, Remark~5.7]{Kitamura:indhomlcqg}. 
		From this we see 
		$\Ind^{F}_{F^{\bi}} \Res^{\wt{F}^{\bi}}_{F^{\bi}} 
		\cong 
		\Res^{\wt{F}}_{F} \Ind^{\wt{F}}_{F^{\bi}_{\Gamma}} \Res^{\wt{F}^{\bi}}_{F^{\bi}_{\Gamma}}$. 
		
		For $(A,\alpha,\lambda,\beta,\mu)$ as in \cref{eq:bfFbi}, 
		we have 
		\begin{align*}
			&
			\Ind^{\wt{F}}_{F^{\bi}_{\Gamma}} \Res^{{\wt{F}}^{\bi}}_{F^{\bi}_{\Gamma}} \bfF^{\bi}(A)
			\\&
			=
			\Ind^{\wt{F}}_{F^{\bi}_{\Gamma}}
			\left[ u^{r*}_1 u^{s*}_2 (\id\otimes\mu)\lambda(a) \ \left|\ 
			\begin{array}{c}
				r,s\in\Gamma,\ a\in A, 
				\\
				(\jmath^*\alpha)(a)\in u^{r}\otimes A,\ 
				(\jmath^{\op *}\beta)(a)\in u^{s}\otimes A
			\end{array}\right.
			\right] 
			\\&
			\cong
			(\id\otimes\mu)
			\bigl[ u^{r*}_1 \lambda(a) \ \big|\ 
			r\in\Gamma, a\in A, 
			(\jmath^*\alpha)(a)\in u^{r}\otimes A, \beta(a)\in 1\otimes A \bigr], 
		\end{align*}
		since as subspaces of $C^r(H)$ we have $C(\wh{\Gamma})\cap C^r(G)=\bC1$. 
		Similarly, we have
		\begin{align*}
			&
			\bfF (\Ind^{\wt{G}}_{G^{\bi}_{\Gamma}} \Res^{{\wt{G}}^{\bi}}_{G^{\bi}_{\Gamma}} A)
			\\&
			\cong
			\bigl[ u^{r*}_1 \lambda(a) \ \big|\ 
			r\in\Gamma, a\in A, 
			(\jmath^*\alpha)(a)\in u^{r}\otimes A, \beta(a)\in 1\otimes A \bigr], 
		\end{align*} 
		and $\id\otimes \mu$ gives their left $\wt{F}$-$*$-isomorphism. 
		It is easy to check the naturality. 
		By combining the constructions so far, we get the desired natural isomorphism 
		\begin{align}\label{prf:lem:grtwitransfer1}
			&
			\theta'\colon  
			\Ind^{F}_{F^{\bi}} \Res^{{\wt{F}}^{\bi}}_{F^{\bi}} \bfF^{\bi} 
			\xrightarrow{\sim} 
			\Res^{\wt{F}}_{F} \bfF \Ind^{\wt{G}}_{G^{\bi}_{\Gamma}} \Res^{{\wt{G}}^{\bi}}_{G^{\bi}_{\Gamma}}
		\end{align}
		as the functors of 
		$\Calg^{{\wt{G}}^{\bi}}\to \Calg^{F}$. 
		These functors give split exact homotopy functors to $\KK^F$. 
		By \cite[Remark~6.5]{Kitamura:indhomlcqg} and 
		$\Res^{\wt{G}}_{G}\Ind^{\wt{G}}_{G^{\bi}_{\Gamma}}\cong \Ind^{G}_{G^{\bi}}\Res^{G^{\bi}_{\Gamma}}_{G^{\bi}}$, 
		we see the functors in \cref{prf:lem:grtwitransfer1} preserve full corner embeddings on the full subcategory of $\Calg^{\wt{G}^{\bi}}$ of the objects whose left $G^{\op}$-actions are free. 
		
		Thus the compositions of the left and right hand sides of \cref{prf:lem:grtwitransfer1} after $\wh{\wt{G}^{\bi}}^{\op}\redltimes \wt{G}^{\bi}\redltimes -\colon \Calg^{\wt{G}^{\bi}}\to \Calg^{\wt{G}^{\bi}}$ 
		induce triangulated functors $\bfG',\bfH'\colon \KK^{\wt{G}^{\bi}}\to \KK^{F}$, respectively, and they are naturally isomorphic via $\theta'$. 
		It is not hard to see $\bfG'\cong \bfG$ and $\bfH'\cong\bfH\Res^{\wt{G}^{\bi}}_{G^{\bi}_{\Gamma}}$. 
		We define $\theta$ as the composition of these natural isomorphisms. 
	\end{proof}
	
	\begin{lem}\label{lem:grtwitransfer2}
		In the situation of \cref{cond:actionfinab}, let $C$ be a separable C*-algebra. 
		Then we have a left $F^{\bi}$-$\ast$-homomorphism 
		$\rho \colon  \Res^{\wt{F}^{\bi}}_{F^{\bi}} \bfF^{\bi} \Ind^{{\wt{G}}^{\bi}}_{G^{\bi}_{\Gamma}} (C^r(G))
		\to C^r(F) $ 
		which fits into the following commutative diagram in $\KK^{F}$, 
		\begin{align*}
			&\xymatrix@C=4.5em{
				\bfG \Ind^{{\wt{G}}^{\bi}}_{G^{\bi}_{\Gamma}} (C\otimes C^r(G)) 
				\ar[r]^-{\theta_{\Ind (C\otimes C^r(G))}}
				\ar[d]_-{F^{\op}\ltimes(\id_C\otimes \rho)}&
				\bfH \Res^{\wt{G}^{\bi}}_{G^{\bi}_{\Gamma}} \Ind^{\wt{G}^{\bi}}_{G^{\bi}_{\Gamma}} (C\otimes C^r(G)) 
				\ar[d]^-{ \bfH(\kappa_{C\otimes C^r(G)}) }
				\\
				C\otimes\cK(F) 
				\ar[r]^-{\sim}&
				\bfH (C\otimes C^r(G)) .
			}
		\end{align*}
	\end{lem}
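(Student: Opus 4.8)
The plan is to represent every arrow of the square by an honest equivariant $*$-map of $F^{\bi}$-C*-algebras and then to obtain the whole diagram by applying the triangulated functor $F^{\op}\redltimes-$, so that commutativity in $\KK^F$ becomes commutativity of a square in $\Calg^{F^{\bi}}$. Indeed, the top-left corner is by definition $\bfG\Ind^{\wt G^{\bi}}_{G^{\bi}_\Gamma}(C\otimes C^r(G))=F^{\op}\redltimes\Res^{\wt F^{\bi}}_{F^{\bi}}\bfF^{\bi}\Ind^{\wt G^{\bi}}_{G^{\bi}_\Gamma}(C\otimes C^r(G))$, while the bottom-left corner is $C\otimes\cK(F)\cong F^{\op}\redltimes(C\otimes C^r(F))$ through the regular representation of $F$; hence it suffices to produce the $F^{\bi}$-$*$-homomorphism $\rho\colon\Res^{\wt F^{\bi}}_{F^{\bi}}\bfF^{\bi}\Ind^{\wt G^{\bi}}_{G^{\bi}_\Gamma}(C^r(G))\to C^r(F)$ and to check that, after $F^{\op}\redltimes-$, the map $\id_C\otimes\rho$ computes the composite $\bfH(\kappa_{C\otimes C^r(G)})\circ\theta_{\Ind(C\otimes C^r(G))}$.

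First I would make $\theta$ and $\kappa$ explicit. By the proof of \cref{lem:grtwitransfer1}, $\theta$ is assembled from the honest $*$-isomorphism $\id\otimes\mu$ appearing there, from \eqref{prf:lem:grtwitransfer1}, and from the crossed-product model of induction $\Ind^{\wt G}_{G^{\bi}_\Gamma}\cong\wh{\wt G}^{\op}\redltimes\bigl((G^{\bi}_\Gamma)\redltimes-\bigr)$ built into $\bfH$; thus, up to the outer $\bfF$- and $\wh{\wt G}^{\op}\redltimes(G^{\bi}_\Gamma\redltimes-)$-wrappers, $\theta$ is a $*$-isomorphism. For the counit I would unwind the adjunction $\Res^{\wt G^{\bi}}_{G^{\bi}_\Gamma}\dashv\Ind^{\wt G^{\bi}}_{G^{\bi}_\Gamma}$ of \cite[Proposition~6.6]{Kitamura:indhomlcqg}: evaluated at the induced algebra of $C\otimes C^r(G)$, the counit $\kappa_{C\otimes C^r(G)}\colon\Res^{\wt G^{\bi}}_{G^{\bi}_\Gamma}\Ind^{\wt G^{\bi}}_{G^{\bi}_\Gamma}(C\otimes C^r(G))\to C\otimes C^r(G)$ is the canonical equivariant $*$-homomorphism attached to the crossed-product construction, which I would read off from the explicit $G^{\bi}_\Gamma$-action on $C^r(G)$ and from \eqref{eq:bfFbi}.

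Next I would compute $\Res^{\wt F^{\bi}}_{F^{\bi}}\bfF^{\bi}\Ind^{\wt G^{\bi}}_{G^{\bi}_\Gamma}(C^r(G))$ as a concrete C*-algebra assembled from $C^r(F)$, $C^r(G)$ and $\Gamma$, using $W^\pi=\sum_{t\in\Gamma}u^{t*}\otimes\delta_t$, the identity $C^r(H)=\Gamma\redltimes C^r(G)$, and $C(\wh\Gamma)\cap C^r(G)=\bC1$ from \cref{cond:actionfinab} and the surrounding remarks. The map $\rho$ is then the evaluation of this algebra onto its $C^r(F)$-part, and I would verify that it is left $F^{\bi}$-equivariant directly from these formulas and the multiplicative-unitary relations of \cref{thm:relations}. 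The bottom isomorphism is the Morita/crossed-product identification that matches $\Res^{\wt F}_F\bfF\Ind^{\wt G}_{G^{\bi}_\Gamma}(C^r(G))$ with $F^{\op}\redltimes C^r(F)\cong\cK(F)$.

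Finally, commutativity reduces to the statement that $\bfH(\kappa_{C\otimes C^r(G)})\circ\theta_{\Ind(C\otimes C^r(G))}$ and $(\text{bottom iso})\circ F^{\op}\ltimes(\id_C\otimes\rho)$ agree in $\KK^F$; since both are now built from equivariant $*$-maps that act in the same way on the explicit generators (each one being the evaluation onto $C^r(F)$ composed with the regular representation), they coincide already in $\Calg^{F^{\bi}}$ before $F^{\op}\redltimes-$ is applied. The main obstacle I expect is the concrete identification of the counit $\kappa$ through the nested crossed products and through $\bfF^{\bi}$, together with the attendant leg-numbering bookkeeping; in particular I must confirm that the a priori $KK$-theoretic composite $\bfH(\kappa)\circ\theta$ is genuinely represented by the honest $*$-homomorphism $\id_C\otimes\rho$, which is exactly what licenses writing the left vertical arrow as the crossed product $F^{\op}\ltimes(\id_C\otimes\rho)$ of a $*$-homomorphism.
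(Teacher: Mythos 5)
Your computational ingredients coincide with the paper's: it likewise identifies $\Ind^{\wt{G}^{\bi}}_{G^{\bi}_{\Gamma}}(C^r(G))$ with $C^r(\wt{G})$, computes $\bfF^{\bi}(C^r(\wt{G}))$ explicitly from $W^{\pi}=\sum_{t}u^{t*}\otimes\delta_t$, $C^r(H)=\Gamma\redltimes C^r(G)$ and $C(\wh{\Gamma})\cap C^r(G)=\bC 1$, obtains $\rho$ by compressing onto a central $F^{\bi}$-invariant projection whose corner is $F^{\bi}$-isomorphic to $C^r(F)$ (your ``evaluation onto the $C^r(F)$-part''), and uses that $\kappa_{C\otimes C^r(G)}\colon C^r(G)\otimes C(\Gamma)\to C^r(G)$ is evaluation at $e\in\Gamma$.

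The genuine gap is in your reduction mechanism. You want to realize the whole square as the image under $F^{\op}\redltimes-$ of a commuting square in $\Calg^{F^{\bi}}$, and you assert at the end that the two composites ``coincide already in $\Calg^{F^{\bi}}$''. This cannot even be formulated: the right-hand column consists of the $F$-algebras $\bfH\Res^{\wt{G}^{\bi}}_{G^{\bi}_{\Gamma}}\Ind^{\wt{G}^{\bi}}_{G^{\bi}_{\Gamma}}(C\otimes C^r(G))$ and $\bfH(C\otimes C^r(G))$, where $\bfH=\Res^{\wt{F}}_{F}\bfF\bigl(\wh{\wt{G}}^{\op}\redltimes(G^{\bi}_{\Gamma}\redltimes -)\bigr)$; these carry no residual $F^{\op}$-action and are not presented as crossed products of $F^{\bi}$-algebras, so neither $\bfH(\kappa_{C\otimes C^r(G)})$ nor $\theta$ is of the form $F^{\op}\redltimes(\cdot)$. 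Indeed, by its construction in \cref{lem:grtwitransfer1}, $\theta$ is not an honest map: it is the $\Calg$-level isomorphism $\theta'$ of \cref{prf:lem:grtwitransfer1} --- whose left-hand functor is $\Ind^{F}_{F^{\bi}}\Res^{\wt{F}^{\bi}}_{F^{\bi}}\bfF^{\bi}$, built from $\Ind^{F}_{F^{\bi}}$ and not from $F^{\op}\redltimes-$ --- conjugated by the purely $KK$-theoretic natural isomorphisms $\bfG\cong\bfG'$ and $\bfH'\cong\bfH\Res^{\wt{G}^{\bi}}_{G^{\bi}_{\Gamma}}$, which exist only after stabilizing by $\wh{\wt{G}^{\bi}}^{\op}\redltimes\wt{G}^{\bi}\redltimes-$ and only because those $\Calg$-level functors preserve full corner embeddings on objects with free left $G^{\op}$-actions. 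Your inventory of the ``wrappers'' constituting $\theta$ omits exactly this piece, so the ``main obstacle'' you flag is restated rather than resolved. The correct route, which is the paper's, is to verify by hand the square of honest $*$-homomorphisms with top arrow $\theta'_{C\otimes C^r(\wt{G})}$, left arrow $\Ind^{F}_{F^{\bi}}(\id_C\otimes\rho)$ (landing in $C$), and right arrow $\Res^{\wt{F}}_{F}\bfF\Ind^{\wt{G}}_{G^{\bi}_{\Gamma}}(\kappa_{C\otimes C^r(G)})$, and then to transport this square to the stated diagram in $\KK^{F}$ using the definition of $\theta$ together with the freeness of the left $G^{\op}$-actions on $C^r(\wt{G})$ and $C^r(G)$, which is what makes the corner-embedding argument following \cref{prf:lem:grtwitransfer1} applicable to these particular objects.
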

	
	We will identify $\Ind^{\wt{G}^{\bi}}_{G^{\bi}_{\Gamma}} C^r(G)$ 
	as $C^r(\wt{G})$ via the left $\wt{G}^{\bi}$-$*$-isomorphisms 
	$\Ind^{\wt{G}^{\bi}}_{G^{\bi}_{\Gamma}} C^r(G) \cong C^r(\wt{G}^{\bi})\cotensor{G^{\bi}_{\Gamma}}C^r(G)$ by \cite[Theorem~4.7]{Kitamura:indhomlcqg} 
	and $C^r(\wt{G})\xrightarrow{\sim} C^r(\wt{G}^{\bi})\cotensor{G^{\bi}_{\Gamma}}C^r(G)$ 
	that sends $x\otimes\delta_s\in C^r(\wt{G})$ for each $x\in C^r(G)$ and $s\in \Gamma$ to 
	\begin{align*}
		&
		\sum\limits_{t\in\Gamma} (\delta_t\otimes\delta_{t^{-1}s})_{24} u^{t\otimes2*}_{35} 
		\bigl((\id\otimes\Delta_G)\Delta_G(x)\bigr)_{153} u^{t\otimes2}_{35} 
		\\&
		\in C^r(\wt{G}^{\bi})\cotensor{G^{\bi}_{\Gamma}}C^r(G) 
		\subset 
		C^r(\wt{G})\otimes C^r(\wt{G}^{\op})\otimes C^r(G). 
	\end{align*}
	
	\begin{proof}
		Via \cref{eq:wtGopvsrDpi} the left $G^{\op}\dblcross{\Ad W^{\pi}}\Gamma^{\op}$-action is 
		\begin{align*}
			&
			C^r(\wt{G}) 
			= C^r(G)\otimes C(\Gamma) 
			\ni x\otimes \delta_s
			\\&
			\mapsto 
			\sum\limits_{t\in\Gamma} 
			\Ad (W^{\pi*}_{12}W^{\pi}_{14}) \bigl(\Delta_{G}(x)\otimes \delta_{st}\otimes\delta_t\bigr)_{3142}
			\in C(G^{\op}\dblcross{\Ad W^{\pi}}\Gamma^{\op})\otimes C^r(\wt{G}) . 
		\end{align*}
		Since for each $r\in\Gamma$, 
		\begin{align*}
			&
			[ x\in C^r(H) \,|\, (\jmath^*\Delta_H)(x)\in u^{r*}\otimes C^r(H) ]
			\\&
			=
			u^{r*} [ x\in C^r(H) \,|\, (\jmath^*\Delta_H)(x)\in 1\otimes C^r(H) ] 
			=
			u^{r*} C^r(F) 
			\\&
			=
			[ x\in C^r(H) \,|\, (\Delta_H\rpb\jmath) (x)\in C^r(H)\otimes u^{r*} ], 
		\end{align*}
		we see 
		\begin{align*}
			&
			\bfF^{\bi}(C^r(\wt{G})) 
			=
			\biggl[\, 
			\sum\limits_{t,t'\in\Gamma} (u^r\delta_{t})\otimes (u^r\delta_{t'})\otimes (u^{t*} xu^{t})\otimes \delta_{t^{-1}st'} 
			\,\bigg|\,
			r,s\in \Gamma, x\in C^r(G)\cap u^{r*}C^r(F)
			\,\biggr], 
		\end{align*}
		and its left $F$-action and its left $F^{\op}$-action sends
		\begin{align*}
			&
			\sum\limits_{t,t'\in\Gamma} (u^r\delta_{t})\otimes (u^r\delta_{t'})\otimes (u^{t*} xu^{t})\otimes \delta_{t^{-1}st'} 
		\end{align*}
		to 
		\begin{align*}
			&
			u^r_1 \sum\limits_{t,t'\in\Gamma} u^{t}_1 
			\bigl( (u^r\delta_{t})\otimes (u^r\delta_{t'})\otimes \Delta_G(u^{t*} xu^{t})\otimes \delta_{t^{-1}st'} \bigr)_{23145} u^{t*}_1
			\\&
			=
			u^r_1 \sum\limits_{t,t'\in\Gamma} u^{t*}_4 
			\bigl( (u^r\delta_{t})\otimes (u^r\delta_{t'})\otimes \Delta_G(x)\otimes \delta_{t^{-1}st'} \bigr)_{23145} u^{t}_4
		\end{align*}
		and 
		\begin{align*}
			&
			u^r_1 \sum\limits_{t,t'\in\Gamma} u^{t'}_1 u^{t^{-1}st'*}_1 
			\bigl( (u^r\delta_{t})\otimes (u^r\delta_{t'})\otimes \Delta_G(u^{t*} xu^{t})\otimes \delta_{t^{-1}st'} \bigr)_{23415} u^{t^{-1}st'}_1 u^{t'*}_1 
			\\&
			=
			u^r_1 \sum\limits_{t,t'\in\Gamma} u^{s*}_1 u^{t*}_4
			\bigl( (u^r\delta_{t})\otimes (u^r\delta_{t'})\otimes \Delta_G( x )\otimes \delta_{t^{-1}st'} \bigr)_{23415} u^{t}_4 u^{s}_1 , 
		\end{align*}
		respectively. 
		Thus we have a $F^{\bi}$-invariant central projection 
		\begin{align*}
			&
			p := 
			\sum\limits_{t,t'\in\Gamma} \delta_t \otimes \delta_{t'} \otimes 1 \otimes \delta_{t^{-1}t'} 
			\in \cZ(\bfF^{\bi}(C^r(\wt{G}))^{F^{\bi}}), 
		\end{align*}
		and a left $F^{\bi}$-$*$-isomorphism $C^r(F)\to p\bfF^{\bi}(C^r(\wt{G}))p$ given by 
		the restriction of the well-defined injective $*$-homomorphism 
		$C^r(H)=\Gamma\redltimes C^r(G)\to \Gamma^{\bi}\redltimes C^r(\wt{G})$ 
		that sends $u^s x$ for $s\in \Gamma$ and $x\in C^r(G)$ to 
		\begin{align*}
			&
			\sum\limits_{t,t'\in\Gamma} (u^s\delta_{t})\otimes (u^s\delta_{t'})\otimes (u^{t*} xu^{t})\otimes \delta_{t^{-1}t'} . 
		\end{align*} 
		Thus we get a surjective left $F^{\bi}$-$*$-homomorphism $\rho\colon \bfF^{\bi}(C^r(\wt{G}))\to C^r(F)$. 
		Since $\kappa_{C\otimes C^r(G)}\colon C^r(\wt{G})=C^r(G)\otimes C(\Gamma)\to C^r(G)$ is the evaluation at $e\in \Gamma$ by its construction, we can check the commutativity of the following diagram 
		\begin{align*}
			&\xymatrix@C=4.5em{
				\Ind^{F}_{F^{\bi}} \Res^{\wt{F}^{\bi}}_{F^{\bi}} \bfF^{\bi} (C\otimes C^r(\wt{G}))
				\ar[r]^-{\theta'_{ C\otimes C^r(\wt{G}) }}
				\ar[d]_-{\Ind^{F}_{F^{\bi}}(\id_C\otimes \rho)}&
				\Res^{\wt{F}}_{F}\bfF\Ind^{\wt{G}}_{G^{\bi}_{\Gamma}} \Res^{\wt{G}^{\bi}}_{G^{\bi}_{\Gamma}} (C\otimes C^r(\wt{G})) 
				\ar[d]^-{ \Res\bfF\Ind(\kappa_{C\otimes C^r(G)}) }
				\\
				C 
				\ar[r]^-{\sim}&
				\Res^{\wt{F}}_{F}\bfF\Ind^{\wt{G}}_{G^{\bi}_{\Gamma}} (C\otimes C^r(G)) .
			}
		\end{align*}
		Now since the left $G^{\op}$-actions of $C^r(\wt{G})$ and $C^r(G)$ are free, we have the desired commutative diagram in $\KK^F$ by the argument after \cref{prf:lem:grtwitransfer1} and the definition of $\theta$. 
	\end{proof}
	
	\begin{proof}[Proof of {\cref{thm:grtwtransfer}}]
		We take an isomorphism $\bx\in\KK^{G^{\bi}_{\Gamma}}(A, C\otimes C^r(G))$ by assumption and put 
		$\by\in \KK^{F^{\bi}}(\bfF^{\bi}(A), C\otimes C^r(F))$ as the following composition, 
		\begin{align*}
			&
			\bfF^{\bi}(A)
			\xrightarrow{\bfF^{\bi}\eta_A} 
			\bfF^{\bi} \Ind^{{\wt{G}}^{\bi}}_{G^{\bi}_{\Gamma}} \Res^{{\wt{G}}^{\bi}}_{G^{\bi}_{\Gamma}} (A) 
			\\&
			\xrightarrow{\bfF^{\bi}\Ind^{{\wt{G}}^{\bi}}_{G^{\bi}_{\Gamma}}(\bx)} 
			\bfF^{\bi} \Ind^{{\wt{G}}^{\bi}}_{G^{\bi}_{\Gamma}} (C\otimes C(G)) 
			\\&
			\xrightarrow{\id_C\otimes \rho} 
			C\otimes C^r(F). 
		\end{align*}
		It suffices to show $\by$ is isomorphic, but 
		by $\KK^{F^{\bi}}= \bra\, \KK \,\ket_{\loc}^{F^{\bi}}$ and \cref{lem:gamma1}, 
		we only have to show $F^{\op}\redltimes\by \in \KK^F(F^{\op}\redltimes\Res^{\wt{F}^{\bi}}_{F^{\bi}} \bfF^{\bi}(A),C\otimes\cK(F))$ is isomorphic. 
		Thus it is enough to show the following diagram in $\KK^{F}$ commutes, 
		\begin{align*}
			&\xymatrix{
				\bfG(A) 
				\ar[d]_-{\bfG (\eta_A) }
				\ar[r]^-{\theta_A} &
				\bfH \Res^{\wt{G}^{\bi}}_{G^{\bi}_{\Gamma}} (A) 
				\ar[d]^-{ \bfH(\bx) }
				\\
				\bfG \Ind^{{\wt{G}}^{\bi}}_{G^{\bi}_{\Gamma}} \Res^{{\wt{G}}^{\bi}}_{G^{\bi}_{\Gamma}} (A) 
				\ar[d]_-{ \bfG \Ind (\bx) } &
				\bfH (C\otimes C^r(G)) 
				\\
				\bfG \Ind^{{\wt{G}}^{\bi}}_{G^{\bi}_{\Gamma}} (C\otimes C^r(G)) 
				\ar[r]^-{F^{\op}\ltimes(\id\otimes \rho)}
				&
				C\otimes \cK(F) 
				\ar[u]_-{\wr} . 
			}
		\end{align*}
		
		By \cref{lem:grtwitransfer2}, this reduces to the commutativity of the following diagram, 
		\begin{align*}
			\xymatrix@C=-3em@R=3em{
				\bfG(A) 
				\ar[d]_-{\bfG (\eta_A)}
				\ar[rrr]^-{\theta_A} &&&
				\bfH \Res^{\wt{G}^{\bi}}_{G^{\bi}_{\Gamma}} (A) 
				\ar[d]^-{\bfH(\bx)}
				\ar[dl]_-{\bfH \Res (\eta_{A}) \qquad}
				\\
				\bfG \Ind^{{\wt{G}}^{\bi}}_{G^{\bi}_{\Gamma}} \Res^{{\wt{G}}^{\bi}}_{G^{\bi}_{\Gamma}} (A) 
				\ar[d]_-{ \bfG \Ind (\bx) } 
				&\qquad\qquad\qquad&
				\bfH \Res^{{\wt{G}}^{\bi}}_{G^{\bi}_{\Gamma}} \Ind^{{\wt{G}}^{\bi}}_{G^{\bi}_{\Gamma}} \Res^{{\wt{G}}^{\bi}}_{G^{\bi}_{\Gamma}} (A) 
				\quad
				\ar[dr]_-{\bfH \Res\Ind (\bx) \qquad }&
				\bfH (C\otimes C^r(G))
				\\
				\bfG \Ind^{{\wt{G}}^{\bi}}_{G^{\bi}_{\Gamma}} (C\otimes C^r(G)) 
				\ar[rrr]_-{\theta_{\Ind (C\otimes C^r(G))}}&&& 
				\bfH \Res^{\wt{G}^{\bi}}_{G^{\bi}_{\Gamma}} \Ind^{\wt{G}^{\bi}}_{G^{\bi}_{\Gamma}} (C\otimes C^r(G)) 
				\ar[u]_-{ \bfH(\kappa_{C\otimes C^r(G)}) }. 
			}
		\end{align*}
		The left part commutes by the naturality of $\theta$. 
		The right part commutes because of the equality 
		\begin{align*}
			&
			\kappa_{C\otimes C^r(G)} 
			\circ \bigl( \Res^{\wt{G}^{\bi}}_{G^{\bi}_{\Gamma}} \Ind^{\wt{G}^{\bi}}_{G^{\bi}_{\Gamma}} (\bx) \bigr)
			=
			\bx \circ \kappa_{\Res^{\wt{G}^{\bi}}_{G^{\bi}_{\Gamma}}A}
		\end{align*}
		by the naturality of $\kappa$ 
		and the unit-counit relation 
		\begin{align*}
			&
			\kappa_{\Res^{\wt{G}^{\bi}}_{G^{\bi}_{\Gamma}}A} 
			\circ \Res^{\wt{G}^{\bi}}_{G^{\bi}_{\Gamma}} (\eta_{A})
			= \id. 
		\qedhere\end{align*}
	\end{proof}
	
	Finally, we apply \cref{thm:grtwtransfer} to graded twisting of compact Lie groups considered in \cite{Neshveyev-Yamashita:twilie}. 
	For each connected, simply connected compact Lie group $G$, $0<q\leq 1$, and $\tau\in \cZ(G)^{\dim\fh}$, where $\fh$ is the Cartan subalgebra of the Lie algebra of $G$, 
	they constructed the graded twisting $G_q^{\tau}$ for the invariant cocentral action of a finite abelian group $\Gamma$ associated with $\tau$. 
	Its $\Gamma$-action is a restriction of the action of $T/\cZ(G)$ on $G$, given by the adjoint action of $T$. 
	See \cite[Theorem~3.1]{Neshveyev-Yamashita:twilie} for detail. 
	When $q=1$, this situation satisfies the assumption of the following result by putting $F:=G^{\tau}$. 
	
	\begin{thm}\label{thm:grtwLiehmg}
		Under \cref{cond:actionfinab}, we additionally assume that 
		$G$ is a connected compact Lie groups with a torsion-free fundamental group, 
		and there is a group homomorphism $\psi\colon \Gamma \to T/\cZ(G)$, where $T$ is a maximal torus of $G$, such that the action $\Gamma\curvearrowright G$ is given by $\Ad \psi$. 
		Then we have the following. 
		\begin{enumerate}
			\item
			$\KK^{F^{\bi}} = \bra \KK \ket_{\loc}^{F^{\bi}}$. 
			\item
			$C^r(F/T)\cong \bC^{\oplus n}$ in $\KK^{\sD(F)}$, for some $n\in\bZ_{>0}$. 
		\end{enumerate}
	\end{thm}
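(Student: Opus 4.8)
The plan is to prove both parts by transferring classical computations for the genuine compact Lie group $G$ to the quantum group $F=G^{\tau}$, using the equivalence $\bfF^{\bi}\colon \KK^{\wt{G}^{\bi}}\xrightarrow{\sim}\KK^{\wt{F}^{\bi}}$ of \cref{rem:bfF} and \cref{thm:grtwtransfer}. The observation that organizes everything is that since $\Gamma$ acts on $G$ through $\Ad\psi$ with $\psi(\Gamma)\subset T/\cZ(G)$, and the adjoint action of the torus on itself is trivial, the maximal torus $T$ is pointwise $\Gamma$-fixed. Hence $T$ survives as a \emph{classical} closed quantum subgroup of $F$, so that $C^r(F/T)$ is defined and $T^{\bi}=T\times T^{\op}$ is a genuine compact abelian Lie group. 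For part (1), I would first record that $\wh{T^{\bi}}$ is then a torsion-free discrete abelian group, so $\KK^{T^{\bi}}=\bra\,\KK\,\ket_{\loc}^{T^{\bi}}$ by the classical Baum--Connes property. I would then bootstrap from the torus to $F^{\bi}$ exactly along the lines of \cite[Theorem~6.1]{Voigt:bcfo}, as already carried out for $SU_{\pm q}(2)$ in \cref{rem:BCSU-q(2)}: the inclusion $T^{\bi}\hookrightarrow F^{\bi}$ together with the quantum flag manifold yields a finite filtration of the generators of $\KK^{F^{\bi}}$ by objects induced from $T^{\bi}$, whence $\KK^{F^{\bi}}=\bra\,\KK\,\ket_{\loc}^{F^{\bi}}$. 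This is precisely the hypothesis needed to run \cref{thm:grtwtransfer}.

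For part (2), the classical input is the Pittie--Steinberg theorem: because $\pi_1(G)$ is torsion-free, $R(T)$ is a free $R(G)$-module of rank $n:=|W|$, where $W$ is the Weyl group. Translating this through \cref{prop:hmgvscotensor}, which gives $\bfJ^{\id_G}(C^r(G/T))\cong \Ind^G_T(\phi^*C^r(G))$, one obtains $\bfJ^{\id_G}(C^r(G/T))\cong C^r(G)^{\oplus n}$ in $\KK^{G^{\bi}}$, i.e.\ $C^r(G/T)\cong \bC^{\oplus n}$ in $\KK^{\sD(G)}$. I would then choose $A\in\Calg^{\wt{G}^{\bi}}$ to be the $\wt{G}^{\bi}$-lift of this flag algebra, characterized by the two properties that $\bfF^{\bi}(A)$ restricts on $F^{\bi}$ to $\bfJ^{\id_F}(C^r(F/T))\cong\Ind^F_T(\phi^*C^r(F))$, while $\Res^{\wt{G}^{\bi}}_{G^{\bi}_{\Gamma}}A\cong \bC^{\oplus n}\otimes C^r(G)$ in $\KK^{G^{\bi}_{\Gamma}}$. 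Feeding $A$ and $C=\bC^{\oplus n}$ into \cref{thm:grtwtransfer} and invoking part (1) produces $\Res^{\wt{F}^{\bi}}_{F^{\bi}}\bfF^{\bi}(A)\cong \bC^{\oplus n}\otimes C^r(F)$ in $\KK^{F^{\bi}}$, so that $\bfJ^{\id_F}(C^r(F/T))\cong \bC^{\oplus n}\otimes C^r(F)$ there. Finally, since \cref{prop:hmgvscotensor} for the trivial subgroup gives $\bfJ^{\id_F}(\bC)\cong C^r(F)$, hence $\bfJ^{\id_F}(\bC^{\oplus n})\cong \bC^{\oplus n}\otimes C^r(F)$, and $\bfJ^{\id_F}\colon\KK^{\sD(F)}\xrightarrow{\sim}\KK^{F^{\bi}}$ is an equivalence by \cref{thm:BSTTdualcat}, I would conclude $C^r(F/T)\cong\bC^{\oplus n}$ in $\KK^{\sD(F)}$.

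The hard part will be twofold. The torus bootstrapping in part (1) requires adapting Voigt's argument to the genuinely noncommutative $F^{\bi}$, producing the finite resolution of the unit by $T^{\bi}$-induced objects without yet knowing the flag computation. The second, and I expect more delicate, obstacle is the construction of $A$ together with the verification of its two defining compatibilities: that $\bfF^{\bi}(A)$ restricts to the induced flag algebra over $F^{\bi}$, and---crucially---that $\Res^{\wt{G}^{\bi}}_{G^{\bi}_{\Gamma}}A\cong \bC^{\oplus n}\otimes C^r(G)$. The latter is a $\Gamma$-equivariant refinement of Pittie--Steinberg, since the free-module decomposition must be compatible with the $\Gamma$-grading built into $G^{\bi}_{\Gamma}$; I would extract it from the $\Gamma$-action permuting the torus-fixed points of $G/T$ together with the freeness of $R(T)$ over $R(G)$, after which the transfer machinery of \cref{thm:grtwtransfer} does the remaining work.
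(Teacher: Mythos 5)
Your part (2) follows essentially the paper's strategy (transfer of a refined classical flag computation through \cref{thm:grtwtransfer}, then \cref{prop:hmgvscotensor} and the equivalence $\bfJ^{\id_F}$), but your part (1) has a genuine gap, and since part (2) consumes part (1) as the hypothesis of \cref{thm:grtwtransfer}, the gap is fatal to the proposal as a whole. Your plan for (1) is to bootstrap from $\KK^{T^{\bi}}=\bra\,\KK\,\ket_{\loc}^{T^{\bi}}$ up to $F^{\bi}$ ``along the lines of \cite[Theorem~6.1]{Voigt:bcfo}.'' That argument is not a formal consequence of having the closed subgroup $T\subset F$: for $SU_{\pm q}(2)$ it consumes, as an essential input, a flag-manifold statement --- that $\bC^{\oplus 2}$ sits inside $C(SU_{\pm q}(2)/T)$ as a direct summand in $\KK^{\sD(SU_{\pm q}(2))}$ inducing a $\KK^{SU_{\pm q}(2)}$-equivalence (\cite[Theorem~4.6, Proposition~4.8]{Voigt:bcfo}); this is exactly what \cref{rem:BCSU-q(2)} uses. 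For the graded twisting $F=G^{\tau}$ of a higher-rank $G$, that input is precisely part (2) of the theorem, which in your scheme comes after part (1): the two parts are circular. You acknowledge you must produce the $T^{\bi}$-resolution ``without yet knowing the flag computation,'' but you offer no mechanism, and one should not expect an intrinsic, representation-theoretic one on the $F$ side: the paper's closing remark on property (T) of $\sD(G_q)$ explains why Voigt-type arguments are unavailable beyond rank one, so the only known route is to import everything from the classical side.

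That import is what the paper's proof of (1) does, with no flag input on the $F$ side at all: apply Meyer--Nest to the \emph{classical} compact connected Lie group $G^{\bi}_{K}$ with $K=T/\cZ(G)$ (its $\pi_1\cong\pi_1(G\times K\times G)$ is torsion-free) to get $C(G)\in\bra\,\KK\,\ket_{\loc}^{G^{\bi}_{K}}$; restrict to $G^{\bi}_{\Gamma}$ and induce to $\wt{G}^{\bi}$ to conclude, by the argument of \cref{prop:gamma1}, that $\KK^{\wt{G}^{\bi}}=\bra\,\KK^{\Gamma^{\bi}}\,\ket_{\loc}^{\wt{G}^{\bi}}$; push this generation statement through the graded-twisting equivalence $\bfF^{\bi}$ (which requires no Baum--Connes-type input); and finally restrict to $F^{\bi}$, using that $A$ is an equivariant direct summand of $\Res^{\wt{F}^{\bi}}_{F^{\bi}}\Ind^{\wt{F}^{\bi}}_{F^{\bi}}A$. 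If you repair (1) along these lines, your (2) still owes the construction of $A$, which you only characterize by two properties whose joint realizability is the crux. The paper takes $A=C(G)\cotensor{T}C(G)\cong C(G/T)\otimes C(G)$ explicitly, obtains $\Res^{\wt{G}^{\bi}}_{G^{\bi}_{\Gamma}}A\cong\bC^{\oplus n}\otimes C(G)$ from the $K$-equivariant classical computation $C(G/T)\cong\bC^{\oplus n}$ in $\KK^{G\rtimes K}$ \cite{Rosenberg-Schochet:equivKunnethUCT,Meyer-Nest:dualcpt} --- enlarging $\Gamma$ to $K$ is how the ``$\Gamma$-equivariant Pittie--Steinberg'' you flag as delicate is actually obtained --- and verifies $\bfF^{\bi}(C(G)\cotensor{T}C(G))\cong C(F)\cotensor{T}C(F)$ directly, since the $T$-translations commute with the relevant $\wt{G}$-actions; then \cref{prop:hmgvscotensor} finishes as you describe.
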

	
	Here, note that the restriction $f\colon C(G)\to C(T)$ gives a well-defined surjective $*$-homomorphism $\wt{f}\colon C(H)=\Gamma\redltimes C(G)\to C(T)$ preserving $\Delta_H$ 
	that sends $u^s x$ for $x\in C^r(G)$, $s\in\Gamma$ to $f(x)$ 
	since $\Gamma$ acts trivially on $T$ by $\Ad \psi$. 
	It follows $\wt{f}(C^r(F))=\wt{f}(C^r(H))=C(T)$ from 
	$C^r(H)=[u^t C^r(F) \,|\, t\in\Gamma]$ 
	and thus $\wt{f}$ identifies $T$ as a closed subgroup of $F$ by \cite[Theorem~6.1]{Daws-Kasprzak-Skalski-Soltan:closed}. 
	
	\begin{proof}
		We put $K:=T/\cZ(G)$ and define $G^{\bi}_{K}$ similarly to $G^{\bi}_{\Gamma}$. 
		Since $\pi_1(G^{\bi}_{K})\cong \pi_1(G\times K\times G)$ is torsion-free, it follows from \cite{Meyer-Nest:dualcpt} that $C(G)\in \KK^{G^{\bi}_{K}} = \bra \KK \ket_{\loc}^{G^{\bi}_{K}}$, 
		where we equip $G$ with the left and right translations of $G$ and the adjoint $K$-action. 
		It follows $\Res^{G^{\bi}_{K}}_{G^{\bi}_{\Gamma}} C(G)\in \bra \KK\ket_{\loc}^{G^{\bi}_{\Gamma}}$, and thus 
		$C(\wt{G})\cong \Ind^{\wt{G}^{\bi}}_{G^{\bi}_{\Gamma}} C(G) 
		\in \bra C(\Gamma)\otimes \KK\ket_{\loc}^{\wt{G}^{\bi}}$. 
		From this we can show $\KK^{\wt{G}^{\bi}} = \bra \KK^{\wt{G}\times \Gamma^{\op}} \ket_{\loc}^{\wt{G}^{\bi}} = \bra\KK^{\Gamma^{\bi}}\ket_{\loc}^{\wt{G}^{\bi}}$ 
		by replacing $G$ with $\wt{G}$ in the proof of (1')$\Rightarrow$(3')$\Rightarrow$(3) in \cref{prop:gamma1}. 
		By applying $\bfF^{\bi}$, we get $\KK^{\wt{F}^{\bi}}=\bra\KK^{\Gamma^{\bi}}\ket_{\loc}^{\wt{F}^{\bi}}$ 
		and (1) follows by applying $\Res^{\wt{F}^{\bi}}_{F^{\bi}}$ since 
		$\Res^{\wt{F}^{\bi}}_{F^{\bi}}\Ind^{\wt{F}^{\bi}}_{F^{\bi}}A$ contains $A$ as an equivariant direct summand for $A\in \Calg^{F^{\bi}}$. 
		
		By \cite{Rosenberg-Schochet:equivKunnethUCT,Meyer-Nest:dualcpt}, it holds 
		$C(G/T)=C((G\rtimes K)/(T\times K))\cong \bC^{\oplus n}$ in $\KK^{G\rtimes K}$ for some $n\in\bZ_{>0}$, 
		and thus in $\KK^{G^{\bi}_{K}}$ with the trivial $G^{\op}$-actions. 
		Note that $n$ must equal the order of the Weyl group of $G$ by \cite{Steinberg:Pittie,Goffeng:PVcptLie}. 
		Using well-defined endofunctor on $\KK^{G^{\bi}_{K}}$ that sends 
		each $A\in\KK^{G^{\bi}_{K}}$ to $A\otimes C(G)$ with the diagonal $G^{\bi}_{K}$-action, 
		we see $C(G/T)\otimes C(G)\cong C(G)^{\oplus n}$ in $\KK^{G^{\bi}_{K}}$, and thus in $\KK^{G^{\bi}_{\Gamma}}$. 
		Note that $C(G/T)\otimes C(G)\cong C(G)\cotensor{T}C(G)$ as left $G^{\bi}_{K}$-C*-algebras. 
		Here we equip $C(G)\cotensor{T}C(G)$ with a left $(G\rtimes K)^{\bi}$-action by 
		the left $G$-action $\Delta_G\otimes\id$, the right $G$-action $\id\otimes\Delta_G$, and taking adjoint on the left [resp.~right] component of $G$ as the left [resp.~right] $K$-action. 
		Now (1) and \cref{thm:grtwtransfer} shows $C(F)^{\oplus n} \cong \bfF^{\bi}(C(G)\cotensor{T}C(G))$ in $\KK^{F^{\bi}}$. 
		We can check the left $F^{\bi}$-$*$-isomorphism $\bfF^{\bi}(C(G)\cotensor{T}C(G))\cong C(F)\cotensor{T}C(F)$ because the right [resp.~left] translation of $T$ on $G$ commutes 
		with the left [resp.~right] action of $\wt{G}$ given by the restriction of the $G^{\bi}_\Gamma$-action on $G$. 
		Finally, \cref{prop:hmgvscotensor} concludes the proof (cf.~\cite[Theorem~4.7]{Kitamura:indhomlcqg}). 
	\end{proof}
	
	For example, a graded twisting of \cite{Neshveyev-Yamashita:twilie} gives rise to $SU_{-q}(2)$ from $SU_q(2)$. 
	In this case, $C(SU_{-q}(2)/T)$ is $\KK^{\sD(SU_{-q}(2))}$-equivalent to $\bC^{\oplus2}$ for $0<q<1$ as we have seen in \cref{rem:BCSU-q(2)}. We record the remaining case of $q=1$. 
	
	\begin{cor}
		It holds $C(SU_{-1}(2)/T)\cong \bC^{\oplus 2}$ in $\KK^{\sD(SU_{-1}(2))}$. 
	\qed\end{cor}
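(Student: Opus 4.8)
The plan is to deduce this as the $q=1$ specialization of \cref{thm:grtwLiehmg}, the only extra input being the identification of the integer $n$ produced there. First I would take $G:=SU(2)$, regarded as the classical compact Lie group (the $q=1$ instance of $SU_q(2)$), and $F:=SU_{-1}(2)$, realized as the graded twisting $G^\tau$ of $SU(2)$ in the sense of \cite{Neshveyev-Yamashita:twilie}. As recorded in the discussion immediately preceding \cref{thm:grtwLiehmg}, for $q=1$ this graded twisting fits \cref{cond:actionfinab} with $\Gamma=\bZ/2$ and with a homomorphism $\psi\colon\Gamma\to T/\cZ(SU(2))$ sending the generator to the unique order-two element of $T/\cZ(SU(2))\cong U(1)$, the $\Gamma$-action on $G$ being $\Ad\psi$. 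Thus the present corollary is exactly the conclusion $C^r(F/T)\cong\bC^{\oplus n}$ of \cref{thm:grtwLiehmg}~(2) for this choice of data, once $n$ is computed.

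Next I would verify the additional hypotheses of \cref{thm:grtwLiehmg}. Since $SU(2)$ is simply connected its fundamental group is trivial, hence torsion-free, and the $\Gamma$-action is given by $\Ad\psi$ by construction, so $\psi$ is the required homomorphism into $T/\cZ(G)$. Therefore \cref{thm:grtwLiehmg} applies verbatim and yields $C(SU_{-1}(2)/T)\cong\bC^{\oplus n}$ in $\KK^{\sD(SU_{-1}(2))}$ for some $n\in\bZ_{>0}$, with $T$ identified as the maximal torus of $SU_{-1}(2)$ through the surjection $\wt f\colon C(H)\to C(T)$ described in the remark after \cref{thm:grtwLiehmg}.

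Finally I would pin down $n=2$. In the proof of \cref{thm:grtwLiehmg} it is noted, via \cite{Steinberg:Pittie,Goffeng:PVcptLie}, that $n$ equals the order of the Weyl group of $G$; for $G=SU(2)$ the Weyl group is $\bZ/2$, whence $n=2$ and $C(SU_{-1}(2)/T)\cong\bC^{\oplus 2}$. There is no genuine obstacle here, as the substantive transfer argument is already carried out in \cref{thm:grtwtransfer} and \cref{thm:grtwLiehmg}; the only points requiring care are confirming that the $q=1$ graded twisting producing $SU_{-1}(2)$ satisfies \cref{cond:actionfinab} (which is the content of the sentence preceding \cref{thm:grtwLiehmg}) and reading off the Weyl-group count $|W(SU(2))|=2$.
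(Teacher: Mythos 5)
Your proposal is correct and follows essentially the same route as the paper: the corollary is obtained by specializing \cref{thm:grtwLiehmg} to the $q=1$ graded twisting of $SU(2)$ producing $SU_{-1}(2)$ (the situation noted just before the theorem to satisfy \cref{cond:actionfinab}), and the value $n=2$ is read off from the observation in that theorem's proof that $n$ equals the order of the Weyl group of $G$. No gaps; this is exactly the intended argument.
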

	
	Unfortunately, the statement of \cref{thm:grtwLiehmg} can no longer be hoped for $G_q$ with $0<q<1$ except $SU_q(2)$, because of the property (T) of $\sD(G_q)$ due to \cite{Arano:cpxssDrinfeld}. 
	This property is known to be an obstruction to the property $\gamma=1$ for groups, and at least the proof of the $\KK^{\sD(SU_q(2))}$-equivalence of $C(SU_q(2)/T)$ and $\bC^{\oplus2}$ by \cite{Voigt:bcfo} does not work for higher rank cases (cf.~\cite{Voigt-Yuncken:equivFredholm}). 
	Although we will not go further here, this situation also suggests the difficulty towards an analog of the property $\gamma=1$ for $\sD(G_q)$ and thus for $\wh{G}_q\times \wh{G}_q^{\op}$ in view of \cref{prop:gamma1}, despite the latter being amenable.

	\providecommand{\bysame}{\leavevmode\hbox to3em{\hrulefill}\thinspace}


\begin{thebibliography}{9999}
		\bibitem{Arano:cpxssDrinfeld}
		Y.~Arano, \emph{Comparison of unitary duals of {Drinfeld} doubles and complex
			semisimple {Lie} groups}, Comm. Math. Phys. \textbf{351} (2017), no.~3,
		1137--1147.
		
		\bibitem{Arano-deCommer:torsion}
		Y.~Arano and K.~De~Commer, \emph{Torsion-freeness for fusion rings and tensor
			$\mathrm{C}^\ast$-categories}, J. Noncommut. Geom. \textbf{13} (2019), no.~1,
		35--58.
		
		\bibitem{Baaj-Skandalis:eqkk}
		S.~Baaj and G.~Skandalis, \emph{$\mathrm{C}^\ast$-alg{\`e}bres de {Hopf} et
			th{\'e}orie de {Kasparov} {\'e}quivariante}, $K$-theory \textbf{2} (1989),
		no.~6, 683--721.
		
		\bibitem{Baaj-Skandalis:mltu}
		\bysame, \emph{Unitaires multiplicatifs et dualit\'e pour les produits
			crois\'es de $\mathrm{C}^\ast$-alg\`ebres}, Ann. Sci. \'Ecole Norm. Sup.
		\textbf{26} (1993), no.~4, 425--488.
		
		\bibitem{Baaj-Skandalis-Vaes:nonsemireg}
		S.~Baaj, G.~Skandalis, and S.~Vaes, \emph{Non-semi-regular quantum groups
			coming from number theory}, Comm. Math. Phys. \textbf{235} (2003), no.~1,
		139--167.
		
		\bibitem{Baaj-Vaes:doublecrossed}
		S.~Baaj and S.~Vaes, \emph{Double crossed products of locally compact quantum
			groups}, J. Inst. Math. Jussieu \textbf{4} (2005), no.~1, 135--173.
		
		\bibitem{Bichon-Neshveyev-Yamashita:grtwi}
		J.~Bichon, S.~Neshveyev, and M.~Yamashita, \emph{Graded twisting of categories
			and quantum groups by group actions}, Ann. Inst. Fourier \textbf{66} (2016),
		no.~6, 2299--2338.
		
		\bibitem{Bichon-Neshveyev-Yamashita:grmod}
		\bysame, \emph{Graded twisting of comodule algebras and module categories}, J.
		Noncommut. Geom. \textbf{12} (2018), no.~1, 331--368.
		
		\bibitem{Crespo:moneqKK}
		J.~Crespo, \emph{Measured quantum groupoids on a finite basis and equivariant
			{Kasparov} theory}, arXiv preprint (2017), arXiv:1706.08516, to appear in
		\emph{J. Noncommut. Geom.}
		
		\bibitem{Daws-Kasprzak-Skalski-Soltan:closed}
		M.~Daws, P.~Kasprzak, A.~Skalski, and P.M. So{\l}tan, \emph{Closed quantum
			subgroups of locally compact quantum groups}, Adv. Math. \textbf{231} (2012),
		no.~6, 3473--3501.
		
		\bibitem{deCommer:comonoidal}
		K.~De~Commer, \emph{Comonoidal {$\mathrm{W}^\ast$-Morita} equivalence for {von
				Neumann} bialgebras}, J. Noncommut. Geom. \textbf{5} (2011), no.~4, 547--571.
		
		\bibitem{deCommer:Galois}
		\bysame, \emph{{Galois} objects and cocycle twisting for locally compact
			quantum groups}, J. Operator Theory \textbf{66} (2011), no.~1, 59--106.
		
		\bibitem{deCommer-Martos-Nest:projective}
		K.~De~Commer, R.~Martos, and R.~Nest, \emph{Projective representation theory
			for compact quantum groups and the quantum {Baum-Connes} assembly map}, arXiv
		preprint (2021), arXiv:2112.04365.
		
		\bibitem{dellAmbrogio:ttKK}
		I.~Dell'Ambrogio, \emph{Tensor triangular geometry and {$KK$}-theory}, J.
		Homotopy Relat. Struct. \textbf{5} (2010), no.~1, 319--358.
		
		\bibitem{dellAmbrogio-Meyer:primecycKK}
		I.~Dell’Ambrogio and R.~Meyer, \emph{The spectrum of equivariant {Kasparov}
			theory for cyclic groups of prime order}, Ann. K-Theory \textbf{6} (2021),
		no.~3, 543--558.
		
		\bibitem{deRijdt-vanderVennet:moneq}
		A.~De~Rijdt and N.~Vander~Vennet, \emph{Actions of monoidally equivalent
			compact quantum groups and applications to probabilistic boundaries}, Ann.
		Inst. Fourier \textbf{60} (2010), no.~1, 169--216.
		
		\bibitem{Enock:crossprod}
		M.~Enock, \emph{Produit crois{\'e} d'une alg{\`e}bre de {von Neumann} par une
			alg{\`e}bre de {Kac}}, J. Funct. Anal. \textbf{26} (1977), no.~1, 16--47.
		
		\bibitem{Enock-Schwartz:crossprod}
		M.~Enock and J.M. Schwartz, \emph{Produit crois{\'e} d'une alg{\`e}bre de {von
				Neumann} par une alg{\`e}bre de {Kac}, {II}}, Publ. Res. Inst. Math. Sci.
		\textbf{16} (1980), no.~1, 189--232.
		
		\bibitem{Etingof-Gelaki-Nikshych-Ostrik:book}
		P.~Etingof, S.~Gelaki, D.~Nikshych, and V.~Ostrik, \emph{Tensor categories},
		Math. Surveys and Monographs, vol. 205, American Mathematical Soc., 2016.
		
		\bibitem{Fima:compact}
		P.~Fima, K.~Mukherjee, and I.~Patri, \emph{On compact bicrossed products}, J.
		Noncommut. Geom. \textbf{11} (2017), no.~4, 1521--1591.
		
		\bibitem{Goffeng:PVcptLie}
		M.~Goffeng, \emph{The {Pimsner--Voiculescu} sequence for coactions of compact
			{Lie} groups}, Math. Scand. (2012), 297--319.
		
		\bibitem{Kitamura:indhomlcqg}
		K.~Kitamura, \emph{Induced coactions along a homomorphism of locally compact
			quantum groups}, J. Funct. Anal. \textbf{282} (2022), no.~12, 109462.
		
		\bibitem{Kustermans-Vaes:lcqg}
		J.~Kustermans and S.~Vaes, \emph{Locally compact quantum groups}, Ann. Sci.
		\'Ecole Norm. Sup. \textbf{33} (2000), no.~6, 837--934.
		
		\bibitem{Kustermans-Vaes:lcqgvN}
		\bysame, \emph{Locally compact quantum groups in the {von Neumann} algebraic
			setting}, Math. Scand. \textbf{92} (2003), no.~1, 68--92.
		
		\bibitem{Martos:semidirectBC}
		R.~Martos, \emph{The {Baum--Connes} property for a quantum (semi-)direct
			product}, J. Noncommut. Geom. \textbf{13} (2020), no.~4, 1295--1357.
		
		\bibitem{Meyer:genhom}
		R.~Meyer, \emph{Equivariant {Kasparov} theory and generalized homomorphisms},
		$K$-theory \textbf{21} (2000), 201--228.
		
		\bibitem{Meyer:homological}
		\bysame, \emph{Homological algebra in bivariant {$K$}-theory and other
			triangulated categories. {II}}, Tbil. Math. J. \textbf{1} (2008), 165--210.
		
		\bibitem{Meyer-Nest:bctri}
		R.~Meyer and R.~Nest, \emph{The {Baum--Connes} conjecture via localisation of
			categories}, Topology \textbf{45} (2006), no.~2, 209--259.
		
		\bibitem{Meyer-Nest:dualcpt}
		\bysame, \emph{An analogue of the {Baum--Connes} isomorphism for coactions of
			compact groups}, Math. Scand. \textbf{100} (2007), no.~2, 301--316.
		
		\bibitem{Meyer-Nest:homological}
		\bysame, \emph{Homological algebra in bivariant {$K$-theory} and other
			triangulated categories, {I}}, London Math. Soc. Lecture Note Ser., vol. 375,
		Cambridge Univ. Press, 2010, pp.~236--289.
		
		\bibitem{Meyer-Roy-Woronowicz:qghom}
		R.~Meyer, S.~Roy, and S.L. Woronowicz, \emph{Homomorphisms of quantum groups},
		M{\"u}nster J. Math. \textbf{5} (2012), 1--24.
		
		\bibitem{Meyer-Roy-Woronowicz:twiten}
		\bysame, \emph{Quantum group-twisted tensor products of
			$\mathrm{C}^\ast$-algebras}, Internat. J. Math. \textbf{25} (2014), no.~2,
		1450019.
		
		\bibitem{Meyer-Roy-Woronowicz:twiten2}
		\bysame, \emph{Quantum group-twisted tensor products of
			$\mathrm{C}^\ast$-algebras. {II}}, J. Noncommut. Geom. \textbf{10} (2016),
		no.~3, 859--888.
		
		\bibitem{Nakano-Vashaw-Yakimov:nctentrigeom}
		D.K. Nakano, K.B. Vashaw, and M.T. Yakimov, \emph{Noncommutative tensor
			triangular geometry}, arXiv preprint (2019), arXiv:1909.04304, to appear in
		{Amer. J. Math.}
		
		\bibitem{Neeman:book}
		A.~Neeman, \emph{Triangulated categories}, Annals of Mathematics Studies, vol.
		148, Princeton University Press, 2014.
		
		\bibitem{Neshveyev-Tuset:book}
		S.~Neshveyev and L.~Tuset, \emph{Compact quantum groups and their
			representation categories}, vol.~20, Soci{\'e}t{\'e} math{\'e}matique de
		France Paris, 2013.
		
		\bibitem{Neshveyev-Yamashita:twilie}
		S.~Neshveyev and M.~Yamashita, \emph{Twisting the $q$-deformations of compact
			semisimple {Lie} groups}, J. Math. Soc. Japan \textbf{67} (2015), no.~2,
		637--662.
		
		\bibitem{Nest-Voigt:eqpd}
		R.~Nest and C.~Voigt, \emph{Equivariant {Poincar{\'e}} duality for quantum
			group actions}, J. Funct. Anal. \textbf{258} (2010), no.~5, 1466--1503.
		
		\bibitem{Rosenberg-Schochet:equivKunnethUCT}
		J.~Rosenberg and C.~Schochet, \emph{The {K{\"u}nneth} theorem and the universal
			coefficient theorem for equivariant {$K$-theory} and {$KK$-theory}}, Mem.
		Amer. Math. Soc. \textbf{348} (1986).
		
		\bibitem{Roy:double}
		S.~Roy, \emph{The {Drinfeld} double for $\mathrm{C}^\ast$-algebraic quantum
			groups}, J. Operator Theory \textbf{74} (2015), no.~2, 485--515.
		
		\bibitem{Steinberg:Pittie}
		R.~Steinberg, \emph{On a theorem of {Pittie}}, Topology \textbf{14} (1975),
		173--177.
		
		\bibitem{Takai:duality}
		H.~Takai, \emph{On a duality for crossed products of
			$\mathrm{C}^\ast$-algebras}, J. Funct. Anal. \textbf{19} (1975), no.~1,
		25--39.
		
		\bibitem{Takesaki:dualitycrossprod}
		M.~Takesaki, \emph{Duality for crossed products and the structure of {von
				Neumann} algebras of type {III}}, Acta Math. \textbf{131} (1973), 249--310.
		
		\bibitem{Vaes:thesis}
		S.~Vaes, \emph{Locally compact quantum groups}, Ph.D. thesis, KU Leuven, 2001.
		
		\bibitem{Vaes:unitaryimplement}
		\bysame, \emph{The unitary implementation of a locally compact quantum group
			action}, J. Funct. Anal. \textbf{180} (2001), no.~2, 426--480.
		
		\bibitem{Vaes:impr}
		\bysame, \emph{A new approach to induction and imprimitivity results}, J.
		Funct. Anal. \textbf{229} (2005), no.~2, 317--374.
		
		\bibitem{Vaes-Vainerman:bicrossed}
		S.~Vaes and L.~Vainerman, \emph{Extensions of locally compact quantum groups
			and the bicrossed product construction}, Adv. Math. \textbf{175} (2003),
		no.~1, 1--101.
		
		\bibitem{Vergnioux-Voigt:bcfu}
		R.~Vergnioux and C.~Voigt, \emph{The {$K$}-theory of free quantum groups},
		Math. Ann. \textbf{357} (2013), no.~1, 355--400.
		
		\bibitem{Voigt:bcfo}
		C.~Voigt, \emph{The {Baum--Connes} conjecture for free orthogonal quantum
			groups}, Adv. Math. \textbf{227} (2011), no.~5, 1873--1913.
		
		\bibitem{Voigt:cpxss}
		\bysame, \emph{On the assembly map for complex semisimple quantum groups}, Int.
		Math. Res. Not. \textbf{2022} (2022), no.~13, 9603--9632.
		
		\bibitem{Voigt-Yuncken:equivFredholm}
		C.~Voigt and R.~Yuncken, \emph{Equivariant {Fredholm} modules for the full
			quantum flag manifold of {$SU_q(3)$}}, Doc. Math. \textbf{20} (2015),
		433--490.
		
		\bibitem{Wang:tensorprod}
		S.~Wang, \emph{Tensor products and crossed products of compact quantum groups},
		Proc. London Math. Soc. \textbf{71} (1995), no.~3, 695--720.
		
		\bibitem{Woronowicz:mltu}
		S.L. Woronowicz, \emph{From multiplicative unitaries to quantum groups},
		Internat. J. Math. \textbf{7} (1996), no.~1, 129--149.
	\end{thebibliography}
\end{document}